\DeclareSymbolFont{matha}{OML}{txmi}{m}{it}
\DeclareMathSymbol{\varu}{\mathord}{matha}{117}
\DeclareMathSymbol{\varv}{\mathord}{matha}{118}
\DeclareMathSymbol{\varw}{\mathord}{matha}{119}
\newtheorem{theorem}{Theorem}[section]
\newtheorem{lemma}[theorem]{Lemma}
\newtheorem{proposition}[theorem]{Proposition}
\newtheorem{corollary}[theorem]{Corollary}
\theoremstyle{remark}
\newtheorem*{remark}{Remark}
\theoremstyle{definition}
\newtheorem{definition}[theorem]{Definition}
\numberwithin{equation}{section}
\newcommand{\ubar}[1]{\underaccent{\bar}{#1}}
\newcommand{\et}{\quad\mbox{and}\quad}
\newcommand{\bA}{\mathbb{A}}
\newcommand{\bC}{\mathbb{C}}
\newcommand{\bQ}{\mathbb{Q}}
\newcommand{\bR}{\mathbb{R}}
\newcommand{\bZ}{\mathbb{Z}}
\newcommand{\cA}{{\mathcal{A}}}
\newcommand{\cC}{{\mathcal{C}}}
\newcommand{\cI}{{\mathcal{I}}}
\newcommand{\cS}{{\mathcal{S}}}
\newcommand{\disp}{\displaystyle}
\newcommand{\dist}{\mathrm{dist}}
\newcommand{\homega}{\widehat{\omega}}
\newcommand{\image}{\mathrm{Im}}
\newcommand{\norm}[1]{\|\hspace*{2pt}#1\hspace*{1pt}\|}
\newcommand{\pv}{\varv}
\newcommand{\pw}{\varw}
\newcommand{\chibot}{{\ubar{\chi}}}
\newcommand{\chitop}{{\bar{\chi}}}
\newcommand{\psibot}{{\ubar{\psi}}}
\newcommand{\psitop}{{\bar{\psi}}}
\newcommand{\tbigwedge}{\normalsize{\bigwedge}}
\newcommand{\tA}{\widetilde{A}}
\newcommand{\talpha}{\widetilde{\alpha}}
\newcommand{\tbeta}{\widetilde{\beta}}
\newcommand{\tchi}{\widetilde{\chi}}
\newcommand{\tcC}{\widetilde{\cC}}
\newcommand{\tL}{\widetilde{L}}
\newcommand{\tP}{\widetilde{P}}
\newcommand{\ts}{\tilde{s}}
\newcommand{\tS}{\widetilde{S}}
\newcommand{\trho}{\widetilde{\rho}\,}
\newcommand{\tuL}{\widetilde{\uL}}
\newcommand{\tua}{{\widetilde\ua}}
\newcommand{\tuP}{{\widetilde\uP}}
\newcommand{\ua}{\mathbf{a}}
\newcommand{\ub}{\mathbf{b}}
\newcommand{\ue}{\mathbf{e}}
\newcommand{\uE}{\mathbf{E}}
\newcommand{\uL}{\mathbf{L}}
\newcommand{\uP}{{\mathbf{P}}}
\newcommand{\uR}{{\mathbf{R}}}
\newcommand{\uu}{\mathbf{u}}
\newcommand{\uv}{\mathbf{v}}
\newcommand{\ux}{\mathbf{x}}
\newcommand{\uX}{\mathbf{X}}
\newcommand{\uy}{\mathbf{y}}
\newcommand{\veq}{\mathrel{\rotatebox{90}{$=$}}}
\newcommand{\vpp}{\mathrel{\rotatebox{90}{$>$}}}
\newcommand{\dpe}{\mathrel{\rotatebox{45}{$\le$}}}
\newcommand{\vpe}{\mathrel{\rotatebox{90}{$\ge$}}}
\begin{document}

\baselineskip=15.3pt  

\title[Best versus uniform Diophantine approximation]
{Best versus uniform Diophantine approximation}
\author{Martin Rivard-Cooke}
\address{
   D\'epartement de Math\'ematiques\\
   Universit\'e d'Ottawa\\
   150 Louis-Pasteur\\
   Ottawa, Ontario K1N 6N5, Canada}
\email{MartinRivardCooke@hotmail.com}
\author{Damien Roy}
\address{
   D\'epartement de Math\'ematiques\\
   Universit\'e d'Ottawa\\
   150 Louis-Pasteur\\
   Ottawa, Ontario K1N 6N5, Canada}
\email{droy@uottawa.ca}
\subjclass[2020]{Primary 11J13; Secondary 11J82}
%

\begin{abstract}
Let $0<m<n$ be integers, and let $K_\pw$ denote the completion of a number 
field $K$ at a non-trivial place $\pw$.  For each non-zero $\uu\in K_\pw^n$,
let $\omega_{m-1}(\uu)$ denote the exponent of best approximation to $\uu$
by vector subspaces of $K_\pw^n$ of dimension $m$ defined over $K$, and let
$\homega_{m-1}(\uu)$ denote the corresponding exponent of uniform 
approximation.  Finally, let $\cS_{m,n}$ denote the set of all pairs 
$(\homega_{m-1}(\uu),\omega_{m-1}(\uu))$ where $\uu$ runs through all
points of $K_\pw^n$ with linearly independent coordinates over $K$.  In this 
paper we use parametric geometry of numbers to study this spectrum $\cS_{m,n}$,
noting at first that it is independent of the choice of $K$ and $\pw$.  We 
may thus assume that $K=\bQ$ and $K_\pw=\bR$.  In this context, Schmidt 
and Summerer proposed conjectural descriptions for $\cS_{1,n}$ and $\cS_{n-1,n}$
which were confirmed by Marnat and Moshchevitin for each $n\ge 2$.  We 
give an alternative proof of their result based on the PhD thesis of 
the first author, highlighting the duality between the two spectra.   
In his thesis, the first author generalized the conjecture to any pair 
$(m,n)$ and proved it to be true also for $\cS_{2,4}$.  We present this
as well, but show that this natural conjecture fails for $\cS_{3,5}$.  
Moreover, the part 
of $\cS_{3,5}$ that we succeed to compute here suggests a complicated 
boundary for that set, possibly not semialgebraic.   We also give a 
qualitative description of $\cS_{m,n}$ for a general pair $(m,n)$. 
\end{abstract}

\maketitle

\section{Introduction}
\label{sec:intro}

In his 1967 landmark paper \cite{Sc1967}, W.~M.~Schmidt fixes a number 
field $K\subset\bR$ (resp.~$K\subset\bC$) and, given integers $d,m,n$ with $0<d<n$
and $0<m<n$, he studies approximation to $d$-dimensional vector subspaces $U$ 
of $\bR^n$ (resp.~of $\bC^n$) by $m$-dimensional vector subspaces $V$ of 
$\bR^n$ (resp.~of $\bC^n$) defined over $K$, in terms of their height $H(V)$.  
In \cite{La2009b},  M.~Laurent revisited this theory for $K=\bQ\subset \bR$ and for 
subspaces $U$ of $\bR^n$ of dimension $d=1$.  Assuming that $U$ is 
not contained in a proper subspace of $\bR^n$ defined over $\bQ$,
which is no true restriction, he defined for each $m=1,\dots,n-1$, an exponent 
$\omega_{m-1}(U)$ of approximation to $U$ by subspaces $V$ of $\bR^n$ 
of dimension $m$ defined over $\bQ$.
As $U$ is spanned by a single vector, these can be viewed as projective invariants 
$\omega_0(\uu),\dots,\omega_{n-2}(\uu)$ attached to points $\uu$ of $\bR^n$ 
with linearly independent coordinates over $\bQ$.
Then, he proved inequalities relating $\omega_{m-1}(\uu)$ and $\omega_m(\uu)$
for each integer $m$ with $1\le m\le n-2$.   These follow from Schmidt's going-up and 
going-down theorems \cite[Theorems 9 and 10]{Sc1967}, but self-contained proofs 
are also given by Y.~Bugeaud and M.~Laurent in \cite[Sections 5 and 6]{BL2010}.
In \cite{R2016}, the second author used parametric geometry of numbers as 
developed in \cite{R2015} to give an alternative proof of these inequalities 
and showed that, together with Dirichlet's
lower bound $\omega_0(\uu)\ge 1/(n-1)$, they determine 
the spectrum of $(\omega_0,\dots,\omega_{n-2})$, namely the set of all 
values that this sequence takes at points $\uu$ of $\bR^n$ with $\bQ$-linearly 
independent coordinates.

The invariants $\omega_0,\dots,\omega_{n-2}$ are often referred to as \emph{ordinary} 
exponents of approximation or as exponents of \emph{best} approximation.  They have 
natural counterparts $\homega_0,\dots,\homega_{n-2}$ called exponents of \emph{uniform} 
approximation.  For $n=3$, Laurent determined in \cite{La2009a} the spectrum of the full
sequence of exponents $(\homega_0,\dots,\homega_{n-2},\omega_0,\dots,\omega_{n-2})$,
but in higher dimension the problem is open.  Combining \cite[Proposition 3.1]{R2016} with 
parametric geometry of numbers from \cite{R2015}, this Diophantine problem can be 
translated into a combinatorial problem about a class of functions called $n$-systems
(see Section \ref{ssec:pgn:tool} below), but this still presents a challenge.

In this paper, we are interested in the spectrum of the pair $(\homega_{m-1},\omega_{m-1})$ 
for each integer $m=1,\dots,n-1$.  For $m=1$ and for the dual case $m=n-1$, these pairs of 
exponents are classical objects introduced by Khintchine and Jarn\'{\i}k, and 
their spectra were recently determined by A.~Marnat and N.~Moshchevitin in \cite{MM2020}, 
culminating work in \cite{Ja1950, Ja1954, La2009a, Mo2012, SS2013b}. 
Their result already had impact, for example, in the study of uniform rational approximation to 
real points on quadratic hypersurfaces \cite{KM2019}.
To compute these spectra, Marnat and Moshchevitin proceed in two steps.  For the case $m=1$,
they first fix a point $\uu\in\bR^n$ with $\bQ$-linearly independent coordinates and use 
classical techniques of Diophantine approximation, based on sequences of minimal points 
in $\bZ^n$ attached to $\uu$, to derive a polynomial inequality between $\homega_0(\uu)$ 
and $\omega_0(\uu)$, thus showing that the spectrum of $(\homega_0,\omega_0)$ is 
contained in a certain semialgebraic subset of $[1/(n-1),\infty]\times[1/(n-1),\infty]$.  As a 
second step, they use parametric geometry of numbers and a generalization of the \emph{regular}
$n$-systems introduced by Schmidt and Summerer in \cite{SS2013b} to show that any point 
of this subset lies in the spectrum of $(\homega_0,\omega_0)$, namely that it is of the form 
$(\homega_0(\uu),\omega_0(\uu))$ for some $\uu\in\bR^n$ with $\bQ$-linearly independent 
coordinates.  For the pair of dual exponents $(\homega_{n-2},\omega_{n-2})$, they 
proceed in a similar way.  Note that, for $m=1$, 
a quantitative form of the inequality of Marnat and Moshchevitin, 
inspired by constructions of N.~A.~V. Nguyen in \cite[Chapter 3]{Ng2014}, is
established in \cite{NPR2020} in terms of measures of approximation to points of $\bR^n$ 
by rational lines in $\bR^n$.  The argument is again based on sequences of minimal points. 

In his PhD thesis \cite{Ri2019}, the first author, M.~Rivard-Cooke, gave a short 
algebraic argument based only on 
parametric geometry of numbers from \cite{R2015} to establish the inequalities of Marnat 
and Moschevitin, generalizing what Schmidt and Summerer had done in dimension $n=4$ 
in \cite{SS2013b} based on their original version of the theory in \cite{SS2013a}.   This thus 
provides a complete analysis of the spectra of $(\homega_0,\omega_0)$ and 
$(\homega_{n-2},\omega_{n-2})$ by means of parametric geometry of numbers.  
In Section~\ref{sec:MM}, we present his argument in a way that highlights the 
duality between the two pairs of exponents.

Rivard-Cooke also extended the family of $n$-systems built by Marnat and
Moshchevitin in \cite[Section 6]{MM2020} to produce, via parametric geometry of numbers,
a large set of points in the spectrum of $(\homega_{m-1},\omega_{m-1})$ 
for any given integer $m$ with $1\le m\le n-1$, and he conjectured that this set 
is the full spectrum.  For $m=1$ and $m=n-1$, this conjecture, which we recall
in section~\ref{sec:dr}, is due to Schmidt and Summerer (see the last 
sentence of \cite[Section~3]{SS2013b}), and is shown to be true by the  
work of Marnat and Moschevitin in \cite{MM2020}.  Rivard-Cooke proved his
conjecture in dimension $n=4$ for the remaining case $m=2$.  We give the proof
in section~\ref{sec:Ri}.  It is possible however that these are the only cases where 
the conjecture holds, since it happens to be false in the next simplest case
$n=5$ and $m=3$, namely for the spectrum of $(\homega_2,\omega_2)$ in
dimension $5$.  Sections \ref{sec:dim5} and \ref{sec:more} of the paper are devoted to 
that particular spectrum.  The part of it that we succeed in computing suggests a 
relatively complicated boundary containing arcs of several 
algebraic curves.   It seems plausible that this boundary is in fact a union of 
infinitely many such arcs, and thus that the spectrum itself is not a semialgebraic 
subset of $\bR^2$ (a finite union of subsets of $\bR^2$ each of which is defined 
by finitely many polynomial inequalities).  It would be interesting to settle this 
question, as the only known examples of non-semialgebraic spectra, those
constructed in \cite[Section 5]{RR2020}, are quite artificial.

In general, all of the above extends to the situation where $\bQ$ is replaced 
by a number field $K$ and $\bR$ is replaced by the completion of $K$ at a 
place $\pw$, using the theory developed by A.~Po\"{e}ls and D.~Roy 
in \cite{PR2023}.  For an Archimedean place, we recover the setting of Schmidt in 
his paper \cite{Sc1967} mentioned above.  We recall below the relevant 
definitions and results. 
 
%
%

\section{Definitions and main results}
\label{sec:dr}

As in \cite{PR2023}, let $K$ be a finite algebraic extension of $\bQ$ and let 
$d=[K:\bQ]$ be its degree.  For each (non-trivial) place $\pv$ of $K$, we denote by $|\ |_\pv$ the 
corresponding absolute value of $K$ extending the usual absolute value of $\bQ$ 
if $\pv$ is Archimedean or its usual $p$-adic absolute value if $\pv$ lies above 
a prime number $p$.   By continuity, it extends uniquely to an absolute value 
on the completion $K_\pv$ of $K$ at $\pv$, which we also denote by $|\ |_\pv$.     
We denote the local degree of $\pv$ by $d_\pv=[K_\pv:\bQ_\pv]$, where $\bQ_\pv$ 
stands for the topological closure of $\bQ$ in $K_\pv$.  We write $\pv\mid\infty$ if $\pv$ is 
Archimedean, and $\pv\nmid\infty$ otherwise.  In the first case, $\bQ_\pv$ naturally 
identifies with $\bR$, and $K_\pv$ identifies with $\bR$ or $\bC$.


\subsection{Local norms}
Let $m$, $n$ be integers with $1\le m\le n$ and let $(\ue_1,\dots,\ue_n)$ denote
the canonical basis of $K^n$.  We set $N=\binom{n}{m}$ and denote by 
$(\uE_1,\dots,\uE_N)$ the basis of $\tbigwedge^mK^n$ over $K$ formed by the $N$
products $\ue_{i_1}\wedge\cdots\wedge\ue_{i_m}$ with $1\le i_1<\cdots<i_m\le n$, 
in some order.    Then, for any place $\pv$ of $K$, we define the norm of a point 
$\uX=\sum_{j=1}^N X_j\uE_j$ in $\tbigwedge^mK_\pv^n$ to be 
\[
 \|\uX\|_\pv 
  =\begin{cases} 
    (|X_1|_v^2+\cdots+|X_N|_v^2)^{1/2} &\text{if $\pv\mid\infty$,} \\[2pt]
    \max\{|X_1|_v,\dots,|X_N|_v\} &\text{otherwise.}
    \end{cases}
\]
When $m=1$, we have  $\tbigwedge^mK_\pv^n=K_\pv^n$, and the norm of 
a point $\ux=(x_1,\dots,x_n)$ in $K_\pv^n$ is simply its usual Euclidean norm 
if $\pv\mid\infty$, and its maximum norm otherwise.  


\subsection{Projective distance}
We fix a place $\pw$ of $K$, an integer $n\ge 1$, and a non-zero
point $\uu$ of $K_\pw^n$.   As in \cite[Section~3]{PR2023}, we define the 
\emph{projective distance} between $\uu$ and a non-zero point $\ux$ of 
$K_\pw^n$ to be
\[
 \dist(\uu,\ux) = \frac{\norm{\uu\wedge\ux}_\pw}{\norm{\uu}_\pw\norm{\ux}_\pw}.
\]
It depends only on the lines spanned by $\uu$ and $\ux$ in $K_\pw^n$.  When 
$\pw$ is Archimedean, it represents the sine of the angle between $\uu$ and $\ux$ 
for the Euclidean structure of $K_\pw^n$ attached to the norm $\norm{\ }_\pw$.
We also define the \emph{projective distance} between $\uu$ and a subspace $V$ 
of $K_\pw^n$ of dimension $m$ to be the minimal distance from $\uu$ to a non-zero 
point  of $V$.  By \cite[Lemma~3.6]{PR2023}, this is given algebraically by
\[
 \dist(\uu,V) 
  = \frac{\norm{\uu\wedge\uX}_\pw}{\norm{\uu}_\pw\norm{\uX}_\pw},
\]
where $X$ is any generator of the one-dimensional subspace $\tbigwedge^m V$ of
$\tbigwedge^mK_\pw^n$, namely where $\uX=\ux_1\wedge\cdots\wedge\ux_m$ for 
a basis $(\ux_1,\dots,\ux_m)$ of $V$ over $K_\pw$.


\subsection{Exponents of approximation}
Let $n > m \ge 1$ be integers, let $\pw$ be a place of $K$ and let 
$\uu$ be a non-zero element of $K_\pw^n$.   A subspace $V$ of $K_\pw^n$ of 
dimension $m$ is \emph{defined over $K$} if it admits a basis $(\ux_1,\dots,\ux_m)$
made of elements of $K^n$.  Then, $X=\ux_1\wedge\cdots\wedge\ux_m$
belongs to $\tbigwedge^mK^n$ and, as Schmidt in \cite{Sc1967}, we define the 
\emph{height} of $V$ by
\[
 H(V)=\prod_\pv \norm{\uX}_\pv^{d_\pv/d},
\]
where the product runs over all places $\pv$ of $K$.  This is independent of the choice 
of the basis, and so is the product
\[
 D^*_\uu(V) := H(V)\dist(\uu,V)^{d_\pw/d} 
   = \Big(\frac{\norm{\uu\wedge\uX}_\pw}{\norm{\uu}_\pw}\Big)^{d_\pw/d}
      \prod_{\pv\neq\pw} \norm{\uX}_\pv^{d_\pv/d},
\]
which, in contrast to $\dist(\uu,V)$, does not require division by $\norm{X}_\pw$.   

\begin{definition}
\label{dr:def:omega}
We denote by $\homega_{m-1}(\uu)$ (resp.~$\omega_{m-1}(\uu)$) the supremum of all
$\omega\ge 0$ such that, for each sufficiently large value of $Q\ge 1$ 
(resp.~ for arbitrarily large values of $Q\ge 1$), there is a subspace $V$ of $K_\pw^n$
of dimension $m$ defined over $K$ with
\[
 H(V)\le Q \et D^*_\uu(V)\le Q^{-\omega}.
\]  
\end{definition}

When $K=\bQ$ and $\bQ_\pw=\bR$, this agrees with \cite[Definition~2.1]{R2016} and
\cite[Definition~3]{BL2010}.  In that case, the formulas for $H(V)$ and $D^*_\uu(V)$ 
simplify to $H(V)=\norm{\uX}$ and $D^*_\uu(V)  = \norm{\uu\wedge\uX}/\norm{\uu}$
with the usual Euclidean norms, by choosing $\uX=\ux_1\wedge\cdots\wedge\ux_m$ 
for a basis $(\ux_1,\dots,\ux_m)$ of the $\bZ$-module $V\cap \bZ^n$.

In \cite[Definition 10.5]{PR2023}, one finds an alternative description of 
$\homega_{m-1}(\uu)$ and $\omega_{m-1}(\uu)$, inline with 
\cite[Section 4, Proposition]{BL2010}.  The proof of \cite[Proposition 10.6]{PR2023} 
shows the equivalence of the two definitions based on an adelic version of
Mahler's theory of compound bodies. 

\begin{definition}
\label{dr:def:spectrum}
The \emph{spectrum} of a subsequence $(\theta_1,\dots,\theta_\ell)$ of
$(\homega_0,\dots,\homega_{n-2}, \omega_0, \dots, \omega_{n-2})$ relative 
to the pair $K\subset K_\pw$ is the subset of\/ $[0,\infty]^\ell$ made of the points
\[ 
  (\theta_1(\uu),\dots,\theta_\ell(\uu))
\]
for all $\uu\in K_\pw^n$ with linearly independent coordinates over $K$.
\end{definition}

The next result is a direct consequence of \cite[Proposition 10.6]{PR2023}.  We 
explain this in Section \ref{ssec:pgn:tool}, in stronger form, using tools from
parametric geometry of numbers.

\begin{proposition}
\label{dr:prop}
The \emph{spectrum} of a subsequence $(\theta_1,\dots,\theta_\ell)$ of
$(\homega_0,\dots,\homega_{n-2}, \omega_0, \dots, \omega_{n-2})$ relative 
to the pair $K\subset K_\pw$ is the same as its spectrum relative to the pair
$\bQ\subset \bR$.
\end{proposition}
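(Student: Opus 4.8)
The plan is to deduce Proposition~\ref{dr:prop} from \cite[Proposition 10.6]{PR2023}, which recasts the exponents $\homega_{m-1}(\uu)$ and $\omega_{m-1}(\uu)$ in terms of the successive minima of a one-parameter family of convex bodies. First I would recall that proposition in the form we need: attached to the point $\uu\in K_\pw^n$ there is a family of convex bodies (or, adelically, a family of lattices over the ring of adeles of $K$) depending on a real parameter $Q\ge 1$, and for each $m$ the two exponents are expressed through the asymptotic behaviour, as $Q\to\infty$, of the $m$-th exterior power of this family — equivalently through the successive minima $\lambda_1(Q)\le\cdots\le\lambda_N(Q)$ of the compound convex body in $\tbigwedge^mK_\pw^n$, with $N=\binom nm$. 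The key point is that, in the formulation of \cite{PR2023}, the data that enters the definition of these successive minima consists only of: the dimension $n$, the place $\pw$ together with its local degree $d_\pw/d$, the height function $H$ on $K$-rational subspaces, and the projective distance at $\pw$. So the whole question reduces to showing that, after a suitable normalisation of the parameter $Q$, the relevant asymptotics are governed by a function that does not see $K$ or $\pw$ beyond what already appears for $\bQ\subset\bR$.

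The main step is therefore a reduction-to-$\bQ$ argument for the one-parameter family. I would proceed as follows. By \cite[Proposition 10.6]{PR2023} (together with the parametric geometry of numbers of \cite{R2015} in the Archimedean case and of \cite{PR2023} in general), the pair $(\homega_{m-1}(\uu),\omega_{m-1}(\uu))$ is determined by the \emph{combined graph} of the functions $q\mapsto \log\lambda_i(e^q)$, $i=1,\dots,n$, up to the bounded-difference equivalence that \cite[Proposition 10.6]{PR2023} establishes between these functions and an $n$-system in the sense of Section~\ref{ssec:pgn:tool}. Two facts are needed. First, for any choice of $K$ and $\pw$, the function $q\mapsto(\log\lambda_1(e^q),\dots,\log\lambda_n(e^q))$ lies, up to an additive bounded error, within the class of $n$-systems; this is exactly the content of \cite[Proposition 10.6]{PR2023}. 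Second, and conversely, every $n$-system arises, up to bounded difference, from such a family for the fixed pair $\bQ\subset\bR$; this is the realisation half of the parametric geometry of numbers over $\bQ$, e.g.\ from \cite{R2015} combined with \cite{R2016}. Putting these two facts together, the set of pairs $(\homega_{m-1}(\uu),\omega_{m-1}(\uu))$ attained for a given $K\subset K_\pw$ is contained in the set of such pairs read off from $n$-systems, and the latter set coincides with the spectrum relative to $\bQ\subset\bR$; the reverse inclusion is trivial since $\bQ\subset\bR$ is itself a permissible choice of $K\subset K_\pw$. Since the exponents $\theta_1,\dots,\theta_\ell$ in the statement are extracted from the same combined graph, the same argument applies verbatim to any subsequence of $(\homega_0,\dots,\homega_{n-2},\omega_0,\dots,\omega_{n-2})$, which gives Proposition~\ref{dr:prop}.

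The step I expect to be the main obstacle is the bookkeeping that identifies \emph{which} normalisation of the parameter $Q$ makes the local degree $d_\pw/d$ disappear from the asymptotics. In Definition~\ref{dr:def:omega} the height $H(V)$ is the global height (a $d$-th root of a product over all places) while the distance appears through $D^*_\uu(V)=H(V)\dist(\uu,V)^{d_\pw/d}$, so a priori the exponents seem to depend on $d_\pw/d$; the resolution is that in the adelic family of \cite{PR2023} the parameter is scaled by precisely the factor that cancels this, so that the compound body's successive minima — and hence the two exponents — satisfy the \emph{same} normalisations (Minkowski's second theorem with the standard product of minima $\asymp 1$, Mahler duality, etc.) as in the classical $\bQ$ case. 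I would therefore devote the bulk of the proof to carefully invoking \cite[Proposition 10.6]{PR2023} with the correct scaling and checking that, with that scaling, the resulting $n$-systems satisfy exactly the axioms recalled in Section~\ref{ssec:pgn:tool}, with no residual dependence on $K$ or $\pw$. Once that is in place, the equality of spectra is immediate from the two inclusions above, and the extension from a single pair $(\homega_{m-1},\omega_{m-1})$ to an arbitrary subsequence costs nothing.
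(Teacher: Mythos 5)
Your route is essentially the paper's: reduce the spectrum over $K\subset K_\pw$ to the field-independent description in terms of $n$-systems coming from parametric geometry of numbers, and note that a subsequence is just a coordinate projection of the same data. However, there is a genuine gap where you declare the reverse inclusion ``trivial since $\bQ\subset\bR$ is itself a permissible choice of $K\subset K_\pw$''. Your two facts only yield one inclusion: fact 1 (the direct half over $K$, $\pw$) places the exponent data of any $\uu\in K_\pw^n$ inside the set of values computed from proper $n$-systems, and fact 2 (the realisation half over $\bQ$, from \cite{R2015}) shows that this latter set is contained in the spectrum relative to $\bQ\subset\bR$; together these give $\cS(K\subset K_\pw)\subseteq\cS(\bQ\subset\bR)$. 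The opposite inclusion $\cS(\bQ\subset\bR)\subseteq\cS(K\subset K_\pw)$ is not trivial and does not follow from $\bQ\subset\bR$ being an admissible pair: for it you need the realisation half of the correspondence over the \emph{general} pair, namely that every proper $n$-system arises, up to bounded difference, from some $\uu\in K_\pw^n$ with $K$-linearly independent coordinates (equivalently, that every value point read off an $n$-system is actually attained over $K\subset K_\pw$). That statement is exactly what \cite{PR2023} supplies for arbitrary $K$ and $\pw$, and it is how the paper argues: in Proposition~\ref{pgn:prop:allexp} the set of exponent points over $K\subset K_\pw$ is identified, with both inclusions at once via \cite[Proposition 10.6]{PR2023} and the results of \cite{R2017}, with the set of points $\big(\chibot_{n-1}(\uP),\dots,\chibot_1(\uP),\chitop_{n-1}(\uP),\dots,\chitop_1(\uP)\big)$ over proper $n$-systems, a set defined without any reference to $K$ or $\pw$; Proposition~\ref{dr:prop} then follows by projecting onto the coordinates of the chosen subsequence.

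Your argument becomes correct once you also invoke the realisation direction over $K\subset K_\pw$ rather than only over $\bQ$. Two minor points: the bounded-difference approximation of the successive-minima functions by $n$-systems is the content of Theorem A of \cite{PR2023} (the analogue of \cite{R2015}), while Proposition 10.6 there is the statement about the exponents themselves; and the normalisation issue involving $d_\pw/d$ that you expect to be the main obstacle is already absorbed in the framework of \cite{PR2023}, so it is not where the real work lies.
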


%
%

\subsection{Main results}  Let $n\ge 2$ be an integer, let $K$ be a number field, 
and let $\pw$ be a place of $K$.  The main result of \cite{R2016} is the determination 
of the spectrum of $(\omega_0,\dots,\omega_{n-2})$ relative to the standard extension
$\bQ \subset \bR$ (see \cite[Theorems 2.2 and 2.3]{R2016}).   By Proposition~\ref{dr:prop}, 
it has the same spectrum relative to the extension $K\subset K_\pw$.  This yields the
following statement.
 
\begin{theorem}
\label{dr:thm:omega}
The spectrum of $(\omega_0,\dots,\omega_{n-2})$ relative to the pair
$K\subset K_\pw$ consists of all points $(\beta_0,\dots,\beta_{n-2})\in[0,\infty]^{n-1}$
satisfying $\beta_0\ge 1/(n-1)$ and 
\[
 \frac{j\beta_j}{\beta_j+j+1} \le \beta_{j-1} \le \frac{(n-1-j)\beta_j-1}{n-j}
\]
for each index $j$ with $1\le j\le n-2$, with the understanding that $j\beta_j/(\beta_j+j+1)$
is equal to $j$ when $\beta_j=\infty$.
\end{theorem}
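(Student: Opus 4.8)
The plan is to deduce Theorem~\ref{dr:thm:omega} directly from Proposition~\ref{dr:prop} together with the already-established description of the spectrum over the standard extension $\bQ\subset\bR$. By Proposition~\ref{dr:prop}, the spectrum of $(\omega_0,\dots,\omega_{n-2})$ relative to the pair $K\subset K_\pw$ coincides with the spectrum of the same subsequence relative to $\bQ\subset\bR$. So it suffices to quote \cite[Theorems 2.2 and 2.3]{R2016}, where the second author determined the latter spectrum to be exactly the set of $(\beta_0,\dots,\beta_{n-2})\in[0,\infty]^{n-1}$ satisfying $\beta_0\ge 1/(n-1)$ and the displayed chain of inequalities for $1\le j\le n-2$. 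The only thing to check is that the description in \cite{R2016} is stated (or immediately rephrased) in exactly this form, including the convention about $j\beta_j/(\beta_j+j+1)$ at $\beta_j=\infty$.

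Concretely, I would first record that the exponents $\omega_{m-1}$ in Definition~\ref{dr:def:omega} reduce, when $K=\bQ$ and $\bQ_\pw=\bR$, to the exponents of \cite[Definition~2.1]{R2016}; this is already noted in the text right after Definition~\ref{dr:def:omega}, using the simplification $H(V)=\norm{\uX}$ and $D^*_\uu(V)=\norm{\uu\wedge\uX}/\norm{\uu}$ with $\uX$ a generator of $\tbigwedge^m(V\cap\bZ^n)$. Hence the spectrum of $(\omega_0,\dots,\omega_{n-2})$ relative to $\bQ\subset\bR$ in the sense of Definition~\ref{dr:def:spectrum} is literally the object computed in \cite{R2016}. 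Second, I would invoke Proposition~\ref{dr:prop} to transfer this equality of sets to the extension $K\subset K_\pw$. Third, I would cite the main theorems of \cite{R2016} for the explicit semialgebraic description.

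The substantive content here is entirely borrowed, so the ``main obstacle'' is not a mathematical difficulty but a bookkeeping one: making sure the inequalities as written match those of \cite{R2016} after any reindexing (the paper \cite{R2016} may phrase the constraints in terms of consecutive ratios or in a dual form), and confirming the boundary conventions at $\infty$ and at the Dirichlet value $1/(n-1)$. I would therefore keep the proof short: state that the claim is immediate from Proposition~\ref{dr:prop} and \cite[Theorems 2.2 and 2.3]{R2016}, and add one sentence reconciling the form of the inequalities, in particular noting that the upper bound $\beta_{j-1}\le((n-1-j)\beta_j-1)/(n-j)$ is the one forced by Dirichlet/Minkowski (equivalently, by the going-down mechanism) and the lower bound $\beta_{j-1}\ge j\beta_j/(\beta_j+j+1)$ is the one coming from the going-up mechanism, exactly as in Laurent's inequalities recalled in the introduction. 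No parametric geometry of numbers beyond Proposition~\ref{dr:prop} is needed at this point, since the hard analysis was done in \cite{R2016}; the present theorem is just its restatement over an arbitrary pair $K\subset K_\pw$.
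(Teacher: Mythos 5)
Your proposal matches the paper's own treatment exactly: Theorem~\ref{dr:thm:omega} is stated there as an immediate consequence of Proposition~\ref{dr:prop} combined with the determination of the spectrum over $\bQ\subset\bR$ in \cite[Theorems 2.2 and 2.3]{R2016}, with the identification of the exponents for $K=\bQ$, $K_\pw=\bR$ already noted after Definition~\ref{dr:def:omega}. Nothing further is needed.
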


Similarly, Marnat and Moshchevitin determined the spectra of the pairs
$(\homega_0,\omega_0)$ and $(\homega_{n-2},\omega_{n-2})$ relative to
the embedding $\bQ\subset \bR$, in \cite[Theorem 1]{MM2020}.  In combination 
with Proposition~\ref{dr:prop}, we deduce the following.

\begin{theorem}
\label{dr:thm:MM}
{\rm (i)} The spectrum of\/ $(\homega_0,\omega_0)$ relative to 
$K\subset K_\pw$ consists of the points $(\alpha,\infty)$ with $\alpha\in[1/(n-1),\, 1]$
and of all $(\alpha,\beta)\in(0,\infty)^{2}$ with $\alpha\le \beta$ and
\begin{equation}
\label{dr:thm:MM:eq1}
 n-1 \ge \frac{1}{\alpha} \ge \sum_{i=0}^{n-2}\Big(\frac{\alpha}{\beta}\Big)^i.
\end{equation}
{\rm (ii)} The spectrum of\/ $(\homega_{n-2},\omega_{n-2})$ relative to 
$K\subset K_\pw$ consists  of the points $(\alpha,\infty)$ with $\alpha\in[n-1,\infty]$
and of all $(\alpha,\beta)\in(0,\infty)^{2}$  with $\alpha\le \beta$ and  
\begin{equation}
\label{dr:thm:MM:eq2}
 n-1 \le \alpha \le \sum_{i=0}^{n-2}\Big(\frac{\beta}{\alpha}\Big)^i.
\end{equation}
\end{theorem}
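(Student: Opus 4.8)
The plan is to deduce Theorem~\ref{dr:thm:MM} from the corresponding result of Marnat and Moshchevitin over $\bQ\subset\bR$ together with the transfer principle of Proposition~\ref{dr:prop}. Concretely, \cite[Theorem~1]{MM2020} asserts precisely that, for a point $\uu\in\bR^n$ with $\bQ$-linearly independent coordinates, the pair $(\homega_0(\uu),\omega_0(\uu))$ ranges exactly over the set described in part~(i) (with the analogous statement for $(\homega_{n-2},\omega_{n-2})$ giving part~(ii)). So the first step is simply to record that the spectrum of $(\homega_0,\omega_0)$, and that of $(\homega_{n-2},\omega_{n-2})$, relative to $\bQ\subset\bR$ is the set stated in the theorem. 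The second step is to invoke Proposition~\ref{dr:prop}, applied with $\ell=2$ and the subsequence $(\theta_1,\theta_2)=(\homega_0,\omega_0)$ (resp.\ $(\homega_{n-2},\omega_{n-2})$), to conclude that the spectrum relative to an arbitrary $K\subset K_\pw$ coincides with the one relative to $\bQ\subset\bR$. This is essentially a one-line argument once the ingredients are in place.

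There is, however, a genuine subtlety worth spelling out, and I expect it to be the main point requiring care: the statement in \cite{MM2020} is phrased in terms of Khintchine's and Jarn\'ik's exponents $\lambda$ (or $\omega$, $\homega$) of simultaneous and dual rational approximation to a real vector, whereas here $\homega_{m-1}$ and $\omega_{m-1}$ are defined via approximation by rational subspaces in the sense of Definition~\ref{dr:def:omega}. For $m=1$ one has $\omega_0=\lambda$ in the usual normalisation, and for $m=n-1$ the exponent $\omega_{n-2}$ is the dual exponent; the identification of the two formalisms in the case $K=\bQ$, $K_\pw=\bR$ is exactly what is recorded in the paragraph following Definition~\ref{dr:def:omega} (where $H(V)=\norm{\uX}$ and $D^*_\uu(V)=\norm{\uu\wedge\uX}/\norm{\uu}$ with $\uX$ attached to a basis of $V\cap\bZ^n$), together with the equivalence of Definition~\ref{dr:def:omega} with \cite[Definition~10.5]{PR2023}. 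So I would insert a short remark verifying that, under these identifications, the inequalities \eqref{dr:thm:MM:eq1} and \eqref{dr:thm:MM:eq2} are literally the inequalities of \cite[Theorem~1]{MM2020}, paying attention to the normalisation of the exponents and to the boundary cases $\beta=\infty$ (the rays $(\alpha,\infty)$ with $\alpha\in[1/(n-1),1]$ in part~(i), and $\alpha\in[n-1,\infty]$ in part~(ii)), which correspond to $\uu$ lying on a rational line respectively to $\uu$ being, up to projective equivalence, a Liouville-type point.

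Finally, I would note that parts~(i) and~(ii) are exchanged by the duality $V\mapsto V^\perp$ on subspaces together with the duality relating $\omega_{m-1}$ and $\omega_{n-1-m}$, so that once part~(i) is established part~(ii) follows by applying it to a dual point; alternatively, both parts follow independently from the two halves of \cite[Theorem~1]{MM2020}. Since the heavy lifting over $\bQ$ is already done in \cite{MM2020} and the descent to $\bR$ in \cite[Proposition~10.6]{PR2023}, the proof here is purely a matter of bookkeeping: match the two sets of inequalities, handle the infinite boundary values, and cite Proposition~\ref{dr:prop}. The only thing that could go wrong is a mismatch in conventions, so the ``hard part'' is really just the careful dictionary between the subspace exponents of Definition~\ref{dr:def:omega} and the classical exponents of \cite{MM2020} in the cases $m=1$ and $m=n-1$.
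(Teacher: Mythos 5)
Your deduction is correct, and indeed the paper itself records it in the sentence preceding the statement (``In combination with Proposition~\ref{dr:prop}, we deduce the following''): part~(i) and part~(ii) over $\bQ\subset\bR$ are exactly \cite[Theorem~1]{MM2020}, and Proposition~\ref{dr:prop} transfers the spectrum unchanged to any $K\subset K_\pw$. But this is not the proof the paper actually gives. Its proof of record, in Section~\ref{sec:MM}, is deliberately independent of \cite{MM2020}: the inclusion of the region described by \eqref{dr:thm:MM:eq1}--\eqref{dr:thm:MM:eq2} into the spectrum comes from the explicit $n$-system constructions of Proposition~\ref{dr:prop:Ri} (via Theorem~\ref{cons:thm}), and the reverse inclusion is proved by a combinatorial analysis of $m$-division numbers (Lemma~\ref{MM:lemma}) leading to Proposition~\ref{MM:prop:n-sys} for part~(ii) and, via the opposite (backwards) $n$-systems and Corollary~\ref{pgn:duality:cor}, to Proposition~\ref{MM:prop:bn-sys} for part~(i). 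What your route buys is brevity, at the cost of importing the classical minimal-point arguments and constructions of Marnat--Moshchevitin as a black box; what the paper's route buys is a self-contained treatment inside parametric geometry of numbers that exhibits the duality between the two parts explicitly --- which is precisely the advertised contribution of the paper, so your argument, while valid, reproves nothing of that.

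Two small points of bookkeeping in your write-up. First, your gloss on the rays $(\alpha,\infty)$ is slightly off: points $\uu$ with $K$-linearly dependent coordinates are excluded from the spectrum by Definition~\ref{dr:def:spectrum}, so these rays do not come from $\uu$ on a rational line but from Liouville-type points with $\omega=\infty$ and uniform exponent $\alpha$ in the stated interval; in the paper they are obtained as limit points, since $\cS_{1,n}$ and $\cS_{n-1,n}$ are closed and the finite part of the spectrum is dense in them. Second, the suggested shortcut of deriving part~(ii) from part~(i) by a duality $V\mapsto V^\perp$ at the level of individual points $\uu$ is not something the paper establishes (its duality operates on $n$-systems, not on points), so if you use it you must either justify it or, as you also suggest, simply quote both halves of \cite[Theorem~1]{MM2020} separately.
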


Note that the necessary condition $\alpha\le \beta$ follows directly from 
\eqref{dr:thm:MM:eq1} or from \eqref{dr:thm:MM:eq2} when $n\ge 3$.  
However, it is needed for $n=2$ because, in that case, \eqref{dr:thm:MM:eq1} 
and \eqref{dr:thm:MM:eq2} reduce to $\alpha=1$, and the 
spectrum of $(\homega_0,\omega_0)$ is $\{1\}\times[1,\infty]$.

We will give an alternative proof of Theorem \ref{dr:thm:MM} based only on parametric geometry 
of numbers following the approach of the first author in \cite[Chapter 2]{Ri2019}.
As Schmidt and Summerer note in \cite[Page 85]{SS2013b} for the case $n=4$, the inequalities 
\eqref{dr:thm:MM:eq1} and \eqref{dr:thm:MM:eq2} can be obtained from each other by replacing
$\alpha$ with $1/\alpha$, $\beta$ with $1/\beta$ and by reversing the inequalities.
They view that as a manifestation of the duality between the two pairs of exponents.
We managed to reflect this duality in our arguments as well.

Working over the extension $\bQ\subset\bR$, the first author exhibited a large 
subset of the spectrum of a general pair in \cite[Theorem 2.1.15]{Ri2019}.  With 
the convention that $\infty^0=1$, it can be described as follows.

\begin{proposition}
\label{dr:prop:Ri}
Let $m$ be an integer with $1\le m\le n-1$.  The spectrum $\cS_{m,n}$
of the pair $(\homega_{m-1},\omega_{m-1})$ relative to $K\subset K_\pw$
contains the points $(\alpha,\infty)$ with $\alpha\in[m/(n-m),\infty^{m-1}]$
and the points $(\alpha,\beta)\in (0,\infty)^2$ with $\alpha\le \beta$,
\[
 \frac{m}{n-m}\le \alpha 
 \et
 \alpha\sum_{i=0}^{n-m-1}\Big(\frac{\alpha}{\beta}\Big)^i 
              \le \sum_{i=0}^{m-1}\Big(\frac{\beta}{\alpha}\Big)^i.
\]
Equivalently, it contains the pairs $(\alpha,\beta)\in [0,\infty]^2$ which, for some 
$g\in[1,\infty]$, satisfy 
\[
 \alpha = \Big(\sum_{i=0}^{m-1}g^i\Big)/\Big(\sum_{i=0}^{n-m-1}1/g^i\Big) 
 \et 
 \beta\ge g\alpha.
\]
\end{proposition}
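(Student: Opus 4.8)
The plan is to prove Proposition~\ref{dr:prop:Ri} by constructing, for each admissible pair $(\alpha,\beta)$, an explicit $n$-system whose associated exponents of best and uniform approximation in dimension $m-1$ equal $\omega_{m-1}=\beta$ and $\homega_{m-1}=\alpha$; by Proposition~\ref{dr:prop} and the parametric geometry of numbers (the translation recalled in Section~\ref{ssec:pgn:tool}), producing such an $n$-system suffices to place $(\alpha,\beta)$ in the spectrum $\cS_{m,n}$.  First I would reconcile the two stated parametrizations of the claimed region: given $g\in[1,\infty]$, set $\alpha=(\sum_{i=0}^{m-1}g^i)/(\sum_{i=0}^{n-m-1}g^{-i})$ and check that the condition $\beta\ge g\alpha$ is equivalent to the displayed inequality $\alpha\sum_{i=0}^{n-m-1}(\alpha/\beta)^i\le\sum_{i=0}^{m-1}(\beta/\alpha)^i$ together with $\alpha\le\beta$ and $m/(n-m)\le\alpha$; the case $g=1$ gives the minimal value $\alpha=m/(n-m)$ (the Dirichlet-type bound), and letting $g\to\infty$ with $\beta=\infty$ recovers the half-line $(\alpha,\infty)$ with $\alpha\le\infty^{m-1}$.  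This reduction is elementary algebra with geometric series but must be done carefully to handle the boundary values $g=1$, $g=\infty$ and $\beta=\infty$ consistently with the stated conventions $\infty^0=1$.

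The core of the argument is the construction of the $n$-system.  I would build a self-similar (eventually periodic on a logarithmic scale) system: on each period, the combined graph of the successive minima functions consists of $n$ line segments, $m$ of them with a certain pattern of slopes governed by $g$ and $n-m$ of them with a complementary pattern, arranged so that at the start of each period all $n$ functions meet.  The ratio by which the parameter is scaled from one period to the next is chosen to be $g^{?}$ (determined by the requirement that the system close up), and the two ``critical'' functions to track are the $m$-th and $(m{+}1)$-st combined successive minima, whose behaviour encodes approximation to $m$-dimensional subspaces.  From the rate at which the $m$-th function returns to zero one reads off $\homega_{m-1}=\alpha$, and from the slope of descent one reads off $\omega_{m-1}=\beta$; when $\beta>g\alpha$ strictly, one interpolates by inserting, within each period, an extra descending segment of adjustable length (the parameter $g$ fixes $\homega_{m-1}$, while a second free parameter stretches the descent to push $\omega_{m-1}$ up to any value $\ge g\alpha$, including $\infty$).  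This is the same mechanism by which Marnat and Moshchevitin handled $m=1$ and $m=n-1$ in \cite[Section~6]{MM2020}; the generalization to arbitrary $m$ is exactly the content of \cite[Theorem~2.1.15]{Ri2019}.

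The steps, in order, are: (1) set up the combinatorial framework of $n$-systems and the formulas expressing $\homega_{m-1}$ and $\omega_{m-1}$ of a system in terms of the first returns to zero and the descent rates of its $m$-th combined minimum (this is standard from \cite{R2015} and Section~\ref{ssec:pgn:tool}); (2) for a parameter $g\in[1,\infty)$, exhibit the periodic template on one period, specifying all $n$ slopes and the $n$ breakpoints, and verify the defining inequalities of an $n$-system (adjacent functions interleave correctly, slopes lie in the allowed range, the sum condition holds); (3) compute the period length and the resulting $\alpha$, checking it matches the claimed formula; (4) introduce the second parameter to prolong the descent and obtain every $\beta\in[g\alpha,\infty]$, including the construction realizing $(\alpha,\infty)$; (5) let $g\to\infty$ to cover the upper end of the half-line.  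The main obstacle I expect is step~(2): writing down a template that is simultaneously a \emph{valid} $n$-system for \emph{all} $g\ge 1$ — the inequalities constraining an $n$-system are rigid, and the natural first guess for the slopes typically violates one of them near $g=1$ or for $m$ close to $n/2$, so the template has to be engineered (possibly with a non-obvious ordering of the ascending and descending segments within the period) to stay inside the feasible region throughout the range.  A secondary subtlety is bookkeeping the boundary conventions ($\infty^0=1$, behaviour at $g=1$ and $g=\infty$) so that the endpoints of the parametrized family are actually attained rather than merely approached.
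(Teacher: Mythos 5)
Your plan is essentially the paper's proof: the paper also realizes each point by a self-similar $n$-system built from a one-period template governed by $g$, with the period-scaling factor $\rho\ge g$ supplying $\beta=\rho\alpha$, and closedness of the spectrum handling the boundary cases $g=1$, $g=\infty$ and $\beta=\infty$. The only difference in execution is that the paper invokes Theorem~\ref{cons:thm} (built on Corollary~\ref{type:cor1}), so it need only prescribe the values $\ua^{(1)}=(1,g,\dots,g^{m-1},g^{m-1},\dots,g^{n-2})$ and $\ua^{(2)}=\rho\ua^{(1)}$ at the $m$-division numbers and check two simple inequalities — which disposes of the template-validity worry you raise in step (2) — before passing from $\cS_{n-m,n}$ to $\cS_{m,n}$ by swapping $m\leftrightarrow n-m$.
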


The equivalence between the two formulations is a particular case of the following
observation.  If $\varphi\colon(0,\infty)\to(0,\infty)$ is a continuous, increasing
and unbounded map, then a point $(\alpha,\beta)\in(0,\infty)^2$ satisfies
$\varphi(1)\le \alpha\le \varphi(\beta/\alpha)$ if and only there exists $g\in[1,\infty)$
such that $\alpha=\varphi(g)$ and $\beta\ge g\alpha$.

For $m=1$ and for $m=n-1$, Theorem \ref{dr:thm:MM} shows that $\cS_{m,n}$ 
contains no more points than those given by Proposition \ref{dr:prop:Ri}.  
Based on this, the first author conjectured that this is 
true for all $m$ with $1\le m\le n-1$, in \cite[Conjecture 2.1.4]{Ri2019}.  He also
proved this in dimension $n=4$ for the remaining integer $m=2$, yielding
the following description of $\cS_{2,4}$.

\begin{theorem}
\label{dr:thm:Ri}
We have \ 
$\disp
 \cS_{2,4} = \big\{ (\alpha,\beta)\in [0,\infty]^2 \,;\, 1\le \alpha^2\le \beta \big\}$\,.
\end{theorem}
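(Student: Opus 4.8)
We prove the two inclusions separately.

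\emph{The inclusion $\supseteq$.} By Proposition~\ref{dr:prop} we may assume $K=\bQ$ and $K_\pw=\bR$. Proposition~\ref{dr:prop:Ri} with $(m,n)=(2,4)$ shows that $\cS_{2,4}$ contains every $(\alpha,\infty)$ with $\alpha\in[1,\infty]$ together with every $(\alpha,\beta)\in(0,\infty)^2$ satisfying $\alpha\le\beta$, $\alpha\ge 1$ and $\alpha(1+\alpha/\beta)\le 1+\beta/\alpha$. Multiplying this last inequality by $\alpha\beta$ turns it into $\alpha^2\beta+\alpha^3\le\alpha\beta+\beta^2$, i.e.\ $(\alpha+\beta)(\beta-\alpha^2)\ge 0$, i.e.\ (since $\alpha,\beta>0$) simply $\beta\ge\alpha^2$. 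Hence $\cS_{2,4}$ contains all $(\alpha,\beta)\in[0,\infty]^2$ with $1\le\alpha^2\le\beta$, and only the reverse inclusion remains.

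\emph{Reduction to a statement about $4$-systems.} Fix $\uu\in\bR^4$ with $\bQ$-linearly independent coordinates. By the material recalled in Section~\ref{ssec:pgn:tool} (Proposition~\ref{dr:prop} itself rests on \cite[Proposition~10.6]{PR2023} and on the parametric geometry of numbers of \cite{R2015}), $\uu$ determines a $4$-system $\uP=(P_1,\dots,P_4)$ from which the exponents can be recovered; moreover, via Mahler's theory of compound bodies, which identifies the first successive minimum of the second exterior power with the partial sum $P_1+P_2$, there is a decreasing bijection $G$, depending only on $(m,n)=(2,4)$, with $\homega_1(\uu)=G\bigl(\limsup_{q\to\infty}(P_1+P_2)(q)/q\bigr)$ and $\omega_1(\uu)=G\bigl(\liminf_{q\to\infty}(P_1+P_2)(q)/q\bigr)$, and with $G(1/2)=1$. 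Conversely, every $4$-system arises, up to the relevant equivalence, from such a $\uu$. Writing $\overline p$ and $\underline p$ for the $\limsup$ and $\liminf$ of $(P_1+P_2)(q)/q$, it therefore suffices to show, for every $4$-system, that $G(\overline p)\ge 1$ and $G(\underline p)\ge G(\overline p)^2$.

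\emph{The combinatorial content.} The inequality $G(\overline p)\ge 1$ is Dirichlet-type: since $P_1+P_2\le P_3+P_4$ and $P_1+\dots+P_4=q$, one has $(P_1+P_2)(q)\le q/2$ for all $q$, so $\overline p\le \tfrac12$ and $G(\overline p)\ge G(\tfrac12)=1$. The inequality $G(\underline p)\ge G(\overline p)^2$ is the heart of the proof; unwinding the definition of $G$ (equivalently, reading Proposition~\ref{dr:prop:Ri} in $n$-system terms) it is a polynomial relation forced on $(\overline p,\underline p)$ by the structure of $4$-systems. Following the scheme of Section~\ref{sec:MM}, which handles the analogous statements for $m=1$ and $m=n-1$, one uses that $(P_1+P_2)(q)$ is non-decreasing, increasing at unit rate exactly on the intervals where the slope-$1$ component of $\uP$ has index in $\{1,2\}$ and constant otherwise, together with the switch rules governing the admissible successions of slope-$1$ indices and the coincidences among the $P_i$ at the breakpoints; the moves bringing the slope-$1$ index back from $\{3,4\}$ to $\{1,2\}$ are the ones that pin down the relative levels. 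Starting from a scale $q_k$ with $(P_1+P_2)(q_k)$ close to $\overline p\,q_k$ and propagating $\uP$ forward through the ensuing descent and subsequent ascent, one tracks $(P_1+P_2)(q)/q$ and shows that it must drop to at most the value that $G$ sends to $G(\overline p)^2$, equality holding for the self-similar $4$-systems that underlie Proposition~\ref{dr:prop:Ri}.

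The step I expect to be the main obstacle is exactly this last one: carrying out the finite but delicate case analysis over the local shapes of a $4$-system — enumerating the admissible patterns of slope-$1$ indices between consecutive breakpoints together with their coincidence data, and optimizing the $\overline p$-versus-$\underline p$ trade-off over concatenations of these patterns. This is the part carried out in \cite{Ri2019}, and it is the breakdown of the analogous (and heavier) analysis for $(m,n)=(3,5)$ that is responsible for the phenomena studied in Sections~\ref{sec:dim5}--\ref{sec:more}.
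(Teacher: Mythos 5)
Your first inclusion is correct and is exactly the paper's: Proposition~\ref{dr:prop:Ri} with $(m,n)=(2,4)$, and your simplification of $\alpha(1+\alpha/\beta)\le 1+\beta/\alpha$ to $(\alpha+\beta)(\beta-\alpha^2)\ge 0$ is right. Your reduction of the reverse inclusion is also sound in substance: by Corollary~\ref{pgn:tool:cor}, $\cS_{2,4}$ is the set of pairs $(\chibot_2(\uP),\chitop_2(\uP))$ over proper non-degenerate $4$-systems, and your $G(\overline p)$ and $G(\underline p)$ are precisely $\chibot_2(\uP)$ and $\chitop_2(\uP)$; the Dirichlet-type bound $G(\overline p)\ge 1$ is fine.

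The proposal has a genuine gap, however, at exactly the point you flag: the inequality $G(\underline p)\ge G(\overline p)^2$, i.e.\ $\chitop_2(\uP)\ge\chibot_2(\uP)^2$, is never proved. What you give is a strategy sketch (track $(P_1+P_2)(q)/q$ forward from a scale where it is near $\overline p$, then do a case analysis over local shapes), with the analysis itself deferred to \cite{Ri2019}; but that analysis is the entire content of the theorem beyond Proposition~\ref{dr:prop:Ri}. In the paper it is Proposition~\ref{Ri:prop}: one fixes $\alpha<\chibot_2(\uP)\le\chitop_2(\uP)<\beta$, sets $g=\beta/\alpha$, and shows $\alpha\le g$ by splitting into two cases according to whether $\cI_2(\uP)$ contains infinitely many simple intervals of type $\{1,2,3,4\}$. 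In the case where there are only finitely many, one finds arbitrarily far out a simple interval of type $\{2,3,4\}$ followed, after a stretch on which $P_1$ and $P_4$ are constant, by a simple interval of type $\{1,2,3\}$; Proposition~\ref{type:prop:kl} then yields the key relation $A^+_2+(B^+_2-B^-_2)\le 2B^+_1-A^+_1$, which combined with $A^+_i\ge\alpha A^-_i$, $B^+_i\ge\alpha B^-_i$, $B^+_i\le\beta A^-_i$ (from Proposition~\ref{part:prop:n-sys}) forces $\alpha\le g$. In the opposite case a single simple interval of type $\{1,2,3,4\}$ gives $A^+_1\le B^-_1$ and again $\alpha\le g$. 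Nothing in your write-up substitutes for these case-specific inequalities; note also that the paper organizes the argument around the $2$-division numbers (the points where $P_2=P_3$) and the types of the simple intervals between them, not around propagating from a scale realizing the limsup of $(P_1+P_2)(q)/q$, so even as a sketch your mechanism would need reworking before it could be made precise.
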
 

Unfortunately, this conjecture already breaks in dimension $n=5$ for $\cS_{3,5}$.  
As we will see, this particular spectrum is complicated.  Our most satisfactory result 
about it is the following.

\begin{theorem}
\label{dr:thm:S35}
The points $(\alpha,\beta)$ of\/ $\cS_{3,5}$ with $\alpha\ge 5$ 
are the pairs $(\alpha,\infty)$ with $\alpha\in[5,\infty]$ and the pairs $(\alpha,\beta)
\in [5,\infty)^2$ with
\[
 5 \le \alpha \le g^2 +1 \quad\text{where}\quad g=\beta/\alpha.
\]
Equivalently, they are the pairs $(\alpha,\beta)\in [0,\infty]^2$ which, for some 
$g\in[2,\infty]$, satisfy 
\begin{equation*}
\label{dr:thm:S35:eq}
 \alpha = g^2+1
 \et 
 \beta\ge g\alpha.
\end{equation*}
\end{theorem}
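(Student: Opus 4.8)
The plan is to establish both the upper bound (no points of $\cS_{3,5}$ with $\alpha\ge 5$ lie outside the claimed region) and the lower bound (every point of the claimed region lies in $\cS_{3,5}$) via parametric geometry of numbers, working over $\bQ\subset\bR$ by Proposition~\ref{dr:prop}.  For the lower bound I would simply invoke Proposition~\ref{dr:prop:Ri} with $m=3$, $n=5$: there, the boundary curve is $\alpha=(1+g+g^2)/(1+1/g+1/g^2)=g^2$, which would only give $\alpha=g^2$, not $\alpha=g^2+1$.  So the interesting content of Theorem~\ref{dr:thm:S35} is that $\cS_{3,5}$ is strictly \emph{larger} than the conjectural set in the region $\alpha\ge 5$; thus the lower bound requires constructing new $5$-systems (in the sense of Section~\ref{ssec:pgn:tool}) realizing pairs $(\homega_2,\omega_2)=(\alpha,\beta)$ with $\alpha$ up to $g^2+1$ for $g=\beta/\alpha$.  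I would build a self-similar (eventually periodic up to scaling) $5$-system on $[1,\infty)$ whose combined graph has, over one period, the right proportions of the five successive minima so that the second-to-top exponent attains $\homega_2=g^2+1$ while the corresponding best exponent is $g\alpha$; the endpoint cases $g=2$ (giving $\alpha=5$, the Dirichlet-type corner) and $g=\infty$ (giving $(\alpha,\beta)=(\infty,\infty)$, or finite $\alpha$ with $\beta=\infty$) would be treated as degenerations.

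For the upper bound, which I expect to be the main obstacle, the strategy is to translate the constraint $\alpha\ge 5$ into a rigidity statement about the underlying $5$-system $\uP=(P_1,\dots,P_5)$ associated to $\uu$ by \cite{R2015} and Proposition~\ref{dr:prop}.  Recall that $\omega_2(\uu)$ and $\homega_2(\uu)$ are read off from the behavior of the third successive-minimum function $P_3$ (or, dually via the $m=n-m=2$ symmetry that the authors emphasize, from $P_{n-2}=P_3$ as well — note $n=5$ makes $m=3$ nearly self-dual).  The condition $\homega_2\ge 5=n$ forces $P_3$ to be essentially flat on long intervals relative to the mesh, which in turn constrains how the remaining four functions $P_1,P_2,P_4,P_5$ can descend and rise around it: on each ``descent'' interval only finitely many combinatorial patterns of the system are compatible with such a large uniform exponent.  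I would enumerate these patterns, extract from each the induced relation between the local slopes, and sum over a period to get the inequality $\alpha\le g^2+1$.  The key inequality $\alpha\le g^2+1$ should emerge as the extremal case of a recursion: writing $g$ for the ratio $\beta/\alpha$ as the ``growth factor'' of consecutive minimal points, the quantity $1/\alpha$ is bounded below by a geometric-type sum, but the presence of the large uniform exponent removes some terms, collapsing the five-term sum $\sum_{i=0}^{4}g^{-i}$-style bound of the general conjecture down to a two-term expression $\tfrac{1}{g^2+1}$.

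Concretely, the heart of the argument will be a case analysis on the $5$-system restricted to a maximal interval where $P_3$ realizes the uniform exponent, mirroring the proof of Theorem~\ref{dr:thm:MM} in Section~\ref{sec:MM} but for the middle index.  I would first show that on such an interval the graph of $\uP$ behaves like one of a short list of ``rigid blocks,'' each determined by which of $P_1,P_2$ (below) and $P_4,P_5$ (above) are currently increasing or decreasing; the hypothesis $\alpha\ge 5$ eliminates all but the blocks in which at most one pair of coordinates moves, and chaining these blocks over a period yields the scaling law $\beta\ge g\alpha$ together with $\alpha=g^2+1$ at the extremal system, hence $\alpha\le g^2+1$ in general.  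The endpoint $\alpha=5$ corresponds to $g=2$: there the block reduces to the classical Dirichlet configuration and $\homega_2=5$ is forced, so the interval $[5,g^2+1]$ for $\alpha$ closes up correctly, and letting $\alpha\to\infty$ (equivalently $g\to\infty$) recovers the pairs $(\alpha,\infty)$.  The main risk is that the list of rigid blocks for the \emph{middle} index is genuinely longer than in the $m=1$ or $m=n-1$ cases — this is precisely why the naive conjecture fails — so I would expect to spend most of the effort verifying that, although extra blocks exist (they are what make $\cS_{3,5}$ bigger than the conjectural set), none of them pushes $\alpha$ past $g^2+1$ once $\alpha\ge 5$ is imposed.
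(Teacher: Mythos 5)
Your proposal is a strategy outline rather than a proof, and both halves are missing the substantive content that the theorem actually requires. For the inclusion ``$\supseteq$'' (constructing points), you correctly observe that Proposition~\ref{dr:prop:Ri} does not reach the curve $\alpha=g^2+1$ (though you misquote its boundary: for $m=3$, $n=5$ it gives $\alpha=(1+g+g^2)/(1+1/g)=g^2+1-1/(g+1)$, not $g^2$ — you swapped the roles of $m$ and $n-m$), but you then only assert that you ``would build'' a self-similar $5$-system realizing $\alpha=g^2+1$ exactly. No such system is exhibited, and this is precisely the delicate point: the paper does not produce one either, but instead constructs (Proposition~\ref{dim5:prop:cons}) an explicit family of periodic data $\ua^{(1)},\dots,\ua^{(s+1)}$, fed into Theorem~\ref{cons:thm}, giving $\alpha_s=g^2+1-g/(1+g^s)$ for every $s\ge 3$ and $g\ge 2$, with a nontrivial inductive verification of the ordering inequalities $\rho+b_i<d_{i-1}<b_{i+1}$; the boundary value $\alpha=g^2+1$ is then obtained only by letting $s\to\infty$ and using that $\cS_{3,5}$ is closed, together with Theorem~\ref{dr:thm:rectangles} for the vertical rays $\beta\ge g\alpha$. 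Your plan, as written, presumes a single self-similar system attains the boundary, which is unsupported and quite possibly false.

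For the inclusion ``$\subseteq$'' (the inequality $\alpha\le g^2+1$), your plan to ``enumerate rigid blocks'' around $P_3$ and sum over a period is left entirely unexecuted: no list of configurations, no local inequality, no summation argument. It also rests on a shaky reading of the invariants: here $\alpha=\chibot_2(\uP)$ is the liminf of $(P_3+P_4+P_5)/(P_1+P_2)$, so $\alpha\ge 5$ constrains the sum $P_1+P_2$ relative to $q$, not the flatness of $P_3$ alone, and the relevant combinatorics is that of the $2$-division numbers and the \emph{types} of simple intervals in $\cI_2(\uP)$ (Proposition~\ref{type:prop:kl}), not a self-dual middle index (for $n=5$, $m=3$ one has $n-m=2\ne m$). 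The paper's Proposition~\ref{dim5:prop:cond} proves $\alpha\le (\beta/\alpha)^2+1$ for \emph{all} points with $\beta<\infty$ (no hypothesis $\alpha\ge 5$ is needed for the inequality), by splitting into two cases according to whether infinitely many simple intervals have type containing $\{1,2,3,4\}$, and in each case chaining explicit coordinate inequalities from Proposition~\ref{type:prop:kl} with the asymptotic bounds of Proposition~\ref{part:prop:n-sys}. Without the explicit case analysis and the resulting chains of inequalities, your sketch does not establish the bound, so there is a genuine gap in both directions.
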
 

Thus the boundary of $\cS_{3,5}$ contains the arc of curve $g \mapsto (g^2+1)(1,g)$ 
with $g\ge 2$.  This contradicts \cite[Conjecture~2.1.4]{Ri2019} since this curve lies 
outside of the subset of $\cS_{3,5}\cap\bR^2$ given by Proposition~\ref{dr:prop:Ri}, 
namely the set of points $(\alpha,\beta)\in (0,\infty)^2$ with
\begin{equation}
\label{dr:eq:conj}
 \frac{3}{2} \le \alpha \le \frac{g^2+g+1}{1+1/g} = g^2+1-\frac{1}{g+1} 
 \quad \text{where} \quad  g=\beta/\alpha.
\end{equation}  
In section \ref{sec:more}, we will also show that the boundary of $\cS_{3,5}$ contains the arc of curve
\begin{equation}
\label{dr:eq:arc}
 g\mapsto ((3/2)g^2-g+1)(1,g) \quad \text{with $1.798\le g\le 1.839$,}
\end{equation} 
which also lies outside of the set described by \eqref{dr:eq:conj}.  
In general, we have the following result.

\begin{theorem}
\label{dr:thm:rectangles}
Let $m$ be an integer with $1\le m\le n-1$.  The spectrum
$S_{n-m,n}$ is a closed subset of $[(n-m)/m,\infty]^2$ with the property that
\[
 [(n-m)/m,\alpha] \times [\beta,\infty] 
   \subseteq S_{n-m,n}
\]
for each $(\alpha,\beta)\in S_{n-m,n}$.   
\end{theorem}

Thus, for such $m$, there exist a closed subinterval $I$ of $[(n-m)/m,\infty]$
and a non-decreasing left-continuous function 
$F\colon I \to[(n-m)/m,\infty]$ such that $S_{n-m,n}$ consists of the pairs 
$(\alpha,\beta)$ with $\alpha\in I$ and $F(\alpha)\le \beta\le\infty$.
If $m\ge 2$, we have $I=[(n-m)/m,\infty]$ by Proposition~\ref{dr:prop:Ri}
while, if $m=1$, Theorem~\ref{dr:thm:MM} (i) shows that $I=[1/(n-1),1]$.  
It would be interesting to know if $F$ is continuous.

We prove Proposition~\ref{dr:prop:Ri} and Theorem~\ref{dr:thm:rectangles}
in section~\ref{sec:cons}, Theorem~\ref{dr:thm:MM} in section~\ref{sec:MM},
Theorem~\ref{dr:thm:Ri} in section~\ref{sec:Ri}, and Theorem~\ref{dr:thm:S35}
in section~\ref{sec:dim5}.  Section~\ref{sec:more} provides complementary
results about $\cS_{3,5}$, and we conclude in section~\ref{sec:fin} with 
two final remarks.  In the next sections \ref{sec:pgn}, \ref{sec:part} and
\ref{sec:type}, we recall the necessary results of parametric geometry of
numbers, and develop tools for the present study.

%
%

\section{Link with parametric geometry of numbers}
\label{sec:pgn}
Parametric geometry of numbers was introduced by Schmidt and Summerer
in \cite{SS2009, SS2013a} to study rational approximation to points in $\bR^n$, 
then completed by Roy in \cite{R2015}, and extended 
to deal with approximation over a number field $K$ by Poëls and Roy in \cite{PR2023}.
To a non-zero point $\uu\in K_\pw^n$, for a place $\pw$ of $K$, the latter 
research attaches a family of convex bodies $\cC_\uu(q)$ in $\bA_K^n$ with 
parameter $q\in[0,\infty)$, where $\bA_K$ stands for the ring of adeles of $K$
(see \cite[Section 7]{PR2023}).   The logarithms of the $n$ successive minima of 
$\cC_\uu(q)$ with respect to $K^n$ yield a function of $q$ with
values in $\bR^n$ which contains much information about the point $\uu$.  In
particular, the exponents $\homega_{m-1}(\uu)$ and $\omega_{m-1}(\uu)$ 
for $m=1,\dots,n-1$ can be computed, in a simple way, in terms of the behaviour 
at infinity of the said function or of any approximation $\uL\colon[0,\infty)\to\bR^n$ 
of it with bounded difference.  The main result \cite[Theorem A]{PR2023} generalizing
that of \cite{R2015} is that, modulo the additive group of bounded functions, the set 
of those maps $\uL$ is represented by a set of functions $\uP\colon[0,\infty)\to\bR^n$ 
called $n$-systems, characterized simply by combinatorial properties.   This 
translates the problem of computing the spectrum of exponents like those 
considered here, in terms of $n$-systems. 

%
%

\subsection{Main definitions}
\label{ssec:pgn:def}
There are many ways of defining an $n$-system.  Below, we recall that from  
\cite[Section 2.5]{R2015}, where it is called an $(n,0)$-system, as it is most 
convenient for our purpose.   In order to unify the analysis of the spectra of 
$(\homega_{m-1},\omega_{m-1})$ for $m=1$ and $m=n-1$, 
we begin by introducing a larger class of functions. 

\begin{definition}
 \label{pgn:def:signfree}
Let $I$ be a closed interval of $\bR$ with non-empty interior.  A \emph{sign-free-$n$-system} 
on $I$ is a map $\uP=(P_1,\dots,P_n)\colon I \to\bR^n$ with the following properties:
\begin{itemize}
 \item[(S1)] $P_1,\dots,P_n$ are continuous and piecewise affine on $I$
  with slopes in $\{0, 1\}$;
 \item[(S2)] $P_1(q)\le\cdots\le P_n(q)$ and $P_1(q)+\cdots+P_n(q)=q$ for
  each $q\in I$;
 \item[(S3)]  if, for some $j\in\{1,\dots,n-1\}$, the sum  $P_1+\cdots+P_j$ changes
  slope from $1$ to $0$ at some interior point $q$ of $I$, then $P_j(q)=P_{j+1}(q)$.
\end{itemize}
We say that it is an \emph{$n$-system} if $P_1(q)\ge 0$ for each $q\in I$.  
\end{definition}

To clarify this notion, we recall that a function $f\colon I\to\bR$, defined on a closed
interval $I$ of $\bR$  with non-empty interior, is continuous and piecewise affine if
its graph in $I\times\bR$ 
is a closed polygonal line.  Then, the \emph{slopes} of $f$ are the slopes of the 
line segments that compose this polygonal line, or equivalently the values of the 
derivative of $f$ on the open subintervals of $I$ over which the restriction of $f$ is
affine.

If a map $\uP=(P_1,\dots,P_n)\colon I \to\bR^n$ satisfies conditions (S1) and
(S2) of Definition \ref{pgn:def:signfree}, then, by (S1), the interval $I$ partitions
into maximal closed subintervals over which each component $P_1,\dots,P_n$ of
$\uP$ is affine of constant slope $0$ or $1$, and, by (S2), these slopes add to $1$.  
Thus, over such an interval, exactly one component has slope $1$ while the others 
have slope $0$.  Consequently, for each $j=1,\dots,n$, the sum
$P_1+\cdots+P_j\colon I \to \bR$ is continuous and piecewise affine with slopes
in $\{0,1\}$.  At an interior point of $I$, such a sum can only change slope from 
$0$ to $1$ or from $1$ to $0$.  Condition (S3) imposes a restriction to $\uP$ when 
the second case applies. 

Note that condition (S2) implies that an $n$-system takes values in $[0,\infty)^n$, so 
its domain is contained in $[0,\infty)$.  Although we will not need that here, it is an 
amusing exercise to show that any sign-free-$n$-system on $[0,\infty)$ can be 
approximated up to bounded difference by an $n$-system on $[0,\infty)$.  However, 
there is more flexibility in the domain of a sign-free-$n$-system, which could even 
be all of $\bR$.  

\begin{definition}
 \label{pgn:def:combined}
The \emph{combined graph} of a sign-free-$n$-system $\uP=(P_1,\dots,P_n)\colon I\to \bR^n$
is the union of the graphs of $P_1,\dots,P_n$ in $I\times\bR$. 
\end{definition}

Although condition (S2) requires that a sign-free-$n$-system $\uP$ takes values in the set 
\[
 \Delta_n=\{(x_1,\dots,x_n)\in\bR^n\,;\, x_1\le \cdots\le x_n\},
\]
this is in general not enough to recover $\uP$ from its combined graph.  We 
need additional restrictions.

\begin{definition}
 \label{pgn:def:switch}
Let $\uP=(P_1,\dots,P_n)$ be a sign-free-$n$-system on some closed interval $I$ 
of $\bR$ with non-empty interior.  A \emph{switch number} of $\uP$ is
a point of $I$ on the boundary of $I$, or an interior point $q$ of $I$ at which at least 
one of the sums $P_1+\cdots+P_j$ with $1\le j<n$ changes slope from $0$ to $1$.
A \emph{switch point} of $\uP$ is the value $(P_1(q),\dots,P_n(q))$ of $\uP$
at a switch number $q$.
Given a number $\delta>0$, we say that $\uP$ is \emph{rigid of mesh $\delta$} if it
admits at least one switch number and if 
$P_1(q),\dots,P_n(q)$ are $n$ distinct non-zero multiples of $\delta$ for each switch
number $q$ of $\uP$.  We say that $\uP$ is \emph{non-degenerate} if
$P_1(q)<\cdots<P_n(q)$ at each interior point $q$ of $I$ where $\uP$ is differentiable.
\end{definition}

The above notion of non-degeneracy for an $n$-system is the same as the one 
introduced by the first author in \cite[Definition 1.2.1 (S4)]{Ri2019}.  However, we
warn the reader that it is slightly weaker than the notion of non-degeneracy from
\cite[Section 2]{R2016} which asks furthermore that $P_1(q)<\cdots<P_n(q)$ 
at each point $q\in I$ on the boundary of $I$.

The next lemma uses the continuous map 
$\Phi_n\colon\bR^n\to\Delta_n$ 
which lists the coordinates of a point in non-decreasing order
(cf.\ \cite[Section~1]{R2015}).

\begin{lemma}
 \label{pgn:lemma:combined_graph}
Let $\uP=(P_1,\dots,P_n)$ be a sign-free-$n$-system on some closed interval $I$ 
of $\bR$ with non-empty interior, and let $J$ be a closed subinterval of $I$ with
non-empty interior and no switch number in its interior.  Then, for each $a\in J$, there 
exists an index $k$ depending on $a$ (but not necessarily unique) such that, 
for each $q\in J$, we have
\begin{equation}
 \label{pgn:lemma:combined_graph:eq}
 \uP(q) = \Phi_n(P_1(a),\dots,\widehat{P_k(a)},\dots,P_n(a),P_k(a)+q-a)
\end{equation}
where the hat on $P_k(a)$ means that this coordinate is omitted from the list. If, for some
$a\in J$, we have $P_1(a)<\cdots<P_n(a)$, then the restriction of\/ $\uP$ to $J$ is 
non-degenerate.
\end{lemma}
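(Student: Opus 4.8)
The plan is to describe $\uP$ on $J$ through its ``active coordinate'' and then identify it with the explicit map on the right-hand side of \eqref{pgn:lemma:combined_graph:eq}. First I would record two facts about $\uP|_J$. By (S1) and (S2), on each maximal subinterval of $J$ on which $\uP$ is affine exactly one component has slope $1$; call its index the \emph{active index} there. The active index is non-decreasing along $J$: were it to drop from $k_1$ to some $k_2<k_1$ at an interior point $q_0$ of $J$, then each sum $P_1+\cdots+P_j$ with $k_2\le j<k_1$ would change slope from $0$ to $1$ at $q_0$, making $q_0$ a switch number interior to $J$, contrary to hypothesis. When it jumps up from $k_1$ to $k_2>k_1$ at an interior point $q_0$, each sum $P_1+\cdots+P_j$ with $k_1\le j<k_2$ changes slope from $1$ to $0$ there, so (S3) forces $P_{k_1}(q_0)=P_{k_1+1}(q_0)=\cdots=P_{k_2}(q_0)$. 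Combined with the ordering in (S2) this yields the key constraint: if $k$ is the active index of $\uP$ just to the right (resp.\ just to the left) of a point $q\in J$, then $P_k(q)<P_{k+1}(q)$ unless $k=n$ (resp.\ $P_{k-1}(q)<P_k(q)$ unless $k=1$), for otherwise letting $P_k$ increase (resp.\ decrease) would break $P_k\le P_{k+1}$ (resp.\ $P_{k-1}\le P_k$).

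Now fix $a\in J$; I treat the case $a<\sup J$ (for $a=\sup J$ only the one-sided version of what follows is needed). Let $k$ be the active index of $\uP$ on some interval $(a,a+\delta)$ with $\delta>0$, and let $\psi\colon J\to\bR^n$ denote the right-hand side of \eqref{pgn:lemma:combined_graph:eq}, so that $\psi(q)=\Phi_n\big(P_1(a),\dots,\widehat{P_k(a)},\dots,P_n(a),\lambda(q)\big)$ with $\lambda(q)=P_k(a)+q-a$. One checks directly that $\psi$ is continuous and piecewise affine, takes values in $\Delta_n$, satisfies $\psi(a)=\uP(a)$, and on each affine piece has exactly one component of slope $1$, namely the ``slot'' carrying the strictly increasing value $\lambda(q)$, this slot being non-decreasing in $q$. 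Examining $\Phi_n$ near $a$, using $P_k(a)<P_{k+1}(a)$ from the constraint above, this slot equals $k$ just to the right of $a$, while just to the left of $a$ it equals the least index $m$ with $P_m(a)=P_k(a)$; the latter, again by the constraint, is exactly the active index of $\uP$ just to the left of $a$, and $k$ is its active index just to the right. So $\uP$ and $\psi$ agree at $a$ and have the same active index on each side of $a$.

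The core of the proof is to deduce $\uP=\psi$ on all of $J$, by a connectedness argument. Since $\uP$ and $\psi$ agree at $a$ with the same active index on each side, they agree near $a$. Suppose now $\uP=\psi$ on $[a,c]$ for some $c$ interior to $J$ with $c\ge a$; I claim they also agree just to the right of $c$. As $\uP(c)=\psi(c)$ and both are affine just right of $c$, it suffices that their active indices there coincide. Let $\ell_0$ be the common active index of $\uP$ and $\psi$ just left of $c$; then $\psi$ carries the value $\lambda(c)=P_{\ell_0}(c)$ in that slot at $c$. Because $\lambda$ is strictly increasing, $\psi$'s active index just right of $c$ is the top index $\ell'$ of the maximal block of coordinates of $\uP(c)$ equal to $P_{\ell_0}(c)$, so $P_{\ell'}(c)<P_{\ell'+1}(c)$ (or $\ell'=n$). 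Meanwhile $\uP$'s active index $\ell_1$ just right of $c$ satisfies $\ell_1\ge\ell_0$ (monotonicity) and $P_{\ell_1}(c)<P_{\ell_1+1}(c)$ (or $\ell_1=n$, by the constraint). Since $P_{\ell_0}(c)=\cdots=P_{\ell'}(c)$, these force $\ell_1\ge\ell'$; and if $\ell_1>\ell'$ then the active index of $\uP$ jumps from $\ell_0$ to $\ell_1$ at $c$, so (S3) gives $P_{\ell_0}(c)=\cdots=P_{\ell_1}(c)$, hence $P_{\ell'}(c)=P_{\ell'+1}(c)$, a contradiction. Therefore $\ell_1=\ell'$. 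The symmetric statement holds to the left of $a$, so by connectedness $\uP=\psi$ on all of $J$, which is \eqref{pgn:lemma:combined_graph:eq} for this $k$.

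For the last assertion, apply the above with $a$ equal to the given point at which $P_1(a)<\cdots<P_n(a)$; then the $n-1$ numbers $P_i(a)$ with $i\ne k$ are pairwise distinct, and by \eqref{pgn:lemma:combined_graph:eq} the restriction $\uP|_J$ is $\Phi_n$ of these constants together with the value $\lambda(q)=P_k(a)+q-a$ of slope $1$. Hence $\uP$ fails to be differentiable at a point $q\in J$ only when $\lambda(q)\in\{P_i(a):i\ne k\}$; at every other interior point $q_0$ of $J$, including $q_0=a$ where $\lambda(a)=P_k(a)$ differs from all $P_i(a)$ with $i\ne k$, the $n$ numbers $\{P_i(a):i\ne k\}\cup\{\lambda(q_0)\}$ are pairwise distinct, so $P_1(q_0)<\cdots<P_n(q_0)$, and $\uP|_J$ is non-degenerate. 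The step I expect to be the main obstacle is the bookkeeping inside the connectedness argument: one must marry monotonicity of the active index, the coincidences (S3) produces at jump-ups, and the inequalities $P_j\le P_{j+1}$ to see that at a would-be point of disagreement $\uP$ is forced onto the same active slot as $\psi$. This is precisely where (S3) is indispensable; without it $\uP$ could skip past a coordinate and part ways with the sorted picture recorded by $\Phi_n$.
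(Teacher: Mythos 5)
Your proposal is correct and takes essentially the same route as the paper: the same local analysis (the slope-$1$ index is non-decreasing on $J$ because an interior drop would create a switch number, (S3) forces equal values at an upward jump, and (S2) pins the active index against its neighbours), globalized by a connectedness argument, with the last assertion read off directly from \eqref{pgn:lemma:combined_graph:eq}. The only difference is organisational: the paper verifies the formula locally around every point of $J$ and patches these local representations, whereas you fix a single base point $a$ and run an explicit continuation with the frontier analysis spelled out.
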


\begin{proof}
Geometrically, the first assertion means that the combined graph of $\uP$ over the 
interval $J$ consists of the horizontal line segments $J\times P_i(a)$ for each index $i$
with $1\le i\le n$ and $i\neq k$, together with the line segment of slope $1$ passing 
through the point $(a,P_k(a))$ and projecting down to $J$.  For the proof, we first 
observe that, for a given $a\in J$, the formula \eqref{pgn:lemma:combined_graph:eq}
holds locally in $J\cap[a-\epsilon,a+\epsilon]$ for a sufficiently small $\epsilon>0$,
by choosing $k$ to be the index $s$ for which $P_s$ has right derivative $1$ at $a$, if 
$a$ is not a right end point of $J$, or the index $r$ for which $P_r$ has left derivative 
$1$ at $a$, if $a$ is not a left end point of $J$.  If $a$ is an interior point of $J$, then either 
choice is good because we then have $r\le s$ and $P_r(a)=\dots=P_s(a)$ since 
$a$ is not a switch number of $\uP$ (as illustrated in \cite[Figure 1]{R2016} 
when $r<s$).  The first assertion follows from this because
the functions defined by the right-hand side of \eqref{pgn:lemma:combined_graph:eq}
for distinct values of $a$ and for their associated indices $k$ agree on $J$ as soon as
they agree on a non-empty open subinterval of $J$.  The second assertion of the lemma
in turn follows from formula \eqref{pgn:lemma:combined_graph:eq}.
\end{proof}

\begin{corollary}
 \label{pgn:def:cor}
Let $\uP$ and $I$ be as in the above lemma.  If\/ $\uP$ is rigid of mesh $\delta$ for some
$\delta>0$, then $\uP$ is non-degenerate.  If\/ $\uP$ is non-degenerate,
then it is uniquely determined by its combined graph in $I\times\bR$.
\end{corollary}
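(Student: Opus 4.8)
The plan is to prove the two assertions of Corollary~\ref{pgn:def:cor} in turn, extracting both from Lemma~\ref{pgn:lemma:combined_graph}.

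\textbf{First assertion: rigid $\Rightarrow$ non-degenerate.} Suppose $\uP$ is rigid of mesh $\delta$, so it has at least one switch number, and at every switch number $q$ the values $P_1(q)<\cdots<P_n(q)$ are distinct (in particular strictly increasing). I would argue that non-degeneracy can only fail on a subinterval of $I$ containing no switch number in its interior. More precisely, let $q_0$ be an interior point of $I$ at which $\uP$ is differentiable; I want $P_1(q_0)<\cdots<P_n(q_0)$. Pick a maximal closed subinterval $J\subseteq I$ containing $q_0$ and having no switch number in its interior; since switch numbers include the endpoints of $I$ and $\uP$ has at least one interior switch number (or else $J$ still has a switch number as an endpoint by the boundary clause), the interval $J$ has at least one endpoint $a$ that is a switch number of $\uP$. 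At that switch number, $P_1(a)<\cdots<P_n(a)$ by rigidity. Now apply the second statement of Lemma~\ref{pgn:lemma:combined_graph} to $J$: since $P_1(a)<\cdots<P_n(a)$ for some $a\in J$, the restriction of $\uP$ to $J$ is non-degenerate, hence $P_1(q_0)<\cdots<P_n(q_0)$. As $q_0$ was arbitrary, $\uP$ is non-degenerate. The one point to handle carefully is the degenerate case where $\uP$ might have no interior switch number at all: then $I$ itself has no switch number in its interior, and $J=I$, whose endpoints are switch numbers by definition, so the same argument applies.

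\textbf{Second assertion: non-degenerate $\Rightarrow$ determined by combined graph.} Here I would reconstruct $\uP$ from the combined graph $G\subseteq I\times\bR$. First recover the switch numbers: by Lemma~\ref{pgn:lemma:combined_graph}, over any subinterval $J$ with no interior switch number the combined graph is the union of $n-1$ horizontal segments and one segment of slope $1$, so in particular over such $J$ the graph $G$ meets each vertical line $\{q\}\times\bR$ in exactly $n$ points for $q$ in the interior of $J$, and these vary continuously and are strictly ordered. The switch numbers are exactly the boundary points of $I$ together with the finitely many interior points where this "$n-1$ flat plus one rising" local picture breaks — equivalently, where two of the $n$ branches meet or where more than one branch has slope $1$. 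Having located the switch numbers $a_0<a_1<\cdots<a_N$ (the $a_i$ with $0<i<N$ being the interior ones, $a_0,a_N$ the endpoints of $I$), on each closed interval $[a_i,a_{i+1}]$ I reconstruct $\uP$ via formula \eqref{pgn:lemma:combined_graph:eq}: at the left endpoint $a_i$ the $n$ points of $G$ on the vertical line are, by non-degeneracy and continuity, $P_1(a_i)<\cdots<P_n(a_i)$ in order, and the index $k$ is forced to be the unique one whose branch rises with slope $1$ immediately to the right — which is visible in $G$. Then \eqref{pgn:lemma:combined_graph:eq} gives $\uP$ on all of $[a_i,a_{i+1}]$. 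Since consecutive pieces agree at the shared endpoint, this determines $\uP$ on all of $I$, and the reconstruction used only data read off from $G$.

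\textbf{Main obstacle.} The delicate part is the second assertion, specifically making rigorous the claim that the switch numbers — hence the whole combinatorial skeleton — are intrinsically readable from the combined graph as a mere point set. One must argue that non-degeneracy rules out "hidden" coincidences: away from switch numbers the $n$ branches are genuinely distinct and exactly one rises, so the graph locally determines the ordered $n$-tuple $\uP(q)$; and at a switch number the local picture changes in a way detectable from $G$ (either two previously separate horizontal levels merge, or the rising branch changes, or $\uP$ hits a boundary of $I$). I expect this to require a short case analysis at each interior switch number $q$, using condition (S3) and the non-degeneracy hypothesis to see that the combined graph near $q$ cannot be mistaken for the graph near a non-switch point. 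Once the switch numbers are pinned down, the rest is a routine propagation argument via \eqref{pgn:lemma:combined_graph:eq}.
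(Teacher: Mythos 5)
Your treatment of the first assertion is correct and is essentially the paper's own argument: partition $I$ into maximal closed subintervals with no interior switch number, note (rigidity guaranteeing that a switch number exists at all, even when $I=\bR$) that each such subinterval has an endpoint which is a switch number, where rigidity forces $P_1<\cdots<P_n$, and invoke the last sentence of Lemma~\ref{pgn:lemma:combined_graph}.

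For the second assertion there is a genuine gap, and it sits exactly where you place your ``main obstacle''. Your plan requires recovering the switch numbers from the combined graph, but the concrete test you offer for them is wrong, and so is the auxiliary claim it rests on. Over a subinterval $J$ with no interior switch number, the fiber of the combined graph over an interior point $q$ need \emph{not} consist of $n$ points: at the interior points where the rising branch meets a horizontal branch, i.e.\ where $P_j(q)=P_{j+1}(q)$ for some $j$ (this is the situation $r<s$ in the proof of Lemma~\ref{pgn:lemma:combined_graph}, and for a proper non-degenerate $n$-system all the $m$-division numbers of Lemma~\ref{part:lemma:Am} are of this kind), the fiber has only $n-1$ points. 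Such points are precisely \emph{not} switch numbers (there a partial sum changes slope from $1$ to $0$, not from $0$ to $1$), yet they are points ``where two of the $n$ branches meet'', so your proposed equivalent criterion would misclassify them; conversely, at a genuine interior switch number of a non-degenerate system no two branches meet (one rising branch stops and a strictly lower branch starts rising), and showing that this is detectable from the bare point set is the case analysis you leave undone. Similarly, the assertion that ``by non-degeneracy and continuity'' the fiber over a switch number $a_i$ consists of $n$ distinct ordered points is unjustified: non-degeneracy constrains only interior points of differentiability, passing to the limit gives only weak inequalities, and at a boundary point of $I$ (a switch number by definition) coordinates of $\uP$ may well coincide --- this is exactly why the paper's notion is weaker than that of \cite{R2016}, as remarked after Definition~\ref{pgn:def:switch}. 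The whole detour through switch numbers is unnecessary: as in the paper, since $\uP$ is piecewise affine, the interior points where it is not differentiable form a discrete set; at every other interior point $q$, non-degeneracy makes the fiber of the combined graph exactly $n$ distinct points whose increasing enumeration is $\uP(q)$, and $\uP$ is then determined at the remaining discrete set of interior points and at the boundary by continuity.
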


\begin{proof}
Suppose first that $\uP$ is rigid of mesh $\delta$ for some $\delta>0$.  Then its 
domain $I$ partitions into maximal closed subintervals whose interior is non-empty
but does not contain any switch number of $\uP$. Since $\uP$ admits at least one
switch number, these intervals are of the form  $(-\infty,a]$, $[a,b]$ or
$[a,\infty)$ where $a$ and $b$ are switch numbers.  In particular, they contain at
least one switch number and, by the lemma, the restriction of $\uP$ to any of these 
subintervals is non-degenerate.  So $\uP$ itself is non-degenerate.
Finally, if $\uP$ is non-degenerate, then it is determined by its combined 
graph since $\uP$ is continuous and the interior points of $I$ where it is not 
differentiable form a discrete subset of $I$.  
\end{proof}

Similarly, one shows that $\uP$ is non-degenerate 
if it admits at least one switch number in the interior of $I$ and if its components 
take $n$ distinct values at each such number.   We conclude 
by recalling two more important classes of $n$-systems.

\begin{definition}
 \label{pgn:def:systems}
We say that an $n$-system $\uP=(P_1,\dots,P_n)$ is \emph{proper} if it is defined on
$[q_0,\infty)$ for some $q_0\ge 0$ and if $P_1(q)$ goes to infinity with $q$.  We say 
that it is \emph{self-similar} if moreover there exists some $\rho>1$ such that 
$\uP(\rho q)=\rho\uP(q)$  for each $q\ge q_0$.  
\end{definition}

%
%

\subsection{A geometric characterization}
\label{ssec:pgn:char}

By Corollary \ref{pgn:def:cor}, a non-degenerate sign-free-$n$-system $\uP$ on an interval
$[a,b]$ is uniquely determined by its combined graph.  Conversely, given a graph in 
$[a,b]\times\bR$, it is easy to check if it is the combined graph of such a map.  It suffices to
verify that $[a,b]$ can be partitioned into subintervals $[q_0,q_1],\dots,[q_{N-1},q_N]$ for 
some integer $N\ge 1$ so that the following properties hold:
\begin{itemize}
\item over each subinterval $[q_{i-1},q_i]$, the graph consists of exactly $n-1$ distinct 
  horizontal line segments and one line segment $\Gamma_i$ of slope $1$, each projecting 
 down to $[q_{i-1},q_i]$;
\item for each $i$ with $1\le i\le N-1$, the right end-points of the line segments above 
 $[q_{i-1},q_i]$ are the same, as a set, as the left end-points of those above $[q_i,q_{i+1}]$,
 and the right end-point of $\Gamma_i$ does not lie strictly below the left end-point 
of $\Gamma_{i+1}$ (but can be the same);
 \item the sum of the ordinates of the points above $a$ is $a$ 
\end{itemize}
(cf.~\cite[Section 2.6]{PR2023} or \cite[Section 2]{RR2020}).  The switch numbers of $\uP$ are
then $q_0=a$, $q_N=b$ and the numbers $q_i$ with $1\le i\le N-1$ for which
the right end-point of $\Gamma_i$ lies strictly above the left end-point of $\Gamma_{i+1}$.

%
%

\subsection{Main tool}
\label{ssec:pgn:tool}
For each proper $n$-system $\uP=(P_1,\dots,P_n)$ and each $m=1,\dots,n-1$, we set
\begin{equation}
\label{pgn:eq:chi}
 \chibot_m(\uP)=\liminf_{q\to\infty}\frac{P_{m+1}(q)+\cdots+P_n(q)}{P_1(q)+\cdots+P_m(q)},
 \quad
 \chitop_m(\uP)=\limsup_{q\to\infty}\frac{P_{m+1}(q)+\cdots+P_n(q)}{P_1(q)+\cdots+P_m(q)}.
\end{equation}
These are elements of $[0,\infty]=[0,\infty)\cup\{\infty\}$, a set that we endow with 
the topology of one point compactification of $[0,\infty)$.  Then the map $\iota\colon
[0,\infty]\to[0,1]$ which sends $\infty$ to $0$ and any $x\in[0,\infty)$ to $1/(x+1)$ is a 
homeomorphism. For each integer 
$m\ge 1$, we also endow $[0,\infty]^m$ with the product topology, which makes it Hausdorff 
and compact.  The next result is the central tool on which this paper relies.  

\begin{proposition}
\label{pgn:prop:allexp}
The set of points
\[
 ( \homega_0(\uu),\dots,\homega_{n-2}(\uu),
     \omega_0(\uu), \dots, \omega_{n-2}(\uu) )
\]
where $\uu\in K_\pw^n$ has $K$-linearly independent coordinates is a closed subset of 
$[0,\infty]^{2n-2}$ which coincides with the set of points
\begin{equation}
\label{pgn:prop:allexp:eq}
 \big(\chibot_{n-1}(\uP), \dots, \chibot_{1}(\uP), \chitop_{n-1}(\uP), \dots, \chitop_1(\uP)\big)
\end{equation}
where $\uP$ is a proper $n$-system or even a proper non-degenerate $n$-system.  
Moreover, the points coming from self-similar $n$-systems are dense in that set.
\end{proposition}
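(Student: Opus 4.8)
The plan is to transport the statement to the combinatorial world of $n$-systems through parametric geometry of numbers, and then to invoke the structure theory of $n$-systems from \cite{R2015}, in the adelic form given by \cite{PR2023}. To a non-zero $\uu\in K_\pw^n$ one attaches the family of adelic convex bodies $\cC_\uu(q)$ of \cite[Section~7]{PR2023} and the map $\uL_\uu=(L_1,\dots,L_n)\colon[0,\infty)\to\bR^n$ recording the logarithms of its successive minima with respect to $K^n$, so that $L_1\le\cdots\le L_n$ and $L_1(q)+\cdots+L_n(q)=q+O(1)$. The first step is to recall from \cite[Proposition~10.6]{PR2023}, whose proof uses an adelic version of Mahler's theory of compound bodies, that for each $m=1,\dots,n-1$ the exponents $\homega_{m-1}(\uu)$ and $\omega_{m-1}(\uu)$ are respectively the limit inferior and the limit superior, as $q\to\infty$, of
\[
 \frac{L_{n-m+1}(q)+\cdots+L_n(q)}{L_1(q)+\cdots+L_{n-m}(q)}\,.
\]
When the coordinates of $\uu$ are $K$-linearly independent one has $L_1(q)\to\infty$, and then, since $L_1(q)+\cdots+L_n(q)=q+O(1)$, both the numerator and the denominator of this ratio tend to $\infty$; consequently these liminf and limsup are unchanged if $\uL_\uu$ is replaced by any map within bounded distance of it.

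Next I would apply \cite[Theorem~A]{PR2023}, the adelic analogue of the main theorem of \cite{R2015}: $\uL_\uu$ is within bounded distance of an $n$-system $\uP$, which by the description of $L_1$ above is proper precisely when the coordinates of $\uu$ are $K$-linearly independent, while conversely every proper $n$-system is within bounded distance of $\uL_\uu$ for some such $\uu$. Matching indices with \eqref{pgn:eq:chi}, the ratio displayed above, evaluated on a proper $n$-system $\uP$, is exactly $(P_{n-m+1}(q)+\cdots+P_n(q))/(P_1(q)+\cdots+P_{n-m}(q))$, so $\homega_{m-1}(\uu)=\chibot_{n-m}(\uP)$ and $\omega_{m-1}(\uu)=\chitop_{n-m}(\uP)$; reading this off for $m=1,\dots,n-1$ identifies the spectrum with the set of tuples \eqref{pgn:prop:allexp:eq} coming from proper $n$-systems. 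To reduce further to proper non-degenerate $n$-systems, I would use the standard fact that every proper $n$-system is within bounded distance of a proper non-degenerate one — on each maximal interval of differentiability one separates any coinciding components by amounts tending to $0$, keeping (S1)--(S3) and $P_1\ge0$, as in \cite[Section~2]{R2016} and \cite{Ri2019} — together with the bounded-perturbation invariance of all $\chibot_m$ and $\chitop_m$ noted above.

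What then remains are two purely combinatorial assertions about the set of tuples \eqref{pgn:prop:allexp:eq} attached to proper $n$-systems: that it is closed, and that the tuples coming from self-similar $n$-systems are dense in it. Closedness follows from the corresponding property of the spectrum of $n$-systems in \cite{R2015} (it is also visible from the explicit description of that spectrum given there); to reprove it one normalizes a sequence of proper $n$-systems with convergent tuples by affine substitutions $q\mapsto\lambda_j q$, extracts a locally uniform subsequential limit using that the components of an $n$-system have slopes in $\{0,1\}$, and checks that the limit is again a proper $n$-system with the limiting tuple. For the density of self-similar systems, given a proper $n$-system $\uP$ and $\varepsilon>0$, one chooses $a<b$ with $a$ large enough that on $[a,b]$ each ratio $R_k(q)=(P_{k+1}(q)+\cdots+P_n(q))/(P_1(q)+\cdots+P_k(q))$ comes within $\varepsilon$ of both $\chibot_k(\uP)$ and $\chitop_k(\uP)$ — possible since $R_k\ge\chibot_k(\uP)-\varepsilon$ on $[a,\infty)$ for $a$ large and by the definitions of $\chibot_k$ and $\chitop_k$ — and then builds a self-similar $n$-system whose period reproduces, up to a small error in each $R_k$, the extremal behaviour seen over this window, splicing admissible $n$-system arcs so that the two ends of the period are proportional configurations and checking validity by the geometric criterion of Section~\ref{ssec:pgn:char}. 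Letting $\varepsilon\to0$, and combining with closedness, gives the density of self-similar systems in the full spectrum.

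The hard part is this last construction. One must realize, within a single periodic pattern, all $2(n-1)$ extremal values $\chibot_k(\uP)$ and $\chitop_k(\uP)$ (nearly) simultaneously, with the two ends of the period joined by admissible moves into proportional configurations, and with the effect on every $R_k$ kept below the prescribed tolerance; this splicing and its bookkeeping is the technical core of \cite{R2015}. Granting it in the adelic setting of \cite{PR2023}, Proposition~\ref{pgn:prop:allexp} indeed follows as "a direct consequence" of \cite[Proposition~10.6]{PR2023} once the structure theory of $n$-systems is available; the part genuinely specific to this proposition is the index bookkeeping of the first two steps, including the identification \eqref{pgn:prop:allexp:eq}, the properness-versus-linear-independence dictionary, and the use of the one-point-compactification topology on $[0,\infty]$ to make sense of the closedness and density claims.
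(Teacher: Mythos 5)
Your translation steps match the paper's: the paper likewise reduces everything to proper $n$-systems via \cite[Proposition 10.6]{PR2023}, passes to non-degenerate systems by replacing $\uP$ with a rigid system of mesh $1$ (\cite[Proposition 7.1]{R2015} plus Corollary \ref{pgn:def:cor}), and matches indices exactly as you do, so that part of your proposal is sound. The gap is in the two remaining assertions that are part of the statement itself: closedness and the density of the points coming from self-similar systems. The paper does not prove these ad hoc; it obtains them by applying \cite[Corollary 8.5]{R2017} (compactness of the spectrum $\image^*(\mu_T)$ of an arbitrary linear map $T$ evaluated on proper $n$-systems) and \cite[Corollary 7.2]{R2017} (density of the points coming from self-similar systems) to the specific map $T=(-T_1,\dots,-T_{n-1},T_1,\dots,T_{n-1})$ with $T_m$ the sum of the first $n-m$ coordinates, and then transports the result through the homeomorphism $\iota$. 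Your proposal never invokes \cite{R2017}; it attributes these facts to \cite{R2015}, where they do not appear (nor is there an ``explicit description'' of this joint spectrum anywhere --- as the introduction notes, that problem is open for $n\ge 4$), and the substitute arguments you sketch do not close the gap.

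Concretely: your closedness argument --- normalize, extract a locally uniform subsequential limit of $n$-systems, and ``check that the limit is again a proper $n$-system with the limiting tuple'' --- fails at the last step, because $\chibot_m$ and $\chitop_m$ are asymptotic quantities (liminf/limsup as $q\to\infty$) and are not continuous under locally uniform convergence; the limit system's asymptotics need not have anything to do with the limit of the tuples. The actual proof of compactness in \cite{R2017} proceeds differently (a concatenation/semicontinuity argument), which is why the paper cites it rather than reproving it. For the density of self-similar systems you candidly identify the periodization-with-controlled-junction construction as ``the hard part'' and then grant it; but that construction \emph{is} the content of \cite[Corollary 7.2]{R2017} (and it is purely combinatorial, not something that needs to be redone ``in the adelic setting'' --- once \cite[Proposition 10.6]{PR2023} is applied, nothing adelic remains). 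So as written the proposal proves the translation/bookkeeping part of the proposition but leaves the closedness and density claims resting on results it neither proves nor correctly cites; citing \cite[Corollaries 7.2 and 8.5]{R2017} for the appropriate $T$, as the paper does, is what is needed to complete it.
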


\begin{proof}
By \cite[Proposition~7.1]{R2015} (applied with $\gamma=0$), any proper 
$n$-system $\uP$ differs by a bounded function from some proper rigid $n$-system $\uR$ of 
mesh $1$, an $n$-system which, by Corollary~\ref{pgn:def:cor}, is non-degenerate.
As the point \eqref{pgn:prop:allexp:eq} does not change when we replace $\uP$ by $\uR$, 
we obtain the same set of points by letting $\uP$ run through all proper $n$-systems 
in \eqref{pgn:prop:allexp:eq}, or through the smaller set of proper non-degenerate 
$n$-systems. Thus, it suffices to prove the first part of the proposition for proper $n$-systems.

Definition~1.1 in \cite{R2017} attaches to each linear map 
$T=(T_1,\dots,T_\ell)\colon\bR^n\to\bR^\ell$,
a spectrum of approximation to points in $\bR^n$, denoted $\image^*(\mu_T)$.  
By \cite[Theorem~3.1]{R2017}, it consists of the points 
\[
 \mu_T(\uP)
  =\Big(
    \liminf_{q\to\infty}\frac{T_1(\uP(q))}{q},
    \dots,
    \liminf_{q\to\infty}\frac{T_\ell(\uP(q))}{q}
    \Big)
\]
where $\uP$ is a proper $n$-system.  By \cite[Corollary 8.5]{R2017}, this set 
$\image^*(\mu_T)$ is a compact subset 
of $\bR^\ell$, and by \cite[Corollary 7.2]{R2017}, the points $\mu_T(\uP)$ where $\uP$ is a 
self-similar $n$-system are dense in it.  We take $T=(-T_1,\dots,-T_{n-1},T_1,\dots,T_{n-1})$
where, for $m=1,\dots,n-1$, the linear form $T_m$ maps each point of $\bR^n$ to the sum of its 
first $n-m$ coordinates.  We also denote by $\sigma$ the linear automorphism of $\bR^{2(n-1)}$ 
which sends $(\ux,\uy)$ to $(-\ux,\uy)$ for any $\ux,\uy\in\bR^{n-1}$.  Then, the set 
$\cS=\sigma(\image^*(\mu_T))$ consists of the points
\[
 \sigma(\mu_T(\uP))
  =\big(
    \psitop_{n-1}(\uP), \dots, \psitop_1(\uP), \psibot_{n-1}(\uP),\dots, \psibot_1(\uP)
    \big)
\]
where $\uP=(P_1,\dots,P_n)$ is a proper $n$-system and 
\[
 \psibot_m(\uP)=\liminf_{q\to\infty}\frac{P_1(q)+\cdots+P_m(q)}{q},
 \quad
 \psitop_m(\uP)=\limsup_{q\to\infty}\frac{P_1(q)+\cdots+P_m(q)}{q},
\]
for $m=1,\dots,n-1$.  Since the coordinates of $\uP(q)$ sum to $q$, this set $\cS$ 
is a compact subset of $[0,1]^{2(n-1)}$.  By \cite[Proposition 10.6]{PR2023}, $\cS$ is also 
the set of points 
\[
 \big( \iota(\homega_0(\uu)),\dots,\iota(\homega_{n-2}(\uu)),
         \iota(\omega_0(\uu)), \dots, \iota(\omega_{n-2}(\uu)) \big)
\]
where $\iota\colon[0,\infty]\to[0,1]$ is the homeomorphism defined just before the proposition
and where $\uu$ runs through the set of points of $K_\pw^n$ with linearly independent 
coordinates over $K$.  The conclusion follows since 
\[
 \psibot_m(\uP)=\iota(\chitop_m(\uP))
 \et
 \psitop_m(\uP)=\iota(\chibot_m(\uP))
\]
for any $m=1,\dots,n-1$ and any proper $n$-system $\uP$. 
\end{proof}

In particular, the above result shows that the \emph{spectrum} of 
$(\homega_0,\dots,\homega_{n-2}, \omega_0, \dots, \omega_{n-2})$ relative 
to the pair $K\subset K_\pw$ is independent of the choice of $K$ and $\pw$.
This in turn implies Proposition \ref{dr:prop}.   Another direct consequence is
the following statement which is crucial for the present study.

\begin{corollary}
\label{pgn:tool:cor}
Let $m$ be an integer with $1\le m\le n-1$.  The spectrum $\cS_{n-m,n}$ of\/ 
$( \homega_{n-m-1}, \omega_{n-m-1} )$ relative to the pair $K\subset K_\pw$
is the closed subset of $[0,\infty]^2$ made of the points
\[
 \big(\chibot_{m}(\uP), \chitop_{m}(\uP)\big)
\]
where $\uP$ is a proper non-degenerate $n$-system.  The points coming from
self-similar $n$-systems are dense in that set.
\end{corollary}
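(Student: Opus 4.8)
The plan is to deduce Corollary~\ref{pgn:tool:cor} directly from Proposition~\ref{pgn:prop:allexp} by a projection argument. Write $S\subseteq[0,\infty]^{2n-2}$ for the full spectrum described in Proposition~\ref{pgn:prop:allexp}, namely the set of points
\[
 \big(\chibot_{n-1}(\uP),\dots,\chibot_{1}(\uP),\chitop_{n-1}(\uP),\dots,\chitop_1(\uP)\big)
\]
as $\uP$ ranges over all proper $n$-systems (equivalently, over all proper non-degenerate $n$-systems, since by Proposition~\ref{pgn:prop:allexp} these give the same set). The spectrum $\cS_{n-m,n}$ of $(\homega_{n-m-1},\omega_{n-m-1})$ is, by Definition~\ref{dr:def:spectrum}, the image of the spectrum of the full sequence of $2n-2$ exponents under the coordinate projection onto the pair $(\homega_{n-m-1},\omega_{n-m-1})$. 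Matching indices with the ordering in \eqref{pgn:prop:allexp:eq} — where $\homega_{n-m-1}$ sits in the slot corresponding to $\chibot_m(\uP)$ and $\omega_{n-m-1}$ in the slot corresponding to $\chitop_m(\uP)$ — this projection sends the displayed point above to $\big(\chibot_m(\uP),\chitop_m(\uP)\big)$. Hence $\cS_{n-m,n}$ is exactly the set of pairs $\big(\chibot_m(\uP),\chitop_m(\uP)\big)$ where $\uP$ runs through all proper (equivalently, proper non-degenerate) $n$-systems, which is the first assertion.

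It remains to check closedness and the density statement. For closedness: $S$ is a closed subset of the compact Hausdorff space $[0,\infty]^{2n-2}$ by Proposition~\ref{pgn:prop:allexp}, hence compact; the coordinate projection $\pi\colon[0,\infty]^{2n-2}\to[0,\infty]^2$ onto the two slots indexed above is continuous; and the continuous image of a compact set in a Hausdorff space is compact, hence closed. So $\cS_{n-m,n}=\pi(S)$ is a closed subset of $[0,\infty]^2$. For density: Proposition~\ref{pgn:prop:allexp} asserts that the points of $S$ coming from self-similar $n$-systems form a dense subset $S_0\subseteq S$. Since $\pi$ is continuous and surjective onto $\pi(S)$, the image $\pi(S_0)$ is dense in $\pi(S)=\cS_{n-m,n}$; but $\pi(S_0)$ is precisely the set of pairs $\big(\chibot_m(\uP),\chitop_m(\uP)\big)$ with $\uP$ self-similar, so these are dense in $\cS_{n-m,n}$, as claimed.

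There is essentially no hard step here — the statement is a clean specialization of Proposition~\ref{pgn:prop:allexp} obtained by forgetting all but two coordinates. The only point requiring care is bookkeeping: one must verify that, under the index conventions fixed in Definition~\ref{dr:def:omega} and in the display \eqref{pgn:prop:allexp:eq}, the exponent pair $(\homega_{n-m-1},\omega_{n-m-1})$ really lands in the slots associated with $\big(\chibot_m,\chitop_m\big)$ rather than with some reflected index $m'=n-m$. Tracing through the proof of Proposition~\ref{pgn:prop:allexp}, the linear form $T_m$ is the sum of the first $n-m$ coordinates, so $\homega_{n-m-1}$ corresponds to $\psitop_m=\iota(\chibot_m)$ and $\omega_{n-m-1}$ to $\psibot_m=\iota(\chitop_m)$, confirming the pairing used above; applying $\iota^{-1}$ (which is the homeomorphism $[0,1]\to[0,\infty]$ inverse to $\iota$) to both slots returns $\big(\chibot_m(\uP),\chitop_m(\uP)\big)$. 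With this identification in hand, the corollary follows immediately.
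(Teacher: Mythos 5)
Your proof is correct and follows the same route the paper intends: the paper presents the corollary as a direct consequence of Proposition~\ref{pgn:prop:allexp}, obtained exactly by projecting the closed full spectrum (and its dense subset of points coming from self-similar systems) onto the two coordinates corresponding to $(\homega_{n-m-1},\omega_{n-m-1})$, with the same index matching $\homega_{n-m-1}\leftrightarrow\chibot_m$, $\omega_{n-m-1}\leftrightarrow\chitop_m$. Your bookkeeping of the slots and the compactness/density arguments are accurate, so nothing is missing.
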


%
%

\subsection{Duality}
\label{ssec:pgn:duality}
The above corollary describes, in terms of $n$-systems, the spectrum of a pair 
$( \homega_{n-m-1}, \omega_{n-m-1} )$ with $1\le m\le n-1$ .  
We will show below that the spectrum of the dual pair 
$( \homega_{m-1}, \omega_{m-1} )$ can be computed using essentially 
the same formulas in terms of another class of sign-free-$n$-systems which 
we call backwards $n$-systems, a convenient alternative to the dual 
$n$-systems introduced in \cite[Section 2.6]{PR2023}.    In section \ref{sec:fin}, 
we illustrate, in the classical setting where $K=\bQ$ and $K_\pw=\bR$, 
how this is naturally connected with Mahler's duality for convex bodies.  
We start with the following observation.

\begin{proposition}
\label{pgn:duality:prop1}
Let $I$ be a closed subinterval of\/ $\bR$ with non-empty interior and let
$\uP=(P_1,\dots,P_n)\colon I \to \bR^n$ be a sign-free-$n$-system on $I$.
Then $-I=\{-q\,;\, q\in I\}$ is a closed subinterval of $\bR$ with 
non-empty interior and the map $\uP^\vee=(P_1^\vee,\dots,P_n^\vee)
\colon -I \to \bR^n$ given by
\[
 \uP^\vee(q)=(-P_n(-q),\dots,-P_1(-q))
\]
for each $q\in-I$ is a sign-free-$n$-system on $-I$.
\end{proposition}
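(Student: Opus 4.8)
The plan is to verify directly that $\uP^\vee$ satisfies the three defining conditions (S1), (S2), (S3) of a sign-free-$n$-system. The key point to keep track of is how the reflection $q\mapsto -q$ together with the sign change and the reversal of the order of the components interact with each of these properties. First I would observe the elementary fact that $-I$ is again a closed interval with non-empty interior, and that if $f\colon I\to\bR$ is continuous and piecewise affine with slopes in $\{0,1\}$, then $g(q)=-f(-q)$ is continuous and piecewise affine on $-I$ with slopes again in $\{0,1\}$: indeed the precomposition with $q\mapsto -q$ negates slopes, and the outer negation negates them once more, so a slope $s\in\{0,1\}$ of $f$ becomes a slope $-(-s)=s$ of $g$. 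Applying this to each $P_i$ gives that every $P_i^\vee(q)=-P_{n+1-i}(-q)$ is continuous, piecewise affine with slopes in $\{0,1\}$, which is (S1).

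For (S2), I would note that $P_1(q)\le\cdots\le P_n(q)$ for $q\in I$ gives, upon negating and reversing, $-P_n(q)\le\cdots\le-P_1(q)$, so at $-q\in -I$ we read $P_1^\vee(-q)\le\cdots\le P_n^\vee(-q)$; this holds for every point of $-I$. The normalization $P_1^\vee(q)+\cdots+P_n^\vee(q)=-\bigl(P_1(-q)+\cdots+P_n(-q)\bigr)=-(-q)=q$ follows immediately from the corresponding normalization for $\uP$. The substantive step is (S3). Here the crucial bookkeeping is that the partial sum $P_1^\vee+\cdots+P_j^\vee$ corresponds, under the reflection, to a complementary partial sum of $\uP$: since
\[
 (P_1^\vee+\cdots+P_j^\vee)(q) = -\bigl(P_{n-j+1}(-q)+\cdots+P_n(-q)\bigr)
   = (P_1+\cdots+P_{n-j})(-q) - (-q),
\]
using $P_1(-q)+\cdots+P_n(-q)=-q$. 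Writing $k=n-j$, so $1\le k\le n-1$ exactly when $1\le j\le n-1$, the function $q\mapsto (P_1^\vee+\cdots+P_j^\vee)(q)$ equals $q\mapsto (P_1+\cdots+P_k)(-q)+q$. Differentiating, its slope at an interior point $q$ of $-I$ is $-(\text{slope of }P_1+\cdots+P_k\text{ at }-q)+1$. Hence a slope change of $P_1^\vee+\cdots+P_j^\vee$ from $1$ to $0$ at $q$ corresponds precisely to a slope change of $P_1+\cdots+P_k$ from $0$ to $1$ at $-q$.

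At such a point $-q$, I would then invoke the description of sign-free-$n$-systems: condition (S3) for $\uP$ constrains only the downward (from $1$ to $0$) slope changes of $P_1+\cdots+P_k$, not the upward ones, so (S3) itself gives nothing here; instead one uses the structure recorded in Lemma~\ref{pgn:lemma:combined_graph}, which applies on a subinterval around $-q$ with no switch number in its interior except possibly $-q$. Actually the cleaner route is: since $\uP$ is a sign-free-$n$-system, over a small interval to the left of $-q$ some single component $P_s$ has slope $1$ and lies at position $\ge k+1$ in the ordering while, over a small interval to the right, the component of slope $1$ moves to position $\le k$; continuity of all $P_i$ forces the two relevant components to agree at $-q$, which is exactly the equality $P_{n-j}( -q)=P_{n-j+1}(-q)$, i.e. $P_j^\vee(q)=P_{j+1}^\vee(q)$ as required by (S3) for $\uP^\vee$. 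The main obstacle is making this last matching argument precise: one must carefully track which component carries slope $1$ immediately before and after $-q$ and argue that an upward slope change of $P_1+\cdots+P_k$ of $\uP$ at $-q$ forces, by continuity and the monotone ordering $P_1\le\cdots\le P_n$, the equality $P_k(-q)=P_{k+1}(-q)$ — this is the sign-free analogue of the "going-up means equality at the crossing" phenomenon, and it deserves to be spelled out rather than asserted. Everything else is routine.
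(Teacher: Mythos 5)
Your verifications of (S1) and (S2) are fine and agree with the paper's. The problem is in (S3), and it is a genuine error rather than a missing detail. Your identity $(P_1^\vee+\cdots+P_j^\vee)(q)=(P_1+\cdots+P_{n-j})(-q)+q$ is correct, but the slope bookkeeping is not: the reflection $t\mapsto -t$ both transforms slopes by $s\mapsto 1-s$ (as you note) \emph{and} reverses the orientation of the axis, i.e.\ it swaps ``just before'' and ``just after''. Writing $S=P_1+\cdots+P_{n-j}$ and supposing $P_1^\vee+\cdots+P_j^\vee$ has slope $1$ on $[q-\epsilon,q]$ and slope $0$ on $[q,q+\epsilon]$, one finds that $S$ is constant on $[-q,-q+\epsilon]$ and has slope $1$ on $[-q-\epsilon,-q]$; so $S$ changes slope from $1$ to $0$ at $-q$, not from $0$ to $1$ as you claim. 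With the correct direction, condition (S3) for $\uP$ (applied with the index $n-j$) immediately gives $P_{n-j}(-q)=P_{n-j+1}(-q)$, i.e.\ $P_j^\vee(q)=P_{j+1}^\vee(q)$, and the proof closes; this is exactly the paper's argument.

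Because of the sign slip you instead set out to show that an \emph{upward} ($0$ to $1$) slope change of $P_1+\cdots+P_k$ at $-q$ forces $P_k(-q)=P_{k+1}(-q)$, and that auxiliary claim is false. An upward slope change of a partial sum is precisely what happens at a switch number, where the adjacent components may be strictly separated: already for $n=2$, the component $P_1$ may pick up slope $1$ at a point where $P_1<P_2$ strictly (the slope-$1$ component jumps from position $2$ to position $1$ with no equality), so continuity does not ``force the two relevant components to agree'' at the crossing. Hence the route you sketch cannot be completed as stated; the desired conclusion is rescued only by noting that the slope change of the complementary sum is in fact downward, where (S3) for $\uP$ applies directly.
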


\begin{proof}
For each $j=1,\dots,n$, the $j$-th component $P_j^\vee$ of $\uP^\vee$ 
is given by 
\[
 P_j^\vee(q)=-P_{n-j+1}(-q)
\] 
for each $q\in -I$.  It is
continuous and, for each closed subinterval $J$ of $I$ of positive length on
which $P_{n-j+1}$ is affine, the restriction of $P_j^\vee$ to $-J$ is affine 
of the same slope.  Thus,  $P_j^\vee$ is piecewise affine with slopes 
in $\{0,1\}$.  This shows that $\uP^\vee$ meets condition (S1) in 
Definition~\ref{pgn:def:signfree}.  Condition (S2) is immediate since, for
each $q\in-I$, we have $-q\in I$, thus
\[
 -P_n(-q)\le \dots\le -P_1(-q)
 \et
 -P_n(-q)-\dots-P_1(-q)=-(-q)=q.
\]
To check condition (S3), suppose that, 
for some $j\in\{1,\dots,n-1\}$, the sum  $P_1^\vee+\cdots+P_j^\vee$ 
changes slope from $1$ to $0$ at an interior point $q$ of $-I$.   Then
there exists $\epsilon>0$ such that  $P_1^\vee+\cdots+P_j^\vee$ is
affine of slope $1$ on $[q-\epsilon,q]$ and constant on $[q,q+\epsilon]$.
Since 
\[
 P_1^\vee(t)+\cdots+P_j^\vee(t)=t+P_1(-t)+\cdots+P_{n-j}(-t)
\]
for each $t\in -I$, it follows that  $P_1+\cdots+P_{n-j}$ is constant on
$[-q,-q+\epsilon]$ while it has slope $1$ on $[-q-\epsilon,-q]$. Thus, the last 
sum changes slope from $1$ to $0$ at $-q$, and so $P_{n-j}(-q)=P_{n-j+1}(-q)$,
which translates into $P_j^\vee(q)=P_{j+1}^\vee(q)$.
\end{proof}

\begin{definition}
\label{pgn:duality:def1}
With the notation of Proposition \ref{pgn:duality:prop1}, we call 
$\uP^\vee\colon -I \to \bR^n$ the \emph{opposite} of $\uP\colon I \to \bR^n$.
We say that $\uP$ is a \emph{backwards $n$-system} (resp.~\emph{a proper 
backwards $n$-system}) if $\uP^\vee$ is an $n$-system (resp.~a proper $n$-system).
\end{definition}

Clearly, we have $(\uP^\vee)^\vee=\uP$ for each sign-free-$n$-system
$\uP$.  Backwards $n$-systems have their domain contained in $(-\infty,0]$, and 
they admit the following alternative characterization.

\begin{lemma}
\label{pgn:duality:lemma1}
A backwards $n$-system is a sign-free-$n$-system $\uP=(P_1,\dots,P_n)
\colon I \to\bR^n$ such that $P_n(q)\le 0$ for each $q\in I$.  It is 
proper if $I=(-\infty,q_0]$ for some $q_0\le 0$ and if $P_n(q)$ goes to
$-\infty$ with $q$.
\end{lemma}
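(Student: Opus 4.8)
The plan is to unwind Definition \ref{pgn:duality:def1} through the formula $P_j^\vee(q) = -P_{n-j+1}(-q)$ established in the proof of Proposition \ref{pgn:duality:prop1}, and to compare the defining condition of an $n$-system (namely $P_1 \ge 0$, from Definition \ref{pgn:def:signfree}) and that of a proper $n$-system (Definition \ref{pgn:def:systems}) against the corresponding conditions on $\uP$. For the first assertion, I would argue as follows. By Proposition \ref{pgn:duality:prop1}, for any sign-free-$n$-system $\uP$ on $I$, the opposite $\uP^\vee$ is automatically a sign-free-$n$-system on $-I$, and $(\uP^\vee)^\vee = \uP$. So $\uP$ is a backwards $n$-system, i.e.\ $\uP^\vee$ is an $n$-system, if and only if the first component $P_1^\vee$ of $\uP^\vee$ is everywhere nonnegative. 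Since $P_1^\vee(q) = -P_n(-q)$ for $q \in -I$, the condition $P_1^\vee \ge 0$ on $-I$ is exactly the condition $P_n(q) \le 0$ for all $q \in I$ (substituting $q \mapsto -q$, which is a bijection $-I \to I$). This proves the first sentence.

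For the second assertion I would proceed in the same spirit. By Definition \ref{pgn:def:systems}, $\uP^\vee$ is a proper $n$-system precisely when it is an $n$-system whose domain $-I$ has the form $[q_0', \infty)$ for some $q_0' \ge 0$ and whose first component $P_1^\vee(q)$ tends to $\infty$ as $q \to \infty$. Translating: $-I = [q_0', \infty)$ with $q_0' \ge 0$ means $I = (-\infty, q_0]$ with $q_0 = -q_0' \le 0$; and $P_1^\vee(q) = -P_n(-q) \to \infty$ as $q \to +\infty$ is the same as $P_n(t) \to -\infty$ as $t \to -\infty$, i.e.\ $P_n(q)$ goes to $-\infty$ with $q$ in the sense that $q$ decreases to $-\infty$ (note $q \in I = (-\infty, q_0]$, so ``$q$ goes to $\infty$'' within $I$ is impossible; the natural unbounded direction in $I$ is $q \to -\infty$, which is the intended reading and should perhaps be made explicit). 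Combining this with the first assertion (which already guarantees $\uP^\vee$ is an $n$-system once $P_n \le 0$ on $I$), we get that $\uP$ is a proper backwards $n$-system iff $I = (-\infty, q_0]$ for some $q_0 \le 0$ and $P_n(q) \to -\infty$.

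The argument is entirely formal once the identity $P_j^\vee(q) = -P_{n-j+1}(-q)$ is in hand, so there is no real obstacle; the only point requiring a little care is the bookkeeping on domains and on the direction in which the parameter goes to infinity. In particular, I would take care to state that for a backwards $n$-system the domain lies in $(-\infty, 0]$ (as already noted in the text right before the lemma), so that the relevant limit is $q \to -\infty$, and that the nonnegativity of $P_1^\vee$ is equivalent, via the order-reversing nature of the coordinate flip in $\uP^\vee(q) = (-P_n(-q), \dots, -P_1(-q))$, to the sign condition on the \emph{last} component $P_n$ of $\uP$ rather than the first.
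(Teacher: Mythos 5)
Your argument is correct: the paper states this lemma without proof, treating it as an immediate unwinding of Definition \ref{pgn:duality:def1} via the identity $P_j^\vee(q)=-P_{n-j+1}(-q)$, which is exactly what you carry out, including the correct bookkeeping that $P_1^\vee\ge 0$ corresponds to $P_n\le 0$ and that the relevant limit is $q\to-\infty$. Nothing is missing; your write-up simply makes explicit what the authors left to the reader.
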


The invariants \eqref{pgn:eq:chi} attached to proper $n$-systems have natural analogs 
for backwards $n$-system, obtained by letting $q$ tend to $-\infty$ instead of $+\infty$.  Thus, 
for each proper backwards $n$-system $\uP=(P_1,\dots,P_n)$ and each $m=1,\dots,n-1$, 
we set
\begin{equation}
\label{pgn:eq:chi:-infty}
 \chibot_m(\uP)=\liminf_{q\to-\infty}\frac{P_{m+1}(q)+\cdots+P_n(q)}{P_1(q)+\cdots+P_m(q)},
 \quad
 \chitop_m(\uP)=\limsup_{q\to-\infty}\frac{P_{m+1}(q)+\cdots+P_n(q)}{P_1(q)+\cdots+P_m(q)}.
\end{equation}
We view the next statement as an expression of duality.

\begin{proposition}
\label{pgn:duality:prop2}
Let $\uP\colon [q_0,\infty) \to \bR^n$ be a proper $n$-system.
For each $m=1,\dots,n-1$, we have
\[
 \chibot_{n-m}(\uP) = \chitop_{m}(\uP^\vee)^{-1}
 \et
 \chitop_{n-m}(\uP) = \chibot_{m}(\uP^\vee)^{-1}.
\]
\end{proposition}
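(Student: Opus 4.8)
The plan is to reduce the claimed identities to a purely algebraic manipulation of the limit quotients in \eqref{pgn:eq:chi} and \eqref{pgn:eq:chi:-infty}, exploiting the explicit formula $P_j^\vee(q)=-P_{n-j+1}(-q)$ from the proof of Proposition~\ref{pgn:duality:prop1}. First I would fix $m\in\{1,\dots,n-1\}$ and set $k=n-m$, and write, for $q\ge q_0$,
\[
 A_k(q) := P_1(q)+\cdots+P_k(q), \qquad B_k(q) := P_{k+1}(q)+\cdots+P_n(q),
\]
so that $A_k(q)+B_k(q)=q$ by (S2) and $\chibot_{k}(\uP)=\liminf_{q\to\infty}B_k(q)/A_k(q)$, $\chitop_{k}(\uP)=\limsup_{q\to\infty}B_k(q)/A_k(q)$. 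On the dual side, with $\uP^\vee$ defined on $-I=(-\infty,-q_0]$, the same formula gives for $t\le -q_0$
\[
 P_1^\vee(t)+\cdots+P_m^\vee(t) = -\big(P_{n-m+1}(-t)+\cdots+P_n(-t)\big) = -B_k(-t),
\]
and likewise $P_{m+1}^\vee(t)+\cdots+P_n^\vee(t) = -A_k(-t)$. Hence the defining quotient in \eqref{pgn:eq:chi:-infty} for $\uP^\vee$ at index $m$ is
\[
 \frac{P_{m+1}^\vee(t)+\cdots+P_n^\vee(t)}{P_1^\vee(t)+\cdots+P_m^\vee(t)} = \frac{-A_k(-t)}{-B_k(-t)} = \frac{A_k(-t)}{B_k(-t)},
\]
which is exactly the reciprocal of $B_k(q)/A_k(q)$ under the substitution $q=-t$.

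Next I would observe that, since $\uP$ is a proper $n$-system, $A_k(q)\to\infty$ as $q\to\infty$ (because $P_1(q)\to\infty$), so the quotient $B_k(q)/A_k(q)$ is eventually a well-defined element of $[0,\infty)$, and the reciprocal map $x\mapsto x^{-1}$ is a homeomorphism of $[0,\infty]$ onto itself that interchanges $\liminf$ and $\limsup$: for any function $f$ with values in $(0,\infty]$ one has $\liminf f^{-1} = (\limsup f)^{-1}$ and $\limsup f^{-1} = (\liminf f)^{-1}$. Letting $t\to-\infty$ corresponds to $q=-t\to+\infty$, so applying this to $f(q)=B_k(q)/A_k(q)$ yields
\[
 \chitop_m(\uP^\vee) = \limsup_{t\to-\infty} \frac{A_k(-t)}{B_k(-t)} = \Big(\liminf_{q\to\infty}\frac{B_k(q)}{A_k(q)}\Big)^{-1} = \chibot_{k}(\uP)^{-1} = \chibot_{n-m}(\uP)^{-1},
\]
and symmetrically $\chibot_m(\uP^\vee) = \chitop_{n-m}(\uP)^{-1}$. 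Rearranging gives precisely $\chibot_{n-m}(\uP)=\chitop_m(\uP^\vee)^{-1}$ and $\chitop_{n-m}(\uP)=\chibot_m(\uP^\vee)^{-1}$, as claimed.

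There is really only one point that needs care, and it is the one I would flag as the main (minor) obstacle: checking that $\uP^\vee$ is genuinely a \emph{proper} backwards $n$-system so that the quantities in \eqref{pgn:eq:chi:-infty} are defined for it, and that the reciprocal identities between $\liminf$ and $\limsup$ are valid in the compactified interval $[0,\infty]$ including the endpoint values $0$ and $\infty$. The first is immediate from Definition~\ref{pgn:duality:def1} together with Lemma~\ref{pgn:duality:lemma1}: $\uP$ proper means $(\uP^\vee)^\vee=\uP$ is a proper $n$-system, so $\uP^\vee$ is by definition a proper backwards $n$-system, with domain $(-\infty,-q_0]$ and $P_n^\vee(t)=-P_1(-t)\to-\infty$ as $t\to-\infty$; in particular $-(P_1^\vee+\cdots+P_m^\vee)(t)=B_k(-t)$ and $A_k(-t)$ both tend to $+\infty$, so no division by zero occurs in the tail. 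The second is the elementary fact, handled by a short case analysis on whether the relevant $\liminf$/$\limsup$ equals $0$, is finite and positive, or equals $\infty$, using the convention $0^{-1}=\infty$ and $\infty^{-1}=0$ consistent with the topology on $[0,\infty]$ fixed before Proposition~\ref{pgn:prop:allexp}. With those two verifications in place the proof is complete.
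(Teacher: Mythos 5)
Your proposal is correct and follows essentially the same route as the paper: the paper's proof consists precisely of the pointwise identity expressing the defining quotient for $\uP$ at $q$ as the reciprocal of the one for $\uP^\vee$ at $-q$, valid once $0<P_1(q)\le\cdots\le P_n(q)$, from which the swap of $\liminf$ and $\limsup$ under reciprocation gives the result. Your additional verifications (properness of $\uP^\vee$, eventual positivity of the sums, behaviour of reciprocals on $[0,\infty]$) are exactly the details the paper leaves implicit.
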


\begin{proof}
Writing $\uP=(P_1,\dots,P_n)$ and $\uP^\vee=(P_1^\vee,\dots,P_n^\vee)$, this follows 
from the identity
\[
 \frac{P_{n-m+1}(q)+\cdots+P_n(q)}{P_1(q)+\cdots+P_{n-m}(q)}
 = \Big( \frac{P^\vee_{m+1}(-q)+\cdots+P^\vee_n(-q)}%
                   {P_1^\vee(-q)+\cdots+P_{m}^\vee(-q)} \Big)^{-1}
\]
valid for each $q\ge q_0$ large enough so that $0<P_1(q)\le \cdots\le P_n(q)$,
a condition which is equivalent to $P_1^\vee(-q)\le \cdots\le P_n^\vee(-q)<0$.
\end{proof}

Combining this proposition with Corollary \ref{pgn:tool:cor}, we obtain the following.

\begin{corollary}
\label{pgn:duality:cor}
Let $m$ be an integer with $1\le m\le n-1$.  The spectrum $\cS_{m,n}$ of\/ 
$( \homega_{m-1}, \omega_{m-1} )$ relative to the pair $K\subset K_\pw$
is the closed subset of $[0,\infty]^2$ made of the points
\[
 \big(\chitop_{m}(\uP)^{-1}, \chibot_{m}(\uP)^{-1}\big)
\]
where $\uP$ is a proper non-degenerate backwards $n$-system.  
\end{corollary}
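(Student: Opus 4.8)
The plan is to deduce Corollary~\ref{pgn:duality:cor} directly from Corollary~\ref{pgn:tool:cor} and Proposition~\ref{pgn:duality:prop2}, using the fact that $\uP \mapsto \uP^\vee$ is an involution on sign-free-$n$-systems that exchanges proper $n$-systems with proper backwards $n$-systems. First, by Corollary~\ref{pgn:tool:cor} applied with the roles as stated there (replacing its ``$m$'' by ``$n-m$'', which is legitimate since $1 \le n-m \le n-1$ whenever $1 \le m \le n-1$), the spectrum $\cS_{m,n}$ of $(\homega_{m-1},\omega_{m-1})$ is the closed subset of $[0,\infty]^2$ consisting of the points $\big(\chibot_{n-m}(\uQ),\chitop_{n-m}(\uQ)\big)$ where $\uQ$ ranges over all proper non-degenerate $n$-systems.

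Next I would pass to the opposite system. Given a proper non-degenerate $n$-system $\uQ$ on some $[q_0,\infty)$, its opposite $\uQ^\vee$ is, by Definition~\ref{pgn:duality:def1} and Proposition~\ref{pgn:duality:prop1}, a sign-free-$n$-system on $-[q_0,\infty) = (-\infty,-q_0]$; since $(\uQ^\vee)^\vee = \uQ$ is a proper $n$-system, $\uQ^\vee$ is by definition a proper backwards $n$-system. Non-degeneracy is preserved: the differentiability points of $\uQ^\vee$ at an interior point $q$ correspond to differentiability of $\uQ$ at $-q$, and $P^\vee_1(q) < \cdots < P^\vee_n(q)$ is exactly the reversal of $P_1(-q) < \cdots < P_n(-q)$ (recall $P^\vee_j(q) = -P_{n-j+1}(-q)$). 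Conversely, if $\uP$ is any proper non-degenerate backwards $n$-system, then $\uP^\vee$ is a proper $n$-system, and it is non-degenerate by the same symmetric argument; moreover $(\uP^\vee)^\vee = \uP$. Hence $\uQ \mapsto \uQ^\vee$ is a bijection between the set of proper non-degenerate $n$-systems and the set of proper non-degenerate backwards $n$-systems, with inverse $\uP \mapsto \uP^\vee$.

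Now I apply Proposition~\ref{pgn:duality:prop2}, which gives, for $\uP$ a proper $n$-system and each $j$ with $1 \le j \le n-1$, the identities $\chibot_{n-j}(\uP) = \chitop_{j}(\uP^\vee)^{-1}$ and $\chitop_{n-j}(\uP) = \chibot_{j}(\uP^\vee)^{-1}$. Taking $\uP = \uQ^\vee$ (so that $\uP^\vee = \uQ$) and $j = m$, this reads $\chibot_{n-m}(\uQ^\vee) = \chitop_{m}(\uQ)^{-1}$ and $\chitop_{n-m}(\uQ^\vee) = \chibot_{m}(\uQ)^{-1}$. Writing $\uP := \uQ^\vee$, which ranges over exactly the proper non-degenerate backwards $n$-systems as $\uQ$ ranges over the proper non-degenerate $n$-systems, the point $\big(\chibot_{n-m}(\uQ),\chitop_{n-m}(\uQ)\big) = \big(\chitop_m(\uP)^{-1},\chibot_m(\uP)^{-1}\big)$ by applying the two displayed identities once more with the roles of $\uQ$ and $\uP = \uQ^\vee$ interchanged (using $(\uP)^\vee = \uQ$). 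Thus the set described in Corollary~\ref{pgn:tool:cor} coincides with $\big\{ \big(\chitop_m(\uP)^{-1},\chibot_m(\uP)^{-1}\big) : \uP \text{ a proper non-degenerate backwards } n\text{-system} \big\}$, and since the former equals $\cS_{m,n}$ and is closed, so is the latter. This is exactly the assertion of the corollary.

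The only genuinely delicate point is keeping the index bookkeeping straight: Proposition~\ref{pgn:duality:prop2} is stated for $\uP$ a proper $n$-system with conclusions about $\uP^\vee$, so one must decide whether to feed it the $n$-system $\uQ$ (getting statements about the backwards system $\uQ^\vee$) or the $n$-system $\uP^\vee$ where $\uP$ is the backwards system (getting statements about $\uP$); both work and lead to the same answer, but one should present just one of them cleanly. I would also note in passing that $\chibot_m(\uP)$ and $\chitop_m(\uP)$ for a proper backwards $n$-system are the quantities defined in \eqref{pgn:eq:chi:-infty}, i.e.\ the $q \to -\infty$ analogues, which is precisely how Proposition~\ref{pgn:duality:prop2} uses them. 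Everything else is a formal consequence of the involution property $(\uP^\vee)^\vee = \uP$ and the fact, already established, that $\cS_{m,n}$ is closed.
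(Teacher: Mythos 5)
Your argument is correct and is essentially the paper's own route: the paper simply states that Corollary~\ref{pgn:duality:cor} follows by combining Proposition~\ref{pgn:duality:prop2} with Corollary~\ref{pgn:tool:cor} (with $m$ replaced by $n-m$), and you supply exactly the needed details, namely that $\uQ\mapsto\uQ^\vee$ is an involution matching proper non-degenerate $n$-systems with proper non-degenerate backwards $n$-systems and the resulting identities $\chibot_{n-m}(\uQ)=\chitop_m(\uQ^\vee)^{-1}$, $\chitop_{n-m}(\uQ)=\chibot_m(\uQ^\vee)^{-1}$. The only cosmetic blemish is the intermediate substitution ``$\uP=\uQ^\vee$'' fed literally into Proposition~\ref{pgn:duality:prop2} (whose hypothesis requires an $n$-system, not a backwards one); the clean presentation, which you yourself indicate at the end, is to apply the proposition once to the $n$-system $\uQ$ with index $m$.
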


%
%

\section{A partition} 
\label{sec:part}
Let $m$ and $n$ be integers with $1\le m\le n-1$. 
In this section, we show how to compute the quantities $\chibot_{m}(\uP)$ 
and $\chitop_{m}(\uP)$, defined in \eqref{pgn:eq:chi} and in \eqref{pgn:eq:chi:-infty},
in terms of a partition of the domain of $\uP$.   We begin with the following observation.

\begin{lemma}
\label{part:lemma:Am}
Let $\uP=(P_1,\dots,P_n)\colon I\to\bR^n$ be a non-degenerate 
sign-free-$n$-system on a closed subinterval $I$ of $\bR$ with non-empty 
interior.  Then
\begin{equation}
 \label{part:lemma:Am:eq}
 \cA_m(\uP)=\{a\in I\,;\, P_m(a)=P_{m+1}(a)\}
\end{equation}
is a closed discrete subset of $I$ (i.e.\ has finite intersection with any bounded 
subinterval of $I$). An interior point $a$ of $I$ belongs to 
$\cA_m(\uP)$ if and only if the sum $S_m^-=P_1+\cdots+P_m$ changes slope 
from $1$ to $0$ around $a$.
\end{lemma}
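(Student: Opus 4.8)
The plan is to establish the two assertions separately, using the local structure provided by Lemma \ref{pgn:lemma:combined_graph}. First I would address discreteness of $\cA_m(\uP)$. Fix a bounded subinterval $J\subseteq I$; it suffices to show $\cA_m(\uP)\cap J$ is finite. Since $\uP$ is non-degenerate, Lemma \ref{pgn:lemma:combined_graph} tells us that between consecutive switch numbers the combined graph consists of $n-1$ horizontal segments and one segment of slope $1$, and in that regime the equality $P_m(q)=P_{m+1}(q)$ can hold for at most one value of $q$ (the unique crossing of the slope-$1$ segment past the relevant horizontal level, if it occurs at all). There are only finitely many switch numbers in the bounded interval $J$ (this is part of the definition of a sign-free-$n$-system via (S1): piecewise affine on a bounded interval means finitely many pieces), so $\cA_m(\uP)\cap J$ is a finite union of at-most-singletons plus possibly the switch numbers themselves, hence finite. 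Being discrete and closed in $I$ (the defining condition $P_m=P_{m+1}$ is closed since the $P_i$ are continuous) gives the first claim.

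For the characterization of interior points $a\in\cA_m(\uP)$, I would argue both directions. Recall from the discussion after Definition \ref{pgn:def:signfree} that $S_m^-=P_1+\cdots+P_m$ is continuous, piecewise affine with slopes in $\{0,1\}$, and at an interior point it can only change slope from $0$ to $1$ or from $1$ to $0$. Suppose $S_m^-$ changes slope from $1$ to $0$ at $a$. On $[a-\epsilon,a]$ the slope-$1$ coordinate of $\uP$ is among $P_1,\dots,P_m$, and on $[a,a+\epsilon]$ it is among $P_{m+1},\dots,P_n$; by non-degeneracy and continuity the only way the slope-$1$ behaviour can switch from the lower block to the upper block is for the ascending coordinate to pass from index $m$ to index $m+1$, which forces $P_m(a)=P_{m+1}(a)$, so $a\in\cA_m(\uP)$. (This is exactly condition (S3) in action, together with non-degeneracy ruling out the coincidence of more than two coordinates.) Conversely, if $a$ is an interior point with $P_m(a)=P_{m+1}(a)$, then by non-degeneracy $\uP$ is not differentiable at $a$ — if it were, all coordinates would be distinct there — so at least one partial sum changes slope at $a$; a short case analysis of which coordinate carries slope $1$ on each side of $a$, using that the values $P_1(a)<\cdots<P_{m-1}(a)<P_m(a)=P_{m+1}(a)<P_{m+2}(a)<\cdots$ are otherwise distinct, shows the slope-$1$ coordinate must move from index $m$ to index $m+1$ across $a$, which is precisely $S_m^-$ changing slope from $1$ to $0$.

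The main obstacle I anticipate is the careful bookkeeping in the converse direction of the characterization: one must rule out the scenario where some \emph{other} partial sum $S_j^-$ with $j\neq m$ changes slope at $a$ while $S_m^-$ does not, i.e.\ where the slope-$1$ coordinate jumps across an index other than $m$. Non-degeneracy is the key here — it guarantees that at the non-differentiable point $a$ the coordinates $P_i(a)$ for $i\neq m,m+1$ are all strictly separated from each other and from the common value $P_m(a)=P_{m+1}(a)$, so Lemma \ref{pgn:lemma:combined_graph} applied on small intervals to the left and right of $a$ pins down exactly which segment has slope $1$ on each side, and continuity of the combined graph forces the only admissible transition to be through the $m$/$m+1$ level. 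I would present this as a clean local picture rather than an exhaustive enumeration. The discreteness argument should be routine once the "at most one crossing per regular interval" observation is stated.
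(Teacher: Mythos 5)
Your proposal is correct and essentially matches the paper's proof: one direction of the characterization is exactly condition (S3), and the other uses non-degeneracy together with a one-sided slope analysis to show that the slope-$1$ coordinate must pass from index $m$ to index $m+1$ across $a$, so that $S_m^-$ changes slope from $1$ to $0$. The only differences are organizational and harmless — the paper deduces closedness and discreteness of $\cA_m(\uP)$ from this characterization (slope-change points of the piecewise affine $S_m^-$ form a discrete set) rather than directly from the combined-graph structure, and the strict separation at $a$ of the coordinates $P_i(a)$ with $i\neq m,m+1$ that you invoke is true but requires the same one-sided slope argument (non-degeneracy alone speaks only of differentiable points) and is in fact not needed for the conclusion.
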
 

\begin{proof}
It suffices to prove the second assertion as it implies the first.  So, let $a$
be an interior point of $I$.  If $a\in \cA_m(\uP)$, then, as $\uP$ is non-degenerate,
it is not differentiable at $a$.  More precisely, around the point $a$, its component 
$P_m$ changes slope from $1$ to $0$, $P_{m+1}$ changes slope from $0$ to $1$,
and all the other components are constant, thus $S_m^-$ changes slope 
from $1$ to $0$ around $a$.  Conversely, by definition of a sign-free-$n$-system, we 
have $a\in\cA_m(\uP)$ if $S_m^-$ changes slope from $1$ to $0$ around $a$.
\end{proof}

Thus, the set $\cA_m(\uP)$ provides a partition of the domain $I$ of $\uP$. 

\begin{definition}
\label{part:def}
For $\uP$ as in Lemma \ref{part:lemma:Am}, we say that the set $\cA_m(\uP)$ 
given by \eqref{part:lemma:Am:eq} is the set of \emph{$m$-division numbers} of 
$\uP$.  We denote by $\cI_m(\uP)$ the set of finite intervals $[a,b]$ with end 
points $a\le b$ in $\cA_m(\uP)$.   We say that an interval in $\cI_m(\uP)$ is \emph{simple} 
if its interior is non-empty and does not contain an element of $\cA_m(\uP)$. 
\end{definition}

Simple intervals are important because of the following property.

\begin{lemma}
\label{part:lemma:simple}
Let $\uP=(P_1,\dots,P_n)$ be a non-degenerate sign-free-$n$-system. 
Suppose that $[a,b]$ is a simple interval in $\cI_m(\uP)$.  Then there exists
$t\in(a,b)$ such that $P_1+\dots+P_m$ is constant on $[a,t]$, and affine of 
slope $1$ on $[t,b]$.  The number $t$ is given by
\begin{equation}
 \label{type:lemma:eq}
 t = P_1(a)+\cdots+P_m(a)+P_{m+1}(b)+\cdots+P_n(b).
\end{equation}
\end{lemma}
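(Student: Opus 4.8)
The plan is to track the behaviour of the sum $S_m^- = P_1+\cdots+P_m$ on the simple interval $[a,b]$ using Lemma~\ref{part:lemma:Am}. Since $\uP$ is a non-degenerate sign-free-$n$-system and $[a,b]\in\cI_m(\uP)$ is simple, the open interval $(a,b)$ contains no $m$-division number, so by the second assertion of Lemma~\ref{part:lemma:Am}, $S_m^-$ does not change slope from $1$ to $0$ at any interior point of $[a,b]$. As noted in the discussion following Definition~\ref{pgn:def:signfree}, each partial sum $P_1+\cdots+P_j$ is continuous and piecewise affine with slopes in $\{0,1\}$, and at an interior point it can only switch between these two slopes. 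Hence on $(a,b)$ the sum $S_m^-$ can only change slope from $0$ to $1$. First I would rule out the case that $S_m^-$ has slope $1$ on a neighbourhood of $a$ inside $[a,b]$: if it did, then since it cannot revert to slope $0$, it would be affine of slope $1$ on all of $[a,b]$; but $a$ is an $m$-division number, so (looking just to the left of $a$, if $a$ is interior to the domain $I$, or using the structure at a boundary point) we need $P_m(a)=P_{m+1}(a)$, and I would argue that combined with non-degeneracy this forces $S_m^-$ to have slope $0$ immediately to the right of $a$ as well — more carefully, the component picture at an $m$-division number (from the proof of Lemma~\ref{part:lemma:Am}) is that $P_m$ has slope $1$ on the left and slope $0$ on the right, so $S_m^-$ starts with slope $0$ at $a$.

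Having established that $S_m^-$ starts with slope $0$ at $a$ and is non-decreasing-slope on $(a,b)$, there is a unique point $t\in[a,b]$ where it switches from slope $0$ to slope $1$, with $S_m^-$ constant on $[a,t]$ and affine of slope $1$ on $[t,b]$. Next I would check $t\in(a,b)$ strictly, i.e.\ $a<t<b$. For $t<b$: at the right endpoint $b$, again $b\in\cA_m(\uP)$, and symmetrically the component picture there has $P_m$ with slope $0$ on the left and slope $1$ on the right; so $P_m$, hence $S_m^-$, has slope $1$ just to the left of $b$, which forces $t<b$. For $a<t$: if $t=a$ then $S_m^-$ has slope $1$ on all of $(a,b)$, contradicting what was just shown at $a$. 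This pins down the qualitative statement.

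It remains to compute $t$ via \eqref{type:lemma:eq}. The key identity is that the coordinate-sorting structure of $\uP$ on $[a,t]$ and on $[t,b]$ is controlled by Lemma~\ref{pgn:lemma:combined_graph}, applied to the simple subinterval $[a,b]$ (which has no switch number — hence no $m$-division number — in its interior in the relevant sense): on $[a,t]$, the components $P_1,\dots,P_m$ are all constant, so $S_m^-$ equals its value at $a$, namely $P_1(a)+\cdots+P_m(a)$; on $[t,b]$, the components $P_{m+1},\dots,P_n$ are all constant, so $P_{m+1}+\cdots+P_n$ equals its value at $b$, namely $P_{m+1}(b)+\cdots+P_n(b)$. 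Using (S2), $S_m^-(b) = b - \bigl(P_{m+1}(b)+\cdots+P_n(b)\bigr)$, while $S_m^-(b) = S_m^-(t) + (b-t) = \bigl(P_1(a)+\cdots+P_m(a)\bigr) + (b-t)$. Equating the two expressions and solving for $t$ gives exactly \eqref{type:lemma:eq}. I expect the main obstacle to be the bookkeeping of exactly which components are constant on which side of $t$ — i.e.\ justifying that on $[a,t]$ it is precisely $P_1,\dots,P_m$ that stay flat while on $[t,b]$ it is precisely $P_{m+1},\dots,P_n$ — which needs the description of the combined graph over a switch-free interval from Lemma~\ref{pgn:lemma:combined_graph} together with non-degeneracy to identify the moving index $k$ as the unique one realizing the slope-$1$ segment; once that is in hand, the arithmetic is immediate.
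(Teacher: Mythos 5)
Your overall strategy is the same as the paper's: simplicity of $[a,b]$ rules out any slope change of $S_m^-=P_1+\cdots+P_m$ from $1$ to $0$ inside $(a,b)$; non-degeneracy together with $P_m=P_{m+1}$ at the endpoints forces $S_m^-$ to have slope $0$ just to the right of $a$ and slope $1$ just to the left of $b$; hence $S_m^-$ is constant on $[a,t]$ and of slope $1$ on $[t,b]$ for some $t\in(a,b)$, and \eqref{type:lemma:eq} follows from (S2). Two points need repair, though. First, your justification of the endpoint behaviour is partly garbled: at $b$ you assert that ``$P_m$ has slope $0$ on the left and slope $1$ on the right'' and then conclude that $P_m$ has slope $1$ just to the left of $b$ --- the conclusion is the correct one, but the stated picture contradicts it (at every $m$-division number the pattern is the same: $P_m$ goes from slope $1$ to slope $0$, $P_{m+1}$ from $0$ to $1$). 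Also, at $a$ your appeal to the two-sided picture from the proof of Lemma~\ref{part:lemma:Am} is unnecessary and does not cover the case where $a$ is a boundary point of the domain; the clean argument (which is the paper's) is one-sided: since $P_m(a)=P_{m+1}(a)$ and $P_m\le P_{m+1}$ with $P_m$ non-decreasing, if $P_{m+1}$ had slope $0$ just to the right of $a$ then $P_m=P_{m+1}$ there, contradicting non-degeneracy; so $P_{m+1}$ has slope $1$ and hence $S_m^-$ has slope $0$ to the right of $a$, and symmetrically $P_m$ has slope $1$ to the left of $b$.

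Second, and more seriously, your appeal to Lemma~\ref{pgn:lemma:combined_graph} rests on the claim that a simple interval of $\cI_m(\uP)$ ``has no switch number in its interior''. That is false in general: switch numbers (Definition~\ref{pgn:def:switch}) are points where some sum $P_1+\cdots+P_j$ changes slope from $0$ to $1$, while simplicity only excludes $m$-division numbers, i.e.\ points where $S_m^-$ changes slope from $1$ to $0$; a simple interval whose type contains more than two indices necessarily contains switch numbers in its interior (see Figure~\ref{type:fig1}). So that step, as written, would fail. Fortunately it is not needed: once $S_m^-$ has slope $0$ on $[a,t]$, each of the non-decreasing functions $P_1,\dots,P_m$ is constant there, so $S_m^-(t)=P_1(a)+\cdots+P_m(a)$; and on $[t,b]$ the identity $P_{m+1}+\cdots+P_n=q-S_m^-$ from (S2) shows $P_{m+1},\dots,P_n$ are constant, so $S_m^-(b)=b-\big(P_{m+1}(b)+\cdots+P_n(b)\big)$. (In fact your arithmetic only uses $S_m^-(t)=S_m^-(a)$ and (S2) at $b$, so even this bookkeeping is more than is required.) With these two corrections your proof coincides with the paper's.
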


\begin{proof}
Since $[a,b]$ is simple, we have $P_m(q)<P_{m+1}(q)$ for each $q\in (a,b)$.  Thus,
the sum $S_m^-=P_1+\cdots+P_m$ cannot change slope from $1$ to $0$ anywhere on 
$(a,b)$.  Moreover, as $\uP$ is non-degenerate with $P_m(a)=P_{m+1}(a)$,
its component $P_{m+1}$ has slope $1$ to the right of $a$.  Similarly, $P_m$ has 
slope $1$ to the left of $b$ because $P_m(b)=P_{m+1}(b)$.  Thus, $S_m^-$ has 
slope $0$ to the right of $a$ and slope $1$ to the left of $b$.  We conclude that there 
exists $t\in (a,b)$ such that $S_m^-$ is constant on $[a,t]$ and affine of slope $1$ 
on $[t,b]$.  Then $P_1,\dots,P_m$ are constant on $[a,t]$, while $P_{m+1},\dots,P_n$ 
are constant on $[t,b]$, thus
\[
 \uP(t)=( P_1(a),\dots,P_m(a),P_{m+1}(b),\dots,P_n(b)),
\]
and so $t$ is given by \eqref{type:lemma:eq}.
\end{proof}

The next result shows that, when $\uP$ is a proper non-degenerate $n$-system, 
its values on $\cA_m(\uP)$ suffice to compute $\chibot_m(\uP)$ and $\chitop_m(\uP)$.

\begin{proposition}
\label{part:prop:n-sys}
Let $\uP=(P_1,\dots,P_n) \colon [q_0,\infty) \to \bR^n$ be a proper 
non-degenerate $n$-system.  Then
\[
 \cA_m(\uP)=\{a\ge q_0\,;\, P_m(a)=P_{m+1}(a)\}
\]
is a closed discrete unbounded subset of $[q_0,\infty)$.  Let $a_1<a_2<a_3<\cdots$ 
be the list of its elements in increasing order.   Then, for each $i\ge 1$, there 
exists $t_i\in(a_i,a_{i+1})$ such that $P_1+\cdots+P_m$ is constant on
$[a_i,t_i]$ and affine of slope $1$ on $[t_i,a_{i+1}]$.  Moreover, we have 
\begin{equation}
 \label{part:prop:n-sys:eq}
 \frac{n-m}{m}\,\le\, \chibot_m(\uP)=\liminf_{i\to\infty}\frac{A_i^+}{A_i^-}
 \,\le\,
 \chitop_m(\uP)=\limsup_{i\to\infty}\frac{A_{i+1}^+}{A_i^-}
\end{equation}
where $A_i^-=P_1(a_i)+\cdots+P_m(a_i)$ and $A_i^+=P_{m+1}(a_i)+\cdots+P_n(a_i)$ 
for each $i\ge 1$.
\end{proposition}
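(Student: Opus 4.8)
The plan is to reduce the whole statement to an analysis of the single scalar function
$g\colon q\mapsto (P_1(q)+\cdots+P_m(q))/q$ on $[q_0,\infty)$, using the partition of
$[q_0,\infty)$ by the $m$-division numbers. Write $S_m^-=P_1+\cdots+P_m$. By
Lemma~\ref{part:lemma:Am}, applied to $\uP$ restricted to bounded subintervals, $\cA_m(\uP)$ is
closed and discrete, and an interior point $a$ lies in it precisely when $S_m^-$ changes slope
from $1$ to $0$ around $a$. To see that $\cA_m(\uP)$ is unbounded, I would argue by
contradiction: if it were bounded, then beyond its largest element $S_m^-$ could, by (S3) and
Lemma~\ref{part:lemma:Am}, only change slope from $0$ to $1$, hence would have an eventually
constant slope; that slope cannot be $0$ (else $P_1,\dots,P_m$, hence $P_1$, would be eventually
constant) nor $1$ (else $P_{m+1}+\cdots+P_n$ would be eventually constant, while it is
$\ge (n-m)P_{m+1}\ge (n-m)P_1$), so either way properness of $\uP$ is violated. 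Listing
$\cA_m(\uP)$ as $a_1<a_2<\cdots$, each $[a_i,a_{i+1}]$ is then a simple interval in
$\cI_m(\uP)$, and the existence of $t_i\in(a_i,a_{i+1})$ with $S_m^-$ constant on $[a_i,t_i]$ and
of slope $1$ on $[t_i,a_{i+1}]$, together with the formula for $t_i$, is exactly
Lemma~\ref{part:lemma:simple}; note $t_i=A_i^-+A_{i+1}^+$.

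Next I would record the shape of $g$ on a simple interval $[a_i,a_{i+1}]$. On $[a_i,t_i]$ the
numerator is the constant $A_i^-$, so $g(q)=A_i^-/q$ is non-increasing there, with minimum
$g(t_i)=A_i^-/t_i$ at $t_i$. On $[t_i,a_{i+1}]$, $S_m^-$ has slope $1$ and equals $A_{i+1}^-$ at
$a_{i+1}$, so $S_m^-(q)=q-A_{i+1}^+$ (using $A_{i+1}^-+A_{i+1}^+=a_{i+1}$ and $t_i=A_i^-+A_{i+1}^+$),
hence $g(q)=1-A_{i+1}^+/q$ is non-decreasing, rising to $g(a_{i+1})=A_{i+1}^-/a_{i+1}$. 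Thus on
each simple interval $g$ first decreases to its minimum $g(t_i)$ and then increases, so its local
maxima occur at the points $a_i$, with value $A_i^-/a_i$, and its local minima at the $t_i$. Since
$[a_N,\infty)=\bigcup_{i\ge N}[a_i,a_{i+1}]$, it follows that
$\limsup_{q\to\infty}g(q)=\limsup_{i\to\infty}A_i^-/a_i$ and
$\liminf_{q\to\infty}g(q)=\liminf_{i\to\infty}A_i^-/t_i$.

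Then I would translate back to the invariants of \eqref{pgn:eq:chi}. From $P_1(q)+\cdots+P_n(q)=q$
we get $(P_{m+1}(q)+\cdots+P_n(q))/S_m^-(q)=1/g(q)-1$ whenever $S_m^-(q)>0$, i.e. for all large $q$
by properness; taking $\limsup$ and $\liminf$ gives $\chitop_m(\uP)=1/\liminf_q g(q)-1$ and
$\chibot_m(\uP)=1/\limsup_q g(q)-1$ in $[0,\infty]$. Substituting $A_i^-/a_i=A_i^-/(A_i^-+A_i^+)$
and $A_i^-/t_i=A_i^-/(A_i^-+A_{i+1}^+)$ then yields $\chibot_m(\uP)=\liminf_i A_i^+/A_i^-$ and
$\chitop_m(\uP)=\limsup_i A_{i+1}^+/A_i^-$, as claimed. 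For the lower bound: at $a_i\in\cA_m(\uP)$
we have $P_m(a_i)=P_{m+1}(a_i)$ and, by properness, $P_1(a_i)>0$ for $i$ large, so from the
monotonicity of the coordinates $A_i^-\le mP_m(a_i)$ and $A_i^+\ge (n-m)P_{m+1}(a_i)=(n-m)P_m(a_i)$,
whence $A_i^+/A_i^-\ge (n-m)/m$ for all large $i$ and therefore $\chibot_m(\uP)\ge (n-m)/m$.

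I expect the main obstacle to be the unboundedness of $\cA_m(\uP)$: this is the only place where
properness is genuinely used, and it requires being careful about exactly which slope changes of
the partial sums are permitted by axiom (S3). The remaining difficulty is purely bookkeeping,
namely confirming that the global $\limsup$ and $\liminf$ of $g$ are captured by its values along
the discrete sequences $(a_i)$ and $(t_i)$; once the unimodal shape of $g$ on each simple interval
is established, everything else is routine algebra.
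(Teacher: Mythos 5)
Your proposal is correct and takes essentially the same route as the paper: closedness, discreteness and unboundedness of $\cA_m(\uP)$ via Lemma~\ref{part:lemma:Am} and properness, the points $t_i$ via Lemma~\ref{part:lemma:simple}, and then a monotonicity analysis on each simple interval $[a_i,a_{i+1}]$ reducing the $\limsup$ and $\liminf$ to the discrete sequences at the $a_i$ and $t_i$. The only (cosmetic) difference is that you track $g(q)=S_m^-(q)/q$ instead of the paper's ratio $\chi_m(q)=S_m^+(q)/S_m^-(q)$; since $\chi_m=1/g-1$, your unimodality of $g$ is exactly the paper's observation that $\chi_m$ increases on $[a_i,t_i]$ and decreases on $[t_i,a_{i+1}]$, so the two arguments coincide in substance.
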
 

\begin{proof}
The functions 
\begin{equation}
 \label{part:prop:n-sys:defS}
 S_m^-=P_1+\cdots+P_m \et S_m^+=P_{m+1}+\cdots+P_n
\end{equation}
are continuous and piecewise affine on $[q_0,\infty)$, with slopes 
in $\{0,1\}$, and satisfy
\begin{equation}
 \label{part:prop:n-sys:eq2}
 S_m^-(q)+S_m^+(q)=q
 \et
 P_1(q)\le \frac{1}{m}S_m^-(q)\le P_m(q)\le \frac{1}{n-m}S_m^+(q)\le P_n(q)
\end{equation}
for each $q\ge q_0$.  Since $\uP$ is proper, its component $P_1$ is unbounded 
and so both $S_m^-(q)$ and $S_m^+(q)$ tend to infinity with $q$. 

By Lemma~\ref{part:lemma:Am},  $\cA_m(\uP)$ is a closed discrete 
subset of $[q_0,\infty)$.  It is unbounded because otherwise, by the same
lemma, there would exist $c\ge q_0$ such that $S_m^-$ is affine of slope 
$0$ or $1$ on $[c,\infty)$.  Slope $0$ is excluded because
$S_m^-$ is unbounded.  Slope $1$ is also impossible because, by the first part
of \eqref{part:prop:n-sys:eq2}, that would imply that $S_m^+$ is constant 
on $[c,\infty)$.  Thus, we can list the elements of $\cA_m(\uP)$ as stated in the 
proposition.

By construction, each $[a_i,a_{i+1}]$ is a simple interval in $\cI_m(\uP)$.  So, 
by Lemma~\ref{part:lemma:simple}, there exists $t_i\in(a_i,a_{i+1})$ such 
that $S_m^-$ is constant on $[a_i,t_i]$ and affine of slope $1$ on $[t_i,a_{i+1}]$.  
If $i$ is large enough, $S_m^-$ is positive thus nowhere zero on $[a_i,a_{i+1}]$,
and so the ratio
\begin{equation}
 \label{part:prop:n-sys:defchi}
 \chi_m(q)=\frac{S_m^+(q)}{S_m^-(q)}
\end{equation}
is a continuous function of $q$ on that interval.  As $S_m^+$ is affine of slope $1$ 
on $[a_i,t_i]$, and constant on $[t_i,a_{i+1}]$, the function $\chi_m$ is increasing 
on $[a_i,t_i]$ and decreasing on $[t_i,a_{i+1}]$.  So, we find
\begin{align}
 &\max\{\chi_m(q)\,;\,a_i\le q\le a_{i+1}\} 
   = \frac{S_m^+(t_i)}{S_m^-(t_i)}  
  = \frac{S_m^+(a_{i+1})}{S_m^-(a_i)} 
  = \frac{A_{i+1}^+}{A_i^-},
 \label{part:prop:n-sys:max}
 \\
 &\min\{\chi_m(q)\,;\,a_i\le q\le a_{i+1}\} 
   = \min\left\{\frac{S_m^+(a_i)}{S_m^-(a_i)},\, 
                      \frac{S_m^+(a_{i+1})}{S_m^-(a_{i+1})}\right\}  
  = \min\left\{\frac{A_i^+}{A_i^-},\, 
                      \frac{A_{i+1}^+}{A_{i+1}^-}\right\}. 
 \label{part:prop:n-sys:min} 
\end{align}
This yields \eqref{part:prop:n-sys:eq} upon noting that $\chi_m$ is bounded below
by $(n-m)/m$ by the second part of \eqref{part:prop:n-sys:eq2}.
\end{proof}

We conclude with the following dual statement.

\begin{proposition}
\label{part:prop:bn-sys}
Let $\uP=(P_1,\dots,P_n) \colon (-\infty,q_0] \to \bR^n$ be a proper 
non-degenerate backwards $n$-system.  Then
\[
 \cA_m(\uP)=\{a\le q_0\,;\, P_m(a)=P_{m+1}(a)\}
\]
is a closed discrete unbounded subset of $(-\infty,q_0]$.  Let $a_{-1}>a_{-2}>a_{-3}>\cdots$ 
be the list of its elements in decreasing order.   Then, we have 
\begin{equation}
 \label{part:prop:bn-sys:eq}
 \chibot_m(\uP)=\liminf_{i\to-\infty}\frac{A_{i+1}^+}{A_i^-}
 \,\le\,
 \chitop_m(\uP)=\limsup_{i\to-\infty}\frac{A_i^+}{A_i^-}
 \,\le\,
 \frac{n-m}{m} 
\end{equation}
where $A_i^-=P_1(a_i)+\cdots+P_m(a_i)$ and $A_i^+=P_{m+1}(a_i)+\cdots+P_n(a_i)$ 
for each $i\ge 1$.
\end{proposition}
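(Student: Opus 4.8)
The plan is to mirror the proof of Proposition~\ref{part:prop:n-sys}, transporting it through the opposite construction $\uP\mapsto\uP^\vee$ of Proposition~\ref{pgn:duality:prop1}, rather than redoing all the piecewise-affine analysis at $-\infty$. First I would record that, since $\uP$ is a proper non-degenerate backwards $n$-system, its opposite $\uP^\vee=(P_1^\vee,\dots,P_n^\vee)\colon[-q_0,\infty)\to\bR^n$ is a proper non-degenerate $n$-system (Definitions~\ref{pgn:duality:def1} and the fact, from Corollary~\ref{pgn:def:cor} and Proposition~\ref{pgn:duality:prop1}, that non-degeneracy is preserved by $\vee$). Writing $P_j^\vee(q)=-P_{n-j+1}(-q)$, one checks directly that $q\in\cA_m(\uP)$ if and only if $-q\in\cA_{n-m}(\uP^\vee)$, because $P_m(q)=P_{m+1}(q)$ is the same relation as $P_{n-m}^\vee(-q)=P_{n-m+1}^\vee(-q)$. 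Hence Proposition~\ref{part:prop:n-sys}, applied to $\uP^\vee$ with $n-m$ in place of $m$, immediately gives that $\cA_{n-m}(\uP^\vee)$ is closed, discrete and unbounded in $[-q_0,\infty)$, so $\cA_m(\uP)=-\cA_{n-m}(\uP^\vee)$ has the stated properties, and if $b_1<b_2<\cdots$ enumerates $\cA_{n-m}(\uP^\vee)$ then $a_i:=-b_{-i}$ (for $i\le -1$) enumerates $\cA_m(\uP)$ in decreasing order.

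The next step is to match the quantities $A_i^\pm$ with their counterparts for $\uP^\vee$. From $P_j^\vee(q)=-P_{n-j+1}(-q)$ we get, at $q=b_{-i}=-a_i$,
\begin{align*}
 P_1^\vee(-a_i)+\cdots+P_{n-m}^\vee(-a_i) &= -\big(P_{m+1}(a_i)+\cdots+P_n(a_i)\big)=-A_i^+,\\
 P_{n-m+1}^\vee(-a_i)+\cdots+P_n^\vee(-a_i) &= -\big(P_1(a_i)+\cdots+P_m(a_i)\big)=-A_i^-.
\end{align*}
Denoting by $B_k^-$ and $B_k^+$ the quantities attached to $\uP^\vee$ and the index $n-m$ at the point $b_k$ (so $B_k^-=P_1^\vee(b_k)+\cdots+P_{n-m}^\vee(b_k)$ and $B_k^+=P_{n-m+1}^\vee(b_k)+\cdots+P_n^\vee(b_k)$), this reads $B_{-i}^-=-A_i^+$ and $B_{-i}^+=-A_i^-$; note these are negative since the first $n-m$ coordinates of $\uP^\vee$ near $\infty$ are positive while $A_i^\pm<0$ for $\uP$ near $-\infty$, and the signs cancel in all ratios below. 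Proposition~\ref{part:prop:n-sys} applied to $\uP^\vee$ (with $n-m$ in place of $m$, so that $(n-(n-m))/(n-m)=m/(n-m)$ is the lower bound) gives
\[
 \frac{m}{n-m}\le\chibot_{n-m}(\uP^\vee)=\liminf_{k\to\infty}\frac{B_k^+}{B_k^-}\le\chitop_{n-m}(\uP^\vee)=\limsup_{k\to\infty}\frac{B_{k+1}^+}{B_k^-}.
\]
Substituting $B_{-i}^-=-A_i^+$, $B_{-i}^+=-A_i^-$ and $k=-i$ (so $k\to\infty$ becomes $i\to-\infty$, and $k+1=-i$ pairs $B_{k+1}^+/B_k^-$ with $-A_i^-/(-A_{i-1}^+)=A_i^-/A_{i-1}^+$, i.e. the $A_{j+1}^+/A_j^-$ pattern after reindexing $j=i-1$), we obtain
\[
 \frac{m}{n-m}\le\liminf_{i\to-\infty}\frac{A_i^-}{A_i^+}\le\limsup_{i\to-\infty}\frac{A_i^-}{A_{i+1}^+}.
\]
Finally I would invoke Proposition~\ref{pgn:duality:prop2} (or just its defining identity): $\chibot_m(\uP)=\chitop_{n-m}(\uP^\vee)^{-1}$ and $\chitop_m(\uP)=\chibot_{n-m}(\uP^\vee)^{-1}$, which upon taking reciprocals in the displayed inequality turns $\liminf A_i^-/A_i^+$ into $\chitop_m(\uP)=\limsup A_i^+/A_i^-$, turns $\limsup A_i^-/A_{i+1}^+$ into $\chibot_m(\uP)=\liminf A_{i+1}^+/A_i^-$, and turns the lower bound $m/(n-m)$ into the upper bound $(n-m)/m$; reversing the chain of inequalities under $x\mapsto 1/x$ yields exactly \eqref{part:prop:bn-sys:eq}.

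The only point requiring genuine care — the main obstacle — is the bookkeeping of indices and signs: making sure that the decreasing enumeration $a_{-1}>a_{-2}>\cdots$ of $\cA_m(\uP)$ corresponds correctly to the increasing enumeration of $\cA_{n-m}(\uP^\vee)$, that the pair $(A_i^+/A_i^-,\,A_{i+1}^+/A_i^-)$ for $\uP$ matches the pair $(B_k^+/B_k^-,\,B_{k+1}^+/B_k^-)$ for $\uP^\vee$ after the substitution $k=-i$ and reindexing, and that all the reciprocals are legitimate because the relevant partial sums are eventually nonzero (which holds since $\uP^\vee$ is proper, so $P_1^\vee$ is unbounded and $S_{n-m}^\pm(\uP^\vee)\to\infty$, exactly as in the proof of Proposition~\ref{part:prop:n-sys}). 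Once the dictionary $a_i\leftrightarrow b_{-i}$, $A_i^+\leftrightarrow -B_{-i}^-$, $A_i^-\leftrightarrow -B_{-i}^+$ is set up cleanly, everything else is a mechanical application of the already-proved Propositions~\ref{part:prop:n-sys} and~\ref{pgn:duality:prop2}.
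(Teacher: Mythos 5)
Your proof is correct, but it takes a genuinely different route from the paper. The paper simply reruns the argument of Proposition~\ref{part:prop:n-sys} at $-\infty$: it checks that $S_m^\pm$ are piecewise affine with slopes in $\{0,1\}$ and tend to $-\infty$, that $\cA_m(\uP)$ is closed, discrete and unbounded via Lemma~\ref{part:lemma:Am}, and then observes that on each $[a_i,a_{i+1}]$ the ratio $\chi_m$ is now decreasing then increasing, so the roles of $\min$ and $\max$ in \eqref{part:prop:n-sys:max}--\eqref{part:prop:n-sys:min} are exchanged, with $(n-m)/m$ becoming an upper bound. You instead deduce the statement formally from Proposition~\ref{part:prop:n-sys} applied to $\uP^\vee$ with $m$ replaced by $n-m$, using the dictionary $a_i\leftrightarrow b_{-i}$, $A_i^+\leftrightarrow -B_{-i}^-$, $A_i^-\leftrightarrow -B_{-i}^+$ and the duality identities of Proposition~\ref{pgn:duality:prop2}; this avoids redoing the piecewise-affine analysis and makes the backwards statement a corollary of the forward one, at the price of the sign/index bookkeeping, which you handle correctly in the end. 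Two small points: the preservation of non-degeneracy under $\uP\mapsto\uP^\vee$ does not come from Corollary~\ref{pgn:def:cor}; it should be (and easily is) checked directly from Definition~\ref{pgn:def:switch}, since $P_j^\vee(q)=-P_{n-j+1}(-q)$ turns strict inequalities at differentiability points of $\uP$ into strict inequalities for $\uP^\vee$. Also, in your parenthetical the term $B_{k+1}^+/B_k^-$ with $k+1=-i$ should read $A_i^-/A_{i+1}^+$ (you wrote $A_i^-/A_{i-1}^+$), but this is only a slip: the displayed inequality you derive, and everything after it, uses the correct expression, and taking reciprocals indeed yields \eqref{part:prop:bn-sys:eq}.
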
 

\begin{proof}
We proceed as for the previous proposition.  We first note that the functions $S_m^-$ and 
$S_m^+$ given by \eqref{part:prop:n-sys:defS} for the present choice of $\uP$ are 
continuous and piecewise affine on $(-\infty,q_0]$, with slopes in $\{0,1\}$, and that 
they satisfy \eqref{part:prop:n-sys:eq2} for each $q\le q_0$.   Since $\uP$ is proper, $P_n(q)$ 
tends to $-\infty$ with $q$, and so do $S_m^-(q)$ and $S_m^+(q)$. 

Using Lemma~\ref{part:lemma:Am} in the same way, we find that $\cA_m(\uP)$ is a closed 
discrete unbounded subset of $(-\infty,q_0]$, and so its elements can be listed as stated. 
We also deduce from the lemma that, for each $i\le -2$, there exists
$t_i\in(a_i,a_{i+1})$ such that $S_m^-$ is constant on $[a_i,t_i]$ and affine of
slope $1$ on $[t_i,a_{i+1}]$.

If $i$ is small enough, both $S_m^-$ and $S_m^+$ are negative on $[a_i,a_{i+1}]$ ,
so the ratio $\chi_m$ defined by \eqref{part:prop:n-sys:defchi} is continuous 
on $[a_i,a_{i+1}]$ and positive valued.  As $S_m^+$ is affine of slope $1$ 
on $[a_i,t_i]$, and constant on $[t_i,a_{i+1}]$, the function $\chi_m$ is decreasing 
on $[a_i,t_i]$ and increasing on $[t_i,a_{i+1}]$.  So, \eqref{part:prop:n-sys:max}
and \eqref{part:prop:n-sys:min} need to be modified by replacing everywhere $\min$ 
by $\max$, and $\max$ by $\min$.  
This yields \eqref{part:prop:bn-sys:eq} since $\chi_m$ is bounded above
by $(n-m)/m$ by the second part of \eqref{part:prop:n-sys:eq2}.
\end{proof}

%
%

\section{The type of an interval}
\label{sec:type}
Let $m,n\in \bZ$ with $1\le m\le n-1$. 
The following special case of \cite[Theorem 1.3.4]{Ri2019}
compares the values of a sign-free-$n$-system at 
any two $m$-division numbers.   We will show below that it is best possible.

\begin{proposition}
\label{type:prop:kl}
Let $\uP$ be as in Lemma \ref{part:lemma:Am}.  Suppose that $a<b$ are  
elements of $\cA_m(\uP)$.  Then there are integers $k$ and $\ell$ with
$1\le k\le m< \ell\le n$ such that, for each $j=1,\dots,n$, 
\[
 P_j(a) < P_j(b)  \quad \Longleftrightarrow \quad k\le j\le \ell.
\]
Moreover, we have $P_j(a)\le P_{j-1}(b)$ for each $j=k+1,\dots,\ell$.
\end{proposition}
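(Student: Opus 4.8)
\emph{The plan is to track, as the parameter runs from $a$ to $b$, which component of $\uP$ is currently rising.} Since $[a,b]$ is compact and the components of $\uP$ are piecewise affine, I would partition $[a,b]$ as $a=q_0<q_1<\cdots<q_N=b$ into maximal affine pieces; on each open piece $(q_{t-1},q_t)$ exactly one component, say $P_{\rho_t}$, has slope $1$ and the others slope $0$ (as observed after Definition~\ref{pgn:def:signfree}). Because every $P_j$ is non-decreasing, $P_j(a)\le P_j(b)$ for all $j$, and $P_j(a)<P_j(b)$ holds exactly when $P_j$ is non-constant on $[a,b]$, i.e.\ exactly when $j=\rho_t$ for some $t$. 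So the set $D:=\{j:P_j(a)<P_j(b)\}$ equals $\{\rho_1,\dots,\rho_N\}$, and the whole statement becomes a question about the integer sequence $(\rho_1,\dots,\rho_N)$.

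Two facts about this sequence carry the proof. First, $\rho_1=m+1$ and $\rho_N=m$: since $a,b\in\cA_m(\uP)$ we have $P_m(a)=P_{m+1}(a)$ and $P_m(b)=P_{m+1}(b)$, while non-degeneracy forces $P_m<P_{m+1}$ on the affine pieces adjacent to $a$ and to $b$; as $P_{m+1}-P_m$ is affine on such a piece and vanishes at the adjacent endpoint, it is increasing just to the right of $a$ and decreasing just to the left of $b$, which pins down $P_{m+1}$ as the rising component near $a$ and $P_m$ as the rising component near $b$. Second---and this is the key point---\emph{the sequence increases only in steps of $1$}: if $\rho_{t+1}>\rho_t$ then $\rho_{t+1}=\rho_t+1$ and $P_{\rho_t}(q_t)=P_{\rho_t+1}(q_t)$. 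I would prove this by writing $r=\rho_t<s=\rho_{t+1}$ and noting that, for each $i$ with $r\le i<s$, the sum $P_1+\cdots+P_i$ has slope $1$ just left of $q_t$ (it contains $P_r$) and slope $0$ just right of $q_t$ (it misses $P_s$), so condition~(S3) gives $P_i(q_t)=P_{i+1}(q_t)$; on the other hand only $P_r$ moves on $(q_{t-1},q_t)$, so for $i\ne r$ the value $P_i(q_t)$ is just the (constant) value of $P_i$ on that piece, and those values are strictly increasing in $i$ by non-degeneracy, so $P_{r+1}(q_t)<P_{r+2}(q_t)$. If $s\ge r+2$ these two conclusions clash at $i=r+1$; hence $s=r+1$, and then $i=r$ yields $P_r(q_t)=P_{r+1}(q_t)$.

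From these two facts the first assertion follows quickly. Suppose an integer $h$ lies outside $D$ while $D$ has members on both sides of $h$. Then $(\rho_t)$ never equals $h$, so by the ``$+1$ up-steps'' fact it can never pass from a value $<h$ to a value $>h$, and once it drops below $h$ it stays below $h$. If $h>m+1$, then $\rho_1=m+1<h$, so the whole sequence stays below $h$, contradicting that $D$ meets $\{h+1,\dots,n\}$. If $h<m$, then $\rho_1=m+1>h$ and $\rho_N=m>h$, so to ever be $<h$ the sequence would have to cross $h$ downward and then remain below $h$, forcing $\rho_N<h$, again a contradiction. (The values $h=m$ and $h=m+1$ are excluded since $m,m+1\in D$.) Hence $D$ is a block of consecutive integers $[k,\ell]\cap\bZ$, and using $m,m+1\in D$ we get $1\le k\le m<\ell\le n$; this is precisely the stated equivalence $P_j(a)<P_j(b)\Longleftrightarrow k\le j\le\ell$.

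For the remaining inequality, fix $j$ with $k<j\le\ell$, so that $j-1,j\in D$. It suffices to produce $q^\ast\in[a,b]$ with $P_{j-1}(q^\ast)=P_j(q^\ast)$, since then $P_j(a)\le P_j(q^\ast)=P_{j-1}(q^\ast)\le P_{j-1}(b)$ by monotonicity. If $j=m+1$, take $q^\ast=a$ (then $P_{j-1}(a)=P_m(a)=P_{m+1}(a)=P_j(a)$). If $j\le m$, then $\rho_N=m\ge j$, while $\rho_t=j-1$ for some $t$ because $j-1\in D=\{\rho_t\}$; letting $t^\ast$ be the largest index with $\rho_{t^\ast}\le j-1$, the step $\rho_{t^\ast}\to\rho_{t^\ast+1}$ is an increase from $\le j-1$ to $\ge j$, hence (up-steps being $+1$) goes from $j-1$ to $j$, so $q^\ast=q_{t^\ast}$ works. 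If $j\ge m+2$, symmetrically $\rho_1=m+1\le j-1$ while $\rho_t=j$ for some $t$; letting $t^\ast$ be the smallest index with $\rho_{t^\ast}\ge j$, the step $\rho_{t^\ast-1}\to\rho_{t^\ast}$ goes from $j-1$ to $j$, and $q^\ast=q_{t^\ast-1}$ works. The one genuinely delicate point in all of this is the ``up-steps are $+1$'' lemma, where condition~(S3) and non-degeneracy must be used in tandem; everything else is bookkeeping about a step function with $+1$ up-jumps and prescribed endpoint values.
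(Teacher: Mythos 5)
Your proposal is correct, and all the delicate points (applying (S3) only at interior points of $I$, invoking non-degeneracy only at interior points where $\uP$ is differentiable, monotonicity of each $P_j$) are handled properly. It shares the paper's core mechanism --- a partial sum $P_1+\cdots+P_i$ changing slope from $1$ to $0$ forces, via (S3), a point where $P_i=P_{i+1}$, and then $P_j(a)\le P_j(q)=P_{j-1}(q)\le P_{j-1}(b)$ by monotonicity --- but it is organized differently. The paper takes $k,\ell$ to be the extreme indices with $P_j(a)<P_j(b)$, locates the slope change of $P_1+\cdots+P_{j-1}$ directly inside $(a,b)$ (using that $P_{m+1}$ rises just right of $a$, $P_m$ just left of $b$, and $P_k,P_\ell$ rise somewhere in between), proves the ``moreover'' inequality first, and then deduces the interval property from it together with the strict inequalities $P_{j-1}(b)<P_j(b)$ for $j\ne m+1$ (a consequence of non-degeneracy at $b$). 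You instead introduce the sequence $(\rho_t)$ of rising indices on the maximal affine pieces and prove a sharper structural lemma: every up-jump of this sequence is by exactly one unit and occurs at a point where the two components involved coincide (so the jump point lies in $\cA_{\rho_t}(\uP)$). From this, the interval property follows by a crossing argument that is independent of the ``moreover'' statement, and the inequality $P_j(a)\le P_{j-1}(b)$ follows by locating a specific unit jump from $j-1$ to $j$. The paper's route is shorter; yours isolates a reusable combinatorial fact about non-degenerate systems (strict increases of the rising index happen one step at a time, through division points), avoids having to argue that $m$ is the only consecutive equality at $b$, and decouples the two assertions of the proposition.
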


\begin{proof}
Since $\uP$ is non-degenerate, $P_{m+1}$ has slope $1$
to the right of $a$ while $P_m$ has slope $1$ to the left of $b$.  In particular, $P_m$
and $P_{m+1}$ are not constant on $[a,b]$ and so we have 
$P_m(a)=P_{m+1}(a)<P_m(b)=P_{m+1}(b)$.

Let $k$ be the smallest index $j$ for which $P_j(a) < P_j(b)$, and let $\ell$ be the
largest such index.  By the above, we have $1\le k\le m$ and $m+1\le \ell\le n$.  Since 
$P_{j-1}(b)<P_j(b)$ for each value of $j$ except $m+1$, it remains only to show 
that $P_j(a)\le P_{j-1}(b)$ for each $j$ with $k<j\le \ell$ and $j\neq m+1$.
  
Suppose that $j$ is an integer with $m+1< j\le \ell$.  Since 
$P_\ell(a)<P_\ell(b)$, there is a point $c$ in $(a,b)$ around which $P_\ell$ is affine 
of slope $1$.  Since $P_{m+1}$ has slope $1$ to the right of $a$, it follows
that the sum $P_1+\cdots+P_{j-1}$ has slope $1$ to the right of $a$ but
slope $0$ around $c$.  Thus, it changes slope from $1$ to $0$ at some point $q$ in
$(a,c)$.  At that point, we have $P_{j-1}(q)=P_j(q)$ and so
\begin{equation} 
\label{lemma:type:eq}
  P_j(a) \le P_j(q)=P_{j-1}(q) \le P_{j-1}(b).
\end{equation}

Similarly, suppose that $j$ is an integer with $k< j\le m$.  Since 
$P_k(a)<P_k(b)$, there is a point $d$ in $(a,b)$ around which $P_k$ is affine
of slope $1$.  Since $P_m$ has slope $1$ to the left of $b$, the sum 
$P_1+\cdots+P_{j-1}$ is affine of slope $1$ around $d$ and has 
slope $0$ to the left of $b$. Thus, it changes slope from $1$ to $0$ at some point $q$ in
$(d,b)$.  At that point, we have $P_{j-1}(q)=P_j(q)$ and so
\eqref{lemma:type:eq} holds again.
\end{proof}

\begin{definition}
\label{part:def2}
Let $\uP$ be as in Lemma \ref{part:lemma:Am}, and let $[a,b]$ be an interval in $\cI_m(\uP)$,
namely an interval $[a,b]$ with $a,b\in \cA_m(\uP)$ and $a\le b$.  We define the \emph{type}
of $[a,b]$ to be the set of integers $j\in\{1,\dots,n\}$ for which $P_j(a)<P_j(b)$.   
\end{definition}

By Proposition \ref{type:prop:kl}, the type of an interval $[a,b]$ in $\cI_m(\uP)$
with $a<b$ is a sequence of consecutive integers $\{k,\dots,\ell\}$ where $k$ and $\ell$
satisfy $1\le k\le m<\ell\le n$.  For an interval $[a,a]$ reduced to a single point $a$,
it is the empty set.  

Proposition \ref{type:prop:kl} admits the following converse (cf.~\cite[Theorem 1.3.8]{Ri2019}).   

\begin{proposition}
\label{type:prop:converse}
Let $k,\ell\in\bZ$ with $1\le k\le m<\ell\le n$.  Suppose that 
$\ua=(a_1,\dots,a_n)$ and $\ub=(b_1,\dots,b_n)$ are distinct points of $\bR^n$ 
which satisfy
\begin{equation}
 \label{type:prop:converse:eq}
 \left\{
 \begin{aligned}
 &a_1<\cdots<a_m=a_{m+1}<\cdots<a_n
 \et
 b_1<\cdots<b_m=b_{m+1}<\cdots<b_n, \\
 &a_j=b_j \quad\text{for $j=1,\dots,k-1$ and $j=\ell+1,\dots,n$,}\\
 &a_j\le b_{j-1} \quad\text{for $j=k+1,\dots,\ell$.}
 \end{aligned}
 \right.
\end{equation}
Set $a=a_1+\cdots+a_n$ and $b=b_1+\cdots+b_n$.  Then, we have $a<b$ 
and there exists a unique non-degenerate sign-free-$n$-system 
$\uP=(P_1,\dots,P_n)$ on $[a,b]$ with the following properties
\begin{itemize}
 \item[(i)] $\uP(a)=\ua$ and $\uP(b)=\ub$,
 \smallskip
 \item[(ii)] $P_m(q)<P_{m+1}(q)$ for each $q\in(a,b)$,
 \smallskip
 \item[(iii)] for each $j=k,\dots,\ell-1$ with $j\neq m$, there is exactly one value 
  of $q$ in $(a,b)$ for which $P_j(q)=P_{j+1}(q)$.
\end{itemize}
For this map, the sum $P_1+\cdots+P_m$ has slope $0$ then $1$ on $[a,b]$.
\end{proposition}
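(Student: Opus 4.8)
The plan is to extract the inequalities underlying the statement, build $\uP$ by an explicit two‑part construction, and read off uniqueness from Lemma~\ref{part:lemma:simple}. First I would record that $a_j\le b_j$ for every $j$, with $a_j=b_j$ exactly for $j\notin\{k,\dots,\ell\}$ and $a_j<b_j$ for $k\le j\le\ell$: for $j>\ell$ and $j<k$ this is a hypothesis, for $k<j\le\ell$ it follows from $a_j\le b_{j-1}\le b_j$, for $j=k$ from $a_k<a_{k+1}\le b_k$ (if $k<m$) or $a_k=a_{k+1}\le b_m=b_k$ (if $k=m$), and the strictness on $\{k,\dots,\ell\}$ comes from a short case analysis using $\ua\neq\ub$ to exclude the borderline configurations around the tie $a_m=a_{m+1}$, $b_m=b_{m+1}$. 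Hence $a<b$, and in fact $a<t<b$ where
\[
 t:=a_1+\cdots+a_m+b_{m+1}+\cdots+b_n,
\]
since $\sum_{j\le m}(b_j-a_j)>0$ and $\sum_{j>m}(b_j-a_j)>0$ ($m$ and $m+1$ lying in $\{k,\dots,\ell\}$). By Lemma~\ref{part:lemma:simple}, $t$ must be the point where $P_1+\cdots+P_m$ switches from slope $0$ to slope $1$, and then $\uP(t)=(a_1,\dots,a_m,b_{m+1},\dots,b_n)$.

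Next I would define $\uP$ on $[a,t]$ and $[t,b]$ separately and glue. On $[a,t]$: freeze $P_j\equiv a_j$ for $j\le m$ and $j>\ell$, and move the ``active upper block'' $(P_{m+1},\dots,P_\ell)$ from $(a_{m+1},\dots,a_\ell)$ to $(b_{m+1},\dots,b_\ell)$ along the \emph{staircase} that first raises $P_{m+1}$ to the level $a_{m+2}$ of $P_{m+2}$, then $P_{m+2}$ to $a_{m+3}$, \dots, then $P_\ell$ to $b_\ell$, and then \emph{unwinds}, raising $P_{\ell-1}$ from $a_\ell$ to $b_{\ell-1}$, then $P_{\ell-2}$ from $a_{\ell-1}$ to $b_{\ell-2}$, \dots, and finally $P_{m+1}$ from $a_{m+2}$ to $b_{m+1}$. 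A length bookkeeping shows this exactly fills $[a,t]$; the inequalities $a_j\le b_{j-1}$ make every unwinding step legitimate, and the frozen components $P_m\equiv a_m$ and $P_{\ell+1}\equiv a_{\ell+1}$ bound the active block strictly from below and above at interior points. On $[t,b]$: freeze $P_j\equiv b_j$ for $j>m$ and $j<k$, and take the ``active lower block'' to be the one for which $q\mapsto(-P_m(t+b-q),\dots,-P_k(t+b-q))$ is the staircase built by the same recipe from the data $(-b_m,\dots,-b_k)$ to $(-a_m,\dots,-a_k)$; Proposition~\ref{pgn:duality:prop1} (up to a translation) makes this a sign‑free‑$n$‑system, and the inequalities $a_j\le b_{j-1}$ transform precisely into what the recipe needs. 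Since $\uP(t)$ has pairwise distinct entries, the two halves glue into a continuous map satisfying (i).

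It then remains to verify the rest. Conditions (S1), (S2) are clear from the description; (S3) holds because a partial sum $P_1+\cdots+P_j$ drops from slope $1$ to slope $0$ only at the ascending transitions of the first phase of each staircase, where $P_j$ and $P_{j+1}$ have just become equal; non‑degeneracy holds on each half by the strict inequalities noted above, and at $t$ because $\uP(t)$ has distinct entries. Property (ii) holds because $P_{m+1}$ leaves $a_m$ immediately to the right of $a$ while $P_m$ reaches $b_m=b_{m+1}$ only at $b$ — the latter being exactly why the lower block must be a \emph{reflection} of a staircase and not a staircase. For (iii), a pair $(P_j,P_{j+1})$ with $j\in\{m+1,\dots,\ell-1\}$ becomes equal exactly once, during the first phase of the upper staircase, and with $j\in\{k,\dots,m-1\}$ exactly once, during the reflected first phase of the lower staircase, while the unwinding phases separate consecutive components and create no new coincidence. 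For uniqueness, Lemma~\ref{part:lemma:simple} forces $t$ and hence forces $P_1,\dots,P_m$ constant on $[a,t]$, $P_{m+1},\dots,P_n$ constant on $[t,b]$, and $P_j$ globally constant for $j<k$, $j>\ell$; the two active blocks are then pinned down by (iii), using Proposition~\ref{type:prop:kl} (the type of each sub‑interval between two consecutive coincidences is forced), which fixes the combined graph and hence $\uP$ by Corollary~\ref{pgn:def:cor}.

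The substantive part is the verification in the last paragraph. I expect the main obstacle to be arranging the construction so that (ii) holds — this is what forces the asymmetry between the upper and lower blocks — and then proving the ``exactly once'' in (iii), i.e.\ checking that the unwinding phases reintroduce no coincidence; verifying (S3) at the numerous slope changes of the two staircases is routine but must be carried out transition by transition.
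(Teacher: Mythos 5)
Your construction is exactly the one the paper uses (its Figure~\ref{type:fig1} is precisely your ``staircase then unwind'' on $[a,t]$ glued to the time‑reversed, negated staircase on $[t,b]$, with $t=a_1+\cdots+a_m+b_{m+1}+\cdots+b_n$), and your existence verification — the preliminary strictness $a_j<b_j$ for $k\le j\le\ell$, the length bookkeeping, the checks of (S1)–(S3), non‑degeneracy, (ii) and (iii) — is sound and matches what the paper dismisses as routine via the geometric characterization of combined graphs. So the existence half is essentially the paper's proof, only written out in more detail.

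The uniqueness half, however, has a genuine gap as you have justified it. Your appeal to Proposition~\ref{type:prop:kl} is misplaced: that proposition concerns intervals whose endpoints lie in $\cA_m(\uP)$, i.e.\ points where $P_m=P_{m+1}$, and by property (ii) the only such points in $[a,b]$ are $a$ and $b$ themselves; the coincidence points of the pairs $(P_j,P_{j+1})$ with $j\neq m$ are not $m$-division numbers, so there are no ``sub‑intervals between consecutive coincidences'' in $\cI_m(\uP)$ to which the proposition applies. Moreover, even granting an analogue of that proposition for other indices, knowing which components increase on each sub‑interval (its ``type'') does not by itself locate the slope changes, hence does not fix the combined graph. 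What is missing is precisely the substance of the paper's uniqueness argument: after the step you do have (Lemma~\ref{part:lemma:simple} forcing $t$ and the constancy of the inactive components), one must show inductively, for $i=0,\dots,\ell-m-1$ (and dually for the lower block), that the partial sum $S^-_{m+i}=P_1+\cdots+P_{m+i}$ is forced to have slope $1$ on $[a,r_i]$, slope $0$ on $[r_i,t_i]$ and slope $1$ on $[t_i,b]$, with $r_i$ and $t_i$ computed explicitly from $\ua$ and $\ub$. The mechanism is that, by non‑degeneracy and Lemma~\ref{part:lemma:Am} (applied with $m+i+1$ in place of $m$), the unique coincidence of $(P_{m+i+1},P_{m+i+2})$ guaranteed by (iii) is the unique point where $S^-_{m+i+1}$ may change slope from $1$ to $0$, and the induction hypothesis on $S^-_{m+i}$ then pins down both that point and the subsequent ascent. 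Your plan is fillable along exactly these lines, but as written the key forcing step is asserted rather than proved, and the tool you cite for it does not apply.
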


\begin{figure}[h]
     \begin{tikzpicture}[scale=0.75]
      \draw[dashed] (1,7) -- (1,-0.5) node[below] {$r_0=a$}; 
      \draw[dashed] (2.2,5.2) -- (2.2,0.3) node[below] {$r_1$};
      \node[below] at (3.1,0.7){$\dots$};
      \draw[dashed] (4,7) -- (4,0.3) node[below] {\ \ $r_{\ell-m-1}$};
      \draw[dashed] (6,9) -- (6,-0.5) node[below] {$t_{\ell-m-1}$};
      \draw[dashed] (6.8,7.8) -- (6.8,0.3) node[below]{\ \ $t_{\ell-m-2}$};
      \node[below] at (7.4,0.7){$\dots$};
      \draw[dashed] (8,5.2) -- (8,0.3) node[below]{\ \ $t_1$};
      \draw[dashed] (8.8,9) -- (8.8,-0.5) node[below] {$t_0$}; 
      \draw[dashed] (12,3) -- (12,1) node[below] {}; 
      \draw[dashed] (17,9) -- (17,-0.5) node[below] {$b$};
      \node[left] at (8,6.4){$\ddots$};
      \node[left] at (10.8,3.6){$\ddots$};
      \node[left] at (1,4){$a_m=a_{m+1}$};
      \draw[thick] (1,4) -- (6,9) -- (17,9) node[right]{$b_\ell$};
      \draw[thick] (1,4) -- (8.8,4) -- (9.6,4.8) -- (17,4.8) node[right]{$b_{m-1}$};
      \node[left] at (1,2){$a_{k+1}$};
      \draw[thick] (1,2.2) -- (11.2,2.2) -- (12,3) -- (17,3) node[right]{$b_k$};
      \node[left] at (1,1){$a_k$};
      \draw[thick] (1,1) -- (12,1) -- (17,6);
      \node[left] at (1,5.2){$a_{m+2}$};
      \draw[thick] (1,5.2) -- (8,5.2) -- (8.8,6) -- (17,6) node[right]{$b_m=b_{m+1}$};
      \node[left] at (1,7){$a_\ell$};
      \draw[thick] (1,7) -- (6,7) -- (6.8,7.8) -- (17,7.8) node[right]{$b_{\ell-1}$};
      \node[above] at (13,9) {$P_\ell$};
      \node[above] at (13,7.8) {$P_{\ell-1}$};
      \node[below] at (13,7.8) {$\vdots$};
      \node[above] at (13,6) {$P_{m+1}$};
      \node[above] at (13,4.8) {$P_m$};
      \node[below] at (13,4.8) {$\vdots$};
      \node[above] at (13,3) {$P_{k+1}$};
      \node[below] at (13,1.7) {$P_k$};
      \node[left] at (0.8,6.3){$\vdots$};
      \node[left] at (0.8,3.2){$\vdots$};
      \node[right] at (17.2,7.1){$\vdots$};
      \node[right] at (17.2,4.1){$\vdots$};
   \end{tikzpicture}
\caption{The combined graph of $(P_k,\dots,P_\ell)$ in Proposition \ref{type:prop:converse}.}
\label{type:fig1}
\end{figure}
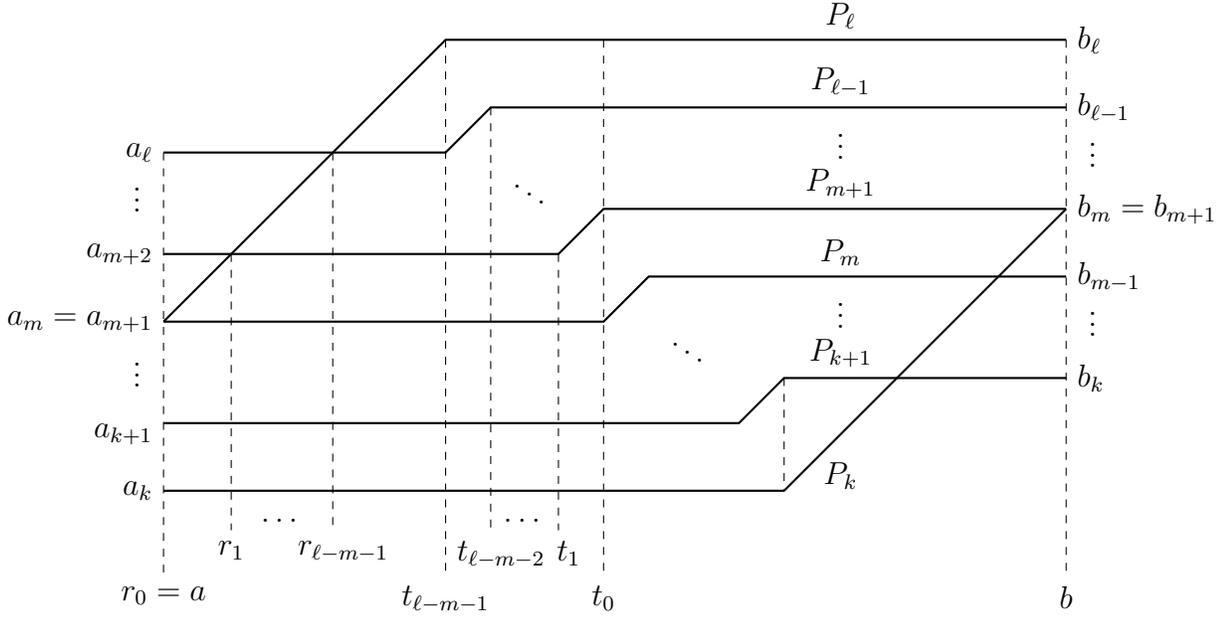

\begin{proof} 
\textbf{Existence.}
Using the geometric charaterization given in Section \ref{ssec:pgn:char}, it is a simple
matter to check that Figure \ref{type:fig1} represents the combined graph of a map 
$\uP=(P_1,\dots,P_n)$ with the required properties, provided that we supplement this graph
above and below by horizontal line segments with ordinate $a_j=b_j$ for each $j$ with
$j<k$ or $j>\ell$, if any.  Note that this drawing simplifies when $k=m$ or $\ell=m+1$.

\textbf{Uniqueness.} 
Suppose that a map $\uP=(P_1,\dots,P_n)$ has the required properties.  To prove that $\uP$ 
is the same as above, it suffices to show that the sums $S^-_j=P_1+\cdots+P_j$ are the 
same for each $j=1,\dots,n$.  We will do it only for $S^-_m,\dots,S^-_n$ as the 
argument is similar for $S^-_1,\dots,S^-_m$.  In fact, since $P_j$ is constant equal
to $a_j=b_j$ for each $j>\ell$, the sums $S^-_j$ with $j\ge \ell$ are affine of slope $1$ on 
$[a,b]$, given by  
\[
 S^-_j(q) = q-a+(a_1+\cdots+a_j).
\]
So, it suffices only to consider $S^-_m,\dots,S^-_{\ell-1}$.   
For each $i=0,\dots,\ell-m-1$, we need to show that, upon setting
\[
 r_i=a+a_{m+i+1}-a_{m+1} 
 \et
 t_i=(a_1+\cdots+a_{m+i+1})+(b_{m+i+1}+\cdots+b_n)-a_{m+1},
\]
the function $S^-_{m+i}$ is affine of slope $1$ on $[a,r_i]$, constant on $[r_i,t_i]$, and 
affine of slope $1$ on $[t_i,b]$.  This characterizes uniquely $S^-_{m+i}$ since 
$S^-_{m+i}(a)=a_1+\cdots+a_{m+i}$.    

By (ii), the interval $[a,b]$ is simple for $\uP$.  Then Lemma \ref{part:lemma:simple} shows 
that $S_m$ is constant on $[a,t_0]$ and affine of slope $1$ on $[t_0,b]$.  Since $r_0=a$, 
this proves our claim for $i=0$.  Now, suppose by induction that $S^-_{m+i}$ has the 
stated property for some $i$ with $0\le i\le\ell-m-2$.  Since $m < m+i+1 < \ell$, 
condition (iii) tells us that there is exactly one $r\in(a,b)$ for which 
$P_{m+i+1}(r)=P_{m+i+2}(r)$.  Since $\uP$ is non-degenerate,
it follows from Lemma \ref{part:lemma:Am} that $r$ is a point of $(a,b)$ at 
which $S^-_{m+i+1}$ changes slope from $1$ to $0$, and the only such point.  
Moreover, $P_{m+1}$ has slope $1$ to the right of $a$, because 
$P_m(a)=P_{m+1}(a)$.  Thus, $S^-_{m+i+1}$ also has slope $1$ 
to the right of $a$.    Consequently, it has slope $1$ on $[a,r]$, and then slope $0$ 
on $[r,t]$ and slope $1$ on $[t,b]$ for some $t\in (r,b]$.  It remains only to prove
that $r=r_{i+1}$ and $t=t_{i+1}$.

To prove this, we first observe that, since $S^-_{m+i+1}$ has slope $1$ on $[a,r]$,
then $P_{m+i+2}$ is constant equal to $a_{m+i+2}$ on $[a,r]$, thus
$P_{m+i+1}(r)=P_{m+i+2}(r)=a_{m+i+2}$.   Since $S^-_{m+i+1}$ is constant on $[r,t]$, 
both $S^-_{m+i}$ and $P_{m+i+1}$ are constant on $[r,t]$.  Finally, since $S^-_{m+i}$
has slope $1$ on $[a,r_i]$, is constant on $[r_i,t_i]$ and has slope $1$ on $[t_i,b]$, 
we conclude that $r_i\le r$, that $t\le t_i$, and that $P_{m+i+1}$ is constant
equal to $a_{m+i+1}$ on $[a,r_i]$, has slope $1$ on $[r_i,r]$, is constant equal to 
$a_{m+i+2}$ on $[r,t]$, has slope $1$ on $[t,t_i]$, and is constant equal to 
$b_{m+i+1}$ on $[t_i,b]$.  This yields
\[
 r=r_i+a_{m+i+2}-a_{m+i+1}=r_{i+1}
 \et
 t=t_i+a_{m+i+2}-b_{m+i+1}=t_{i+1},
\]
as claimed.
\end{proof}

\begin{corollary}
\label{type:cor1}
For each $i\ge 1$, let $\ua^{(i)}=(a^{(i)}_1,\dots,a^{(i)}_n)\in\bR^n$ with
\begin{equation}
 \label{type:cor1:eq1}
 0 \le a^{(i)}_1<\cdots<a^{(i)}_m=a^{(i)}_{m+1}<\cdots<a^{(i)}_n,
\end{equation}
and set $a_i=a^{(i)}_1+\cdots+a^{(i)}_n$.  Suppose that, for each $i\ge 1$, 
the points $\ua^{(i)}$ and $\ua^{(i+1)}$ are distinct and satisfy  
\begin{equation}
 \label{type:cor1:eq2} 
 \left\{
 \begin{aligned}
  &a^{(i)}_j=a^{(i+1)}_j &&\text{for $j=1,\dots,k_i-1$ and $j=\ell_i+1,\dots,n$,}\\
  &a^{(i)}_j \le a^{(i+1)}_{j-1} &&\text{for $j=k_i+1,\dots,\ell_i$,}
 \end{aligned}
 \right.
\end{equation}
for integers $k_i$ and $\ell_i$ with $1\le k_i\le m< \ell_i\le n$.
Finally suppose that $a^{(i)}_1$ tends to infinity with $i$.
Then there exists a non-degenerate proper $n$-system $\uR=(R_1,\dots,R_n)$ 
on $[a_1,\infty)$ such that
\begin{itemize}
 \item[(i)] $\uR(a_i)=\ua^{(i)}$ for each $i\ge 1$,
 \smallskip
 \item[(ii)] $\cA_m(\uR) = \{a_1,a_2,a_3,\dots\}$.
\end{itemize}
Moreover, if there exist an integer $s\ge 1$ and a number $\rho>1$ such that
$\ua^{(i+s)}=\rho\ua^{(i)}$ for each $i\ge 1$, we may choose $\uR$ to be 
self-similar with $\uR(\rho q)=\rho\uR(q)$ for each $q\ge a_1$.
\end{corollary}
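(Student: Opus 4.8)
\textbf{Proof proposal for Corollary \ref{type:cor1}.}
The plan is to glue together, over each consecutive pair of nodes $a_i<a_{i+1}$, the non-degenerate sign-free-$n$-system supplied by Proposition~\ref{type:prop:converse}, and then check that the resulting map on $[a_1,\infty)$ is a genuine $n$-system with the announced division set. First I would fix $i\ge 1$ and apply Proposition~\ref{type:prop:converse} with the pair $(\ua,\ub)=(\ua^{(i)},\ua^{(i+1)})$, the integers $(k,\ell)=(k_i,\ell_i)$, and $(a,b)=(a_i,a_{i+1})$: hypotheses \eqref{type:cor1:eq1} and \eqref{type:cor1:eq2} are exactly \eqref{type:prop:converse:eq}, so the proposition gives $a_i<a_{i+1}$ (in particular the sequence $(a_i)$ is strictly increasing and, since $a^{(i)}_1\to\infty$, tends to $\infty$) and a unique non-degenerate sign-free-$n$-system $\uP^{(i)}$ on $[a_i,a_{i+1}]$ with $\uP^{(i)}(a_i)=\ua^{(i)}$, $\uP^{(i)}(a_{i+1})=\ua^{(i+1)}$, and whose sum $P_1+\cdots+P_m$ has slope $0$ then $1$ on $[a_i,a_{i+1}]$. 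Define $\uR$ on $[a_1,\infty)$ by $\uR|_{[a_i,a_{i+1}]}=\uP^{(i)}$; this is well defined since consecutive pieces agree at the shared endpoint $a_{i+1}$ ($\uP^{(i)}(a_{i+1})=\ua^{(i+1)}=\uP^{(i+1)}(a_{i+1})$), and it is continuous.

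Next I would verify that $\uR$ is a sign-free-$n$-system on $[a_1,\infty)$. Conditions (S1) and (S2) are local and hold on each piece, hence on the whole domain. For (S3), the only potential issue is at an interior node $q=a_{i+1}$: I must check that if some sum $R_1+\cdots+R_j$ changes slope from $1$ to $0$ there, then $R_j(a_{i+1})=R_{j+1}(a_{i+1})$, i.e.\ $a^{(i+1)}_j=a^{(i+1)}_{j+1}$. By Proposition~\ref{type:prop:converse}, within the piece $\uP^{(i)}$ the sum $P_1+\cdots+P_m$ ends with slope $1$ at $a_{i+1}$, and by the structure of the graph (Figure~\ref{type:fig1}) the only sums $S^-_j$ that can have slope $0$ immediately to the left of $a_{i+1}$ are those with $j$ in a certain range of the ``upper'' and ``lower'' blocks; in all such cases the description of $\uP^{(i)}$ near $a_{i+1}$ forces $P_j(a_{i+1})=P_{j+1}(a_{i+1})$ precisely when the slope genuinely drops. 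Concretely, one reads off from the explicit piecewise-affine formulas for the $S^-_j$ in the uniqueness part of Proposition~\ref{type:prop:converse} which sums are constant just before $a_{i+1}$, and at each of those the two relevant components have already merged; the analogous statement for $S^-_1,\dots,S^-_m$ follows by the symmetric argument mentioned there. Since each $a^{(i)}_1\ge 0$ and $P_1$ attains its minimum at a switch point equal to $a^{(i)}_1$ or $a^{(i+1)}_1$ on each piece, we get $R_1(q)\ge 0$ throughout, so $\uR$ is an $n$-system; as $R_1(a_i)=a^{(i)}_1\to\infty$ it is proper, and Corollary~\ref{pgn:def:cor} (or the hypothesis of strict inequalities at every node) gives non-degeneracy. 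This proves (i) and, via Lemma~\ref{part:lemma:Am} together with the ``slope $0$ then $1$'' behaviour of $P_1+\cdots+P_m$ on each piece (so that its slope changes from $1$ to $0$ at exactly the points $a_i$), also (ii): $\cA_m(\uR)=\{a_1,a_2,\dots\}$.

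For the self-similar refinement, suppose $\ua^{(i+s)}=\rho\ua^{(i)}$ for all $i$, with $\rho>1$ and $s\ge1$; then $a_{i+s}=\rho a_i$. I would first use the freedom in Proposition~\ref{type:prop:converse} — it produces a \emph{canonical} piece from the endpoint data — to observe that the piece over $[a_{i+s},a_{i+1+s}]=[\rho a_i,\rho a_{i+1}]$ built from $(\rho\ua^{(i)},\rho\ua^{(i+1)})$ is exactly $q\mapsto\rho\,\uP^{(i)}(q/\rho)$, because the latter is a non-degenerate sign-free-$n$-system on $[\rho a_i,\rho a_{i+1}]$ with the matching endpoint values and the matching property (ii)–(iii), hence equals the canonical one by uniqueness. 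Therefore $\uR(\rho q)=\rho\uR(q)$ automatically on $[a_1,\rho a_1]$ and, by iteration, on all of $[a_1,\infty)$; thus $\uR$ is self-similar.

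The main obstacle I anticipate is the bookkeeping in verifying (S3) at the interior nodes $a_{i+1}$: one must match which partial sums $S^-_j$ drop slope there against the equalities $a^{(i+1)}_j=a^{(i+1)}_{j+1}$ imposed by \eqref{type:cor1:eq1}, using the left-hand behaviour of the piece $\uP^{(i)}$ and the right-hand behaviour of $\uP^{(i+1)}$ as described in the uniqueness part of Proposition~\ref{type:prop:converse}. Everything else — continuity of the glued map, properness, non-degeneracy, and the self-similar case via uniqueness — is routine once this compatibility at the nodes is in hand.
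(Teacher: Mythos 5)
Your proposal follows the paper's proof essentially verbatim: glue the canonical pieces supplied by Proposition~\ref{type:prop:converse} over the intervals $[a_i,a_{i+1}]$, verify (S1)--(S3) at the nodes, deduce properness from $a^{(i)}_1\to\infty$ and non-degeneracy from the pieces, and obtain the self-similar case from the uniqueness statement. The one step you leave vague, the (S3) check at a node $a_{i+1}$, works exactly as you anticipate and is the paper's own one-line justification: inside each piece the component of slope $1$ just left of the right endpoint is $P_m$ and the one just right of the left endpoint is $P_{m+1}$ (as in the proof of Lemma~\ref{part:lemma:simple}), so the only sum $R_1+\cdots+R_j$ changing slope from $1$ to $0$ at a node is the one with $j=m$, where $R_m(a_{i+1})=R_{m+1}(a_{i+1})$ holds by \eqref{type:cor1:eq1}.
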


\begin{proof}
For each $i\ge 1$, the points $\ua=\ua^{(i)}$ and $\ub=\ua^{(i+1)}$ satisfy
the conditions \eqref{type:prop:converse:eq} of the proposition for the choice of
$k=k_i$ and $\ell=\ell_i$.  The proposition thus provides a non-degenerate 
sign-free-$n$-system $\uP^{(i)}=(P^{(i)}_1,\dots,P^{(i)}_n)$ on $[a_i,a_{i+1}]$ 
such that 
\[
 \uP^{(i)}(a_i)=\ua^{(i)}, \quad
 \uP^{(i)}(a_{i+1})=\ua^{(i+1)}, \quad
 P^{(i)}_m(q)<P^{(i)}_{m+1}(q) \ \text{for each $q\in(a_i,a_{i+1})$.}
\]
Consider the map $\uR=(R_1,\dots,R_n)\colon[a_1,\infty)\to\bR^n$ which coincides 
with $\uP^{(i)}$ on $[a_i,a_{i+1}]$ for each $i\ge 1$.  This is a sign-free-$n$-system
on $[a_1,\infty)$ because, for each $i\ge 2$, there is exactly one $j\in\{1,\dots,n\}$ 
for which the sum $R_1+\dots+R_j$ changes slope from $1$ to $0$ at 
$a_i$, it is $j=m$, and we have $R_m(a_i)=R_{m+1}(a_i)$.  Since $R_1(a_i)=a^{(i)}_1\ge 0$
tends to infinity with $i$, this is a proper $n$-system.  By construction, it is
non-degenerate, with the required properties (i) and (ii).

Finally, suppose that, for each $i\ge 1$, we have $\ua^{(i+s)}=\rho\ua^{(i)}$ for 
some fixed integer $s\ge 1$ and some $\rho>1$.  Then, for each $i\ge 1$, we
have $k_{i+s}=k_i$, $\ell_{i+s}=\ell_i$, and $a_{i+s}=\rho a_i$, thus 
$[a_{i+s+1},a_{i+s}] = \rho [a_{i+1},a_i]$ and 
$\uP^{(i+s)}(q)=\rho\uP^{(i)}(q/\rho)$ for each $q\in [a_{i+s+1},a_{i+s}]$ by 
uniqueness of the construction in Proposition \ref{type:prop:converse}.  This yields 
$\uR(\rho q)=\rho\uR(q)$ for each $q\ge a_1$.
\end{proof}

\begin{remark}
\label{type:remark}
The maps constructed by the above process are simpler than general 
non-degenerate proper $n$-systems, and they suffice to compute the spectrum 
$\cS_{n-m,n}$.  

Indeed, choose any non-degenerate proper $n$-system $\uP=(P_1,\dots,P_n)$,  
and list the elements of $\cA_m(\uP)$ in increasing order $a_1<a_2<a_3<\dots$.
By Proposition \ref{type:prop:kl},  the points $\ua^{(i)}=\uP(a_i)$ with $i\ge 1$ 
satisfy the hypotheses of the corollary for a choice of integers $k_i$ and $\ell_i$.  
So, the corresponding non-degenerate 
proper $n$-system $\uR=(R_1,\dots,R_n)$ on $[a_1,\infty)$ has the same set of 
$m$-division numbers $\cA_m(\uR)=\{a_1,a_2,a_3,\dots\}$ as $\uP$,  and $\uR$ 
coincides with $\uP$ on that set.  By Proposition \ref{part:prop:n-sys} and 
Corollary~\ref{pgn:tool:cor}, we conclude that $\uP$ and $\uR$ yield the same point
\[
 (\chibot_m(\uP),\chitop_m(\uP)) = (\chibot_m(\uR),\chitop_m(\uR))
\]
in $\cS_{n-m,n}$.  Since each interval $[a_i,a_{i+1}]$ is simple for $\uP$ and for $\uR$, 
Lemma~\ref{part:lemma:simple} further shows that the sums $P_1+\cdots+P_m$  
and $R_1+\dots+R_m$ coincide on $[a_i,a_{i+1}]$ for each $i\ge 1$, and so they
coincide on $[a_1,\infty)$. 
\end{remark}

%
%

\section{Constructions}
\label{sec:cons}

Let $m,n\in \bZ$ with $1\le m\le n-1$.  We recall that the spectrum $\cS_{n-m,n}$ 
of the pair $(\homega_{n-m-1},\omega_{n-m-1})$ is the same relative to any
embedding of a number field $K$ into its completion $K_\pw$ at a place $\pw$ of 
$K$, and that Corollary~\ref{pgn:tool:cor} allows us to compute it in terms 
of $n$-systems.  In this section, we give an alternative description of that spectrum 
in the spirit of the remark ending the preceding section.  Then, we use it to prove
Proposition~\ref{dr:prop:Ri} and Theorem~\ref{dr:thm:rectangles}.

\begin{theorem}
 \label{cons:thm}
Suppose that, for $i=1,\dots,s+1$ with $s\ge 1$, a point 
$\ua^{(i)}=(a^{(i)}_1,\dots,a^{(i)}_n)$ of $\bR^n$ satisfies the hypotheses
\eqref{type:cor1:eq1} of Corollary~\ref{type:cor1}.  Suppose moreover that,
for $i=1,\dots,s$, the points $\ua^{(i)}$ and $\ua^{(i+1)}$ are distinct and
satisfy \eqref{type:cor1:eq2} for a choice of integers
$k_i$, $\ell_i$ with $1\le k_i\le m<\ell_i\le n$.  Finally, suppose that 
$\ua^{(s+1)}=\rho\ua^{(1)}$ for some $\rho>1$, and that all coordinates
of $\ua^{(1)}$ are positive.  Then $\cS_{n-m,n}$ contains the point 
$(\alpha,\beta)$ given by
\begin{equation}
 \label{cons:thm:eq1}
 \alpha=\min_{1\le i\le s}\frac{A^+_i}{A^-_i},
 \quad
 \beta=\max_{1\le i\le s}\frac{A^+_{i+1}}{A^-_i},
\end{equation}
where
\begin{equation}
 \label{cons:thm:eq2}
 A^-_i = a^{(i)}_1+\cdots+a^{(i)}_m,
 \quad
 A^+_i = a^{(i)}_{m+1}+\cdots+a^{(i)}_n,
\end{equation}
for each $i=1,\dots,s+1$.  More precisely, $\cS_{n-m,n}$ is the topological 
closure in $[(n-m)/m,\infty]^2$ of the set of points $(\alpha,\beta)$ 
constructed as above.
\end{theorem}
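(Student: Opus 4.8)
The plan is to prove the two assertions together: that each point $(\alpha,\beta)$ built as in \eqref{cons:thm:eq1} lies in $\cS_{n-m,n}$, and that conversely $\cS_{n-m,n}$ is contained in the topological closure of the set of all such points. For the first, I would start from the cyclic data $\ua^{(1)},\dots,\ua^{(s+1)}$ and extend it to an infinite sequence by declaring $\ua^{(i+s)}=\rho\,\ua^{(i)}$ for every $i\ge 1$. Since multiplication by the positive scalar $\rho$ preserves \eqref{type:cor1:eq1} and \eqref{type:cor1:eq2} (keeping the same indices $k_i,\ell_i$), and since $a^{(i)}_1$ is non-decreasing in $i$ and bounded below by the positive number $a^{(1)}_1$, hence tends to infinity, this infinite sequence meets all the hypotheses of Corollary~\ref{type:cor1} together with its self-similarity clause for the given $s$ and $\rho$. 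Corollary~\ref{type:cor1} thus produces a self-similar non-degenerate proper $n$-system $\uR$ with $\uR(a_i)=\ua^{(i)}$ and $\cA_m(\uR)=\{a_1,a_2,\dots\}$, where $a_i=a^{(i)}_1+\cdots+a^{(i)}_n$. Applying Proposition~\ref{part:prop:n-sys} to $\uR$ gives $\chibot_m(\uR)=\liminf_i A_i^+/A_i^-$ and $\chitop_m(\uR)=\limsup_i A_{i+1}^+/A_i^-$ with $A_i^\pm$ as in \eqref{cons:thm:eq2}; but the identities $A_{i+s}^\pm=\rho A_i^\pm$ make both families of ratios $s$-periodic, so these $\liminf$ and $\limsup$ equal the minimum and the maximum over one period, that is, $\alpha$ and $\beta$. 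By Corollary~\ref{pgn:tool:cor}, $(\alpha,\beta)=(\chibot_m(\uR),\chitop_m(\uR))\in\cS_{n-m,n}$; and as the same corollary tells us $\cS_{n-m,n}$ is closed, and Proposition~\ref{part:prop:n-sys} that it lies in $[(n-m)/m,\infty]^2$, it contains the closure of the set of all points obtained in this way.

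For the reverse inclusion I would use the density, asserted in Corollary~\ref{pgn:tool:cor}, of the points attached to self-similar $n$-systems. The key observation is that the point attached to a self-similar non-degenerate proper $n$-system $\uP$, say of ratio $\rho$, is itself exactly one of the constructed points. Indeed $\cA_m(\uP)$ is invariant under multiplication by $\rho$ (since $P_m$ and $P_{m+1}$ are), and a large element $b$ of $\cA_m(\uP)$ forces $b/\rho\in\cA_m(\uP)$ via self-similarity; so, listing $\cA_m(\uP)$ as $a_1<a_2<\cdots$ starting from a sufficiently large $a_1$ (large enough that $P_1(a_1)>0$), one gets $a_{i+s}=\rho a_i$ for all $i\ge1$, where $s\ge1$ is the number of $m$-division numbers in $[a_1,\rho a_1)$. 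Put $\ua^{(i)}=\uP(a_i)$ for $i=1,\dots,s+1$. Non-degeneracy, examined as in the proof of Lemma~\ref{part:lemma:Am}, yields \eqref{type:cor1:eq1}; Proposition~\ref{type:prop:kl} yields \eqref{type:cor1:eq2}; Lemma~\ref{part:lemma:simple} shows $\ua^{(i)}\ne\ua^{(i+1)}$; and $\ua^{(s+1)}=\uP(\rho a_1)=\rho\ua^{(1)}$ with $\ua^{(1)}$ having positive coordinates. So the hypotheses of the theorem hold, and the periodicity argument of the previous paragraph shows that the resulting constructed point equals $(\chibot_m(\uP),\chitop_m(\uP))$. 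Hence every point of $\cS_{n-m,n}$ coming from a self-similar non-degenerate proper $n$-system is a constructed point; combined with the first paragraph and the density statement, this forces $\cS_{n-m,n}$ to coincide with the closure of the set of constructed points.

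The one thing left to secure — and what I expect to be the main obstacle — is that Corollary~\ref{pgn:tool:cor} only asserts density of the points of self-similar $n$-systems, not of self-similar \emph{non-degenerate} ones. I would bridge this with a perturbation argument: a self-similar $n$-system is determined by its restriction to a fundamental domain $[q_0,\rho q_0]$, and after translating $q_0$ far out one can modify the horizontal levels of its combined graph there by an arbitrarily small amount, preserving the defining properties recalled in Section~\ref{ssec:pgn:char} together with the boundary relation $\uP(\rho q_0)=\rho\uP(q_0)$, so that all $n$ components become pairwise distinct wherever the map is differentiable; the self-similar extension of the modified graph is then a non-degenerate self-similar $n$-system. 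For such systems the ratio $\chi_m=(P_{m+1}+\cdots+P_n)/(P_1+\cdots+P_m)$ is periodic under $q\mapsto\rho q$, so $(\chibot_m,\chitop_m)$ is the pair of extrema of $\chi_m$ over the compact interval $[q_0,\rho q_0]$ and hence depends continuously on the fundamental-domain data; thus the perturbed system's point can be made to lie within any prescribed distance of that of $\uP$, which is all the argument needs. A more hands-on alternative would bypass self-similar systems altogether, closing a long block of the data of a general proper non-degenerate $n$-system — chosen to realise the $\liminf$ and $\limsup$ of Proposition~\ref{part:prop:n-sys} up to a small error — into a cycle by appending a short return path of small ``balanced'' transitions scaling everything by a large $\rho$; but keeping all the intermediate ratios under control along that return path is considerably more delicate, so the self-similar route seems preferable.
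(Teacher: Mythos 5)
Your argument coincides with the paper's proof: both inclusions are handled exactly as there, namely by extending the cyclic data $\rho$-periodically and invoking Corollary~\ref{type:cor1}, Proposition~\ref{part:prop:n-sys} and Corollary~\ref{pgn:tool:cor} to realize $(\alpha,\beta)$ by a non-degenerate self-similar $n$-system, and conversely by reading off the values of such a system at its $m$-division numbers via Propositions~\ref{type:prop:kl} and~\ref{part:prop:n-sys} to see that its point is a constructed one. The only divergence is your final perturbation step: the paper simply applies the density clause of Corollary~\ref{pgn:tool:cor} without a separate upgrade from self-similar to non-degenerate self-similar systems, so your extra caution there (and the minor slip where divergence of $a^{(i)}_1$ should be attributed to the $\rho$-scaling rather than to monotonicity plus a positive lower bound) goes beyond, but does not alter, the paper's argument.
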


\begin{proof}
We claim that the points $(\alpha,\beta)$ constructed by the theorem are the 
same as the pairs $(\chibot_m(\uR),\chitop_m(\uR))$ where $\uR$ runs through 
all non-degenerate self-similar $n$-systems.  If we admit this, the 
result follows from Corollary~\ref{pgn:tool:cor} together with the lower bound
$\chibot_m(\uR)\ge (n-m)/m$ coming from \eqref{part:prop:n-sys:eq} in 
Proposition~\ref{part:prop:n-sys}.

In one direction, suppose $(\ua^{(1)},\dots,\ua^{(s+1)})$ as in the theorem.
We extend it to an infinite sequence $(\ua^{(i)})_{i\ge 1}$ by defining 
recursively $\ua^{(i+s)}=\rho\ua^{(i)}$ for each $i\ge 1$.  We also extend 
$(k_1,\dots,k_s)$ and $(\ell_1,\dots,\ell_s)$ to infinite $s$-periodic sequences 
$(k_i)_{i\ge 1}$ and $(\ell_i)_{i\ge 1}$ respectively.  Then the hypotheses of 
Corollary \ref{type:cor1} are fulfilled and so, upon denoting by $a_i$ the sum 
of the coordinates of $\ua^{(i)}$ for each $i\ge 1$, there is a non-degenerate 
self-similar $n$-system $\uR=(R_1,\dots,R_n)$ on $[a_1,\infty)$ such that 
$\cA_m(\uR)=\{a_1,a_2,a_3,\dots\}$ and $\uR(a_i)=\ua^{(i)}$ for 
each $i\ge 1$.  By Proposition~\ref{part:prop:n-sys}, we have 
\begin{equation*}
 \label{cons:thm:eq3}
 \chibot_m(\uR)=\liminf_{i\to\infty}\frac{A^+_i}{A^-_i}
 \et
 \chitop_m(\uR)=\limsup_{i\to\infty}\frac{A^+_{i+1}}{A^-_i}
\end{equation*}
where $A^-_i$ and $A^+_i$ are given by \eqref{cons:thm:eq2} for each 
$i\ge 1$.  Since $\ua^{(i+s)}=\rho\ua^{(i)}$ for each $i\ge 1$, the 
ratios $A^+_i/A^-_i$ and $A^+_{i+1}/A^-_i$ are periodic of period $s$.
Thus, these formulas simplify to $\chibot_m(\uR)=\alpha$ and 
$\chitop_m(\uR)=\beta$ where $\alpha$ and $\beta$ are given by
\eqref{cons:thm:eq1}.

Conversely, suppose that $\uR\colon[q_0,\infty)\to\bR^n$ is any 
non-degenerate self-similar (thus proper) $n$-system.  Let $a_1<a_2<a_3<\cdots$ be the 
elements of $\cA_m(\uR)$ listed in increasing order, and set $\ua^{(i)}=\uR(a_i)$
for each $i\ge 1$.  Since $\uR$ is self-similar, there exists $\rho>1$ such
that $\uR(\rho q)=\rho\uR(q)$ for each $q\ge q_0$, and so $\rho\cA_m(\uR)
=\cA_m(\uR)\cap[\rho q_0,\infty)$.  Consequently, there exists an 
integer $s\ge 1$ such that $a_{i+s}=\rho a_i$ and $\ua^{(i+s)}=\rho\ua^{(i)}$ 
for each $i\ge 1$.  By Propositions \ref{type:prop:kl} and \ref{part:prop:n-sys},
the finite subsequence $(\ua^{(1)},\dots,\ua^{(s+1)})$ fulfills the hypotheses 
of the theorem and yields the point 
$(\alpha,\beta)=(\chibot_m(\uR),\chitop_m(\uR))$ through
\eqref{cons:thm:eq1}. 
\end{proof}

\begin{proof}[\textbf{Proof of Proposition \ref{dr:prop:Ri}}] 
Let $g,\rho\in\bR$ with $1< g\le \rho$.  The points 
\[
  \ua^{(1)}=(1,\dots,g^{m-1},g^{m-1},\dots,g^{n-2}) 
  \et
  \ua^{(2)}=\rho \ua^{(1)} 
\]
satisfy the hypotheses of Theorem~\ref{cons:thm} with $s=1$, $k_1=1$ 
and $\ell_1=n$.  Thus $\cS_{n-m,n}$ contains the point $(\alpha,\beta)$
where
\begin{align}
 \alpha
  &=\frac{g^{m-1}+\cdots+g^{n-2}}{1+\cdots+g^{m-1}}
    = \Big(\sum_{i=0}^{n-m-1}g^i\Big) / \Big(\sum_{i=0}^{m-1}1/g^i\Big),
  \label{cons:proof:eq}\\
  \beta
  &=\frac{\rho(g^{m-1}+\cdots+g^{n-2})}{1+\cdots+g^{m-1}}
    =\rho\alpha.
 \notag
\end{align}
As $\cS_{n-m,n}$ is a closed subset of $[0,\infty]^2$, it thus contains 
all $(\alpha,\beta)\in[0,\infty]^2$ which, for some $g\in[1,\infty]$, satisfy 
\eqref{cons:proof:eq} and  $\beta\ge g\alpha$.  Proposition \ref{dr:prop:Ri}
follows upon replacing $m$ by $n-m$.
\end{proof}

The following complement to Theorem~\ref{cons:thm} is inspired by
\cite[Proposition 3.4]{R2016} and is key to the  proof of 
Theorem~\ref{dr:thm:rectangles}.

\begin{proposition}
\label{cons:prop}
Let the notation and hypotheses be as in Theorem~\ref{cons:thm}, and choose
a (strictly) increasing function $\nu\colon[0,\infty)\to[0,\infty)$.  Define 
\[
 \tua^{(i)} = \big(\nu(a^{(i)}_1),\dots,\nu(a^{(i)}_n)\big)
 \quad\text{for $i=1,\dots,s+1$,}
\]
and suppose that $\tua^{(s+1)}=\trho\/\tua^{(1)}$ for some $\trho>1$.  Then 
$\cS_{n-m,n}$ contains the point $(\talpha,\tbeta)$ given by
\begin{equation}
 \label{cons:prop:eq1}
 \talpha=\min_{1\le i\le s} \tA^+_i/\tA^-_i,
 \quad
 \tbeta=\max_{1\le i\le s} \tA^+_{i+1}/\tA^-_i,
\end{equation}
where
\begin{equation}
 \label{cons:prop:eq2}
 \tA^-_i = \nu(a^{(i)}_1)+\cdots+\nu(a^{(i)}_m),
 \quad
 \tA^+_i = \nu(a^{(i)}_{m+1})+\cdots+\nu(a^{(i)}_n),
\end{equation}
for each $i=1,\dots,s+1$.  Moreover,  
\begin{itemize}
\item[(i)] if $\nu(t)/t$ is non-decreasing on $(0,\infty)$, then $\talpha\ge \alpha$ and $\tbeta\ge \beta$;
\smallskip
\item[(ii)] if $\nu(t)/t$ is non-increasing on $(0,\infty)$, then $\talpha\le \alpha$ and $\tbeta\le \beta$.
\end{itemize}
\end{proposition}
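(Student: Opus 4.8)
The plan is to derive Proposition~\ref{cons:prop} from Theorem~\ref{cons:thm} applied to the transformed points $\tua^{(1)},\dots,\tua^{(s+1)}$, and then to obtain the two comparison inequalities from a single elementary estimate on $\nu$.

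First I would check that the points $\tua^{(i)}$ satisfy the hypotheses of Theorem~\ref{cons:thm} with the same integers $k_i,\ell_i$ and with $\trho$ in the role of $\rho$. This is precisely where strict monotonicity of $\nu$ is used: applying $\nu$ coordinatewise preserves the chains of strict inequalities and the equality $a^{(i)}_m=a^{(i)}_{m+1}$ in \eqref{type:cor1:eq1}, and preserves the equalities and the $\le$-inequalities in \eqref{type:cor1:eq2}; injectivity of $\nu$ gives $\tua^{(i)}\neq\tua^{(i+1)}$ whenever $\ua^{(i)}\neq\ua^{(i+1)}$; the requirement $\nu(a^{(i)}_1)\ge 0$ is automatic since $\nu$ takes values in $[0,\infty)$; and, because the hypotheses of Theorem~\ref{cons:thm} force $0<a^{(1)}_1\le a^{(i)}_1$ for every $i$ (by \eqref{type:cor1:eq1} and \eqref{type:cor1:eq2}), every coordinate of every $\tua^{(i)}$ is positive, in particular those of $\tua^{(1)}$. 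Theorem~\ref{cons:thm} then asserts that $\cS_{n-m,n}$ contains the point $(\talpha,\tbeta)$ obtained from formulas \eqref{cons:thm:eq1}--\eqref{cons:thm:eq2} with the $\tua^{(i)}$ in place of the $\ua^{(i)}$, which is exactly \eqref{cons:prop:eq1}--\eqref{cons:prop:eq2}.

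For the comparison, I would fix $i\in\{1,\dots,s\}$ and put $c_i=a^{(i)}_m=a^{(i)}_{m+1}$, recording that $c_i>0$ (the $j=m+1$ instance of \eqref{type:cor1:eq2}, i.e.\ $a^{(i)}_{m+1}\le a^{(i+1)}_m=a^{(i+1)}_{m+1}$, gives $0<c_1\le c_2\le\cdots$), that $a^{(i)}_j\le c_i$ for $j\le m$ and $a^{(i)}_j\ge c_i$ for $j\ge m+1$ by the ordering inside $\ua^{(i)}$, and that $a^{(i+1)}_j\ge c_i$ for $j\ge m+1$ since $a^{(i+1)}_j\ge a^{(i+1)}_{m+1}\ge a^{(i)}_{m+1}=c_i$. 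Assume now that $\nu(t)/t$ is non-decreasing on $(0,\infty)$. Then $\nu(t)\le(\nu(c_i)/c_i)\,t$ for $0<t\le c_i$ and $\nu(t)\ge(\nu(c_i)/c_i)\,t$ for $t\ge c_i$, with $\nu(c_i)/c_i>0$. Summing the first estimate over the $m$ values $t=a^{(i)}_1,\dots,a^{(i)}_m$ and the second over $t=a^{(i)}_{m+1},\dots,a^{(i)}_n$ and then over $t=a^{(i+1)}_{m+1},\dots,a^{(i+1)}_n$ gives
\[
 \tA^-_i \le \frac{\nu(c_i)}{c_i}\,A^-_i,\qquad
 \tA^+_i \ge \frac{\nu(c_i)}{c_i}\,A^+_i,\qquad
 \tA^+_{i+1} \ge \frac{\nu(c_i)}{c_i}\,A^+_{i+1},
\]
so the common positive factor cancels and $\tA^+_i/\tA^-_i\ge A^+_i/A^-_i$, $\tA^+_{i+1}/\tA^-_i\ge A^+_{i+1}/A^-_i$ for every $i$. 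Taking the minimum of the first ratios and the maximum of the second over $i$ yields $\talpha\ge\alpha$ and $\tbeta\ge\beta$, proving (i); part (ii) is obtained by the same computation with every $\nu$-inequality reversed, since $\nu(t)/t$ non-increasing flips the two estimates on $\nu(t)$.

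I do not expect a genuine obstacle here: once the estimate on $\nu(t)$ is available, everything is bookkeeping. The only points needing a moment's care are that $a^{(i+1)}_j\ge c_i$ for $j\ge m+1$ (which rests on the $j=m+1$ case of \eqref{type:cor1:eq2} together with $a^{(i+1)}_m=a^{(i+1)}_{m+1}$) and that $c_i$, hence the divisor $\nu(c_i)/c_i$, is a well-defined positive number; with these in place the two hypotheses on $\nu(t)/t$ pass through the sums defining $\tA^\pm_i$ and cancel in the ratios, delivering (i) and (ii) simultaneously.
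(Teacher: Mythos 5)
Your proof is correct and follows essentially the same route as the paper: the first assertion comes from applying Theorem~\ref{cons:thm} to the transformed points (same $k_i,\ell_i$), and the comparisons (i)--(ii) come from comparing each $\nu(a^{(i)}_j)/a^{(i)}_j$ with the value of $\nu(t)/t$ at $t=a^{(i)}_m=a^{(i)}_{m+1}$ and using $a^{(i)}_m\le a^{(i+1)}_m$ for the $\tbeta$ estimate. The only difference is cosmetic: you bound the coordinates $a^{(i+1)}_j$ ($j\ge m+1$) directly against $c_i$, whereas the paper compares the two factors $\nu(a^{(i+1)}_m)/a^{(i+1)}_m$ and $\nu(a^{(i)}_m)/a^{(i)}_m$; these are equivalent.
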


\begin{proof}
The first assertion follows immediately from Theorem~\ref{cons:thm} since 
$\tua^{(1)},\dots,\tua^{(s+1)}$ satisfy the hypotheses of that theorem for the same 
integers $k_i$ and $\ell_i$ with $1\le i\le s$ as the given points 
$\ua^{(1)},\dots,\ua^{(s+1)}$.  

Now, suppose that $\nu(t)/t$ is non-decreasing on $(0,\infty)$.  For each 
$i=1,\dots, s+1$, we find that 
\[
 \tA_i^- = \sum_{j=1}^m \frac{\nu(a^{(i)}_j)}{a^{(i)}_j}a^{(i)}_j
   \le \frac{\nu(a^{(i)}_m)}{a^{(i)}_m} A_i^-\,,
   \quad
 \tA_i^+ = \sum_{j=m+1}^n \frac{\nu(a^{(i)}_j)}{a^{(i)}_j}a^{(i)}_j
   \ge \frac{\nu(a^{(i)}_m)}{a^{(i)}_m} A_i^+,
\]
thus $\tA_i^+/\tA_i^-\ge A_i^+/A_i^-$, and so $\talpha\ge \alpha$.  
For $i=1,\dots,s$, we also have $a^{(i)}_m\le a^{(i+1)}_m$, and the 
above yields $\tA_{i+1}^+/\tA_i^-\ge A_{i+1}^+/A_i^-$, thus 
$\tbeta\ge \beta$.   This proves the additional assertion (i).  The proof 
of (ii) is similar.
\end{proof}

For a fixed $c>0$, the above proposition applies with 
$\nu(t)=t^c$ for each $t\ge 0$, independently of the choice of
$\ua^{(1)},\dots,\ua^{(s+1)}$, for then we have $\tua^{(s+1)}
=\rho^c\/\tua^{(1)}$.  However, in the application below, the function 
$\nu$ is tailored to the choice of the sequence.

\begin{proof}[Proof of Theorem \ref{dr:thm:rectangles}]
Let $\cS$ denote the subset of $\cS_{n-m,n}$ constructed by
Theorem~\ref{cons:thm}.  Since, by this theorem, $\cS_{n-m,n}$ is the topological
closure of $\cS$ in $[(n-m)/m,\infty]^2$, it suffices to show that $\cS$ 
contains both the vertical half-line $\{\alpha\}\times[\beta,\infty)$ and 
the possibly empty horizontal line segment $((n-m)/m,\alpha]\times\{\beta\}$ 
for each $(\alpha,\beta)\in\cS$.

To this end, we fix a choice of $(\alpha,\beta)\in\cS$ coming from a
sequence $\big(\ua^{(1)},\dots,\ua^{(s+1)}\big)$ with 
$\ua^{(s+1)}=\rho\ua^{(1)}$, as in Theorem~\ref{cons:thm}.   Then, 
with the notation of the theorem, we choose an integer $r\ge 1$ such that 
\begin{equation}
 \label{cons:prop:eq:r}
 a_n^{(1)} < \rho^r a_1^{(1)},
\end{equation}
set $\ts=(2r+1)s$, and form the extended sequence 
$\big(\ua^{(1)},\dots,\ua^{(\ts+1)}\big)$
where $\ua^{(i+s)}=\rho\ua^{(i)}$ for $i=1,\dots,2rs+1$. 
This new sequence again satisfies the hypotheses of Theorem~\ref{cons:thm}
and yields the same point $(\alpha,\beta)$ through the formulas 
\eqref{cons:thm:eq1} and \eqref{cons:thm:eq2}, with $s$ replaced by $\ts$, 
because the ratios $A^+_i/A^-_i$ and $A^+_{i+1}/A^-_i$ are 
periodic functions of $i$ with period $s$ for $1\le i\le \ts$.  In particular,
we note that
\begin{equation}
 \label{cons:prop:eq:ab}
 \alpha=\min_{2rs+1\le i\le \ts} A^+_i/A^-_i
 \et
 \beta=\max_{2rs+1\le i\le \ts} A^+_{i+1}/A^-_i
\end{equation}
Moreover, the choice of $r$ in \eqref{cons:prop:eq:r} ensures that 
\begin{equation}
 \label{cons:prop:eq:aa}
 a_n^{(1)} < a_1^{(rs+1)} \le a_n^{(rs+1)} < a_1^{(2rs+1)}.
\end{equation}
Below, we apply Proposition~\ref{cons:prop} to the sequence 
$\big(\ua^{(1)},\dots,\ua^{(\ts+1)}\big)$ with various increasing 
functions $\nu$, each of which satisfies
\begin{equation}
 \label{cons:prop:eq:phi}
 \begin{cases} 
  \nu(t)=t &\text{for $0\le t\le a_n^{(1)}$,} \cr 
  \nu(t)=ct &\text{for $t\ge a_1^{(2rs+1)}$,}
 \end{cases}
\end{equation}
with a constant $c>0$.  For the corresponding sequence 
$\big(\tua^{(1)},\dots,\tua^{(\ts+1)}\big)$, this yields 
\[
 \tua^{(1)} = \ua^{(1)} \et \tua^{(i)} = c\ua^{(i)} \quad \text{for \ $2rs+1\le i\le \ts+1$.}
\]
In particular, we have $\tua^{(\ts+1)}=\trho\tua^{(1)}$ with $\trho=c\rho^{2r+1}$.
We also note that $\tA^\pm_i=cA^\pm_i$ for each choice of sign $\pm$ and 
each $i=2rs+1,\dots,\ts+1$.  By \eqref{cons:prop:eq:ab}, this implies that the 
corresponding point $(\talpha,\tbeta)\in\cS$, given by \eqref{cons:prop:eq1}, 
satisfies 
\begin{equation}
 \label{cons:prop:eq:tab}
 \alpha=\min_{2rs+1\le i\le \ts} \tA^+_i/\tA^-_i \ge \talpha
 \et
 \beta=\max_{2rs+1\le i\le \ts} \tA^+_{i+1}/\tA^-_i \le \tbeta.
\end{equation}

We first consider the increasing function $\nu\colon[0,\infty)\to[0,\infty)$ 
given by
\[
 \nu(t)=\begin{cases} 
      t &\text{if $0\le t\le a_n^{(1)}$,}\cr
    ct &\text{if $t> a_n^{(1)}$,}
    \end{cases}
\]
for a choice of $c\ge 1$.  It fulfills 
conditions \eqref{cons:prop:eq:phi} because of \eqref{cons:prop:eq:aa}.
Thus, \eqref{cons:prop:eq:tab} holds.   As $\nu(t)/t$ is non-decreasing 
on $(0,\infty)$, Proposition~\ref{cons:prop} (i) also gives $\talpha\ge \alpha$.
Thus, we have $\talpha=\alpha$.   Moreover, by \eqref{cons:prop:eq:aa},
there exists an integer $\ell$ with $1\le \ell\le rs$ such that
$a_n^{(1)}=a_n^{(\ell)}<a_n^{(\ell+1)}$.   For that choice of $\ell$, we
have $\tA^-_\ell=A^-_\ell$ and $\tA^+_{\ell+1}\ge c a_n^{(\ell+1)}$,
thus $\tbeta\ge c a_n^{(\ell+1)}/A^-_\ell$ tends to infinity with $c$.
As $\tbeta$ is a continuous function of $c$ with $\tbeta=\beta$ for $c=1$,
it takes all values in $[\beta,\infty)$ as $c$ varies in $[1,\infty)$.  So, 
$\cS$ contains $\{\alpha\}\times [\beta,\infty)$.

Finally, choose $\epsilon\in(0,1]$, set $\ell=rs+1$, and let 
$\nu\colon[0,\infty)\to[0,\infty)$ be the continuous function given by
\[
 \nu(t)=\begin{cases} 
      t &\text{if $0\le t\le a_1^{(\ell)}$,}\cr
    (1-\epsilon)a_1^{(\ell)}+\epsilon t 
          &\text{if $a_1^{(\ell)}\le t \le  a_n^{(\ell)}$,}\cr
     ct &\text{if $a_n^{(\ell)}\le t$,}
    \end{cases}
    \quad\text{where}\quad c=(1-\epsilon)\frac{a_1^{(\ell)}}{a_n^{(\ell)}}+\epsilon.
\]
It is increasing and fulfills \eqref{cons:prop:eq:phi} in view of 
\eqref{cons:prop:eq:aa}.  Thus, \eqref{cons:prop:eq:tab} holds.   As 
$\nu(t)/t$ is non-increasing on $(0,\infty)$, Proposition~\ref{cons:prop} (ii) 
further gives $\tbeta\le \beta$.  So, we have $\tbeta=\beta$.  We also find that
\[
 \talpha \le \frac{\tA_\ell^+}{\tA_\ell^-}
   =\frac{(1-\epsilon)(n-m)a_1^{(\ell)}+\epsilon A^+_\ell}%
             {(1-\epsilon)ma_1^{(\ell)}+\epsilon A^-_\ell}.
\]
Since $\talpha$ is a continuous function of $\epsilon\in(0,1]$
that is bounded from below by $(n-m)/m$, the above estimate shows 
that $\talpha$ tends
to $(n-m)/m$ as $\epsilon\to 0$.  As $\talpha=\alpha$ for $\epsilon=1$, this 
means that $\talpha$ takes all values in $((n-m)/m,\alpha]$ as $\epsilon$ varies 
in $(0,1]$ and so $\cS$ contains $((n-m)/m,\alpha]\times\{\beta\}$.
\end{proof}

%
%

\section{Proof of Theorem \ref{dr:thm:MM}}
\label{sec:MM}

Theorem~\ref{dr:thm:MM} describes the spectra $\cS_{1,n}$ and $\cS_{n-1,n}$ 
for each integer $n\ge 2$.  For $n=2$, the result is easy.  
By Corollary~\ref{pgn:tool:cor}, the set $\cS_{1,2}$ consists of the points
$(\chibot_1(\uP),\chitop_1(\uP))$ where $\uP$ is a proper non-degenerate 
$2$-system, while Proposition~\ref{part:prop:n-sys} shows that these points 
satisfy $1=\chibot_1(\uP)\le \chitop_1(\uP)$.  Thus we have 
$\cS_{1,2}\subseteq\{1\}\times[1,\infty]$.  As Proposition~\ref{dr:prop:Ri}
shows the reverse inclusion, we conclude that $\cS_{1,2}=\{1\}\times[1,\infty]$.  
Alternatively, this can be deduced from Theorem~\ref{cons:thm}.  Thus, for the proof
of Theorem~\ref{dr:thm:MM}, we may assume that $n\ge 3$.  

The main step is provided by the following result which encompasses 
\cite[Propositions 2.3.2 and 2.3.6]{Ri2019}).   We refer to Definition~\ref{part:def} 
for the meaning of the symbols $\cA_1(\uP)$ and $\cI_1(\uP)$ used below.   

\begin{lemma}
\label{MM:lemma}
Let $\uP=(P_1,\dots,P_n)\colon I\to\bR^n$ be a non-degenerate
sign-free-$n$-system for some $n\ge 3$.  Suppose that a compact sub-interval 
$J$ of $I$ contains at least $n-2$ simple intervals of type $\{1,\dots,n\}$ 
from $\cI_1(\uP)$.   Then, there are numbers 
\begin{equation}
 \label{MM:lemma:eq1}
  a_1<b_1 \le a_2<b_2 \le \cdots \le a_{n-2}<b_{n-2}
\end{equation}
in $J\cap \cA_1(\uP)$ which, for each $i=1,\dots,n-2$, satisfy the following properties:
\begin{itemize}
\item[(P1)] $[a_i,b_i]$ is a simple interval in $\cI_1(\uP)$ whose type contains $\{1,\dots,n-i+1\}$;
 \smallskip
\item[(P2)] if $i>1$, the type of $[b_{i-1},a_{i}]$ is contained in $\{1,\dots,n-i\}$.
\end{itemize}
For such a choice of numbers, we have
\begin{equation}
 \label{MM:lemma:eq2}
 (A_1^+ - A_1^-) + \cdots +  (A_{n-2}^+ - A_{n-2}^-) 
 \le B_1^+ + \cdots + B_{n-3}^+ + B_{n-2}^-
\end{equation} 
where, for each $i=1,\dots,n-2$, 
\begin{equation}
 \label{MM:lemma:eq3}
 \begin{aligned}
   A_i^-&=P_1(a_i), &&A_i^+=P_2(a_i)+\cdots+P_n(a_i), \\
   B_i^-&=P_1(b_i), &&B_i^+=P_2(b_i)+\cdots+P_n(b_i).
 \end{aligned} 
\end{equation}
\end{lemma}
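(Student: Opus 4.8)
The plan is to prove the two assertions in turn: first that numbers \eqref{MM:lemma:eq1} obeying (P1) and (P2) exist, then that \eqref{MM:lemma:eq2} holds for any such choice.

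\textbf{Construction of the numbers.} Write $J\cap\cA_1(\uP)=\{c_1<\dots<c_N\}$, so the simple intervals of $\cI_1(\uP)$ lying in $J$ are exactly the $[c_r,c_{r+1}]$ with $1\le r\le N-1$, at least $n-2$ of which have type $\{1,\dots,n\}$. Take $[a_1,b_1]$ to be the leftmost of those of type $\{1,\dots,n\}$; then (P1) holds (here $n-i+1=n$) and (P2) is vacuous. Having obtained $a_1<b_1\le\dots\le a_{i-1}<b_{i-1}$ with $b_{i-1}=c_s$, let $r^\ast$ be the least $r$ with $s\le r\le N-1$ for which the type of $[c_r,c_{r+1}]$ contains $\{1,\dots,n-i+1\}$, and set $a_i=c_{r^\ast}$, $b_i=c_{r^\ast+1}$. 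Property (P1) then holds by the choice of $r^\ast$, and (P2) holds because, by minimality of $r^\ast$ and Proposition~\ref{type:prop:kl}, each $[c_r,c_{r+1}]$ with $s\le r<r^\ast$ has type a set of consecutive integers starting at $1$ and omitting $n-i+1$, hence contained in $\{1,\dots,n-i\}$; so is the type of their union $[b_{i-1},a_i]$. To see that $r^\ast$ is defined at each stage, one carries along the bound that, entering stage $i$, at least $(n-2)-(i-1)$ of the $[c_r,c_{r+1}]$ with $r\ge s$ have type $\{1,\dots,n\}$: passing from $b_{i-1}$ to $b_i$ one meets only the intervals $[c_r,c_{r+1}]$ with $s\le r\le r^\ast$, and all but the last have type different from $\{1,\dots,n\}$, so the count drops by at most one per stage; starting from $\ge n-2$ it stays $\ge1$ through stage $n-2$, which makes $r^\ast$ well defined and keeps the $a_i,b_i$ inside $J$.

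\textbf{Proof of \eqref{MM:lemma:eq2}.} The crucial point is that $a_i$ and $b_i$ lie in $\cA_1(\uP)$, so $P_1(a_i)=P_2(a_i)$ and $P_1(b_i)=P_2(b_i)$. Since $A_i^++A_i^-=a_i$ by (S2), this gives $A_i^+-A_i^- = a_i-2P_1(a_i) = P_3(a_i)+\dots+P_n(a_i)$. Split this as $\big(P_{n-i+2}(a_i)+\dots+P_n(a_i)\big)+\big(P_3(a_i)+\dots+P_{n-i+1}(a_i)\big)$, the first group (empty when $i=1$) being the sum of the $i-1$ largest coordinates of $\uP(a_i)$. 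By (P2) the coordinates of index $\ge n-i+1$ are constant on $[b_{i-1},a_i]$, so for $i\ge2$ the first group equals the sum of the $i-1$ largest coordinates of $\uP(b_{i-1})$. For the second group, Proposition~\ref{type:prop:kl} applied to $a_i<b_i$ (whose type contains $\{1,\dots,n-i+1\}$ and starts at $1$ since $m=1$) gives $P_k(a_i)\le P_{k-1}(b_i)$ for $2\le k\le n-i+1$, hence
\[
 P_3(a_i)+\dots+P_{n-i+1}(a_i)\;\le\;P_2(b_i)+\dots+P_{n-i}(b_i)\;=\;B_i^+-\big(P_{n-i+1}(b_i)+\dots+P_n(b_i)\big).
\]
Summing the resulting bound on $A_i^+-A_i^-$ over $i=1,\dots,n-2$, the quantities $P_{n-i+1}(b_i)+\dots+P_n(b_i)$ (the sum of the $i$ largest coordinates of $\uP(b_i)$) telescope against the groups coming from the $b_{i-1}$'s, leaving
\[
 \sum_{i=1}^{n-2}(A_i^+-A_i^-)\;\le\;\sum_{i=1}^{n-2}B_i^+\;-\;\big(P_3(b_{n-2})+\dots+P_n(b_{n-2})\big).
\]
Finally $B_{n-2}^+=P_2(b_{n-2})+\dots+P_n(b_{n-2})$ and $P_2(b_{n-2})=P_1(b_{n-2})=B_{n-2}^-$, so the right-hand side equals $B_1^++\dots+B_{n-3}^++B_{n-2}^-$, which is \eqref{MM:lemma:eq2}.

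The part that needs the most care is the counting bound in the construction, where one must ensure that a simple interval of type $\{1,\dots,n\}$ survives to the right of $b_{i-1}$ inside $J$ at each of the $n-2$ stages; by contrast, once the numbers are in hand the estimate reduces, via the telescoping above, to the two elementary identities $P_1=P_2$ at the points of $\cA_1(\uP)$ together with Proposition~\ref{type:prop:kl}.
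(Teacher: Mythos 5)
Your proof is correct and follows essentially the same route as the paper: the greedy choice of the leftmost interval whose type contains $\{1,\dots,n-i+1\}$ is exactly the paper's recursive definition of the indices $i(j)$, and the estimate \eqref{MM:lemma:eq2} is obtained from the same ingredients ($P_1=P_2$ at points of $\cA_1(\uP)$, Proposition~\ref{type:prop:kl} on $[a_i,b_i]$, constancy of the top coordinates on $[b_{i-1},a_i]$) combined by the same telescoping. The only difference is that you spell out the counting argument guaranteeing the recursion never runs out of full-type intervals, a detail the paper leaves implicit.
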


\begin{proof}
Consider the elements $c_1<c_2<\cdots<c_s$ of $J \cap \cA_1(\uP)$.  
For each $i=1,\dots,s-1$, the interval $[c_i,c_{i+1}]$ is a simple 
interval in $\cI_1(\uP)$ and its type is $\{1,\dots,\ell_i\}$ for some integer 
$\ell_i$ with $2\le \ell_i\le n$.  By hypothesis, we have $\ell_i=n$ for at least 
$n-2$ values of $i$.  Set $i(0)=0$.  Then, for $j=1,\dots,n-2$, we may define 
recursively $i(j)$ to be the smallest integer with $i(j-1)<i(j)<s$ and 
$\ell_{i(j)}\ge n-j+1$.   Then (P1) and (P2) are fulfilled by relabelling in the 
form \eqref{MM:lemma:eq1} the numbers
\[
 c_{i(1)} < c_{i(1)+1}  \le c_{i(2)} < c_{i(2)+1} \le \cdots \le c_{i(n-2)} < c_{i(n-2)+1}.
\]

To prove \eqref{MM:lemma:eq2}, we start by fixing an index $i$ with $1\le i\le n-2$, 
and write
\[
 \uP(a_i)=(a^{(i)}_1,\dots,a^{(i)}_n)
 \et
 \uP(b_i)=(b^{(i)}_1,\dots,b^{(i)}_n).
\]
Since the type of $[a_i,b_i]$ contains $\{1,\dots,n-i+1\}$, we have
\[
 a^{(i)}_j \le b^{(i)}_{j-1} \quad\text{for $j=2,\dots,n-i+1$.}
\]
Since $a^{(i)}_1=a^{(i)}_2$, we deduce that
\[
 \begin{aligned}
 A_i^+ - A_i^- 
  &= a^{(i)}_3+\cdots+a^{(i)}_n\\
  &\le (b^{(i)}_2+\cdots+b^{(i)}_{n-i})+(a^{(i)}_{n-i+2}+\cdots+a^{(i)}_n)\\
  &= (a^{(i)}_{n-i+2}+\cdots+a^{(i)}_n) + B_i^+ -  (b^{(i)}_{n-i+1}+\cdots+b^{(i)}_n).
 \end{aligned}
\]
If $i=1$, this can be rewritten in the form 
\begin{equation}
\label{MM:lemma:eq4}
  A_1^+ - A_1^- - B_1^+ \le - b^{(1)}_{n}.
\end{equation}
If $i>1$, the type of $[b_{i-1},a_i]$ is contained in $\{1,\dots,n-i\}$, so 
\[
 b^{(i-1)}_j=a^{(i)}_j  \quad\text{for $j=n-i+1,\dots,n$,}
\]
and we conclude that
\begin{equation}
\label{MM:lemma:eq5}
  A_i^+ - A_i^- -  B_i^+ 
  \le 
  (b^{(i-1)}_{n-i+2}+\cdots+b^{(i-1)}_{n}) - (b^{(i)}_{n-i+1}+\cdots+b^{(i)}_{n}).
\end{equation}
Summing term by term the inequalities \eqref{MM:lemma:eq4} and  
\eqref{MM:lemma:eq5} for $2\le i\le n-2$, we obtain
\[
 \sum_{i=1}^{n-2} ( A_i^+ - A_i^- -  B_i^+)
 \le - (b^{(n-2)}_3+\cdots+b^{(n-2)}_n) = B_{n-2}^- - B_{n-2}^+,
\]
since $b^{(n-2)}_{1}=b^{(n-2)}_{2}$, and \eqref{MM:lemma:eq2} follows.
\end{proof}

\begin{proposition}
\label{MM:prop:n-sys}
Let $\uP=(P_1,\dots,P_n)\colon [q_0,\infty)\to\bR^n$ be a proper 
non-degenerate $n$-system for some $n\ge 3$.  Suppose that 
$\chitop_1(\uP)<\infty$. Then, we have
\[
 n-1 \le \chibot_1(\uP)
      \le \sum_{i=0}^{n-2}\left(\frac{\chitop_1(\uP)}{ \chibot_1(\uP)}\right)^i.
\]
\end{proposition}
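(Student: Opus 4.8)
The first inequality, $n-1\le\chibot_1(\uP)$, is exactly the lower bound recorded in \eqref{part:prop:n-sys:eq} of Proposition~\ref{part:prop:n-sys} in the case $m=1$, so nothing new is needed there. For the second inequality the plan is to work with the data furnished by Proposition~\ref{part:prop:n-sys} and Lemma~\ref{MM:lemma} for $m=1$. Write $\ubar\chi=\chibot_1(\uP)$, $\bar\chi=\chitop_1(\uP)$ and $g=\bar\chi/\ubar\chi\ge 1$; since $\ubar\chi\le\bar\chi<\infty$, both are finite. First I would observe that $\uP$ has infinitely many simple intervals of type $\{1,\dots,n\}$ in $\cI_1(\uP)$: indeed $P_n$ is non-decreasing (its slopes lie in $\{0,1\}$) and $P_n(q)\ge q/n\to\infty$ by \eqref{part:prop:n-sys:eq2}, so $P_n$ cannot be eventually constant; hence $P_n(a_j)<P_n(a_{j+1})$ for infinitely many consecutive pairs of $1$-division numbers, and for such a pair the type of $[a_j,a_{j+1}]$, being of the form $\{1,\dots,\ell\}$, must be all of $\{1,\dots,n\}$.

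Next, fix $\epsilon>0$ and invoke Proposition~\ref{part:prop:n-sys}: for every sufficiently large $1$-division number $a$ one has $\ubar\chi-\epsilon\le S_1^+(a)/S_1^-(a)\le\bar\chi+\epsilon$, and for consecutive large $1$-division numbers $a<a'$ one has $S_1^+(a')\le(\bar\chi+\epsilon)S_1^-(a)$ (the value of $\max\chi_1$ on $[a,a']$ computed in \eqref{part:prop:n-sys:max}). I would then choose a far-out compact interval $J$ whose left endpoint is the start of a simple interval of type $\{1,\dots,n\}$ and which contains at least $n-2$ such intervals, and apply Lemma~\ref{MM:lemma}. This produces $1$-division numbers $a_1<b_1\le\dots\le a_{n-2}<b_{n-2}$, with $a_1$ the prescribed starting point, satisfying \eqref{MM:lemma:eq2}. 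Because $[a_i,b_i]$ is a \emph{simple} interval, $a_i$ and $b_i$ are consecutive $1$-division numbers, so $B_i^+\le(\bar\chi+\epsilon)A_i^-$; combined with $B_i^+\ge(\ubar\chi-\epsilon)B_i^-$ (valid since $b_i$ is a $1$-division number) this yields $B_i^-\le\frac{\bar\chi+\epsilon}{\ubar\chi-\epsilon}A_i^-$. One also records $A_i^+\ge(\ubar\chi-\epsilon)A_i^-$, $A_i^+\le(\bar\chi+\epsilon)A_i^-$, $B_i^+\le A_{i+1}^+$ (monotonicity of $S_1^+$ together with $b_i\le a_{i+1}$, for $i\le n-3$), and $A_1^-\le A_2^-\le\dots\le A_{n-2}^-$ (monotonicity of $P_1$).

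Feeding all of these into \eqref{MM:lemma:eq2} turns it into a finite system of linear inequalities, homogeneous of degree one, in the positive quantities $A_i^\pm,B_i^\pm$, with coefficients depending only on $\ubar\chi,\bar\chi,\epsilon$. Dividing by $A_1^-$ and letting $\epsilon\to 0$, one checks that this system is solvable only if $\ubar\chi\le 1+g+\dots+g^{n-2}$: the extremal configuration is the one in which each ratio $A_{i+1}^-/A_i^-$, as well as $B_{n-2}^-/A_{n-2}^-$, equals $g$, and that is precisely where the geometric progression $\sum_{i=0}^{n-2}g^i$ comes from. Finally, since $a_1$ may be taken to be the left endpoint of any sufficiently far-out simple interval of type $\{1,\dots,n\}$, of which there are infinitely many, and since $\chibot_1(\uP)$ is the $\liminf$ of $S_1^+(a)/S_1^-(a)$ over all $1$-division numbers $a$ (hence is $\le$ the corresponding $\liminf$ taken over this subfamily), letting $\epsilon\to 0$ gives $\chibot_1(\uP)\le\sum_{i=0}^{n-2}\big(\chitop_1(\uP)/\chibot_1(\uP)\big)^i$, as required.

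The step I expect to be the real work is this last extremal analysis: showing that the \emph{single} inequality \eqref{MM:lemma:eq2}, together with the $\liminf$/$\limsup$ estimates and the monotonicity of $P_1$ and $S_1^+$, forces exactly $\ubar\chi\le\sum_{i=0}^{n-2}g^i$ and nothing weaker. I would carry this out by an induction along the chain $a_1,b_1,a_2,\dots,b_{n-2}$, saturating at the $i$-th step the appropriate one of the inequalities $B_i^+\le A_{i+1}^+$, $B_i^-\le\frac{\bar\chi}{\ubar\chi}A_i^-$, $A_i^+\ge\ubar\chi A_i^-$ so as to recover the partial sums $1+g+\dots+g^{i}$ successively; the case $n=3$, where \eqref{MM:lemma:eq2} reads simply $A_1^+-A_1^-\le B_1^-$ and the two inequalities $(\ubar\chi-\epsilon)B_1^-\le B_1^+\le(\bar\chi+\epsilon)A_1^-$ give $A_1^+/A_1^-\le 1+g+o(1)$, already exhibits the mechanism.
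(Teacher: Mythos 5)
Your overall route is the same as the paper's: reduce to Lemma~\ref{MM:lemma} applied to a far-out window containing $n-2$ simple intervals of type $\{1,\dots,n\}$ (your argument that such intervals occur infinitely often, and your list of ratio estimates coming from Proposition~\ref{part:prop:n-sys}, are all fine and match what the paper does). The problem is that the decisive step is exactly the one you defer: converting the single inequality \eqref{MM:lemma:eq2}, together with $A_i^+\ge\alpha A_i^-$, $B_i^-\le(1/\alpha)B_i^+$, $B_i^+\le\beta A_i^-$ and $B_{i-1}^+\le A_i^+$, into $\alpha\le 1+g+\cdots+g^{n-2}$. ``Saturating the appropriate inequality at each step'' identifies the expected extremal configuration, but it is not a proof, and the naive chain of substitutions genuinely fails for $n\ge 4$. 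For instance, for $n=4$ inequality \eqref{MM:lemma:eq2} reads $(A_1^+-A_1^-)+(A_2^+-A_2^-)\le B_1^+ + B_2^-$; substituting $A_i^-\le(1/\alpha)A_i^+$, $B_1^+\le g A_1^+$ and $B_2^-\le(g/\alpha)A_2^+$ directly gives
\[
 \Big(1-\tfrac{1}{\alpha}-g\Big)A_1^+ + \Big(1-\tfrac{1}{\alpha}-\tfrac{g}{\alpha}\Big)A_2^+ \le 0,
\]
which is vacuous because the first coefficient is automatically negative when $g\ge 1$. To get the sharp bound one must interleave the linking inequality $B_1^+\le A_2^+$ with the ratio estimates in a specific order, and the admissible order depends on the \emph{signs} of the accumulated coefficients.

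This is where the paper's proof has an idea that is absent from your sketch: it first reduces (harmlessly) to the case $\alpha>1+g+\cdots+g^{n-3}$, and then runs an induction on the quantities $\psi_i$ and $\theta_i=\frac1\alpha\big(1+\frac1g+\cdots+\frac1{g^i}\big)-\frac1{g^i}$, where the WLOG hypothesis guarantees $\theta_{i-1}<0$ while $\theta_{i-1}+1-\frac1\alpha\ge 0$; these two sign conditions are precisely what licenses using $B_{i-1}^+\le A_i^+$ and then $A_i^+\ge(1/g)B_i^+$ in the correct directions, and the desired bound drops out as $\theta_{n-2}\ge 0$. Your $n=3$ check does not exhibit this mechanism, since for $n=3$ no linking inequality $B_{i-1}^+\le A_i^+$ and no sign bookkeeping are needed. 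So the statement you assert (``the system is solvable only if $\chibot_1\le\sum g^i$'') is true, but the argument you propose for it is incomplete, and the missing ingredient — the reduction to $\alpha>1+g+\cdots+g^{n-3}$ (equivalently, arguing by contradiction) together with the sign-controlled induction, or an explicit LP-duality certificate playing the same role — is the actual content of the paper's proof.
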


\begin{proof}
We already know that $n-1 \le \chibot_1(\uP)$ by Proposition \ref{part:prop:n-sys}.
Let $\alpha,\beta\in(0,\infty)$ with 
\[
 \alpha < \chibot_1(\uP) \le \chitop_1(\uP) < \beta.
\]  
It remains to show that
\begin{equation}
\label{MM:prop:n-sys:eq1}
 \alpha \le 1+g+\cdots+g^{n-2} \quad \text{where} \quad g=\beta/\alpha.
\end{equation}
To this end, we may assume that 
\begin{equation}
\label{MM:prop:n-sys:eq2}
 \alpha > 1+g+\cdots+g^{n-3} \ge 1.
\end{equation}

Let $c_1<c_2<c_3<\cdots$ denote the elements of $\cA_1(\uP)$.  
In view of formulas \eqref{part:prop:n-sys:eq} from 
Proposition~\ref{part:prop:n-sys}, there is an integer $r\ge 1$ such that, 
for each $i\ge r$, we have $P_1(c_i)>0$ and 
\[
 \alpha \le \frac{P_2(c_i)+\cdots+P_n(c_i)}{P_1(c_i)} 
           \le \frac{P_2(c_{i+1})+\cdots+P_n(c_{i+1})}{P_1(c_i)} \le \beta.
\]  
Moreover, since $\uP$ is proper, its component $P_n$ is unbounded.  Thus, there 
are infinitely many integers $i\ge r$ for which $P_n(c_i)<P_n(c_{i+1})$.  For each of 
these, the simple interval $[c_i,c_{i+1}]$ in $\cI_1(\uP)$ has type $\{1,\dots,n\}$.  
Thus, there is an integer $s>r$ such that $[c_r,c_s]$ contains at least $n-2$
simple intervals of type $\{1,\dots,n\}$ from $\cI_1(\uP)$.

We apply Lemma \ref{MM:lemma} to the subinterval $J=[c_r,c_s]$ of $[q_0,\infty)$.
By the choice of $r$, the corresponding numbers in \eqref{MM:lemma:eq3} 
are positive and so they satisfy
\begin{equation}
\label{MM:prop:n-sys:eq3}
 A^-_i\le\frac{1}{\alpha}A^+_i, \quad
 B^-_i\le\frac{1}{\alpha}B^+_i \et
 B^+_i\le \beta A^-_i \le g A^+_i
 \quad (1\le i  \le n-2).
\end{equation}
Using the above upper bounds for $A^-_i$ and $B^-_i$, we deduce 
from \eqref{MM:lemma:eq2} that
\begin{equation}
\label{MM:prop:n-sys:eq4}
 \Big(1-\frac{1}{\alpha}\Big)( A_1^+ + \cdots +  A_{n-2}^+ ) 
 \le B_1^+ + \cdots + B_{n-3}^+ + \frac{1}{\alpha}B_{n-2}^+ \,  .
\end{equation}
We also note that 
\begin{equation}
\label{MM:prop:n-sys:eq5}
 0 < B_{i-1}^+ \le A^+_i \quad (2\le i \le n-2)
\end{equation}
because of \eqref{MM:lemma:eq1} and the fact that $P_2+\cdots+P_n$ 
is a non-decreasing function.

For each $i=1,\dots,n-2$, we set
\begin{align*}
 \psi_i &= B_1^+ + \cdots + B_{i-1}^+ + \frac{1}{\alpha}B_{i}^+
               - \Big(1-\frac{1}{\alpha}\Big)( A_1^+ + \cdots +  A_{i}^+ ) ,\\
 \theta_i &= \frac{1}{\alpha}\Big( 1 + \frac{1}{g} + \cdots +\frac{1}{g^i}\Big) 
              - \frac{1}{g^i}.
\end{align*}
We will show by induction that $\psi_i \le \theta_i B^+_i$ for each of these values 
of $i$.

For $i=1$, we find, as claimed, that
\[
 \psi_1 = \frac{1}{\alpha}B_{1}^+ - \Big(1-\frac{1}{\alpha}\Big) A_1^+
           \le \left(\frac{1}{\alpha} - \frac{1}{g}\Big(1-\frac{1}{\alpha}\Big) \right) B_1^+
           = \theta_1 B^+_1
\]
using the inequality $A^+_1 \ge (1/g)B^+_1$ from \eqref{MM:prop:n-sys:eq3} and the 
fact that $\alpha>1$ by \eqref{MM:prop:n-sys:eq2}.

Suppose now that $\psi_{i-1} \le \theta_{i-1} B^+_{i-1}$ for some $i$ with 
$2\le i\le n-2$.  We find
\[
\begin{aligned}
 \psi_i &= \psi_{i-1} +  \Big(1-\frac{1}{\alpha}\Big) B^+_{i-1} 
               + \frac{1}{\alpha}B_{i}^+ - \Big(1-\frac{1}{\alpha}\Big)A^+_i \\
 &\le \Big(\theta_{i-1} + 1 - \frac{1}{\alpha}\Big)B^+_{i-1} 
               + \frac{1}{\alpha}B_{i}^+ - \Big(1-\frac{1}{\alpha}\Big)A^+_i.
\end{aligned}
\]
Since $\theta_{i-1}\ge 1/\alpha-1/g^{i-1}\ge 1/\alpha-1$, the coefficient
of $B^+_{i-1}$ in the above estimate is non-negative.  Thus, 
using $B_{i-1}^+ \le A^+_i$ from \eqref{MM:prop:n-sys:eq5}, we obtain 
\[
 \psi_i \le \theta_{i-1}A^+_i  + \frac{1}{\alpha}B_{i}^+.
\]
By \eqref{MM:prop:n-sys:eq2}, we have $\alpha > 1+ g + \cdots +g^{i-1}$,
thus $\theta_{i-1} < 0$.  Using $A^+_i\ge (1/g)B^+_i$ from 
\eqref{MM:prop:n-sys:eq3}, we deduce that
\[
 \psi_i \le \Big(\frac{1}{\alpha} + \frac{1}{g}\theta_{i-1}\Big)B_{i}^+
      = \theta_i B^+_i,
\]
which completes the induction step.

This proves that  $\psi_{n-2} \le \theta_{n-2} B^+_{n-2}$.  Since we have
$\psi_{n-2}\ge 0$ by \eqref{MM:prop:n-sys:eq4} and since $B^+_{n-2}>0$, we 
conclude that $\theta_{n-2}\ge 0$, which yields \eqref{MM:prop:n-sys:eq1}.
\end{proof}

\begin{proof}[\textbf{Proof of Theorem 2.5, part (ii)}]
As explained at the beginning of the section, we may assume that $n\ge 3$.  
Let $\cS$ denote the set of solutions $(\alpha,\beta)$ in $(0,\infty)^2$ of the 
inequalities \eqref{dr:thm:MM:eq2} in Theorem~\ref{dr:thm:MM} (ii).  By 
Proposition~\ref{dr:prop:Ri}, the spectrum $\cS_{n-1,n}$ contains $\cS$.
Conversely, in light of Corollary~\ref{pgn:tool:cor}, the preceding proposition shows 
that all points $(\alpha,\beta)$ of $\cS_{n-1,n}$ with $\beta<\infty$ are contained 
in $\cS$.   As these are dense in $\cS_{n-1,n}$ and as
$\cS_{n-1,n}$ is a closed subset of $[0,\infty]^2$, we conclude that $\cS_{n-1,n}$
is the topological closure of $\cS$ in $[0,\infty]^2$.
\end{proof}

Proposition~\ref{MM:prop:n-sys} admits the following dual statement.

\begin{proposition}
\label{MM:prop:bn-sys}
Let $\uP=(P_1,\dots,P_n)\colon (-\infty, q_0]\to\bR^n$ be a proper 
non-degenerate backwards $n$-system for some $n\ge 3$.  Suppose that 
$\chibot_1(\uP)>0$.  Then, we have
\[
  n-1 
 \ge  \chitop_1(\uP)
 \ge \sum_{i=0}^{n-2}\left(\frac{\chibot_1(\uP)}{ \chitop_1(\uP)}\right)^i.
\]
\end{proposition}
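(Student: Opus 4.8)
The plan is to mirror the proof of Proposition~\ref{MM:prop:n-sys} step by step, with Proposition~\ref{part:prop:bn-sys} playing the role of Proposition~\ref{part:prop:n-sys} and with Lemma~\ref{MM:lemma} reused \emph{unchanged}: that lemma is stated for an arbitrary non-degenerate sign-free-$n$-system, so it applies verbatim to the backwards $n$-system $\uP$ of the statement.  The bound $\chitop_1(\uP)\le n-1$ is just the case $m=1$ of \eqref{part:prop:bn-sys:eq}.  For the other inequality I would fix reals $0<\alpha<\chibot_1(\uP)\le\chitop_1(\uP)<\beta$ — possible because $\chibot_1(\uP)>0$ by hypothesis and $\chitop_1(\uP)\le n-1<\infty$ — put $g=\alpha/\beta\in(0,1)$, and prove the finite-level inequality $\beta\ge 1+g+\cdots+g^{n-2}$.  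Since both sides depend continuously on $(\alpha,\beta)$, letting $\alpha\uparrow\chibot_1(\uP)$ and then $\beta\downarrow\chitop_1(\uP)$ then gives $\chitop_1(\uP)\ge\sum_{i=0}^{n-2}\big(\chibot_1(\uP)/\chitop_1(\uP)\big)^i$, as desired.  As in the forward case a harmless preliminary reduction (here one may assume $\beta-\alpha<1$, since otherwise $\beta\ge 1/(1-g)>1+g+\cdots+g^{n-2}$ already) makes the induction below run.

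To produce the data for Lemma~\ref{MM:lemma}, I would list the elements $a_i$ of $\cA_1(\uP)$ in decreasing order, unbounded below by Proposition~\ref{part:prop:bn-sys}, and note — using \eqref{part:prop:bn-sys:eq} and that $P_2+\cdots+P_n$ is non-decreasing with slopes in $\{0,1\}$ — that far enough toward $-\infty$ one has $P_1(a_i)<0$, $P_2(a_i)+\cdots+P_n(a_i)<0$, and the ratios
\[
 \frac{P_2(a_i)+\cdots+P_n(a_i)}{P_1(a_i)}
 \quad\text{and}\quad
 \frac{P_2(a_{i+1})+\cdots+P_n(a_{i+1})}{P_1(a_i)}
\]
both lie strictly between $\alpha$ and $\beta$.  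Since $\uP$ is proper, $P_n(q)\to-\infty$, so $P_n(a_i)<P_n(a_{i+1})$ for infinitely many $i$; for each such $i$ the simple interval $[a_i,a_{i+1}]$ of $\cI_1(\uP)$ has type $\{1,\dots,n\}$ by Proposition~\ref{type:prop:kl}.  Hence some compact window $J$ deep in $(-\infty,q_0]$ contains at least $n-2$ simple intervals of type $\{1,\dots,n\}$ from $\cI_1(\uP)$, with all its $1$-division numbers subject to the above estimates, and Lemma~\ref{MM:lemma} applied to $\uP$ on $J$ delivers numbers $a_1<b_1\le\cdots\le a_{n-2}<b_{n-2}$ satisfying (P1), (P2) and \eqref{MM:lemma:eq2}.

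The last step is the induction of Proposition~\ref{MM:prop:n-sys}, with the quantities $A_i^\pm,B_i^\pm$ of \eqref{MM:lemma:eq3}.  The essential new feature is that $P_1$ and $P_2+\cdots+P_n$ take \emph{negative} values far out in $(-\infty,q_0]$, so all of $A_i^\pm$, $B_i^\pm$ are negative; consequently each estimate deduced from a ratio bound reverses direction relative to the forward computation, and $g=\alpha/\beta$ replaces the quantity $\beta/\alpha$ used there — exactly the reciprocation underlying the duality $\chibot\leftrightarrow\chitop$.  With these reversed inequalities one re-derives the analogue of the invariant $\psi_i\le\theta_i B_i^+$, using the monotonicity $B_{i-1}^+\le A_i^+$ (now an inequality between negative numbers, from $b_{i-1}\le a_i$ and $P_2+\cdots+P_n$ non-decreasing) together with the preliminary reduction to fix the signs of the coefficients, and at $i=n-2$, combined with \eqref{MM:lemma:eq2}, this yields $\beta\ge 1+g+\cdots+g^{n-2}$.

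I expect the only real obstacle to be this sign bookkeeping: because a backwards $n$-system has values tending to $-\infty$, one manipulates negative quantities throughout, every inequality flips, and the inductive invariant of Proposition~\ref{MM:prop:n-sys} must be rewritten with care rather than simply quoted, even though no new idea is involved.  A tempting shortcut through the opposite system $\uP^\vee$ does \emph{not} apply directly: by Proposition~\ref{pgn:duality:prop2}, $\chibot_1(\uP)$ and $\chitop_1(\uP)$ correspond to $\chitop_{n-1}(\uP^\vee)^{-1}$ and $\chibot_{n-1}(\uP^\vee)^{-1}$, not to $\chibot_1(\uP^\vee)$ and $\chitop_1(\uP^\vee)$, so Proposition~\ref{MM:prop:n-sys} cannot be invoked off the shelf and Lemma~\ref{MM:lemma} must be applied to $\uP$ itself.
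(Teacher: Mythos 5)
Your plan is correct and matches the paper's own proof essentially step for step: the paper likewise applies Lemma~\ref{MM:lemma} directly to the backwards system $\uP$ on a compact window far out in $(-\infty,q_0]$, uses Proposition~\ref{part:prop:bn-sys} both for $\chitop_1(\uP)\le n-1$ and for the ratio bounds at the $1$-division numbers, and reruns the same inductive invariant $\psi_i\le\theta_i B_i^+$ with all inequalities reversed because the $A_i^\pm,B_i^\pm$ are negative; the only cosmetic difference is that the paper normalizes $1/\beta<\chibot_1(\uP)\le\chitop_1(\uP)<1/\alpha$ with $g=\beta/\alpha>1$, the reciprocal of your parametrization, and your reduction $\beta-\alpha<1$ is exactly its hypothesis \eqref{MM:prop:bn-sys:eq2}. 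The one small point you leave implicit is the bound $\chitop_1(\uP)\ge 1$ (equivalently, that your $\beta$ exceeds $1$), which the paper records at the outset from $A_i^+\le A_i^-\le 0$ and which is needed to fix the sign of the coefficient $1-1/\beta$ in the base case and in the monotonicity of the $\psi_i$; it does come for free from $A_i^+/A_i^-\ge1$ at the division numbers you already use, so this is a detail of bookkeeping rather than a gap.
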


\begin{proof}
The argument is similar to the proof of Proposition~\ref{MM:prop:n-sys}.
Using Proposition \ref{part:prop:bn-sys} with $m=1$, we first note that 
$1\le  \chitop_1(\uP) \le n-1$ where the lower bound comes from the 
fact that, in the notation of that proposition, we have $A^+_i\le A^-_i\le 0$ 
for each $i\le -1$, when $m=1$.  Let $\alpha,\beta\in(0,\infty)$ with 
\[
 \frac{1}{\beta} < \chibot_1(\uP) \le \chitop_1(\uP) < \frac{1}{\alpha}.
\]  
Since $\chitop_1(\uP)\ge 1$, we have $\alpha<1$ and it remains to show that
\begin{equation}
\label{MM:prop:bn-sys:eq1}
 \frac{1}{\alpha} \ge 1+\frac{1}{g}+\cdots+\frac{1}{g^{n-2}} 
 \quad \text{where} \quad g=\frac{\beta}{\alpha}.
\end{equation}
Since $g>1$, we may assume that 
\begin{equation}
\label{MM:prop:bn-sys:eq2}
  1 > \alpha > \Big(\sum_{i=0}^\infty \frac{1}{g^i}\Big)^{-1} = 1 - \frac{1}{g}.
\end{equation}

Let $c_{-1}>c_{-2}>c_{-3}>\cdots$ denote the elements of $\cA_1(\uP)$.  
In view of formulas \eqref{part:prop:bn-sys:eq} from 
Proposition~\ref{part:prop:bn-sys}, there is an integer $r\le -2$ such that, 
for each $i\le r$, we have $P_n(c_i)<0$ and 
\[
 \frac{1}{\beta} \le \frac{P_2(c_{i+1})+\cdots+P_n(c_{i+1})}{P_1(c_i)} 
           \le \frac{P_2(c_i)+\cdots+P_n(c_i)}{P_1(c_i)} \le \frac{1}{\alpha}.
\]  
Moreover, since $\uP$ is proper, its component $P_n$ is unbounded.  Thus, there 
are infinitely many integers $i\le r$ for which $P_n(c_i)<P_n(c_{i+1})$.  For each of 
these, the simple interval $[c_i,c_{i+1}]$ in $\cI_1(\uP)$ has type $\{1,\dots,n\}$.  
Thus, there is an integer $s<r$ such that $[c_s,c_r]$ contains at least $n-2$
simple intervals of type $\{1,\dots,n\}$ from  $\cI_1(\uP)$.

We apply Lemma \ref{MM:lemma} to the subinterval $J=[c_s,c_r]$ of $(-\infty,q_0]$.
By the choice of $r$, the corresponding numbers in 
\eqref{MM:lemma:eq3} are negative and so they satisfy
\begin{equation}
\label{MM:prop:bn-sys:eq3}
 A^-_i \le \alpha A^+_i, \quad
 B^-_i \le \alpha B^+_i \et
 B^+_i\le \frac{1}{\beta} A^-_i \le \frac{1}{g} A^+_i
 \quad (1\le i  \le n-2).
\end{equation}
Using the above upper bounds for $A^-_i$ and $B^-_i$, we deduce 
from \eqref{MM:lemma:eq2} that
\begin{equation}
\label{MM:prop:bn-sys:eq4}
 (1-\alpha)( A_1^+ + \cdots +  A_{n-2}^+ ) 
 \le B_1^+ + \cdots + B_{n-3}^+ + \alpha B_{n-2}^+ \, .
\end{equation}
We also note that 
\begin{equation}
\label{MM:prop:bn-sys:eq5}
  B_{i-1}^+ \le A^+_i < 0 \quad (2\le i \le n-2)
\end{equation}
because of \eqref{MM:lemma:eq1} and the fact that $P_2+\cdots+P_n$ 
is a non-decreasing function.

For each $i=1,\dots,n-2$, we set
\begin{align*}
 \psi_i &= B_1^+ + \cdots + B_{i-1}^+ + \alpha B_{i}^+
               - (1-\alpha)( A_1^+ + \cdots +  A_{i}^+ ) ,\\
 \theta_i &= \alpha ( 1 + g + \cdots + g^i ) - g^i.
\end{align*}
We will show by induction that 
\begin{equation}
\label{MM:prop:bn-sys:eq6}
  0 \le \psi_i \le \theta_i B^+_i \quad \text{for $i=1,\dots,n-2$.}
\end{equation}

We first observe that, for each $i$ with $2 \le i \le n-2$, we have 
\begin{equation}
\label{MM:prop:bn-sys:eq7}
 \psi_i - \psi_{i-1} 
 = (1-\alpha)B_{i-1}^+ + \alpha B_i^+ - (1-\alpha)A_i^+
 \le 0
\end{equation}
using \eqref{MM:prop:bn-sys:eq2}, \eqref{MM:prop:bn-sys:eq5} and $B_i^+<0$.
Since \eqref{MM:prop:bn-sys:eq4} translates into $\psi_{n-2}\ge 0$, this gives
\[
 \psi_1 \ge \psi_2 \ge  \cdots \ge \psi_{n-2} \ge 0.
\]

We find 
$\psi_1 = \alpha B_{1}^+ - (1-\alpha) A_1^+ 
           \le (\alpha - g(1-\alpha)) B_1^+
           = \theta_1 B^+_1$
since $A^+_1 \ge g B^+_1$ and $\alpha<1$.  Thus, \eqref{MM:prop:bn-sys:eq6}
holds for $i=1$.

Suppose now that $\psi_{i-1} \le \theta_{i-1} B^+_{i-1}$ for some $i$ with 
$2\le i\le n-2$.  Since $B^+_{i-1}<0$, we deduce from this and 
\eqref{MM:prop:bn-sys:eq7} that
\begin{equation}
\label{MM:prop:bn-sys:eq8}
 \theta_{i-1}\le 0 
 \et
 \psi_i \le (\theta_{i-1} + 1 - \alpha)B^+_{i-1} + \alpha B_i^+ - (1-\alpha)A^+_i.
\end{equation}
Using \eqref{MM:prop:bn-sys:eq2}, we find
\[
\begin{aligned}
 \theta_{i-1} + 1 - \alpha
 &= (1-g^{i-1}) +\alpha g(1+g+\cdots+g^{i-2}) \\
 &= (1-g+\alpha g)(1+g+\cdots+g^{i-2}) \ge 0.
\end{aligned}
\]
So, we may use \eqref{MM:prop:bn-sys:eq5} to eliminate $B^+_{i-1}$ from the upper 
bound for $\psi_i$ in \eqref{MM:prop:bn-sys:eq8}.  This gives 
\[
 \psi_i \le \theta_{i-1}A^+_i  + \alpha B_{i}^+.
\]
Since $\theta_{i-1}\le 0$ by \eqref{MM:prop:bn-sys:eq8} and 
since $A^+_i \ge g B^+_i$ by \eqref{MM:prop:bn-sys:eq3}, we conclude that
$\psi_i \le (\alpha+g\theta_{i-1})B_{i}^+ = \theta_i B^+_i$.  This completes the
proof of \eqref{MM:prop:bn-sys:eq6} by induction on $i$.

In particular, we have $0\le \psi_{n-2} \le \theta_{n-2} B^+_{n-2}$, thus
$\theta_{n-2}\le 0$, and \eqref{MM:prop:bn-sys:eq1} follows.
\end{proof}

\begin{proof}[\textbf{Proof of Theorem 2.5, part (i)}]
Again, we may assume that $n\ge 3$.  
Let $\cS$ denote the set of solutions $(\alpha,\beta)$ in $(0,\infty)^2$ of the 
inequalities \eqref{dr:thm:MM:eq1} in Theorem~\ref{dr:thm:MM} (i).  By 
Proposition~\ref{dr:prop:Ri}, the spectrum $\cS_{1,n}$ contains $\cS$.
Conversely, in light of Corollary~\ref{pgn:duality:cor}, Proposition~\ref{MM:prop:bn-sys} 
shows that all points $(\alpha,\beta)$ of $\cS_{1,n}$ with $\beta<\infty$ are contained 
in $\cS$.   As these points are dense in $\cS_{1,n}$ and as
$\cS_{1,n}$ is a closed subset of $[0,\infty]^2$, we conclude that $\cS_{1,n}$
is the topological closure of $\cS$ in $[0,\infty]^2$.
\end{proof}

%
%

\section{Proof of Theorem \ref{dr:thm:Ri}}
\label{sec:Ri}

By Proposition~\ref{dr:prop:Ri}, the spectrum $\cS_{2,4}$ contains the set $\cS$
of all points $(\alpha,\beta)\in[0,\infty]^2$ with $1\le \alpha^2\le\beta$.   
By Corollary~\ref{pgn:tool:cor}, the next result implies the reverse inclusion and 
thus the equality of the two sets, as stated in Theorem~\ref{dr:thm:Ri}.

\begin{proposition}
\label{Ri:prop}
Let $\uP=(P_1,\dots,P_4)$ be a proper non-degenerate $4$-system.  Then
\[
 1\le \chibot_2(\uP) \et \chibot_2(\uP)^2\le \chitop_2(\uP).
\]
\end{proposition}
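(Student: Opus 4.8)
The first inequality $\chibot_2(\uP)\ge 1$ is immediate: it is the lower bound $(n-m)/m$ with $(m,n)=(2,4)$ already recorded in Proposition~\ref{part:prop:n-sys}. For the second inequality, the plan is to work, via Corollary~\ref{pgn:tool:cor} and Proposition~\ref{part:prop:n-sys}, with the set $\cA_2(\uP)=\{a\ge q_0\,;\,P_2(a)=P_3(a)\}=\{a_1<a_2<\cdots\}$, writing $\uP(a_i)=(a_1^{(i)},a_2^{(i)},a_3^{(i)},a_4^{(i)})$ with $a_1^{(i)}<a_2^{(i)}=a_3^{(i)}<a_4^{(i)}$ (non-degeneracy) and $A_i^-=a_1^{(i)}+a_2^{(i)}$, $A_i^+=a_2^{(i)}+a_4^{(i)}$, so that $\chibot_2(\uP)=\liminf_{i\to\infty}A_i^+/A_i^-$ and $\chitop_2(\uP)=\limsup_{i\to\infty}A_{i+1}^+/A_i^-$. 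We may assume $\chitop_2(\uP)<\infty$. Fixing $\epsilon>0$, I would aim to prove $\chitop_2(\uP)\ge(\chibot_2(\uP)-\epsilon)^2$ and then let $\epsilon\to0$. Two preliminary remarks: since each $P_j$ has slopes in $\{0,1\}$ it is non-decreasing, so $(A_i^-)$ and $(A_i^+)$ are non-decreasing, giving $A_i^+/A_i^-\le A_{i+1}^+/A_i^-$ and $A_{i+1}^+/A_{i+1}^-\le A_{i+1}^+/A_i^-$; and beyond some index $r$ one has $A_i^+/A_i^-\ge\chibot_2(\uP)-\epsilon$.

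The key structural input would be Proposition~\ref{type:prop:kl}: the type $\{k_i,\dots,\ell_i\}$ of the simple interval $[a_i,a_{i+1}]$ satisfies $k_i\in\{1,2\}$ and $\ell_i\in\{3,4\}$, the components $P_j$ with $j$ outside this range are constant on $[a_i,a_{i+1}]$, and $a_2^{(i)}\le a_1^{(i+1)}$ if $k_i=1$ while $a_4^{(i)}\le a_3^{(i+1)}=a_2^{(i+1)}$ if $\ell_i=4$. From this I would extract two facts. (i) If $[a_i,a_{i+1}]$ has type $\{1,2,3,4\}$ then $a_1^{(i+1)}\ge a_2^{(i)}$ and $a_2^{(i+1)}\ge a_4^{(i)}$, so
\[
 A_{i+1}^-=a_1^{(i+1)}+a_2^{(i+1)}\ge a_2^{(i)}+a_4^{(i)}=A_i^+.
\]
(ii) If $[a_i,a_{i+1}]$ has type $\{1,2,3\}$, the intervals $[a_{i+1},a_{i+2}],\dots,[a_{j-2},a_{j-1}]$ (if any) all have type $\{2,3\}$, and $[a_{j-1},a_j]$ has type $\{2,3,4\}$, then $P_1$ is constant on $[a_{i+1},a_j]$ and $P_4$ is constant on $[a_i,a_{j-1}]$, so $a_1^{(j)}\ge a_2^{(i)}$ and $a_2^{(j)}\ge a_4^{(j-1)}=a_4^{(i)}$, whence again $A_j^-\ge A_i^+$.

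The argument then splits into two cases. In the first case there are infinitely many $i$ with $[a_i,a_{i+1}]$ of type $\{1,2,3,4\}$; for such $i>r$, fact (i) gives
\[
 \frac{A_{i+1}^+}{A_i^-}=\frac{A_{i+1}^+}{A_{i+1}^-}\cdot\frac{A_{i+1}^-}{A_i^-}\ge\frac{A_{i+1}^+}{A_{i+1}^-}\cdot\frac{A_i^+}{A_i^-}\ge(\chibot_2(\uP)-\epsilon)^2,
\]
so $\chitop_2(\uP)\ge(\chibot_2(\uP)-\epsilon)^2$ and we are done. In the remaining case only finitely many intervals are of type $\{1,2,3,4\}$, so eventually every type is $\{1,2,3\}$, $\{2,3\}$ or $\{2,3,4\}$; since $P_1\to\infty$ there are infinitely many of type $\{1,2,3\}$, and since $P_1\le P_4$ forces $P_4\to\infty$ there are infinitely many of type $\{2,3,4\}$, so infinitely many blocks $[a_i,a_j]$ as in (ii) occur. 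On such a block $P_4$ is constant, equal to $a_4^{(i)}$, on $[a_i,a_{j-1}]$, so $a_2^{(k)}<a_4^{(i)}$ and $A_k^+<2a_4^{(i)}$ for $i\le k\le j-1$; I would combine these bounds with $A_j^-\ge A_i^+\ge(\chibot_2(\uP)-\epsilon)A_i^-$, with the monotonicity of $A_k^\pm$, and with the lower bounds $A_k^+/A_k^-\ge\chibot_2(\uP)-\epsilon$, tracking how much $P_1$ and $P_2$ have grown relative to $a_4^{(i)}$ at the two ends of the block, to locate an index $k$ in the block with $A_{k+1}^+/A_k^-\ge(\chibot_2(\uP)-\epsilon)^2$, which again suffices.

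The first case is the clean one: by fact (i), a single type-$\{1,2,3,4\}$ interval supplies both factors $A_i^+/A_i^-$ and $A_{i+1}^+/A_{i+1}^-$ of $\chibot_2(\uP)$ inside the single peak $A_{i+1}^+/A_i^-$. The main obstacle will be the second case, where no such interval exists and the product of two lower bounds for $\chibot_2(\uP)$ has to be recovered across an entire block of $\{1,2,3\}$-, $\{2,3\}$- and $\{2,3,4\}$-type intervals: one must show that the growth forced on $P_2$ across such a block — it climbs from below $a_4^{(i)}$ to within a bounded factor of $a_4^{(i)}$ while every ratio $A_k^+/A_k^-$ stays $\ge\chibot_2(\uP)-\epsilon$ and $P_4$ remains momentarily frozen — cannot occur without some peak $A_{k+1}^+/A_k^-$ attaining the size $(\chibot_2(\uP)-\epsilon)^2$. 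Making this block estimate precise, in particular handling interior $\{2,3\}$-intervals and the sub-case in which $P_1$ is small compared with $P_2$ at $a_i$, is where the bulk of the effort lies.
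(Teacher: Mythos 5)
Your first inequality and your Case 1 are fine, and Case 1 is essentially the paper's own treatment of that situation: for a simple interval of type $\{1,2,3,4\}$ the inequality $A_i^+\le A_{i+1}^-$ (your fact (i)) is exactly what the paper uses, and multiplying two valley ratios inside one peak gives $\chitop_2(\uP)\ge(\chibot_2(\uP)-\epsilon)^2$. The problem is the remaining case, where only finitely many intervals have type $\{1,2,3,4\}$: this is the main case (the paper's Case 1), and you do not prove it — you only describe a plan ("locate an index $k$ in the block with $A_{k+1}^+/A_k^-\ge(\chibot_2(\uP)-\epsilon)^2$") and acknowledge that making it precise is "where the bulk of the effort lies". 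That is a genuine gap, not a routine detail, and moreover the plan as stated cannot be completed.

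The difficulty is that your blocks run forward from a $\{1,2,3\}$ interval to the next $\{2,3,4\}$ interval, and the data inside such a block simply do not force any in-block peak of size $(\chibot_2(\uP)-\epsilon)^2$. Concretely, with $\alpha=3$, take three consecutive $2$-division numbers with $\uP$-values $(0.2,\,0.9,\,0.9,\,7)$, $(1,\,2,\,2,\,7)$, $(1,\,7.5,\,7.5,\,18.5)$, the first interval of type $\{1,2,3\}$ and the second of type $\{2,3,4\}$; all constraints of Proposition~\ref{type:prop:kl} hold and Proposition~\ref{type:prop:converse} realizes these data by a non-degenerate system. The three valley ratios are $7.9/1.1$, $9/3$ and $26/8.5$, all at least $3$, while the two peaks are $9/1.1$ and $26/3$, both strictly less than $9=\alpha^2$; your auxiliary bounds ($A_j^-\ge A_i^+$, $A_k^+<2a_4^{(i)}$, monotonicity) are all satisfied as well. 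Note also that fact (ii) only controls the ratio $A_j^+/A_i^-$ across the whole block, which is not one of the consecutive peaks $A_{k+1}^+/A_k^-$ whose limsup equals $\chitop_2(\uP)$, so it yields no lower bound on $\chitop_2(\uP)$ by itself. The peak that is actually forced to be large lives in the configuration obtained by coupling a $\{2,3,4\}$ interval with the \emph{following} $\{1,2,3\}$ interval, with both $P_1$ and $P_4$ constant in between — i.e., it straddles your block boundary. This is precisely the paper's Case 1, whose key step is the inequality $A_2^++(B_2^+-B_2^-)\le 2B_1^+-A_1^+$ (in the paper's notation), which together with the valley bounds $\ge\alpha$ and peak bounds $\le\beta$ gives $\alpha\le\beta/\alpha$ directly; some cross-boundary estimate of this kind is what your argument is missing.
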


\begin{proof}
The first inequality follows from Proposition~\ref{part:prop:n-sys}.  To prove the 
second one, we may assume that $1<\chibot_2(\uP)\le \chitop_2(\uP)<\infty$.  Choose 
$(\alpha,\beta)\in(1,\infty)^2$ with
\[
 \alpha < \chibot_2(\uP) \le \chitop_2(\uP) < \beta.
\]
It remains to show that 
\begin{equation}
\label{Ri:prop:eq0}
  \alpha \le g \quad \text{where} \quad g=\beta/\alpha.
\end{equation}
Since $\uP$ is proper, there are infinitely many simple intervals in $\cI_2(\uP)$ 
whose type contains $1$ (those on which $P_1$ is non-constant), and infinitely 
many whose type contains $4$ (those on which $P_4$ is non-constant).  
We distinguish two cases.

\textbf{Case 1.}  Suppose that there are only finitely many simple intervals
in $\cI_2(\uP)$ of type $\{1,\dots,4\}$ (those on which both 
$P_1$ and $P_4$ are non-constant).  Then, there are infinitely many
simple intervals in $\cI_2(\uP)$ of type $\{1,2,3\}$, and infinitely many
of type $\{2,3,4\}$.  Thus, $\cA_2(\uP)$ contains arbitrarily 
large numbers
\[
 a_1 < b_1 \le a_2 <b_2
\]
with the following properties:
\begin{itemize}
\item $[a_1,b_1]$ is a simple interval in $\cI_2(\uP)$ of type $\{2,3,4\}$;
 \smallskip
\item $P_1$ and $P_4$ are constant on $[b_1,a_2]$;
 \smallskip
\item $[a_2,b_2]$ is a simple interval in $\cI_2(\uP)$ of type $\{1,2,3\}$.
\end{itemize}
Fix such a choice of numbers, and write
\begin{equation}
\label{Ri:prop:eq1}
 \uP(a_i)=(a^{(i)}_1,\dots,a^{(i)}_4) \et \uP(b_i)=(b^{(i)}_1,\dots,b^{(i)}_4)
\end{equation}
for $i=1,2$.  Then, taking into account that
\[
 a_2^{(i)}=a_3^{(i)} \et b_2^{(i)}=b_3^{(i)}
\]
for $i=1,2$, and using Proposition~\ref{type:prop:kl}, we find the system of inequalities
\[
 \begin{array}{ccccccc}
 a_1^{(1)} &=      &b_1^{(1)} &=  &a_1^{(2)} &         &b_1^{(2)} \\
 \vpp        &        &\vpp        &     &\vpp        &\dpe &\vpp        \\
 a_3^{(1)} &        &b_3^{(1)} &\le &a_3^{(2)} &        &b_3^{(2)} \\
\vpp        &\dpe &\vpp        &     &\vpp        &        &\vpp     \\
a_4^{(1)} &         &b_4^{(1)} &=  &a_4^{(2)} &=      &b_4^{(2)}
 \end{array}\ .
\]
In terms of the quantities
\begin{equation}
\label{Ri:prop:eq2}
 A^-_i=a_1^{(i)}+a_2^{(i)},  \ A^+_i=a_3^{(i)}+a_4^{(i)}, \ 
 B^-_i=b_1^{(i)}+b_2^{(i)},  \ B^+_i=b_3^{(i)}+b_4^{(i)},
\end{equation}
for $i=1,2$, this yields
\[
\begin{aligned}
 &A^+_2+(B^+_2-B^-_2) 
  =(a_3^{(2)}+a_4^{(2)})+(b_4^{(2)}-b_1^{(2)})
  \le a_4^{(2)}+b_4^{(2)} = 2b^{(1)}_4,\\
 &2B^+_1-A^+_1
 =2(b_3^{(1)}+b_4^{(1)})-(a_3^{(1)}+a_4^{(1)})
 \ge 2b_4^{(1)},
\end{aligned}
\]
thus
\begin{equation}
\label{Ri:prop:eq3}
A^+_2+(B^+_2-B^-_2) \le 2B^+_1-A^+_1.
\end{equation}
Assuming that $a_1$ is large enough, Proposition~\ref{part:prop:n-sys} yields
\begin{equation}
\label{Ri:prop:eq4}
 \alpha\le \min\Big\{\frac{A^+_i}{A^-_i}, \frac{B^+_i}{B^-_i} \Big\} 
 \et
 \frac{B^+_i}{A^-_i}\le \beta
\end{equation}
for $i=1,2$.  In particular, we have
\[
 B^-_2\le (1/\alpha)B^+_2 
 \et
 A^+_1\ge \alpha A^-_1 \ge (1/g)B^+_1.
\]
Using the above inequalities to eliminate $B^-_2$ and $A^+_1$ from \eqref{Ri:prop:eq3},
we find
\[
 A^+_2+(1-1/\alpha)B^+_2 \le (2-1/g)B^+_1.
\]
Since $\alpha>1$, $g>1$ and $0<B^+_1\le A^+_2\le B^+_2$, this yields
\[
 (2-1/\alpha)A^+_2 \le (2-1/g)A^+_2,
\]
thus $2-1/\alpha\le 2-1/g$ and so $\alpha\le g$, as claimed in \eqref{Ri:prop:eq0}.

\smallskip
\textbf{Case 2.}  Suppose on the contrary that there are infinitely many simple intervals
$[a_1,b_1]$ in $\cI_2(\uP)$ of type $\{1,\dots,4\}$.  Using the notation \eqref{Ri:prop:eq1}
for $\uP(a_1)$ and $\uP(b_1)$, we have 
\[
 a_3^{(1)}=a_2^{(1)}\le b_1^{(1)} \et a_4^{(1)}\le b_3^{(1)}=b_2^{(1)}
\]
by Proposition~\ref{type:prop:kl}.  In terms of the quantities \eqref{Ri:prop:eq2} 
with $i=1$, this implies that
\begin{equation}
\label{Ri:prop:eq5}
 A^+_1 \le B^-_1.
\end{equation}
Assuming that $a_1$ is large enough, Proposition~\ref{part:prop:n-sys} yields
\eqref{Ri:prop:eq4} with $i=1$, thus
\[
 A^+_1\ge \alpha A^-_1
 \et
 B^-_1\le (1/\alpha)B^+_1 \le g A^-_1.
\]
Substituting the above estimates into \eqref{Ri:prop:eq5}, we find
\[
\alpha A^-_1 \le g A^-_1,
\]
thus $\alpha\le g$, once again.
\end{proof}

%
%

\section{Proof of Theorem \ref{dr:thm:S35}}
\label{sec:dim5}

The proof of Theorem \ref{dr:thm:S35} is done by double inclusion.  
Each of the two propositions below proves one inclusion. 

\begin{proposition}
\label{dim5:prop:cond}
Let $\uP=(P_1,\dots,P_5)$ be a proper non-degenerate $5$-system.  
Suppose that the point $(\alpha,\beta)=(\chibot_2(\uP),\chitop_2(\uP))$ 
has $\beta<\infty$.  Then, it satisfies
\begin{equation}
\label{dim5:prop:cond:eq1}
 3/2 \le \alpha \le (\beta/\alpha)^2+1.
\end{equation}
\end{proposition}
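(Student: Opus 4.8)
The lower bound $\alpha\ge 3/2$ is immediate: Proposition~\ref{part:prop:n-sys}, applied with $n=5$ and $m=2$, gives $\chibot_2(\uP)\ge (n-m)/m=3/2$. Since $g:=\beta/\alpha\ge 1$ forces $g^2+1\ge 2$, we may assume $\alpha>2$ while proving the upper bound. The plan for that bound is to fix reals $\alpha',\beta'$ with $\alpha'<\alpha\le\beta<\beta'$, set $g'=\beta'/\alpha'$ (so $g'>1$), prove $\alpha'\le (g')^2+1$, and then let $\alpha'\uparrow\alpha$ and $\beta'\downarrow\beta$. Write $c_1<c_2<\cdots$ for the elements of $\cA_2(\uP)$, an unbounded discrete set by Proposition~\ref{part:prop:n-sys}, put $\ua^{(i)}=\uP(c_i)$, and set $A_i^-=P_1(c_i)+P_2(c_i)$ and $A_i^+=P_3(c_i)+P_4(c_i)+P_5(c_i)$. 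The formulas for $\chibot_2$ and $\chitop_2$ in Proposition~\ref{part:prop:n-sys} then provide an index $r$ such that, for all $i\ge r$, one has $A_i^->0$, $\alpha' A_i^-\le A_i^+$ and $A_{i+1}^+\le\beta' A_i^-$; in particular $A_{i+1}^+\le g' A_i^+$, while $A_i^+$ is non-decreasing in $i$.

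Since $\uP$ is proper, each of the components $P_1,P_4,P_5$ tends to infinity, so the simple intervals of $\cI_2(\uP)$ whose type contains $1$, those whose type contains $4$, and those whose type contains $5$ each occur infinitely often; recall that by Proposition~\ref{type:prop:kl} the type of a simple interval $[c_i,c_{i+1}]$ is a block $\{k_i,\dots,\ell_i\}$ with $1\le k_i\le 2<\ell_i\le 5$, so it always contains $\{2,3\}$. I would extract from that proposition the facts that $P_j$ is constant on $[c_i,c_{i+1}]$ whenever $j<k_i$ or $j>\ell_i$, together with the cross-inequalities $P_2(c_i)\le P_1(c_{i+1})$ when $k_i=1$, $P_4(c_i)\le P_3(c_{i+1})$ when $\ell_i\ge 4$, and $P_5(c_i)\le P_4(c_{i+1})$ when $\ell_i=5$. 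The argument then splits according to whether the full type $\{1,\dots,5\}$ occurs infinitely often. In either case one selects, arbitrarily far out (possible by the recurrence just noted), a bounded-length block $c_{i_1}<\cdots<c_{i_N}$ of consecutive $2$-division numbers with prescribed types along which the chosen cross-inequalities chain together; feeding the resulting relations among the coordinates of the $\ua^{(i_k)}$ into $\alpha' A_i^-\le A_i^+$ and $A_{i+1}^+\le\beta' A_i^-$ — in the style of the proofs of Propositions~\ref{MM:prop:n-sys} and~\ref{Ri:prop} — one is led to $\alpha'\le (g')^2+1$. Here the factor $(g')^2$ records two successive going-up steps (first transporting the value of $P_5$ up through an interval whose type contains $5$, then a second step bounding $P_4$, or the middle value $P_2=P_3$, at a later division number) and the summand $1$ comes from the remaining middle coordinate. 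Letting $\alpha'\uparrow\alpha$, $\beta'\downarrow\beta$ gives $\alpha\le (\beta/\alpha)^2+1$, which is the required inclusion for $\cS_{3,5}$ through Corollary~\ref{pgn:tool:cor}.

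The hard part will be the case analysis and, within each case, arranging the chain of cross-inequalities so that the constant comes out exactly as $(g')^2+1$ rather than something weaker. Indeed, a single full-type interval $[c_i,c_{i+1}]$ only yields (using $P_5(c_i)\le P_4(c_{i+1})$, $P_4(c_i)\le P_3(c_{i+1})=P_2(c_{i+1})$, $P_2(c_i)\le P_1(c_{i+1})$ and $P_4(c_{i+1})\le A_{i+1}^+/2$) the bound $\alpha'\le 2g'/(2-g')$, which is restricted to $g'<2$ and is strictly weaker than $(g')^2+1$; so one must genuinely use two consecutive relevant intervals, and, when the full type is rare, pair a type-$\{2,3,4,5\}$ interval with a later type-$\{1,2,3\}$ or type-$\{1,2,3,4\}$ interval. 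Some care is also needed to confirm that the required blocks can always be located arbitrarily far out and that non-degeneracy of $\uP$ enters only through Propositions~\ref{part:prop:n-sys} and~\ref{type:prop:kl}.
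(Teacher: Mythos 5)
Your setup coincides with the paper's: the lower bound via Proposition~\ref{part:prop:n-sys}, the reduction to proving $\alpha'\le (g')^2+1$ for auxiliary parameters $\alpha'<\chibot_2(\uP)\le\chitop_2(\uP)<\beta'$, and the plan of chaining the cross-inequalities of Proposition~\ref{type:prop:kl} along consecutive $2$-division numbers together with the eventual estimates $\alpha'A_i^-\le A_i^+$ and $A_{i+1}^+\le\beta'A_i^-$. However, the entire content of the proposition lies in what you defer as ``the hard part'': choosing, in each case, the precise configuration of simple intervals and combining the resulting inequalities so that the constant comes out as $(g')^2+1$. As written, the proposal asserts that such a chain exists rather than exhibiting it, so there is a genuine gap. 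Your computation that a single full-type interval only gives $\alpha'\le 2g'/(2-g')$ is correct and shows you have located the difficulty, but it does not substitute for the missing argument.

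Moreover, the sketch as stated would need repair before it could be completed. The paper's dichotomy is whether infinitely many simple intervals have type containing $\{1,2,3,4\}$, not whether the full type $\{1,\dots,5\}$ recurs; in the affirmative case it pairs a simple interval of type containing $\{2,\dots,5\}$ with a \emph{later} one of type containing $\{1,\dots,4\}$, with $P_5$ constant in between (found by going backwards from the latter), and the elimination step there needs $\alpha>g$, a reduction you did not make (your reduction to $\alpha>2$ is not the one used). In the opposite case, your proposed two-interval pairing --- a $\{2,3,4,5\}$-type interval followed by a $\{1,2,3\}$- or $\{1,2,3,4\}$-type interval --- does not work as described: when types containing $\{1,2,3,4\}$ occur only finitely often, $P_4$ must grow between the last growth of $P_5$ and the next growth of $P_1$, through intervals of type $\{2,3,4\}$, so one cannot arrange $P_4$ and $P_5$ both constant between the two chosen intervals. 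The paper instead uses a three-interval chain (type containing $\{2,\dots,5\}$, then $\{2,3,4\}$ with $P_5$ constant before it, then $\{1,2,3\}$ with $P_4,P_5$ constant before it) and an extra step of the form $B_3^+-B_3^-\ge(1-1/\alpha)B_2^+$ to reduce to the same key inequality as in the first case. Completing your proof requires carrying out these two chaining arguments explicitly; the final passage to $\cS_{3,5}$ via Corollary~\ref{pgn:tool:cor} and the limit $\alpha'\to\chibot_2(\uP)$, $\beta'\to\chitop_2(\uP)$ is fine.
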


By Corollary \ref{pgn:tool:cor}, the spectrum $\cS_{3,5}$ consists of the points 
$(\chibot_2(\uP),\chitop_2(\uP))$ where $\uP$ is a proper 
non-degenerate $5$-system.  Thus, this proposition shows that each point 
$(\alpha,\beta)$ of $\cS_{3,5}$ either has $\beta=\infty$ or else satisfies 
$\alpha\le (\beta/\alpha)^2+1$.  In particular, if $\alpha\ge 5$ and if we write
$\alpha=g^2+1$ with $g\in[2,\infty]$, then $\beta\ge g\alpha$, as in
Theorem~\ref{dr:thm:S35}.

We note also that, for $\uP$ as in the proposition, both $P_1$ and $P_5$ are 
unbounded, thus there are infinitely many simple intervals in $\cI_2(\uP)$ of type 
containing $\{1,2,3\}$, namely those on which $P_1$ is non-constant, and 
infinitely many of type containing $\{2,3,4,5\}$, namely
those on which $P_5$ is non-constant.  However, the set of simple intervals 
in $\cI_2(\uP)$ whose type contains $\{1,\dots,4\}$ may be finite or infinite.  
The proof of the proposition below considers the two cases separately.

\begin{proof}[Proof of Proposition~\ref{dim5:prop:cond}]
The first inequality in \eqref{dim5:prop:cond:eq1} follows from 
Proposition~\ref{part:prop:n-sys}.  To prove the second one, choose 
instead $\alpha,\beta\in\bR$ with
\[
 1< \alpha < \chibot_2(\uP) \le \chitop_2(\uP) <\beta.
\]
Then, by continuity, we simply need to show that $\alpha\le g^2+1$ 
where $g=\beta/\alpha$.   To this end, we may assume 
that $\alpha>g$.  We distinguish two cases.

\medskip
\textbf{Case 1.} Suppose that infinitely many simple intervals in $\cI_2(\uP)$
have type containing $\{1,\dots,4\}$.

Starting from such an interval $[a_2,b_2]$ with $a_2$ large enough, and going 
backwards, we find a simple interval $[a_1,b_1]$ with $b_1\le a_2$ maximal, 
on which $P_5$ is non-constant.   Moreover, $a_1$ goes to infinity with $a_2$.  
Thus, there are arbitrarily large elements $a_1<b_1\le a_2<b_2$ in $\cA_2(\uP)$
such that $[a_1,b_1]$ is simple of type containing $\{2,\dots,5\}$,  $P_5$ is 
constant on $[b_1,a_2]$, and $[a_2,b_2]$ is simple of type containing 
$\{1,\dots,4\}$.  Similarly as in \eqref{Ri:prop:eq1} and \eqref{Ri:prop:eq2}, we write
\begin{equation}
\label{dim5:prop:cond:eq3}
 \begin{array}{lll}
 \uP(a_i)=(a^{(i)}_1,\dots,a^{(i)}_5), &A^-_i=a_1^{(i)}+a_2^{(i)},  
        &A^+_i=a_3^{(i)}+a_4^{(i)}+a_5^{(i)}, \\[5pt]
 \uP(b_i)=(b^{(i)}_1,\dots,b^{(i)}_5), &B^-_i=b_1^{(i)}+b_2^{(i)},  
        &B^+_i=b_3^{(i)}+b_4^{(i)}+b_5^{(i)},
 \end{array}
\end{equation}
for $i=1,2$.  Among the inequalities which, by Proposition~\ref{type:prop:kl}, 
the numbers $a_j^{(i)}$ and $b_j^{(i)}$ satisfy, we retain the following ones:
\[
\begin{array}{ccccccc}
 a_1^{(1)} &&b_1^{(1)} &&a_1^{(2)} &&b_1^{(2)} \\
 &&&&&\dpe \\
 a_3^{(1)} &&b_3^{(1)} &\le&a_3^{(2)} &&b_3^{(2)} \\
 &\dpe&&&&\dpe\\
 a_4^{(1)} &&b_4^{(1)} &\le&a_4^{(2)} &&b_4^{(2)} \\
 &\dpe\\
 a_5^{(1)} &&b_5^{(1)} &=&a_5^{(2)} &&b_5^{(2)} 
\end{array}\ .
\]
For the numbers $A^-_i, A^+_i,B^-_i, B^+_i$, we deduce that
\begin{align*}
 A^+_1-A^-_1 &= a_4^{(1)}+a_5^{(1)}-a_1^{(1)} 
     \le a_4^{(1)}+a_5^{(1)} \le b_3^{(1)}+b_4^{(1)} = B^+_1-b_5^{(1)},\\
 A^+_2 &= a_3^{(2)}+a_4^{(2)}+a_5^{(2)} 
     \le b_1^{(2)}+b_3^{(2)}+b_5^{(1)} = B^-_2+b_5^{(1)}.
\end{align*}
Summing these estimates term by term, we obtain
\begin{equation}
\label{dim5:prop:cond:eq4}
 (A^+_1-A^-_1)+A^+_2 \le B^+_1 + B^-_2.
\end{equation}
Moreover, we may assume that $a_1$ is large enough so that, by 
Proposition~\ref{part:prop:n-sys} and our choice of $\alpha$ and $\beta$, the 
inequalities \eqref{Ri:prop:eq4} hold for $i=1,2$.  This means that
\begin{equation}
\label{dim5:prop:cond:eq5}
  A^+_i\ge \alpha A^-_i, \quad 
  B^+_i\ge \alpha B^-_i \et
  B^+_i\le \beta A^-_i \le g A^+_i,
\end{equation}
for $i=1,2$.   Using $B^+_2 \ge \alpha B^-_2$ to eliminate $B^-_2$ from
\eqref{dim5:prop:cond:eq4}, we find
\begin{equation}
\label{dim5:prop:cond:eq6}
 (A^+_1-A^-_1)+A^+_2 \le B^+_1 + (1/\alpha) B^+_2.
\end{equation}
Multiplying both sides of the above by $\alpha$, the estimates 
$\alpha A^-_1 \le A^+_1$ and $B^+_2 \le g A^+_2$ yield
\[
 (\alpha-1)A^+_1 + (\alpha-g)A^+_2 \le \alpha B^+_1.
\]
Since $\alpha-g >0$, we may use $A^+_2 \ge B^+_1$ to eliminate $A^+_2$.  This gives
\[
 (\alpha-1)A^+_1 \le g B^+_1.
\]
Finally, since $B^+_1\le g A^+_1$, we conclude that $\alpha-1 \le g^2$, as needed.

\medskip
\textbf{Case 2.} Suppose on the contrary that only finitely many simple 
intervals in $\cI_2(\uP)$ have type containing $\{1,\dots,4\}$.

Then any simple interval $[a_3,b_3]$ in $\cI_2(\uP)$ on which $P_1$ is 
non-constant has type $\{1,2,3\}$, if $a_3$ is large enough.  Starting from 
such an interval  and going backwards, we find a simple interval $[a_2,b_2]$ with 
$b_2\le a_3$ maximal, on which $P_4$ is non-constant, and then a simple interval
$[a_1,b_1]$ with $b_1\le a_2$ maximal, on which $P_5$ is non-constant. 
Moreover, $a_1$ goes to infinity with $a_3$.  Thus, there are arbitrarily large 
elements $a_1<b_1\le a_2<b_2\le a_3<b_3$ in $\cA_2(\uP)$
such that $[a_1,b_1]$ is simple of type containing $\{2,\dots,5\}$,  $P_5$ is 
constant on $[b_1,a_2]$, $[a_2,b_2]$ is simple of type containing 
$\{2,3,4\}$, $P_4$ and $P_5$ are constant on $[b_2,a_3]$, and $[a_3,b_3]$ 
is simple of type $\{1,2,3\}$.  In the notation \eqref{dim5:prop:cond:eq3}, 
this implies the following inequalities
\[
\begin{array}{ccccccccccc}
 a_1^{(1)} &&b_1^{(1)} &&a_1^{(2)} &&b_1^{(2)} &&a_1^{(3)} &&b_1^{(3)} \\
 &&&&&&&&&\dpe \\
 a_3^{(1)} &&b_3^{(1)} &&a_3^{(2)} &&b_3^{(2)} &\le&a_3^{(3)} &&b_3^{(3)} \\
 &\dpe&&&\vpe&\dpe\\
 a_4^{(1)} &&b_4^{(1)} &&a_4^{(2)} &&b_4^{(2)} &=&a_4^{(3)} &=&b_4^{(3)}\\
 &\dpe\\
 a_5^{(1)} &&b_5^{(1)} &=&a_5^{(2)} &&b_5^{(2)} &=&a_5^{(3)} &=&b_5^{(3)}
\end{array}
\ ,
\]
and then
\begin{align*}
 A^+_1-A^-_1 &\le B^+_1-b_5^{(1)} = B^+_1-a_5^{(2)} ,\\
 A^+_2 &\le a_5^{(2)}+2b_3^{(2)} ,\\
 B^+_3-B^-_3 &= b_4^{(3)}+b_5^{(3)}-b_1^{(3)}  
    \le b_4^{(2)}+b_5^{(2)}-b_3^{(2)} = B^+_2-2b_3^{(2)} .
\end{align*}
Summing these three inequalities term by term, we obtain
\begin{equation}
\label{dim5:prop:cond:eq7}
 (A^+_1-A^-_1)+A^+_2 +(B^+_3-B^-_3) \le B^+_1 + B^+_2.
\end{equation}
As in case 1, we may assume that $a_1$ is large enough so that 
the estimates in \eqref{dim5:prop:cond:eq5} hold for $i=1,2,3$.
In particular, we have $B^-_3\le (1/\alpha) B^+_3$ and 
$B^+_3\ge B^+_2$, thus
\[
B^+_3-B^-_3 \ge (1-1/\alpha)B^+_3 \ge (1-1/\alpha)B^+_2.
\]
Substituting this into \eqref{dim5:prop:cond:eq7}, we obtain the same 
estimate \eqref{dim5:prop:cond:eq6} as in the previous situation,
and we conclude once again that $\alpha\le g^2+1$. 
\end{proof}

\begin{proposition}
\label{dim5:prop:cons}
Let $s\ge 3$ be an integer and let $g\in[2,\infty)$.  Then $\cS_{3,5}$ contains the points 
$(\alpha,\beta)\in \bR^2$ with
\[
 \alpha=g^2+1-\frac{g}{1+g^s} \et \beta\ge g\alpha.
\]
\end{proposition}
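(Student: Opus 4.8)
By Theorem~\ref{cons:thm}, $\cS_{3,5}$ contains every point produced there from a finite periodic family $(\ua^{(1)},\dots,\ua^{(s+1)})$ of points of $\bR^5$ satisfying the hypotheses of that theorem (taken with $n=5$, $m=2$, so that $n-m=3$ and the relevant spectrum is $\cS_{3,5}$), where $\ua^{(s+1)}=\rho\ua^{(1)}$. The plan is, for each integer $s\ge 3$ and each $g\in[2,\infty)$, to exhibit such a family realizing --- via \eqref{cons:thm:eq1}--\eqref{cons:thm:eq2} --- exactly the single point $(\alpha,g\alpha)$ with $\alpha=g^2+1-g/(1+g^s)$. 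Since $\alpha\ge 43/9>3/2$ throughout this range, Theorem~\ref{dr:thm:rectangles} (in the case $n=5$, $n-m=3$, where $(n-m)/m=3/2$) then yields $\{\alpha\}\times[g\alpha,\infty]\subseteq\cS_{3,5}$, which is precisely the asserted family, using also the stated equivalence ``$\alpha=g^2+1-g/(1+g^s)$, $\beta\ge g\alpha$, $g=\beta/\alpha\in[2,\infty]$''. It is essential to land on the point whose second coordinate is \emph{exactly} $g\alpha$ and not merely $\ge g\alpha$: since here $\alpha$ may be less than $5$ (e.g.\ $\alpha=43/9$ when $g=2$, $s=3$), the inequality $\beta\ge g\alpha$ is not forced by Proposition~\ref{dim5:prop:cond}, so only the sharp value feeds the rectangle argument.

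The family I would use is a self-similar $5$-system of ratio $\rho=g^{s}$; equivalently, by Corollary~\ref{type:cor1}, the sequence of its values at its $m$-division numbers ($m=2$) is a periodic list $\ua^{(1)},\dots,\ua^{(s+1)}$ of points of $\bR^5$ with $0<a^{(i)}_1<a^{(i)}_2=a^{(i)}_3<a^{(i)}_4<a^{(i)}_5$, with $\ua^{(s+1)}=g^{s}\ua^{(1)}$, and with the compatibility relations \eqref{type:cor1:eq1}--\eqref{type:cor1:eq2} holding for a suitable sequence of types $\{k_i,\dots,\ell_i\}$ ($k_i\in\{1,2\}$, $\ell_i\in\{3,4,5\}$). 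The choice of the type pattern and of the explicit coordinates, which are polynomials in $g$, is dictated by the extremal configuration isolated in the proof of Proposition~\ref{dim5:prop:cond}: over one period, coordinate $5$ should perform essentially a single ascent, in a move of type containing $\{2,\dots,5\}$, and coordinate $1$ essentially a single ascent, in a move of type contained in $\{1,2,3\}$, the remaining $s-2$ moves merely carrying the intermediate coordinates upward so that the data closes self-similarly by the factor $g^{s}$. This generalizes the one-move, type-$\{1,\dots,n\}$ family of Proposition~\ref{dr:prop:Ri}, which is the ``$s=1$'' endpoint and corresponds to the conjecture \eqref{dr:eq:conj}.

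Given the data, one reads off $A^-_i=a^{(i)}_1+a^{(i)}_2$ and $A^+_i=a^{(i)}_3+a^{(i)}_4+a^{(i)}_5$ from \eqref{cons:thm:eq2} (with $A^\pm_{s+1}=g^{s}A^\pm_1$) and checks, by a direct computation, that
\[
 \min_{1\le i\le s}\frac{A^+_i}{A^-_i}=g^2+1-\frac{g}{1+g^s}
 \et
 \max_{1\le i\le s}\frac{A^+_{i+1}}{A^-_i}=g\Big(g^2+1-\frac{g}{1+g^s}\Big).
\]
By \eqref{cons:thm:eq1} this says the constructed self-similar $5$-system has $\chibot_2=\alpha$ and $\chitop_2=g\alpha$, so $(\alpha,g\alpha)\in\cS_{3,5}$, as required.

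The crux --- and the only real obstacle --- is the construction together with this verification. One must choose the coordinates so that \emph{simultaneously}: (a) all the orderings $a^{(i)}_1<a^{(i)}_2<a^{(i)}_4<a^{(i)}_5$ and all the inequalities $a^{(i)}_j\le a^{(i+1)}_{j-1}$ of \eqref{type:cor1:eq2} hold, it being this rigidity that pins the closing ratio to exactly $g^{s}$ and no less; and (b) over one period no $m$-division number produces a ratio $A^+_i/A^-_i$ below the target $\alpha$, and no pair produces $A^+_{i+1}/A^-_i$ above $g\alpha$, so that the two extrema displayed above are attained. This is a finite but intricate optimization over the period, and it plays, for the pair $(m,n)=(3,5)$, the role of the short computation with a single full-type move in the proof of Proposition~\ref{dr:prop:Ri}.
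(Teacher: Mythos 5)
Your overall strategy is the right one, and it matches the paper's: apply Theorem~\ref{cons:thm} with $n=5$, $m=2$ to a periodic family $\ua^{(1)},\dots,\ua^{(s+1)}$ with $\ua^{(s+1)}=\rho\ua^{(1)}$, $\rho=g^s$, arrange that $\min_i A^+_i/A^-_i=\alpha$ and $\max_i A^+_{i+1}/A^-_i=g\alpha$ \emph{exactly}, and then invoke Theorem~\ref{dr:thm:rectangles} to sweep out $\{\alpha\}\times[g\alpha,\infty]$. But what you have written is only a plan: the entire mathematical content of the proposition is the explicit choice of the points $\ua^{(i)}$ and the verification that they satisfy the order constraints \eqref{type:cor1:eq1}--\eqref{type:cor1:eq2} and produce exactly the ratios $\alpha$ and $g\alpha$, and this is precisely what you leave out, acknowledging it yourself as ``the crux and the only real obstacle''. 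In the paper this is done by setting $b_i=g^{i-1}(1+\rho)-\rho$, defining $d_1,\dots,d_{s-1}$ by the recursion $d_1=\alpha(1+\rho)-\rho-b_2$, $d_i=\alpha(\rho+b_i)-b_i-d_{i-1}$, taking $\ua^{(1)}=(1,\rho,\rho,b_2,d_1)$, $\ua^{(i)}=(\rho,b_i,b_i,d_{i-1},d_i)$, $\ua^{(s)}=(\rho,b_s,b_s,\rho^2,\rho b_2)$, and proving by induction (using $2g\le g^2<\alpha<g^2+1\le g^2+g-1$) the key inequalities $\rho+b_i<d_{i-1}<b_{i+1}$; with these data one gets $A^-_i=g^{i-1}(1+\rho)$ and $A^+_i=\alpha A^-_i$, whence all ratios equal $\alpha$ and $g\alpha$. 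None of this, nor any substitute for it, appears in your argument, so the specific value $\alpha=g^2+1-g/(1+g^s)$ is never actually shown to lie in the spectrum.

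Moreover, the blueprint you do sketch points in a direction that does not work. You propose that over one period coordinate $5$ should make ``essentially a single ascent'' in a move of type containing $\{2,\dots,5\}$ and coordinate $1$ a single ascent in a move of type contained in $\{1,2,3\}$. In the correct construction the first move has full type $\{1,\dots,5\}$ and every one of the $s$ moves has type containing $\{2,\dots,5\}$, so coordinate $5$ rises by a factor roughly $g$ in \emph{every} move; this is what keeps $A^+_{i+1}/A^-_i$ pinned at $g\alpha$ at each step. If instead coordinate $5$ were to gain its whole factor $\rho=g^s$ in a single move $[a_i,a_{i+1}]$, then $A^+_{i+1}\ge \rho\, a^{(i)}_5$ while $A^-_i\le 2a^{(i)}_5$, so $\max_i A^+_{i+1}/A^-_i\ge g^s/2$, which exceeds the target $g\alpha\le g^3+g$ as soon as $s$ is moderately large. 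So the missing construction cannot be filled in along the lines you indicate; the gap is genuine and is the heart of the proof.
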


This completes the proof of Theorem \ref{dr:thm:S35} because, as $\cS_{3,5}$
is a closed subset of $[0,\infty]^2$, we deduce, by letting $s$ go to infinity,
that $\cS_{3,5}$ contains the points $(\alpha,\beta)\in [5,\infty]^2$ with
$\alpha=g^2+1$ and $\beta\ge g\alpha$ for each $g\in[2,\infty)$, and so 
for each $g\in[2,\infty]$.

\begin{proof}[Proof of Proposition \ref{dim5:prop:cons}]
For the given $s$ and $g$, let
\[
 \rho =g^s, \quad 
 \alpha =g^2+1-g/(1+\rho), \quad
 b_i = g^{i-1}(1+\rho)-\rho \quad\text{for $i=2,\dots,s+1$,}
\]
and define recursively
\[
 d_1 =\alpha(1+\rho)-\rho-b_2 \et 
 d_i =\alpha(\rho+b_i)-b_i-d_{i-1} \quad\text{for $i=2,\dots,s-1$.}
\]
Note that $b_{s+1}=\rho^2$.  We claim that the points 
\begin{align*}
 \ua^{(1)} &= (1,\rho,\rho, b_2,d_1),\\
 \ua^{(i)} &= (\rho,b_i, b_i,d_{i-1},d_i) \quad \text{for $i=2,\dots,s-1$,}\\
 \ua^{(s)} &= (\rho,b_s, b_s,\rho^2,\rho b_2),\\
 \ua^{(s+1)} &= (\rho,\rho^2,\rho^2, \rho b_2,\rho d_1) = \rho\ua^{(1)},
\end{align*}
satisfy the hypotheses of Theorem~\ref{cons:thm} with $m=2$, $n=5$, 
\[
 (k_1,\ell_1)=(1,5) \et (k_2,\ell_2)=\cdots=(k_s,\ell_s)=(2,5),
\]
as illustrated below.  
%
\[
\begin{array}{ccccccccccc}
 1 &&\rho &=&\rho &=\ \cdots\ =&\rho &=&\rho&=&\rho \\
\vpp &\dpe&\vpp&&\vpp&&\vpp&&\vpp&&\vpp \\
 \rho &&b_2 &&b_3 &&b_{s-1} &&b_s &&\rho^2 \\
 \veq&\dpe&\veq&\dpe&\veq&\dpe\ \cdots\ \dpe&\veq&\dpe&\veq&\dpe&\veq\\
 \rho &&b_2 &&b_3 &&b_{s-1} &&b_s &&\rho^2 \\
 \vpp&\dpe&\vpp&\dpe&\vpp&\dpe\ \cdots\ \dpe&\vpp&\dpe&\vpp&\dpe&\vpp\\
 b_2 &&d_1 &&d_2 &&d_{s-2} &&\rho^2 &&\rho b_2 \\
 \vpp&\dpe&\vpp&\dpe&\vpp&\dpe\ \cdots\ \dpe&\vpp&\dpe&\vpp&\dpe&\vpp\\
 d_1 &&d_2 &&d_3 &&d_{s-1} &&\rho b_2 &&\rho d_1 \\
\end{array}
\]
If we admit this claim, the conclusion follows easily.  Indeed, in the notation
\eqref{cons:thm:eq2} (with $m=2$ and $n=5$), we find
\[
 A^-_i=g^{i-1}(1+\rho) \et A^+_i=\alpha A^-_i \quad \text{for $i=1,\dots,s+1$.}
\]
This is immediate from the definitions of the numbers $\rho$, $b_i$ and $d_i$, except for 
$A^+_s$ which also depends on the choice of $\alpha$:
\begin{align*}
 A^+_s = b_s+\rho^2+\rho b_2 
     &= g^{s-1}(1+\rho) - \rho + \rho g (1+\rho) \\
     &= A^-_s (1-g/(1+\rho)+g^2)=\alpha A^-_s.
\end{align*}
Thus, for each $i=1,\dots,s$, we have $A^+_i/A^-_i=\alpha$ and $A^+_{i+1}/A^-_i =
\alpha A^-_{i+1}/A^-_i = g\alpha$, and so, by Theorem~\ref{cons:thm}, the point
$(\alpha,g\alpha)$ belongs to $\cS_{3,5}$.  By Theorem~\ref{dr:thm:rectangles}, 
it follows that $\cS_{3,5}$ contains all points $(\alpha,\beta)$
with $\beta\in [g\alpha,\infty]$.

To prove our claim, we first note that, since $g\ge 2$, we have
\[ 
 b_2 =g(1+\rho)-\rho> \rho >1.
\]
To complete the proof, it suffices to show the stronger inequalities
\begin{equation}
\label{dim5:prop:cons:eq}
 \rho + b_i < d_{i-1} < b_{i+1} \quad \text{for $i=2,\dots,s$,}
\end{equation}
as $b_{s+1}=\rho^2$.  We prove this by induction on $i$, using the fact that
\[
 2g \le g^2 < \alpha < g^2+1 \le g^2+g-1
\]
Since $d_1=(\alpha-g)(1+\rho)$, we find 
\[
 \rho+b_2 = g(1+\rho) < d_1 < (g^2-1)(1+\rho) = b_3-1 < b_3.
\]
Thus, \eqref{dim5:prop:cons:eq} holds for $i=2$.  Suppose, by induction, that it holds for
some integer $i$ with $2\le i<s$.  Since $0 < g^i < \rho < b_2 \le b_i$ and
\[
 d_i + d_{i-1} - \rho = (\alpha-1)(\rho+b_i) 
   = (g^2-g/(1+\rho))g^{i-1}(1+\rho) = g^{i+1}(1+\rho) - g^i,
\]
we find
\begin{align*}
 d_i &> g^{i+1}(1+\rho) - d_{i-1} > 2(\rho+b_{i+1}) -b_{i+1} > \rho+b_{i+1},\\
 d_i&< g^{i+1}(1+\rho) +\rho - d_{i-1} < 2\rho+b_{i+2}- (\rho+b_i) < b_{i+2},
\end{align*}
which completes the induction step.
\end{proof}

%
%

\section{More on the spectrum $\cS_{3,5}$}
\label{sec:more}

We have seen in the preceding section, as a consequence of 
Proposition~\ref{dim5:prop:cond}, that any point $(\alpha,\beta)$ in
$\cS_{3,5}$ either has $\beta=\infty$ or satisfies $\alpha\le g^2+1$
where $g=\beta/\alpha$.  In the same way, the next result below shows 
that any point $(\alpha,\beta)$ in $\cS_{3,5}$ with 
$\alpha\ge 2(1+1/\sqrt{3})$ either has $\beta=\infty$ or satisfies 
$\alpha\le (3/2)g^2-g+1$ where $g=\beta/\alpha$.  The latter lower bound 
on $g$ (thus on $\beta$)  is stronger than the former if and only if $\alpha\le 5$.  
We will show at the end of the section that this inequality describes 
the portion of $\cS_{3,5}$ in $I\times[0,\infty]$ for a small subinterval 
$I$ of $[2(1+1/\sqrt{3}),5]$.  It would be interesting to know the largest 
interval $I$ with this property.

\begin{proposition}
\label{more:prop1}
Let $\uP=(P_1,\dots,P_5)$ be a proper non-degenerate $5$-system.  
Suppose that $(\alpha,\beta)=(\chibot_2(\uP),\chitop_2(\uP))$ 
satisfies $\beta<\infty$ and $2(1+1/\sqrt{3})\le \alpha$.  Then, 
\[
 \alpha \le (3/2)(\beta/\alpha)^2-\beta/\alpha+1.
\]
\end{proposition}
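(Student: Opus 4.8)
The strategy is to mirror the proof of Proposition~\ref{dim5:prop:cond}, extracting a sharper inequality from the same kind of chain of $2$-division numbers, with the hypothesis $\alpha\ge 2(1+1/\sqrt3)$ playing two roles: it lets us assume from the start that $\alpha>2\beta/\alpha$, and it is exactly the ordinate at which the line $\alpha=2g$ meets the parabola $\alpha=(3/2)g^2-g+1$.

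First, set $g=\beta/\alpha$. If $\alpha\le 2g$, then $2g\ge\alpha\ge 2(1+1/\sqrt3)$ gives $g\ge 1+1/\sqrt3$, hence $3g^2-6g+2\ge 0$, i.e.\ $2g\le(3/2)g^2-g+1$, and the desired bound follows. So we may assume $\alpha>2g$. As in the earlier proofs, it then suffices by continuity to fix $\alpha',\beta'\in\bR$ with $1<\alpha'<\chibot_2(\uP)\le\chitop_2(\uP)<\beta'$ and, the inequality $\alpha>2g$ being strict, with the extra property $\alpha'>2\beta'/\alpha'$, to prove $\alpha'\le(3/2)(\beta'/\alpha')^2-\beta'/\alpha'+1$, and then to let $\alpha'\to\alpha$ and $\beta'\to\beta$. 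Writing again $\alpha,\beta,g$ for these approximating values, we are reduced to proving: for a proper non-degenerate $5$-system $\uP$ with $1<\alpha<\chibot_2(\uP)\le\chitop_2(\uP)<\beta$ and $\alpha>2g$, one has $\alpha\le(3/2)g^2-g+1$.

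For this I would run the dichotomy of Proposition~\ref{dim5:prop:cond}, according to whether infinitely many simple intervals of $\cI_2(\uP)$ have type containing $\{1,2,3,4\}$. In each case one selects arbitrarily large $2$-division numbers forming a chain $a_1<b_1\le a_2<b_2\ (\le a_3<b_3)$ that threads one simple interval of each relevant type together with the intermediate stretches on which the top coordinate is constant, reads off the coordinate inequalities provided by Proposition~\ref{type:prop:kl} for the points $\uP(a_i),\uP(b_i)$, sums a well-chosen subset of them, and rewrites the outcome in terms of the partial sums $A_i^-=P_1(a_i)+P_2(a_i)$, $A_i^+=P_3(a_i)+P_4(a_i)+P_5(a_i)$ and their $b$-analogues. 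One then substitutes the estimates $A_i^+\ge\alpha A_i^-$, $B_i^+\ge\alpha B_i^-$, $B_i^+\le g A_i^+$ and the monotonicity $B_{i-1}^+\le A_i^+$, all coming from Proposition~\ref{part:prop:n-sys}; compared with Proposition~\ref{dim5:prop:cond}, the point is to keep, rather than discard, the contribution that lets the bound improve, and it is precisely at the step where one re-uses an inequality of the form $A_i^+\ge B_{i-1}^+$ that the assumption $\alpha>2g$ is needed, to ensure the coefficient it is multiplied by is nonnegative. The surviving inequality should then collapse to $\alpha-1\le(3/2)g^2-g$, where the coefficient $3/2$ traces back to the sharp estimate $A^+\ge(3/2)A^-$ valid at a $2$-division point --- equivalently $P_1=P_2$ together with $P_3=P_4=P_5$ --- recorded in \eqref{part:prop:n-sys:eq2}.

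I expect the main obstacle to be purely combinatorial: in each of the two cases, pinning down the exact chain of $2$-division numbers and the exact linear combination of the inequalities of Proposition~\ref{type:prop:kl} that produces the constants $3/2$, $-1$, $1$ after the above substitutions rather than some weaker quantity, while keeping the coefficient of the retained term under control with only $\alpha>2g$ available. The surrounding steps --- the limit passage and the elementary reduction at the start using $\alpha\ge 2(1+1/\sqrt3)$ --- are routine.
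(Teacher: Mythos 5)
Your opening reduction is fine and in fact matches the endgame of the paper's argument read backwards: if $\alpha\le 2g$, then $\alpha\ge 2(1+1/\sqrt3)$ forces $g\ge 1+1/\sqrt3$, hence $2g\le (3/2)g^2-g+1$, and one is done; so the whole content of the proposition is the claim that $\alpha>2g$ implies $\alpha\le(3/2)g^2-g+1$. But for that claim you offer only a plan, and the plan's core step is exactly the part you leave open. You propose to re-run the two-case dichotomy of Proposition~\ref{dim5:prop:cond} (finitely versus infinitely many simple intervals of type containing $\{1,2,3,4\}$), pick a single chain $a_1<b_1\le a_2<b_2(\le a_3<b_3)$, and hope that the right linear combination of the inequalities from Proposition~\ref{type:prop:kl} ``collapses'' to $\alpha-1\le(3/2)g^2-g$. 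There is no evidence such a chain exists, and the paper's proof strongly suggests it does not: in the case of infinitely many intervals of type containing $\{1,\dots,4\}$, the single-chain computation of Case~1 of Proposition~\ref{dim5:prop:cond} only yields $\alpha\le g^2+1$, which is \emph{weaker} than the target bound precisely in the relevant range $g<2$, so a genuinely different mechanism is needed there, not a sharper bookkeeping of the same chain.

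What the paper actually does is structurally finer. It introduces \emph{blocs} (pairs of simple intervals of type containing $\{2,\dots,5\}$ separated by a stretch on which $P_5$ is constant), and the decisive case distinction is whether $P_1$ is constant on specific stretches inside and between consecutive blocs, together with the type ($\{1,2,3\}$ or $\{1,2,3,4\}$) of the intermediate intervals. Each configuration gives a different bound: $\alpha\le 2g$ (Lemma~\ref{more:lemma2}), $\alpha\le g(g/2+1)$ (Lemma~\ref{more:lemma3}), $\alpha\le (g^2+g+1)/(1+1/g)$ (Lemma~\ref{more:lemma1}), and $\alpha\le(3/2)g^2-g+1$ (Lemmas~\ref{more:lemma4} and \ref{more:lemma5}); then exclusion arguments (Lemmas~\ref{more:lemma6} and \ref{more:lemma7}) show that if $\alpha$ exceeded both $2g$ and $(3/2)g^2-g+1$, only the configuration of Lemma~\ref{more:lemma1} could occur, and a final numerical case split on $g\ge 2$ versus $g<2$ turns the bound $(g^2+g+1)/(1+1/g)$ into a contradiction (Lemma~\ref{more:lemma8}). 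Note also that your guess about how $\alpha>2g$ enters (as a nonnegativity condition on a coefficient when reusing $A_i^+\ge B_{i-1}^+$) is not how it functions: its role is structural, via Lemma~\ref{more:lemma2}, to rule out intermediate intervals of type $\{1,2,3,4\}$ between blocs. So the proposal has a genuine gap: the combinatorial heart of the proof --- which in the paper occupies seven lemmas and a delicate combination --- is precisely what you defer, and the specific route you sketch for it would not produce the stated inequality.
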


The proof of this result is similar to that of Proposition \ref{dim5:prop:cond} 
but requires a more complex analysis involving several lemmas.  For $\uP$ as above, 
we fix a choice of numbers $\alpha$, $\beta$ with
\begin{equation}
\label{more:prop1:eqab}
 1 < \alpha < \chibot_2(\uP) \le \chitop_2(\uP) <\beta,
\end{equation}
and set $g=\beta/\alpha$.  We will consider finite subsequences of $\cA_2(\uP)$ 
of the form
\begin{equation}
\label{more:prop1:seq}
 a_1<b_1\le a_2<b_2\le \cdots \le a_s<b_s
\end{equation}
where each $[a_i,b_i]$ is a simple interval in $\cI_2(\uP)$ and we will denote
by $a_j^{(i)}$, $b_j^{(i)}$, $A^\pm_i$, $B^\pm_i$ the corresponding 
numbers given by \eqref{dim5:prop:cond:eq3} for $i=1,\dots,s$ 
and $j=1,\dots,5$.  In view of Proposition~\ref{part:prop:n-sys} and
the choice of $\alpha$, $\beta$, there is a number $a_0$ in $\cA_2(\uP)$ 
such that, if $a_1\ge a_0$, then the inequalities \eqref{dim5:prop:cond:eq5} 
hold for $i=1,\dots,s$.  We fix such a choice of $a_0$ and define  
\[
 \cA^* = \{ a\in\cA_2(\uP) \,;\, a\ge a_0 \}.
\]
To ensure that \eqref{dim5:prop:cond:eq5} applies, we will take 
the subsequences \eqref{more:prop1:seq} inside $\cA^*$. 

\begin{definition}
\label{more:def}
A \emph{bloc} in $\cA^*$ is a sequence $a_1<b_1\le a_2<b_2$ in $\cA^*$ such 
that $[a_1,b_1]$ and $[a_2,b_2]$ are simple intervals in $\cI_2(\uP)$ of type 
containing $\{2,\dots,5\}$, and such that $P_5$ is constant on $[b_1,a_2]$.
We say that a sequence of the form \eqref{more:prop1:seq} with $s\ge 2$ 
consists of consecutive blocs if $a_i<b_i\le a_{i+1}<b_{i+1}$ is a bloc for 
each $i=1,\dots,s-1$.
\end{definition}

\begin{lemma}
\label{more:lemma1}
Let $a_1<b_1\le a_2<b_2\le a_3<b_3$ be consecutive blocs in $\cA^*$.
Suppose that $[a_i,b_i]$ has type $\{1,\dots,5\}$ for $i=1,2,3$. 
Then we have $\alpha\le (g^2+g+1)/(1+1/g)$. 
\end{lemma}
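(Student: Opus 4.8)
The plan is to translate everything into inequalities between the coordinates of $\uP$ at the six points $a_1,b_1,a_2,b_2,a_3,b_3$ of $\cA_2(\uP)$. Write $\uP(a_i)=(a^{(i)}_1,\dots,a^{(i)}_5)$ and $\uP(b_i)=(b^{(i)}_1,\dots,b^{(i)}_5)$, with $a^{(i)}_2=a^{(i)}_3$ and $b^{(i)}_2=b^{(i)}_3$, and introduce $A^-_i,A^+_i,B^-_i,B^+_i$ as in \eqref{dim5:prop:cond:eq3}, now for $i=1,2,3$. Since these six points lie in $\cA^*$, the scaling inequalities \eqref{dim5:prop:cond:eq5} hold for $i=1,2,3$. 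First I would dispose of the trivial case: if $\alpha\le g$, then since $g+1\le g^2+g+1$ we get $g\le(g^2+g+1)/(1+1/g)$ and the conclusion is immediate; so from now on we may assume $\alpha>g$ (recall also that $g>1$).

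Next I would extract the combinatorial input. Applying Proposition~\ref{type:prop:kl} to each interval $[a_i,b_i]$, whose type is $\{1,\dots,5\}$, gives $a^{(i)}_j<b^{(i)}_j$ for all $j$ together with $a^{(i)}_j\le b^{(i)}_{j-1}$ for $j=2,\dots,5$; in particular $a^{(i)}_3=a^{(i)}_2\le b^{(i)}_1$, $a^{(i)}_4\le b^{(i)}_3$ and $a^{(i)}_5\le b^{(i)}_4$. As the blocs are consecutive, $P_5$ is constant on $[b_1,a_2]$ and on $[b_2,a_3]$, so $b^{(i)}_5=a^{(i+1)}_5$ for $i=1,2$; and since every component of $\uP$ is non-decreasing (its slopes lie in $\{0,1\}$) we have $A^+_1\le B^+_1\le A^+_2\le B^+_2\le A^+_3$ and $A^-_i\le B^-_i\le A^-_{i+1}$. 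From the coordinate inequalities above one then obtains, for $i=1,2$,
\[
 A^+_i-A^-_i\le B^+_i-a^{(i+1)}_5 \qquad\text{and}\qquad A^+_{i+1}\le B^-_{i+1}+a^{(i+1)}_5 .
\]
Adding the two of these with $i=1$ eliminates $a^{(2)}_5$ and yields $(A^+_1-A^-_1)+A^+_2\le B^+_1+B^-_2$; adding the two with $i=2$ eliminates $a^{(3)}_5$ and yields $(A^+_2-A^-_2)+A^+_3\le B^+_2+B^-_3$. These two relations by themselves only give $\alpha\le g^2+1$, the bound already obtained in Proposition~\ref{dim5:prop:cond}, so to reach the sharper constant I expect to need a finer look at the two gaps: I would split according to whether $P_4$ is constant on $[b_i,a_{i+1}]$, recording the identity $b^{(i)}_4=a^{(i+1)}_4$ when it is, and the inequality $b^{(i)}_4\le a^{(i+1)}_3$ (from Proposition~\ref{type:prop:kl}, since then $4$ lies in the type of $[b_i,a_{i+1}]$) when it is not. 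This extra information is what should make the three intervals genuinely interlock.

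Finally I would combine. Into the two relations above I feed the scaling bounds from \eqref{dim5:prop:cond:eq5} ($A^-_i\le\alpha^{-1}A^+_i$, $B^-_i\le\alpha^{-1}B^+_i$, and $B^+_i\le\beta A^-_i\le gA^+_i$), the monotonicity chain $A^+_1\le B^+_1\le A^+_2\le B^+_2\le A^+_3$, and the gap information, performing the substitutions in an order that — as in the proof of Proposition~\ref{dim5:prop:cond}, Case~1 — uses the quantities attached to the middle interval $[a_2,b_2]$ in both relations so that they cancel, rather than merely summing the two relations, which loses too much. Each substitution is legitimate because the coefficients that arise are positive; this follows from $\alpha>g$ (so $\alpha-g>0$ and $1-g/\alpha>0$) and $\alpha>1$. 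Solving the resulting scalar inequality for $\alpha$ should give $\alpha(1+1/g)\le g^2+g+1$, i.e.\ $\alpha\le(g^2+g+1)/(1+1/g)$. The main obstacle is exactly this last step: organizing the substitutions so that the constant comes out sharp rather than a weaker value such as $g^2+1$ or $(1+g)/(2-g)$; this is what forces the case distinction on the gaps and the careful tracking of which coordinate each inequality eliminates. The combinatorial extraction and the sign verifications, by contrast, are routine.
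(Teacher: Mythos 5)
Your setup and the preliminary reductions are fine, and you are right that your two summed relations $(A^+_1-A^-_1)+A^+_2\le B^+_1+B^-_2$ and $(A^+_2-A^-_2)+A^+_3\le B^+_2+B^-_3$, together with \eqref{dim5:prop:cond:eq5} and monotonicity, cannot give better than $\alpha\le g^2+1$: for instance $A^-_i=g^{i-1}$, $A^+_i=\alpha g^{i-1}$, $B^-_i=g^i$, $B^+_i=\alpha g^i$ satisfies all of these constraints for every $\alpha\le g^2+1$ with $\beta=g\alpha$. But this is exactly where your text stops being a proof: the decisive step is only announced (``should give $\alpha(1+1/g)\le g^2+g+1$''), and the extra input you propose in order to reach it --- a case distinction according to whether $P_4$ is constant on the gaps $[b_i,a_{i+1}]$, recording $b^{(i)}_4=a^{(i+1)}_4$ or $b^{(i)}_4\le a^{(i+1)}_3$ --- is never actually combined with the other inequalities, so the sharp constant is not established. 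You have located the obstacle correctly, but not removed it, and it is not at all clear that the $P_4$ information is the right missing ingredient.

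For comparison, the paper's proof needs no case analysis on the gaps; the information it exploits and you discard concerns $P_1$, not $P_4$. The point is not to drop $-a^{(1)}_1$ via $a^{(1)}_1\ge 0$ (which is what produces your lossy first relation), but to keep $A^+_1$ whole and instead absorb the term $-a^{(2)}_1$ coming from $A^+_2-A^-_2$, using the chain $a^{(1)}_3\le b^{(1)}_1\le a^{(2)}_1$ (type of $[a_1,b_1]$ together with monotonicity of $P_1$ across $[b_1,a_2]$). Combined with $a^{(1)}_4\le b^{(1)}_3$, $a^{(1)}_5\le b^{(1)}_4$, $a^{(2)}_4\le b^{(2)}_3$ and $a^{(2)}_5=b^{(1)}_5$, this gives $A^+_1+(A^+_2-A^-_2)\le B^+_1+b^{(2)}_3$; adding $A^+_2\le B^-_2+b^{(2)}_4$ and $A^+_3\le B^-_3+b^{(2)}_5$ yields $A^+_1+2A^+_2-A^-_2+A^+_3\le B^+_1+B^-_2+B^+_2+B^-_3$. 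Eliminating $A^-_2\le A^+_2/\alpha$, $B^-_2\le B^+_2/\alpha$, $A^+_1\ge B^+_1/g$, $B^-_3\le gA^+_3/\alpha$, then using $A^+_3\ge B^+_2$ and $B^+_1\le A^+_2$ (legitimate since $\alpha>g>1$) and finally $B^+_2\le gA^+_2$, one gets $1+1/g-1/\alpha\le(g^2+g)/\alpha$, i.e.\ the stated bound. Unless you carry out your $P_4$-based cases to an explicit scalar inequality, your argument has a genuine gap at the only non-routine step.
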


\begin{proof}
We may assume that $\alpha > g$.  By hypothesis, $[a_i,b_i]$ is a simple interval of
type $\{1,\dots,5\}$ for $i=1,2,3$, and $P_5$ is constant on $[b_i,a_{i+1}]$
for $i=1,2$.  Thus, we have
\[
\begin{array}{ccccccccccc}
 a_1^{(1)} &&b_1^{(1)} &\le&a_1^{(2)} &&b_1^{(2)} &&a_1^{(3)} &&b_1^{(3)} \\
 &\dpe&&&&\dpe&&&&\dpe \\
 a_3^{(1)} &&b_3^{(1)} &&a_3^{(2)} &&b_3^{(2)} &&a_3^{(3)} &&b_3^{(3)} \\
 &\dpe&&&&\dpe&&&&\dpe\\
 a_4^{(1)} &&b_4^{(1)} &&a_4^{(2)} &&b_4^{(2)} &&a_4^{(3)} &&b_4^{(3)}\\
 &\dpe&&&&\dpe\\
 a_5^{(1)} &&b_5^{(1)} &=&a_5^{(2)} &&b_5^{(2)} &=&a_5^{(3)} &&b_5^{(3)}
\end{array}
\ ,
\]
which yields
\[
 A^+_1+(A^+_2-A^-_2) \le B^+_1+b_3^{(2)}, \quad
 A^+_2  \le B^-_2+b_4^{(2)}, \quad
 A^+_3 \le B^-_3+b_5^{(2)},
\]
and thus,
\[
 A^+_1+2A^+_2-A^-_2+A^+_3 \le B^+_1+B^-_2+B^+_2+B^-_3.
\]
We eliminate $A^-_2$, $B^-_2$, $A^+_1$ and $B^-_3$ from this inequality using 
$A^-_2\le (1/\alpha)A^+_2$, $B^-_2\le(1/\alpha)B^+_2$,
$A^+_1\ge(1/g)B^+_1$ and $B^-_3\le(1/\alpha)B^+_3\le(g/\alpha)A^+_3$.
This gives
\[
 (2-1/\alpha)A^+_2 + (1-g/\alpha)A^+_3 \le (1-1/g)B^+_1 + (1+1/\alpha)B^+_2.
\]
Since $\alpha>g>1$, we have $1-g/\alpha>0$ and $1-1/g>0$. So, we may use 
$A^+_3\ge B^+_2$ and  $B^+_1\le A^+_2$ to eliminate $A^+_3$ and $B^+_1$.  
After simplifications, we find
\[
 (1+1/g-1/\alpha)A^+_2 \le (1/\alpha)(g+1)B^+_2.
\]
As $B^+_2\le g A^+_2$, this implies that $1+1/g-1/\alpha \le (1/\alpha)(g^2+g)$,
and the conclusion follows.
\end{proof}

\begin{lemma}
\label{more:lemma2}
Let $a_1<b_1\le a_2<b_2$ be a bloc in $\cA^*$.
Suppose that the type of $[b_1,a_2]$ is $\{1,\dots,4\}$.   Then, we have 
$\alpha\le 2g$.  
\end{lemma}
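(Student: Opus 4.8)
The plan is to follow the pattern of the proof of Proposition~\ref{dim5:prop:cond}. First, we may assume $\alpha>g$, since otherwise $\alpha\le g<2g$ because $g>1$; as in that proof we fix numbers $\alpha,\beta$ with $1<\alpha<\chibot_2(\uP)\le\chitop_2(\uP)<\beta$ and $g=\beta/\alpha$, write $\uP(a_i)=(a_1^{(i)},\dots,a_5^{(i)})$ and $\uP(b_i)=(b_1^{(i)},\dots,b_5^{(i)})$ for $i=1,2$, and keep the notation \eqref{dim5:prop:cond:eq3} for $A_i^\pm$, $B_i^\pm$.

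Next I would record the componentwise inequalities furnished by Proposition~\ref{type:prop:kl} applied to the three intervals $[a_1,b_1]$, $[b_1,a_2]$, $[a_2,b_2]$ (the first and third of type containing $\{2,3,4,5\}$, the middle one of type exactly $\{1,2,3,4\}$), together with the identities $a_2^{(i)}=a_3^{(i)}$ and $b_2^{(i)}=b_3^{(i)}$, and the crucial relation $b_5^{(1)}=a_5^{(2)}$ coming from the fact that $P_5$ is constant on $[b_1,a_2]$. What distinguishes the present case from a generic bloc is that, the type of $[b_1,a_2]$ being the \emph{full} set $\{1,2,3,4\}$, Proposition~\ref{type:prop:kl} gives both $b_2^{(1)}\le a_1^{(2)}$ and $b_4^{(1)}\le a_3^{(2)}=a_2^{(2)}$; since also $a_4^{(1)}\le b_2^{(1)}$ and $a_5^{(1)}\le b_4^{(1)}$ come from $[a_1,b_1]$, we obtain $a_4^{(1)}+a_5^{(1)}\le a_1^{(2)}+a_2^{(2)}=A_2^-$, hence $A_1^+-A_1^-\le A_2^-$.

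Then I would bring in the inequalities \eqref{dim5:prop:cond:eq5}, available since $a_1\in\cA^*$, namely $A_i^+\ge\alpha A_i^-$, $B_i^+\ge\alpha B_i^-$ and $B_i^+\le\beta A_i^-$ for $i=1,2$, together with the monotonicity of $S_2^+=P_3+P_4+P_5$ and of $S_2^-=P_1+P_2$ along $\cA_2(\uP)$. Combining $A_1^+-A_1^-\le A_2^-$ with $A_1^+\ge\alpha A_1^-$ gives $(\alpha-1)A_1^-\le A_2^-$; combining $\alpha A_2^-\le A_2^+=a_2^{(2)}+a_4^{(2)}+a_5^{(2)}$ with $a_4^{(2)}\le a_5^{(2)}$ and $a_2^{(2)}\le A_2^-$ gives $(\alpha-1)A_2^-\le 2\,a_5^{(2)}$; and combining $B_1^+\le\beta A_1^-$ with $b_2^{(1)}+b_4^{(1)}\ge a_4^{(1)}+a_5^{(1)}\ge A_1^+-A_1^-\ge(\alpha-1)A_1^-$ gives $a_5^{(2)}=b_5^{(1)}=B_1^+-b_2^{(1)}-b_4^{(1)}\le(\beta-\alpha+1)A_1^-$. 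Chaining these three estimates already yields $(\alpha-1)^2\le 2(\beta-\alpha+1)$, that is, $\alpha^2\le 2\beta+1$.

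The main obstacle is to remove the spurious additive constant and reach $\alpha^2\le 2\beta$, equivalently $\alpha\le 2g$. The configuration attaining $\alpha^2=2\beta+1$ in the linear relaxation above is not admissible, since it would force $a_4^{(1)}=a_1^{(2)}=0$ while the orderings require $a_4^{(1)}\ge a_2^{(1)}>0$. The remedy is to keep the two separate inequalities $a_4^{(1)}\le a_1^{(2)}$ and $a_5^{(1)}\le a_2^{(2)}$ rather than only their sum, to use as well $B_2^+\ge\alpha B_2^-$ and the componentwise constraints on $[a_2,b_2]$, and to feed these back into the chain, carefully balancing the contribution of $[a_1,b_1]$ against that of $[a_2,b_2]$. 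I expect this last elimination — in effect a small linear program in the twelve unknowns $a_j^{(i)}$, $b_j^{(i)}$ — to be the technical core of the argument.
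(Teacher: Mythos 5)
Your setup and your preliminary estimates are correct, and your first key inequality $A_1^+-A_1^-\le a_4^{(1)}+a_5^{(1)}\le A_2^-$ is exactly the inequality \eqref{more:lemma2:eq1} on which the paper's proof also rests. But the argument is not complete: the chain you build from it only yields $(\alpha-1)^2\le 2(\beta-\alpha+1)$, i.e.\ $\alpha^2\le 2\beta+1$, which is strictly weaker than the assertion $\alpha\le 2g$ (equivalently $\alpha^2\le 2\beta$), and the step you defer --- the ``small linear program'' meant to remove the additive $1$ --- is precisely the content of the lemma. Your heuristic for why the constant can be removed (that the extremal configuration would force $a_4^{(1)}=a_1^{(2)}=0$) is not a proof: all the constraints in play are homogeneous in the twelve coordinates, so nothing prevents these entries from being arbitrarily small relative to the others, and in any case such a remark does not produce the combination of constraints that gives the sharp bound. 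Note also that the exact constant $2g$ is what is used later (Lemmas~\ref{more:lemma6}--\ref{more:lemma8} compare $\alpha$ with $\max\{2g,(3/2)g^2-g+1\}$), so the weaker bound cannot be substituted, even after passing to the limit $\alpha\to\chibot_2(\uP)$, $\beta\to\chitop_2(\uP)$.

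The missing combination, which is where your chain loses sharpness, is the following. Besides \eqref{more:lemma2:eq1}, one adds the three estimates $A_1^+\le 2b_3^{(1)}+b_4^{(1)}$, \ $A_1^+-A_1^-\le b_3^{(1)}+b_4^{(1)}$, \ $A_2^+-A_2^-\le 2b_5^{(1)}-b_3^{(1)}$ (the last using $a_4^{(2)}\le a_5^{(2)}=b_5^{(1)}$ and $b_3^{(1)}=b_2^{(1)}\le a_1^{(2)}$, which comes from the type of $[b_1,a_2]$ being $\{1,\dots,4\}$), obtaining $A_1^+ +(A_1^+-A_1^-)+(A_2^+-A_2^-)\le 2B_1^+$. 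One then eliminates $A_1^-$, $A_2^-$ via $A_i^+\ge\alpha A_i^-$ and bounds $B_1^+$ by $gA_1^+$ (from $B_1^+\le\beta A_1^-\le(\beta/\alpha)A_1^+$), rather than isolating $b_5^{(1)}\le(\beta-\alpha+1)A_1^-$ as you do; this gives $(\alpha-1)A_2^+\le(2g\alpha-2\alpha+1)A_1^+$, which combined with $(\alpha-1)A_1^+\le A_2^+$ (your estimate, rewritten with $A_1^+$ in place of $A_1^-$) yields $(\alpha-1)^2\le 2g\alpha-2\alpha+1$, i.e.\ $\alpha\le 2g$ with no spurious constant. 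So the feasibility of your ``remedy'' is real --- the constraints you listed do imply the sharp bound --- but the decisive elimination is absent from your proposal, and it is the whole point of the lemma.
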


\begin{proof}
Since $[a_1,b_1]$ is a simple interval of type containing $\{2,\dots,5\}$, 
we have
\[
\begin{array}{ccccccc}
 a_1^{(1)} &&b_1^{(1)} &&a_1^{(2)}  \\
 &&&\dpe\\
 a_3^{(1)} &&b_3^{(1)} &&a_3^{(2)}  \\
 \vpe&\dpe&&\dpe\\
 a_4^{(1)} &&b_4^{(1)} &&a_4^{(2)} \\
 &\dpe&&&\vpe\\
 a_5^{(1)} &&b_5^{(1)} &=&a_5^{(2)} 
\end{array}
\ .
\]
We find
\begin{equation}
\label{more:lemma2:eq1}
 A^+_1-A^-_1 \le a_4^{(1)}+a_5^{(1)} \le A^-_2,
\end{equation}
and also,
\[
 A^+_1 \le 2b_3^{(1)}+b_4^{(1)}, \quad
 A^+_1-A^-_1 \le b_3^{(1)}+b_4^{(1)}, \quad
 A^+_2-A^-_2 \le 2b_5^{(1)}-b_3^{(1)},
\]
thus,
\begin{equation}
\label{more:lemma2:eq2}
 A^+_1 + (A^+_1-A^-_1) + (A^+_2-A^-_2) \le 2B^+_1.
\end{equation}
Using $A^+_1\ge \alpha A^-_1$,  $A^+_2\ge \alpha A^-_2$ 
and $B^+_1\le gA^+_1$ to 
eliminate $A^-_1$, $A^-_2$ and $B^+_1$ from \eqref{more:lemma2:eq1} 
and \eqref{more:lemma2:eq2}, we get
\begin{align*}
 (\alpha-1)A^+_1  &\le A^+_2, \\
 (\alpha-1)A^+_2  &\le  (2g\alpha-2\alpha+1)A^+_1.
\end{align*}
We conclude that $(\alpha-1)^2\le 2g\alpha-2\alpha+1$ and so $\alpha\le 2g$.
\end{proof}

\begin{lemma}
\label{more:lemma3}
Let $a_1<b_1\le a_2<b_2\le a_3<b_3$ be consecutive blocs
in $\cA^*$.  Suppose that $P_1$ is non-constant on $[a_1,a_2]$, and that 
$[b_2,a_3]$ has type $\{1,2,3\}$.  Then we have $\alpha\le g(g/2+1)$.
\end{lemma}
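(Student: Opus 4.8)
The plan is to follow the same template as the proofs of Lemmas \ref{more:lemma1} and \ref{more:lemma2}. First I would dispose of the trivial case: if $\alpha\le g$ then $\alpha\le g\le g(g/2+1)$ (since $g^2/2\ge0$), so from now on I may assume $\alpha>g$, which is what makes several coefficients below have the right sign. Next I would extract from Proposition \ref{type:prop:kl}, applied to each of the three simple intervals $[a_i,b_i]$ ($i=1,2,3$) and to the type-$\{1,2,3\}$ gap $[b_2,a_3]$, a system of inequalities among the coordinates $a_j^{(i)},b_j^{(i)}$ of $\uP$ at the six division numbers, display them in a diagram as in Proposition \ref{dim5:prop:cond}, combine a well-chosen subset of them into a single ``master inequality'' among the $A^\pm_i,B^\pm_i$ (one in which the individual coordinates cancel), and finally eliminate variables using the estimates $A^+_i\ge\alpha A^-_i$, $B^+_i\ge\alpha B^-_i$, $B^+_i\le gA^+_i$ of \eqref{dim5:prop:cond:eq5} — valid because the $a_i$ lie in $\cA^*$ — together with the monotonicity relations $A^+_i\le B^+_i\le A^+_{i+1}$ coming from the fact that $P_3+P_4+P_5$ is non-decreasing.

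Concretely, since each $[a_i,b_i]$ is simple of type containing $\{2,3,4,5\}$, I get $a_2^{(i)}=a_3^{(i)}$, $b_2^{(i)}=b_3^{(i)}$, $a_4^{(i)}\le b_3^{(i)}$, $a_5^{(i)}\le b_4^{(i)}$, hence $A^+_i-A^-_i=a_4^{(i)}+a_5^{(i)}-a_1^{(i)}\le b_3^{(i)}+b_4^{(i)}=B^+_i-b_5^{(i)}$; the ``$P_5$ constant'' parts of the two bloc conditions give $b_5^{(1)}=a_5^{(2)}$ and $b_5^{(2)}=a_5^{(3)}$; the gap $[b_2,a_3]$ of type exactly $\{1,2,3\}$ gives moreover $b_4^{(2)}=a_4^{(3)}$, $b_3^{(2)}=b_2^{(2)}\le a_1^{(3)}\le a_2^{(3)}$; and ``$P_1$ non-constant on $[a_1,a_2]$'' gives the strict chain $a_1^{(1)}<a_1^{(2)}\le b_1^{(2)}<a_1^{(3)}\le b_1^{(3)}$. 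The extra leverage over the situation of Proposition \ref{dim5:prop:cond} is twofold: the middle interval $[a_2,b_2]$ now also has $5$ in its type, so $a_5^{(2)}\le b_4^{(2)}$ and thus $A^+_2\le 2b_3^{(2)}+b_4^{(2)}$ (the term $a_5^{(2)}$, which was ``free'' in \eqref{dim5:prop:cond:eq7}, is now bounded by $b_4^{(2)}$); and $P_1$ picks up genuine length on $[a_1,a_2]$, recorded by the chain above and by $b_1^{(1)}<b_2^{(1)}<b_4^{(1)}<b_5^{(1)}$ on $[a_1,b_1]$. I would then sum a subset of these to a master inequality of the shape $c_1(A^+_1-A^-_1)+c_2A^+_2+c_3(B^+_3-B^-_3)\le(\text{a sum of }B^+_i)$ in which the $b_5^{(i)}$- and $b_3^{(2)}$-terms cancel.

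From the master inequality I would proceed as in Lemma \ref{more:lemma2}: multiply through by $\alpha$, substitute $A^+_i\ge\alpha A^-_i$, $B^+_i\le gA^+_i$ and $A^+_{i+1}\ge B^+_i$ to reduce to a short loop of estimates of the form $(\alpha-g)A^+_1\le(\ldots)A^+_2$ and $(\alpha-g)A^+_2\le(\ldots)A^+_1$ (or an analogous three-term loop through $A^+_3$), cancel the common positive factors, and read off $2(\alpha-g)\le g^2$, i.e. $\alpha\le g(g/2+1)$.

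\textbf{Main obstacle.} The delicate point is precisely the combinatorial bookkeeping in the second paragraph: choosing the linear combination of coordinate inequalities so that all auxiliary coordinates cancel and the resulting estimate produces the coefficient $g^2/2$ rather than the weaker $g^2$ of Proposition \ref{dim5:prop:cond}. I expect this to require — possibly after a small sub-case split according to whether $P_1$ is non-constant already on $[a_1,b_1]$ (so that $[a_1,b_1]$ has type $\{1,\dots,5\}$ and one may retain the term $a_1^{(1)}$) or $P_1$ is constant on $[a_1,b_1]$ but non-constant on $[b_1,a_2]$ (so that the extra $P_1$-increase lives on the gap and is fed into the bounds via $b_1^{(1)}\le a_1^{(2)}$) — a careful simultaneous use of the upgraded bound on $A^+_2$, of $b_5^{(1)}=a_5^{(2)}$ and $b_5^{(2)}=a_5^{(3)}$, and of the strict first-coordinate chain; getting the two extra units of ``$\alpha$-strength'' to line up is the crux.
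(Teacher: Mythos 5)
Your plan is the same in outline as the paper's proof (a diagram of coordinate inequalities from Proposition~\ref{type:prop:kl}, summed into one master inequality, then variable elimination via \eqref{dim5:prop:cond:eq5} and the monotonicity of $P_3+P_4+P_5$), but the proposal stops exactly at the step that constitutes the lemma: the master inequality is never exhibited, and you yourself label the choice of linear combination "the crux". That is a genuine gap, and the partial shape you do commit to is not the one that works. The paper's combination is
\[
 2A^+_1 + 2(A^+_2-A^-_2) + (A^+_3-A^-_3) \le 2B^+_1 + B^+_2,
\]
obtained with weights $2,2,1$ from the three estimates $A^+_1\le B^+_1-b^{(1)}_5+a^{(2)}_1$ (using $a_3^{(1)}\le a_1^{(2)}$, $a_4^{(1)}\le b_3^{(1)}$, $a_5^{(1)}\le b_4^{(1)}$), $A^+_2-A^-_2\le b^{(1)}_5+b^{(2)}_3-a^{(2)}_1$ (using $a_4^{(2)}\le b_3^{(2)}$ and $a_5^{(2)}=b_5^{(1)}$), and $A^+_3-A^-_3\le B^+_2-2b^{(2)}_3$ (using $a_4^{(3)}=b_4^{(2)}$, $a_5^{(3)}=b_5^{(2)}$, $b_3^{(2)}\le a_1^{(3)}$). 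Two ingredients of this cancellation are missing from, or at odds with, your list: the cross-row inequality $a_3^{(1)}\le a_1^{(2)}$, which comes from applying Proposition~\ref{type:prop:kl} directly to $[a_1,a_2]$ (whose type contains $1$) — you only record the first-row chain $a_1^{(1)}<a_1^{(2)}$, and no sub-case split on where $P_1$ increases is needed; and the fact that $a_5^{(2)}$ must stay tied to $b_5^{(1)}$ rather than be absorbed via your "upgraded bound" $A^+_2\le 2b_3^{(2)}+b_4^{(2)}$, which throws away the term that cancels against $-b_5^{(1)}$ in the first estimate.

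The template you write down for the master inequality, $c_1(A^+_1-A^-_1)+c_2A^+_2+c_3(B^+_3-B^-_3)\le(\text{sum of }B^+_i)$, is modelled on \eqref{dim5:prop:cond:eq7} and is not shown (and is unlikely) to produce the coefficient $g^2/2$; likewise the endgame is not a two-term "loop" $(\alpha-g)A^+_1\le(\cdots)A^+_2$, $(\alpha-g)A^+_2\le(\cdots)A^+_1$. In the actual proof one multiplies the master inequality by $\alpha$, uses $\alpha A^-_2\le A^+_2$, $\alpha A^-_3\le A^+_3$ and $B^+_1\le gA^+_1$ to remove $A^-_2$, $A^-_3$, $A^+_1$, then $B^+_1\le A^+_2$ and $B^+_2\le A^+_3$ to reach $2(\alpha/g-1)A^+_2\le B^+_2\le gA^+_2$, whence $\alpha\le g(g/2+1)$; only $\alpha>1$ and $g>1$ are used, so your preliminary reduction to $\alpha>g$ is superfluous. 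Until the specific combination above (or an equivalent one) is written out and the elimination verified, the proposal does not prove the lemma.
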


\begin{proof}
By Proposition~\ref{type:prop:kl}, the hypotheses yield
\[
a_3^{(1)}\le a^{(2)}_1 \et 
\begin{array}{ccccccccc}
 a_1^{(1)} &&b_1^{(1)} &&a_1^{(2)} &&b_1^{(2)} &&a_1^{(3)}\\
 &&&&&&&\dpe\\
 a_3^{(1)} &&b_3^{(1)} &&a_3^{(2)} &&b_3^{(2)} &&a_3^{(3)} \\
 &\dpe&&&&\dpe\\
 a_4^{(1)} &&b_4^{(1)} &&a_4^{(2)} &&b_4^{(2)} &=&a_4^{(3)} \\
 &\dpe\\
 a_5^{(1)} &&b_5^{(1)} &=&a_5^{(2)} &&b_5^{(2)} &=&a_5^{(3)} 
\end{array}
\ ,
\]
thus
\[
 A^+_1\le B^+_1-b^{(1)}_5+a^{(2)}_1,\quad
 A^+_2-A^-_2\le b^{(1)}_5+b^{(2)}_3-a^{(2)}_1, \quad
 A^+_3-A^-_3\le B^+_2-2b^{(2)}_3,
\]
and so
\[
 2A^+_1 + 2(A^+_2-A^-_2) + (A^+_3-A^-_3) \le 2B^+_1 + B^+_2.
\]
We multiply both sides of this inequality by $\alpha$ and use
$\alpha A^-_2\le A^+_2$,  $\alpha A^-_3\le A^+_3$,  $B^+_1\le g A^+_1$
to eliminate $A^-_2$, $A^-_3$ and $A^+_1$.  This gives
\[
 2(\alpha-1)A^+_2 + (\alpha-1)A^+_3 
       \le 2\alpha(1-1/g)B^+_1 + \alpha B^+_2.
\]
Then, using $B^+_1\le A^+_2$ and $B^+_2\le A^+_3$ 
to eliminate $B^+_1$ and $A^+_3$, we obtain
\[
 2(\alpha/g-1)A^+_2  \le B^+_2.
\]
As $B^+_2\le g A^+_2$, we conclude that $2(\alpha/g-1)\le g$ and so
$\alpha\le g(g/2+1)$.
\end{proof}

\begin{lemma}
\label{more:lemma4}
Let $a_1<b_1\le a_2<b_2$ be a bloc in $\cA^*$.  Suppose that $P_1$ 
is constant on $[a_1,a_2]$, and that $[a_2,b_2]$ has type $\{1,\dots,5\}$.  
Then we have $\alpha\le (3/2)g^2-g+1$.
\end{lemma}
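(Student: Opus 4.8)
The plan is to reproduce, for this configuration, the mechanism of the proofs of Lemmas \ref{more:lemma1}--\ref{more:lemma3} and of Proposition \ref{dim5:prop:cond}: read off from the combined graph the inequalities among the coordinates of $\uP$ at the division numbers $a_1,b_1,a_2,b_2$, repackage them as a few linear inequalities among the quantities $A_i^\pm$, $B_i^\pm$, and then eliminate variables with the help of \eqref{dim5:prop:cond:eq5} to reach a polynomial inequality in $\alpha$ and $g$ equivalent to the claim.

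First I would make the standard reductions: fix $\alpha,\beta$ with $1<\alpha<\chibot_2(\uP)\le\chitop_2(\uP)<\beta$, put $g=\beta/\alpha$, and note that by continuity it is enough to prove $\alpha\le (3/2)g^2-g+1$; since $a_1\in\cA^*$, the estimates \eqref{dim5:prop:cond:eq5} hold for $i=1,2$. Next I would record the types of the three relevant intervals: $[a_1,b_1]$ is a simple interval of type exactly $\{2,3,4,5\}$ (as $P_1$ is constant on $[a_1,a_2]\supseteq[a_1,b_1]$), $[b_1,a_2]$ has type $\{2,3\}$ or $\{2,3,4\}$ (or reduces to a point), and $[a_2,b_2]$ has type $\{1,2,3,4,5\}$. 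Writing $\uP(a_i)=(a^{(i)}_1,\dots,a^{(i)}_5)$ and $\uP(b_i)=(b^{(i)}_1,\dots,b^{(i)}_5)$ as in \eqref{dim5:prop:cond:eq3}, Proposition \ref{type:prop:kl} then furnishes a diagram of inequalities of the same flavour as in those earlier proofs; in particular $a^{(1)}_4\le b^{(1)}_3$, $a^{(1)}_5\le b^{(1)}_4$, $a^{(2)}_2\le b^{(2)}_1$, $a^{(2)}_4\le b^{(2)}_3$, $a^{(2)}_5\le b^{(2)}_4$, together with $b^{(1)}_1=a^{(2)}_1$ and $b^{(1)}_5=a^{(2)}_5$ from $P_1$ and $P_5$ being constant on $[b_1,a_2]$, and, when $[b_1,a_2]$ has type $\{2,3,4\}$, the sharper relations $b^{(1)}_3,b^{(1)}_4\le a^{(2)}_2$.

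From this diagram I would extract, for each of the two simple intervals $[a_1,b_1]$ and $[a_2,b_2]$, an inequality of the shape $A^+_i-A^-_i+(\text{top coordinate at the right endpoint})\le B^+_i$ — the analogue of the inequalities used in the proof of Proposition \ref{dim5:prop:cond} — and then combine these with the monotonicity $B^+_1\le A^+_2\le B^+_2$ and with the elementary bound that the top coordinate of a $5$-tuple is at least one third of the sum of its last three entries (because $P_3\le P_4\le P_5$); it is this last bound that should produce the coefficient $3/2$. Substituting $A^-_i\le\alpha^{-1}A^+_i$, $B^-_i\le\alpha^{-1}B^+_i$ and $B^+_i\le gA^+_i$ from \eqref{dim5:prop:cond:eq5} and eliminating $A^-_i$, $B^-_i$ and one of $A^+_1,A^+_2$ should yield a relation which, after using $B^+_2\le gA^+_2$ once more, collapses to $\tfrac23(\alpha+g-1)\le g^2$, that is, to $\alpha\le(3/2)g^2-g+1$ (the target coefficient being $c(\alpha,g)=\tfrac23(\alpha+g-1)/g$ in an inequality $c(\alpha,g)A^+_2\le B^+_2$). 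I expect the elimination to require splitting according to the type of $[b_1,a_2]$, and the hypothesis $\alpha\ge 2(1+1/\sqrt3)$ to be used exactly to secure the correct signs of the coefficients occurring along the way; note that, since $(3/2)g^2-g+1$ is increasing and takes the value $2(1+1/\sqrt3)$ at $g=1+1/\sqrt3$, this hypothesis amounts, under the desired conclusion, to $g\ge 1+1/\sqrt3>3/2$, the range in which estimates coming from a single interval are no longer decisive.

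The hard part will be precisely this final bookkeeping: choosing the linear combination of the graph inequalities so that, after the substitutions, everything collapses to $\alpha\le(3/2)g^2-g+1$ rather than to a weaker bound such as $\alpha\le g^2+1$ (which is already good enough when $g\ge2$, since then $g^2+1\le(3/2)g^2-g+1$), and checking in each sub-case that every intermediate coefficient is nonnegative.
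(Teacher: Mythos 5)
Your overall framework is the right one (the diagram of inequalities from Proposition~\ref{type:prop:kl} together with the estimates \eqref{dim5:prop:cond:eq5}, then elimination), but the decisive step --- which linear combination of graph inequalities to take --- is exactly what you leave open, and the specific mechanism you bet on for producing the coefficient $3/2$ does not work. The combination you describe uses only: the two ``shape'' inequalities $A^+_i-A^-_i+(\text{top coordinate at the right endpoint})\le B^+_i$ for $[a_1,b_1]$ and $[a_2,b_2]$, the identifications $a^{(1)}_1=b^{(1)}_1=a^{(2)}_1$, $b^{(1)}_5=a^{(2)}_5$, the monotonicity $B^+_1\le A^+_2\le B^+_2$, the bound ``top coordinate $\ge\tfrac13$ of the top sum'', and \eqref{dim5:prop:cond:eq5}. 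These ingredients are jointly consistent with $\alpha>(3/2)g^2-g+1$, so no bookkeeping of them alone can close the proof. Concretely, take $\alpha=4.2$, $\beta=7.56$, so $g=1.8$ and $(3/2)g^2-g+1=4.06<\alpha$, and the tuples $\uP(a_1)=(0.01,0.49,0.49,0.7,0.91)$, $\uP(b_1)=(0.01,0.8,0.8,1.09,1.71)$, $\uP(a_2)=(0.01,0.99,0.99,1.5,1.71)$, $\uP(b_2)=(0.01,1.79,1.79,2.8,2.97)$: every inequality in your list (including all of \eqref{dim5:prop:cond:eq5} and the one-third bounds) is satisfied. Of course these numbers cannot come from the actual configuration --- they violate the diagonal $a^{(2)}_2\le b^{(2)}_1$, which you list but never feed into the combination --- and that is precisely the point: the case split on the type of $[b_1,a_2]$ is not needed, and the hypothesis $\alpha\ge 2(1+1/\sqrt3)$ plays no role in this lemma (it only enters in Lemmas~\ref{more:lemma6}--\ref{more:lemma8} and the proof of Proposition~\ref{more:prop1}); what is needed is the diagonal at the bottom of the second interval.

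The missing idea is to exploit the full type $\{1,\dots,5\}$ of $[a_2,b_2]$ through the \emph{lower} coordinates at $b_2$, i.e.\ to compare the $a_2$-data with $B^-_2$ rather than with $B^+_2$. From $a^{(2)}_2=a^{(2)}_3\le b^{(2)}_1$ and $a^{(2)}_2\le b^{(2)}_2$ one gets $A^-_2\le a^{(2)}_1+\tfrac12 B^-_2$, and from $a^{(2)}_3\le b^{(2)}_1$, $a^{(2)}_4\le b^{(2)}_3=b^{(2)}_2$ one gets $A^+_2\le B^-_2+a^{(2)}_5$. Adding these to $A^+_1-A^-_1\le B^+_1-a^{(2)}_5-a^{(2)}_1$ (which uses $a^{(1)}_1=a^{(2)}_1$ and $b^{(1)}_5=a^{(2)}_5$) gives $(A^+_1-A^-_1)+A^-_2+A^+_2\le B^+_1+\tfrac32 B^-_2$; the $3/2$ is the $1+\tfrac12$ coming from the two occurrences of $B^-_2$, not from the bound $b_5\ge\tfrac13 B^+$. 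Then the eliminations $B^-_2\le\tfrac1\alpha B^+_2\le gA^-_2$, $A^-_i\le\tfrac1\alpha A^+_i$, $B^+_1\le gA^+_1$ and finally $B^+_1\le A^+_2$ lead to $g(2\alpha+2-3g)\le 2((g-1)\alpha+1)$, which simplifies to $\alpha\le(3/2)g^2-g+1$, as required.
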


\begin{proof}
By the hypotheses, the type of $[a_1,b_1]$ is $\{2,\dots,5\}$ and we have
\[
\begin{array}{ccccccc}
 a_1^{(1)} &=&b_1^{(1)} &=&a_1^{(2)} &&b_1^{(2)}\\
 &&&&&\dpe&\vpe\\
 a_3^{(1)} &&b_3^{(1)} &&a_3^{(2)} &&b_3^{(2)} \\
 &\dpe&&&&\dpe\\
 a_4^{(1)} &&b_4^{(1)} &&a_4^{(2)} &&b_4^{(2)}\\
 &\dpe\\
 a_5^{(1)} &&b_5^{(1)} &=&a_5^{(2)} &&b_5^{(2)} 
\end{array}
\ ,
\]
thus
\[
 A^+_1-A^-_1 \le B^+_1-a^{(2)}_5-a^{(2)}_1,\quad
 A^-_2\le a^{(2)}_1 + (1/2)B^-_2, \quad
 A^+_2\le B^-_2+ a^{(2)}_5,
\]
and so
\[
 (A^+_1-A^-_1) + A^-_2 + A^+_2 \le B^+_1 + (3/2)B^-_2.
\]
Using $B^-_2 \le (1/\alpha)B^+_2 \le g A^-_2$, this yields
\begin{equation}
\label{more:lemma4:eq}
 2(A^+_1-A^-_1) + 2A^+_2 \le 2B^+_1 + (3g-2)A^-_2.
\end{equation}
Using $A^-_i\le (1/\alpha)A^+_i$ for $i=1,2$ to eliminate 
$A^-_1$ and $A^-_2$ from this inequality, we obtain
\[
 2(\alpha-1)A^+_1 + (2\alpha+2-3g) A^+_2 \le 2\alpha B^+_1.
\]
We now eliminate $A^+_1$ using $B^+_1\le g A^+_1$, to get
\[
 g(2\alpha+2-3g) A^+_2 \le 2((g-1)\alpha+1) B^+_1.
\] 
Since $g>1$ and $B^+_1\le A^+_2$, we conclude that 
$g(2\alpha+2-3g) \le 2((g-1)\alpha+1)$, which simplifies to
$\alpha\le (3/2)g^2-g+1$. 
\end{proof}

\begin{lemma}
\label{more:lemma5}
Let $a_1<b_1\le a_2<b_2\le a_3<b_3$ be consecutive blocs
in $\cA^*$.  Suppose that $P_1$ is constant on $[a_1,b_2]$, and that 
$[b_2,a_3]$ has type $\{1,2,3\}$.  Then we have $\alpha\le (3/2)g^2-g+1$.
\end{lemma}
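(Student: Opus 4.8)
The plan is to argue exactly as in the proofs of Lemmas~\ref{more:lemma3} and~\ref{more:lemma4}: extract from Proposition~\ref{type:prop:kl} a handful of inequalities between the coordinates of $\uP$ at the points $a_1,b_1,a_2,b_2,a_3,b_3$, add them with well-chosen non-negative weights so that all the auxiliary coordinates cancel, and then feed in the estimates~\eqref{dim5:prop:cond:eq5}. One may assume $\alpha>g$, since otherwise $\alpha\le g\le (3/2)g^2-g+1$, the quadratic $(3/2)g^2-2g+1$ having negative discriminant. Write $\uP(a_i)=(a^{(i)}_1,\dots,a^{(i)}_5)$ and $\uP(b_i)=(b^{(i)}_1,\dots,b^{(i)}_5)$ with $a^{(i)}_2=a^{(i)}_3$, $b^{(i)}_2=b^{(i)}_3$, and keep the notation $A^\pm_i,B^\pm_i$ of~\eqref{dim5:prop:cond:eq3}.

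First I would pin down the types of the relevant intervals. Since $P_1$ is constant on $[a_1,b_2]$ we have $a^{(1)}_1=b^{(1)}_1=a^{(2)}_1=b^{(2)}_1$, so $[a_1,b_1]$ and $[a_2,b_2]$, being simple intervals of blocs, have type exactly $\{2,3,4,5\}$; the gap $[b_1,a_2]$ has type contained in $\{2,3,4\}$; $[b_2,a_3]$ has type $\{1,2,3\}$ by hypothesis; and $[a_3,b_3]$ is simple of type containing $\{2,3,4,5\}$. From Proposition~\ref{type:prop:kl} applied to these intervals, together with the identities $b^{(1)}_5=a^{(2)}_5$, $b^{(2)}_4=a^{(3)}_4$, $b^{(2)}_5=a^{(3)}_5$ (constant components on the two gaps) and the ``moreover'' inequality $b^{(2)}_3\le a^{(3)}_1$ for $[b_2,a_3]$, I can write down the bounds I need. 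I would also use the extra chain $b^{(1)}_3\le a^{(2)}_3\le b^{(2)}_3$, which holds because $P_2+P_3=(P_1+P_2+P_3)-P_1$ is non-decreasing on $[a_1,b_2]$, $P_1$ being constant there and $P_1+P_2+P_3$ having slopes in $\{0,1\}$.

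From these ingredients I would derive three inequalities of the same shape as in Lemma~\ref{more:lemma3}: an upper bound for $A^+_1-A^-_1$ coming from $[a_1,b_1]$; an upper bound for $A^+_2-A^-_2$ coming from $[a_2,b_2]$ and $b^{(1)}_5=a^{(2)}_5$; and the bound $A^+_3-A^-_3\le B^+_2-2b^{(2)}_3$, coming from $a^{(3)}_1\ge b^{(2)}_3$ together with $a^{(3)}_4=b^{(2)}_4$, $a^{(3)}_5=b^{(2)}_5$. Adding these with suitable non-negative weights (the analogue of the weights used in Lemma~\ref{more:lemma3}) so that $a^{(2)}_5=b^{(1)}_5$, $b^{(2)}_3$ and the common value $a^{(1)}_1=\dots=b^{(2)}_1$ all disappear — the last one only needs its coefficient to be non-positive, since these quantities are non-negative — yields a linear inequality among the $A^\pm_i$ and $B^\pm_i$ alone. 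Then I would apply~\eqref{dim5:prop:cond:eq5}, i.e.\ $A^+_i\ge\alpha A^-_i$, $B^+_i\ge\alpha B^-_i$ and $B^+_i\le\beta A^-_i$ for $i=1,2,3$, together with the monotonicities $A^\pm_i\le B^\pm_i$ and $B^\pm_i\le A^\pm_{i+1}$ (for $i=1,2$) that hold because $P_1+P_2$ and $P_3+P_4+P_5$ are non-decreasing; in particular $A^+_3-A^-_3\ge(1-1/\alpha)A^+_3\ge(1-1/\alpha)B^+_2$ is what lets one eliminate the $A_3$-terms and reduce everything to a numerical inequality, which should then rearrange to $\alpha\le(3/2)g^2-g+1$.

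The delicate point is exactly this combine-and-eliminate step, and it is what makes the present case genuinely different from Lemmas~\ref{more:lemma3} and~\ref{more:lemma4}. Because $P_1$ stays constant all the way through $b_2$, the relation $a^{(2)}_3\le b^{(2)}_1$ exploited in Lemma~\ref{more:lemma4} is no longer available, so the argument has to be routed through the third simple interval $[a_3,b_3]$ and the constraint $b^{(2)}_3\le a^{(3)}_1$; choosing the weights and the order of the substitutions so that the bound comes out as $(3/2)g^2-g+1$ rather than something weaker is where the care is needed. If a single choice of weights does not suffice, I would split into two cases according to whether $P_1$ is constant on $[a_3,b_3]$ or not — that is, whether $[a_3,b_3]$ has type $\{2,3,4,5\}$ or $\{1,\dots,5\}$ — and handle each separately, in the spirit of the case analysis in the proof of Proposition~\ref{dim5:prop:cond}.
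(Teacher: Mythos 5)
Your plan follows the paper's route almost exactly: the paper's proof of this lemma also reads off inequalities at the six points from Proposition~\ref{type:prop:kl} (using that both $[a_1,b_1]$ and $[a_2,b_2]$ have type $\{2,\dots,5\}$, that $P_5$ is constant on the gaps, and that $b^{(2)}_3\le a^{(3)}_1$, $b^{(2)}_4=a^{(3)}_4$, $b^{(2)}_5=a^{(3)}_5$), combines them with fixed weights, uses precisely your elimination $A^+_3-A^-_3\ge(1-1/\alpha)A^+_3\ge(1-1/\alpha)B^+_2$, and then feeds in \eqref{dim5:prop:cond:eq5}. Your third inequality $A^+_3-A^-_3\le B^+_2-2b^{(2)}_3$ is literally the paper's, and your observation that the Lemma~\ref{more:lemma4} relation $a^{(2)}_2\le b^{(2)}_1$ is unavailable here, forcing the detour through $[a_3,b_3]$, is also correct.

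The gap is that the decisive step is only promised, not performed: you never write down the first two inequalities, never exhibit the weights, and never verify that the elimination lands on the exact bound $(3/2)g^2-g+1$ --- you yourself flag this as ``where the care is needed'' and propose a fallback case split, which signals that the claim is not actually established. Since the entire content of the lemma is this quantitative computation, the argument as written does not prove it. For the record, the paper closes the gap with the three estimates
\[
 A^+_1-A^-_1 \le B^+_1 - B^-_2 - a^{(2)}_5 + b^{(2)}_3,\qquad
 A^+_2 \le 2b^{(2)}_3 + a^{(2)}_5, \qquad
 A^+_3-A^-_3 \le B^+_2 -  2b^{(2)}_3,
\]
taken with weights $2,2,3$ (so that $a^{(2)}_5$ and $b^{(2)}_3$ cancel), then your inequality $A^+_3-A^-_3\ge(1-1/\alpha)B^+_2$, and finally $B^-_2\ge A^-_2$, $B^+_2\le\beta A^-_2$, which reduces everything to the inequality \eqref{more:lemma4:eq} of Lemma~\ref{more:lemma4}, whose endgame ($A^-_i\le(1/\alpha)A^+_i$, $B^+_1\le gA^+_1$, $B^+_1\le A^+_2$) gives $g(2\alpha+2-3g)\le 2((g-1)\alpha+1)$, i.e.\ $\alpha\le(3/2)g^2-g+1$; note also that your second estimate should bound $A^+_2$ itself rather than $A^+_2-A^-_2$ for this cancellation to work, and that no case split on the type of $[a_3,b_3]$ is needed. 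So your plan is viable and essentially the paper's, but as submitted it stops short of the proof.
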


\begin{proof}
By hypothesis, both $[a_1,b_1]$ and $[a_2,b_2]$ have type $\{2,\dots,5\}$.
We note that
\[
\begin{array}{ccccccccc}
 a_1^{(1)} &=&b_1^{(1)} &=&a_1^{(2)} &=&b_1^{(2)} &&a_1^{(3)} \\
 &&&&&&&\dpe&\\
 a_3^{(1)} &&b_3^{(1)} &&a_3^{(2)} &&b_3^{(2)} &&a_3^{(3)}\\
 &\dpe&&&\vpe&\dpe&&&\\
 a_4^{(1)} &&b_4^{(1)} &&a_4^{(2)} &&b_4^{(2)} &=&a_4^{(3)}\\
 &\dpe\\
 a_5^{(1)} &&b_5^{(1)} &=&a_5^{(2)} &&b_5^{(2)} &=&a_5^{(3)}
\end{array}
\ ,
\]
thus,
\[
 A^+_1-A^-_1 \le B^+_1 - B^-_2 - a^{(2)}_5 + b^{(2)}_3,\quad
 A^+_2 \le 2b^{(2)}_3 + a^{(2)}_5, \quad
 A^+_3-A^-_3 \le B^+_2 -  2b^{(2)}_3,
\]
and so
\[
 2(A^+_1-A^-_1) + 2A^+_2 +3(A^+_3-A^-_3) \le 2(B^+_1 - B^-_2) + 3B^+_2.
\]
Since $\alpha A^-_3\le A^+_3$ and $B^+_2\le A^+_3$, we also have
\[
 A^+_3-A^-_3 \ge (1-1/\alpha)A^+_3 \ge (1-1/\alpha)B^+_2.
\]
Combining the last two estimates, we obtain
\[
 2(A^+_1-A^-_1) + 2A^+_2 \le 2(B^+_1 - B^-_2) + (3/\alpha)B^+_2.
\]
Since $B^-_2\ge A^-_2$ and $B^+_2\le \beta A^-_2$, this yields \eqref{more:lemma4:eq}
and the conclusion follows by the same computations.
\end{proof}

\begin{lemma}
\label{more:lemma6}
Suppose that $\alpha >\max\{2g, \, (3/2)g^2-g+1\}$. Then there is no
bloc $a_1<b_1\le a_2<b_2$ in $\cA^*$ such that $P_1$ is constant on
$[a_1,a_2]$.  
\end{lemma}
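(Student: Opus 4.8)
The plan is to argue by contradiction: assume a bloc $a_1<b_1\le a_2<b_2$ in $\cA^*$ with $P_1$ constant on $[a_1,a_2]$ exists, and show it forces one of the inequalities of Lemmas~\ref{more:lemma2}, \ref{more:lemma4} and \ref{more:lemma5}, each of which contradicts the hypothesis (the bound $2g$ comes from Lemma~\ref{more:lemma2} and the bound $(3/2)g^2-g+1$ from Lemmas~\ref{more:lemma4} and \ref{more:lemma5}, so the two are ruled out by $\alpha>2g$ and $\alpha>(3/2)g^2-g+1$ respectively). Two elementary facts about $\uP$ will be used repeatedly. First, $P_5$ is non-decreasing (exactly one component has slope $1$ at each point) and unbounded (it dominates $P_1$, which tends to infinity since $\uP$ is proper), hence not eventually constant. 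Second, by Proposition~\ref{type:prop:kl} the type of any non-degenerate interval with endpoints in $\cA_2(\uP)$ is a set of consecutive integers $\{k,\dots,\ell\}$ with $1\le k\le 2<\ell\le 5$; in particular, a simple interval whose type contains $\{2,\dots,5\}$ has type $\{1,\dots,5\}$ or $\{2,\dots,5\}$, and an interval whose type does \emph{not} contain $5$ has type $\emptyset$, $\{2,3\}$, $\{2,3,4\}$, $\{1,2,3\}$ or $\{1,2,3,4\}$.

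Given the bloc, first consider the type of the simple interval $[a_2,b_2]$. If it is $\{1,\dots,5\}$, then Lemma~\ref{more:lemma4} gives the contradiction $\alpha\le(3/2)g^2-g+1$; so I may assume it is $\{2,\dots,5\}$, whence $P_1$ is constant on all of $[a_1,b_2]$. Since $P_5$ is not eventually constant, I would then let $[a_3,b_3]$ be the first simple interval after $b_2$ on which $P_5$ is non-constant. Then $P_5$ is constant on $[b_2,a_3]$ (being non-decreasing and constant on each intervening simple interval), and the type of $[a_3,b_3]$ contains $5$, hence $\{2,\dots,5\}$; thus $(a_2,b_2,a_3,b_3)$ is a bloc and $(a_1,b_1,a_2,b_2,a_3,b_3)$ are consecutive blocs. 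Let $T$ be the type of $[b_2,a_3]$ (empty if $b_2=a_3$); since $5\notin T$, it is one of the five possibilities above. If $T=\{1,2,3\}$, Lemma~\ref{more:lemma5} applies — the blocs are consecutive and $P_1$ is constant on $[a_1,b_2]$ — and gives $\alpha\le(3/2)g^2-g+1$. If $T=\{1,2,3,4\}$, Lemma~\ref{more:lemma2} applied to the bloc $(a_2,b_2,a_3,b_3)$ gives $\alpha\le 2g$. Either way we reach a contradiction.

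The remaining case is $1\notin T$, i.e. $P_1$ is constant on $[b_2,a_3]$, hence on $[a_1,a_3]$; then $(a_2,b_2,a_3,b_3)$ is again a bloc in $\cA^*$ with $P_1$ constant between its two left endpoints, and I would repeat the argument verbatim with it in place of $(a_1,b_1,a_2,b_2)$. The one point that requires care — the only real obstacle — is to see that this iteration terminates. If it did not, it would produce consecutive blocs $(a_k,b_k,a_{k+1},b_{k+1})$ for all $k\ge1$ with $P_1$ constant on each $[a_k,a_{k+1}]$, hence constant on $[a_1,\sup_k a_k)$. The numbers $b_k$ form a strictly increasing sequence in the closed discrete set $\cA_2(\uP)$ (Proposition~\ref{part:prop:n-sys}), so $b_k\to\infty$ and therefore $a_k\to\infty$; thus $P_1$ would be constant, and hence bounded, on $[a_1,\infty)$, contradicting that $\uP$ is proper. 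Consequently the iteration stops after finitely many steps, and at that step one of Lemmas~\ref{more:lemma2}, \ref{more:lemma4}, \ref{more:lemma5} yields an inequality incompatible with $\alpha>\max\{2g,\,(3/2)g^2-g+1\}$. This shows that no such bloc exists.
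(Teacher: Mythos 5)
Your proof is correct and takes essentially the same route as the paper: Lemma~\ref{more:lemma4} forces the type of $[a_2,b_2]$ to be $\{2,\dots,5\}$, and the possible types $\{1,2,3\}$ and $\{1,2,3,4\}$ of the next gap are excluded by Lemmas~\ref{more:lemma5} and~\ref{more:lemma2}. The only difference is presentational: the paper compresses your explicit iteration-and-termination step by invoking the unboundedness of $P_1$ at the outset to select directly the first pair of consecutive blocs where $P_1$ ceases to be constant.
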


\begin{proof}
Otherwise, since $P_1$ is unbounded, there are consecutive blocs 
$a_1<b_1\le a_2<b_2\le a_3<b_3$ in $\cA^*$ such that $P_1$ is constant 
on $[a_1,a_2]$, but non-constant on $[a_2,a_3]$.   Then, by 
Lemma~\ref{more:lemma4}, the type of $[a_2,b_2]$ is $\{2,\dots,5\}$.
Thus, $P_1$ is constant on $[a_1,b_2]$ but non-constant on $[b_2,a_3]$.  
In particular, the type of $[b_2,a_3]$ is $\{1,2,3\}$ or $\{1,2,3,4\}$.  This is
impossible as the first case is ruled out by Lemma~\ref{more:lemma5},
while the second is excluded by Lemma~\ref{more:lemma2}.
\end{proof}

\begin{lemma}
\label{more:lemma7}
Suppose that $\alpha >\max\{2g, \, (3/2)g^2-g+1\}$. Then, for each
bloc $a_1<b_1\le a_2<b_2$ in $\cA^*$, the type of $[a_2,b_2]$ is
$\{1,\dots,5\}$.
\end{lemma}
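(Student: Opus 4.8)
The plan is to argue by contradiction, reducing everything to the lemmas already established in this section. Fix a bloc $a_1<b_1\le a_2<b_2$ in $\cA^*$. By Definition~\ref{more:def}, $[a_2,b_2]$ is a simple interval in $\cI_2(\uP)$ whose type contains $\{2,3,4,5\}$; and by Proposition~\ref{type:prop:kl} the type of any interval in $\cI_2(\uP)$ with distinct endpoints has the form $\{k,\dots,\ell\}$ with $1\le k\le 2<\ell\le 5$. Hence a type containing $\{2,3,4,5\}$ must equal either $\{2,3,4,5\}$ or $\{1,2,3,4,5\}$, and it suffices to rule out the first possibility under the standing hypothesis $\alpha>\max\{2g,\,(3/2)g^2-g+1\}$. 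So I would assume that $P_1$ is constant on $[a_2,b_2]$ and seek a contradiction.

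The key step is to extend the bloc to the right. Since $\uP$ is proper, $P_5$ is unbounded, so one can take $a_3$ to be the smallest element of $\cA_2(\uP)$ with $a_3\ge b_2$ for which the simple interval $[a_3,b_3]$ of $\cI_2(\uP)$ following $a_3$ has $P_5$ non-constant. By minimality of $a_3$, $P_5$ is constant on $[b_2,a_3]$; and since the type of $[a_3,b_3]$ contains $5$, it has the form $\{k,\dots,5\}$ with $k\le 2$, hence contains $\{2,3,4,5\}$. Therefore $(a_2,b_2,a_3,b_3)$ is again a bloc, contained in $\cA^*$ because $a_3\ge b_2\ge a_0$, and $a_1<b_1\le a_2<b_2\le a_3<b_3$ consists of two consecutive blocs.

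Now I would apply Lemma~\ref{more:lemma6} to the bloc $(a_2,b_2,a_3,b_3)$: under the hypothesis on $\alpha$ it tells us that $P_1$ is non-constant on $[a_2,a_3]$. Combined with the assumption that $P_1$ is constant on $[a_2,b_2]$, this forces $P_1$ to be non-constant on $[b_2,a_3]$. So the type of $[b_2,a_3]$ contains $1$ and, as $P_5$ is constant there, does not contain $5$; by Proposition~\ref{type:prop:kl} this type is $\{1,2,3\}$ or $\{1,2,3,4\}$. If it is $\{1,2,3,4\}$, then Lemma~\ref{more:lemma2} applied to $(a_2,b_2,a_3,b_3)$ gives $\alpha\le 2g$, contradicting the hypothesis. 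If it is $\{1,2,3\}$, then Lemma~\ref{more:lemma6} applied to the original bloc $(a_1,b_1,a_2,b_2)$ shows that $P_1$ is non-constant on $[a_1,a_2]$, so Lemma~\ref{more:lemma3} applied to the consecutive blocs yields $\alpha\le g(g/2+1)=g^2/2+g$; since $g^2/2+g\le (3/2)g^2-g+1$ (equivalently $(g-1)^2\ge 0$), this again contradicts the hypothesis. Either branch gives the desired contradiction.

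Set up this way the argument is short, so the main difficulty is organizational rather than computational. The two points that require care are: checking that the forward-extended quadruple $(a_2,b_2,a_3,b_3)$ genuinely satisfies Definition~\ref{more:def}, which rests on the fact that a simple interval on which $P_5$ is non-constant automatically has type containing $\{2,\dots,5\}$ (using the constraint $k\le 2$ in Proposition~\ref{type:prop:kl}); and handling the harmless degenerate case $b_2=a_3$, where Lemma~\ref{more:lemma6} applied to $(a_2,b_2,a_3,b_3)=(a_2,b_2,b_2,b_3)$ contradicts the assumption that $P_1$ is constant on $[a_2,b_2]$ directly. The only numerical input is the elementary inequality $(g-1)^2\ge 0$ used to reconcile the bound of Lemma~\ref{more:lemma3} with the standing hypothesis on $\alpha$.
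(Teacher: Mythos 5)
Your proof is correct and follows essentially the same route as the paper: extend the given bloc to consecutive blocs $a_1<b_1\le a_2<b_2\le a_3<b_3$ using the unboundedness of $P_5$, apply Lemma~\ref{more:lemma6} to both blocs, and rule out the types $\{1,2,3\}$ and $\{1,2,3,4\}$ for $[b_2,a_3]$ via Lemmas~\ref{more:lemma3} and~\ref{more:lemma2} together with $(g-1)^2\ge 0$. The only differences are organizational (you argue by contradiction where the paper argues directly, and you make explicit the extension step and the degenerate case $b_2=a_3$ that the paper leaves implicit), which does not change the substance.
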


\begin{proof}
Let $a_1<b_1\le a_2<b_2\le a_3<b_3$ be consecutive blocs in $\cA^*$.
By Lemma~\ref{more:lemma6},  $P_1$ is non-constant 
on $[a_1,a_2]$ and non-constant on $[a_2,a_3]$.  Since
\[
 \alpha > (3/2)g^2-g+1 = g(g/2+1) + (g-1)^2 \ge g(g/2+1),
\]
Lemma~\ref{more:lemma3} implies that $[b_2,a_3]$ does not have 
type $\{1,2,3\}$.  By Lemma~\ref{more:lemma2}, it does not have 
type $\{1,2,3,4\}$ either.   So, $P_1$ is constant on $[b_2,a_3]$,
and thus it is non-constant on $[a_2,b_2]$.  Consequently,
the type of $[a_2,b_2]$ is $\{1,\dots,5\}$.
\end{proof}

\begin{lemma}
\label{more:lemma8}
We have $\alpha \le \max\{2g, \, (3/2)g^2-g+1\}$. 
\end{lemma}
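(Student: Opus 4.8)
The plan is to argue by contradiction, assuming
\[
 \alpha > \max\{2g,\ (3/2)g^2-g+1\},
\]
so that the hypotheses of Lemmas~\ref{more:lemma6} and \ref{more:lemma7} are in force. The first step is to exhibit a long enough string of consecutive blocs in $\cA^*$. Since $\uP$ is proper, its top component $P_5$ is unbounded, hence $\cI_2(\uP)$ contains infinitely many simple intervals of type containing $\{2,\dots,5\}$ (those on which $P_5$ is non-constant), and infinitely many of them lie inside $[a_0,\infty)$. Choosing five such intervals $I_1,\dots,I_5$ in increasing order and observing that $P_5$ is constant between any two consecutive ones, each pair $(I_j,I_{j+1})$ is a bloc in $\cA^*$, and the triples $(I_1,I_2,I_3)$, $(I_2,I_3,I_4)$, $(I_3,I_4,I_5)$ are sequences of consecutive blocs.

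Next I would apply Lemma~\ref{more:lemma7} to each of these three triples; since that lemma upgrades the type of the "middle" distinguished interval to $\{1,\dots,5\}$, it follows that $I_2$, $I_3$ and $I_4$ all have type $\{1,\dots,5\}$. Then $(I_2,I_3,I_4)$ is a sequence of consecutive blocs all of whose distinguished intervals have type $\{1,\dots,5\}$, so Lemma~\ref{more:lemma1} applies and gives
\[
 \alpha \le \frac{g^2+g+1}{1+1/g} = \frac{g(g^2+g+1)}{g+1}.
\]

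To conclude, I would derive a contradiction from the elementary bound
\[
 \frac{g(g^2+g+1)}{g+1} \le \max\{2g,\ (3/2)g^2-g+1\} \qquad (g>1),
\]
which applies here with $g=\beta/\alpha>1$ since $\alpha<\beta$. I would prove it by splitting at the crossover $g=1+1/\sqrt{3}$ of the two quantities in the maximum: for $g\ge 1+1/\sqrt{3}$, clearing the denominator $g+1>0$ reduces the claim to $0\le\tfrac12(g-1)(g^2-2)$, valid because $1+1/\sqrt{3}>\sqrt{2}$; for $1<g<1+1/\sqrt{3}$ the maximum equals $2g$ and the claim reduces to $g^2-g-1\le 0$, valid because $1+1/\sqrt{3}<(1+\sqrt{5})/2$. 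Combining with the previous display yields $\alpha<\alpha$, a contradiction, so $\alpha\le\max\{2g,\ (3/2)g^2-g+1\}$.

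The argument is short once the auxiliary lemmas are available, and I do not expect any genuine difficulty. The one point needing care is keeping the string of consecutive blocs long enough — five simple intervals of type containing $\{2,\dots,5\}$ — so that Lemma~\ref{more:lemma1} can be invoked after Lemma~\ref{more:lemma7} has "shifted" the type information one bloc inward; the final numerical inequality, while requiring a small case split, is routine.
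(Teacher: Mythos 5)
Your proposal is correct and follows essentially the same route as the paper: assume the contrary, use Lemma~\ref{more:lemma7} to obtain three consecutive blocs whose distinguished intervals all have type $\{1,\dots,5\}$, apply Lemma~\ref{more:lemma1}, and refute the resulting bound $\alpha\le (g^2+g+1)/(1+1/g)$ by elementary algebra (the paper splits at $g=2$ with a convex-combination trick, you split at the crossover $g=1+1/\sqrt{3}$; both verifications are valid). One small point of care: for $P_5$ to be constant between your chosen intervals you should take \emph{consecutive} members of the enumeration of simple intervals in $\cI_2(\uP)$ on which $P_5$ is non-constant, not an arbitrary five of them --- a trivial adjustment that is clearly what you intended.
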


\begin{proof}
Otherwise, Lemma~\ref{more:lemma7} shows that the hypotheses of 
Lemma~\ref{more:lemma1} are fulfilled for all but at most one pair 
of consecutive blocs in $\cA^*$, and so we have 
\[
 \alpha \le \frac{g^2+g+1}{1+1/g}  = g^2+\frac{g}{g+1}.
\]
If $g\ge 2$, this yields $\alpha< g^2+1\le (3/2)g^2-g+1$.  If $g<2$,
it gives  
\[
   \alpha < g^2+2/3 = (2/3)\big((3/2)g^2-g+1\big)+(1/3)(2g).
\]
In both cases, this is a contradiction.
\end{proof}

\begin{proof}[\textbf{Proof of Proposition~\ref{more:prop1}}]
Letting $\alpha$ go to $\chibot_2(\uP)$ and $\beta$ go to 
$\chitop_2(\uP)$ in \eqref{more:prop1:eqab}, we deduce from 
Lemma~\ref{more:lemma7} that
\[
 \chibot_2(\uP) \le \max\{2g_0,\, (3/2)g_0^2-g_0+1\}
 \quad\text{where $g_0=\chitop_2(\uP)/\chibot_2(\uP)$.}
\]
If $2g_0 \le (3/2)g_0^2-g_0+1$, we are done.  Otherwise, we have
$g_0 < 1+1/\sqrt{3}$ and then $\chibot_2(\uP) \le 2g_0 < 2(1+1/\sqrt{3})$, 
against the hypothesis.
\end{proof}

The next result shows that the boundary of $\cS_{3,5}$ contains the 
arc of curve given by \eqref{dr:eq:arc}.

\begin{proposition}
\label{more:prop2}
Let $g\in[1.798,1.839]$ and let $\alpha=(3/2)g^2-g+1$.  Then,
\[
 \cS_{3,5} \cap\big( \{\alpha\}\times [0,\infty]\big) = \{\alpha\}\times[g\alpha,\infty].
\]
\end{proposition}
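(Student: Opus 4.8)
Throughout, write $h(x)=(3/2)x^2-x+1$, so that the $\alpha$ in the statement equals $h(g)$, and observe that $h'(x)=3x-1$, so $h$ is strictly increasing (hence injective) on $[1/3,\infty)$. I would prove the asserted equality of sets by the two inclusions separately; the inclusion ``$\subseteq$'' is a short deduction from Proposition~\ref{more:prop1}, while ``$\supseteq$'' requires an explicit construction fed into Theorems~\ref{cons:thm} and \ref{dr:thm:rectangles}.

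For $\cS_{3,5}\cap(\{\alpha\}\times[0,\infty])\subseteq\{\alpha\}\times[g\alpha,\infty]$, take $(\alpha,\beta)\in\cS_{3,5}$. If $\beta=\infty$ there is nothing to prove, so assume $\beta<\infty$. By Corollary~\ref{pgn:tool:cor} we may write $(\alpha,\beta)=(\chibot_2(\uP),\chitop_2(\uP))$ for a proper non-degenerate $5$-system $\uP$; in particular $\beta\ge\alpha$, so $\beta/\alpha\ge 1$. Since $g\ge 1.798$, we have $\alpha=h(g)\ge h(1.798)>4>2(1+1/\sqrt{3})$, so Proposition~\ref{more:prop1} applies and gives $\alpha\le h(\beta/\alpha)$. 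As both $g$ and $\beta/\alpha$ lie in $[1,\infty)\subset[1/3,\infty)$ and $h$ is increasing there, $h(g)=\alpha\le h(\beta/\alpha)$ forces $\beta/\alpha\ge g$, i.e.\ $\beta\ge g\alpha$.

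For the reverse inclusion it suffices, by Theorem~\ref{dr:thm:rectangles} applied with $n=5$, $m=2$ (so $(n-m)/m=3/2\le\alpha$), to show that the single point $(\alpha,g\alpha)$ belongs to $\cS_{3,5}$; the theorem then yields the whole half-line $\{\alpha\}\times[g\alpha,\infty]\subseteq\cS_{3,5}$, including the endpoint $(\alpha,\infty)$. To obtain $(\alpha,g\alpha)$ I would use Theorem~\ref{cons:thm} with $m=2$, $n=5$: for each $g\in[1.798,1.839]$, exhibit a finite sequence $\ua^{(1)},\dots,\ua^{(s+1)}=\rho\ua^{(1)}$ of points of $\bR^5$ with positive coordinates satisfying $0<a_1^{(i)}<a_2^{(i)}=a_3^{(i)}<a_4^{(i)}<a_5^{(i)}$ (condition \eqref{type:cor1:eq1}) and \eqref{type:cor1:eq2} for a fixed periodic pattern of types $(k_i,\ell_i)$, arranged so that the quantities of \eqref{cons:thm:eq2} satisfy $A_i^+/A_i^-=\alpha=h(g)$ and $A_{i+1}^+/A_i^-=g\alpha$ for every $i=1,\dots,s$; then \eqref{cons:thm:eq1} produces the point $(\alpha,g\alpha)\in\cS_{3,5}$. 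The combinatorial pattern is dictated by the equality cases of Lemmas~\ref{more:lemma4} and \ref{more:lemma5}: over a period it alternates a bloc whose first interval has type $\{2,\dots,5\}$ (where $P_1$ stays constant, which is the source of the factor $1/2$ responsible for the coefficient $3/2$ in $h$) with a bloc having an interval of type $\{1,\dots,5\}$, exactly the configuration that was shown to be extremal for the bound $\alpha\le(3/2)g^2-g+1$.

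The hard part will be the construction in the previous paragraph: one must write down the coordinates $a_j^{(i)}$ explicitly as rational functions of $g$ and $\rho$, verify that the linear relations $A_i^+=\alpha A_i^-$, $A_{i+1}^+=g\alpha A_i^-$ are consistent with the inequality constraints \eqref{type:cor1:eq2} imposed by the chosen types, and check the strict orderings of \eqref{type:cor1:eq1} along the whole period. It is precisely this last feasibility check that pins $g$ to the interval $[1.798,1.839]$: the endpoints are (numerical approximations to) roots of the polynomial inequalities guaranteeing $a_1^{(i)}>0$ and $a_4^{(i)}<a_5^{(i)}$ throughout the period, so outside this window the pattern degenerates and the same $(\alpha,g\alpha)$ is no longer forced to be a boundary point by this construction. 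By contrast, as noted above, the inclusion ``$\subseteq$'' costs essentially nothing beyond Proposition~\ref{more:prop1} and the monotonicity of $h$.
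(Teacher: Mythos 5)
Your upper inclusion is fine and is exactly the paper's argument: since $\alpha=(3/2)g^2-g+1\ge (3/2)(1.798)^2-1.798+1>4>2(1+1/\sqrt{3})$, Proposition~\ref{more:prop1} applies to any $(\alpha,\beta)\in\cS_{3,5}$ with $\beta<\infty$, and monotonicity of $x\mapsto(3/2)x^2-x+1$ on $[1,\infty)$ gives $\beta\ge g\alpha$. The problem is the reverse inclusion, which carries essentially all the content of the proposition, and there your proposal stops at announcing a plan: you never exhibit the sequence $\ua^{(1)},\dots,\ua^{(s+1)}$, never verify the ordering \eqref{type:cor1:eq1} and the type constraints \eqref{type:cor1:eq2}, and never compute the ratios in \eqref{cons:thm:eq1}. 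Since the window $[1.798,1.839]$ is determined precisely by those verifications, deferring them as ``the hard part'' leaves the statement unproven; that construction \emph{is} the proof. For the record, the paper takes $\rho=g/(2-g)$ and five explicit points such as $\ua^{(1)}=(1/\rho,1,1,1+\epsilon,2(\alpha-g)/g)$, \dots, $\ua^{(5)}=\rho\,\ua^{(1)}$, with $(k_1,\ell_1)=(1,5)$ and $(k_i,\ell_i)=(2,5)$ for $i=2,3,4$.

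Two further defects in the plan itself. First, your aim of achieving the equalities $A_i^+/A_i^-=\alpha$ and $A_{i+1}^+/A_i^-=g\alpha$ exactly, while keeping the strict inequalities $a^{(i)}_1<a^{(i)}_2=a^{(i)}_3<a^{(i)}_4<a^{(i)}_5$, does not work as stated: the extremal configuration is degenerate (coordinates collapse), which is why the paper perturbs by a small $\epsilon>0$, gets from Theorem~\ref{cons:thm} a nearby point $(\talpha,\tbeta)\in\cS_{3,5}$ whose ratios are only $\alpha+O(\epsilon)$ and $g\alpha+O(\epsilon)$, and recovers $(\alpha,g\alpha)$ in the limit $\epsilon\to0$ using that $\cS_{3,5}$ is closed, before invoking Theorem~\ref{dr:thm:rectangles} as you do. Second, your diagnosis that both endpoints of $[1.798,1.839]$ come from positivity and ordering of the coordinates is inaccurate for the construction that works: the lower bound $g\ge 1.798$ enforces the constraint $c\le c'$ (equivalently $3g^3-7g^2+4g-2\ge 0$) required by the pattern \eqref{type:cor1:eq2}, but the upper bound $g\le 1.839$ enforces $(\rho+1)/(2g^2)\le g$ (equivalently $g^3-g^2-g-1\le 0$), which is needed so that the limiting value of $\max_i A_{i+1}^+/A_i^-$ is $g\alpha$ rather than something larger; it is not an ordering-feasibility condition. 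So even as a road map the proposal misidentifies where the numerical window comes from, and without the explicit construction and these checks the inclusion $\{\alpha\}\times[g\alpha,\infty]\subseteq\cS_{3,5}$ remains unestablished.
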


\begin{proof}
Proposition \ref{more:prop1} implies that, if a pair $(\alpha,\beta)$ 
belongs to $\cS_{3,5}$, then $\beta\ge g\alpha$.   To prove the converse, 
set
\begin{align*}
 \rho&=g/(2-g),  & a&=2(\alpha-g)/g, &  b&=2g-1, \\
 c&=2(\alpha-g), & c'&=(\rho+1-g)/g, & d&=2\alpha(g-1)+1.
\end{align*}
It is useful to note that these six functions of $g$ formally 
evaluate to $1$ when $g=1$ (although this is outside of the 
range for $g$).
We claim that for $\epsilon>0$ small enough, the points 
\begin{align*}
 \ua^{(1)} &= (1/\rho,1,1,1+\epsilon, a), &
 \ua^{(2)} &= (1,1+\epsilon,1+\epsilon, b, c), &\\
 \ua^{(3)} &= (1,b,b,c,d), &
 \ua^{(4)} &= (1,c',c',\rho, \rho(1+\epsilon)), \\
 \ua^{(5)} &= (1, \rho, \rho, \rho(1+\epsilon), \rho a) = \rho\ua^{(1)}
\end{align*}
satisfy the hypotheses of Theorem~\ref{cons:thm} with $m=2$, $n=5$, 
$(k_1,\ell_1)=(1,5)$ and $(k_i,\ell_i)=(2,5)$ for $i=2,3,4$.  This amounts to showing
\[
 \rho>1, \quad 1<a\le b<c<d\le \rho \et c\le c'<\rho.
\]
Except for $d\le \rho$ and $c\le c'$, all these inequalities hold relying only upon
$1<g<2$.   As $1<g<2$, the condition $c\le c'$ reduces to $3g^3-7g^2+4g-2\ge 0$.  
It holds since $g\ge 1.798$.  Then, we find $\rho-d=g(c'-c)+g(g-1) \ge g(g-1)>0$, 
and so $d\le \rho$.

In the notation of \eqref{cons:thm:eq2} (with $m=2$ and $n=5$), we find
\begin{align*}
 A^+_1 &= 2\alpha/g+\epsilon, &
 A^+_2 &= 2\alpha+\epsilon, &
 A^+_3 &= 2g\alpha, &
 A^+_4 &= \alpha(\rho+1)/g+\rho\epsilon, & 
 A^+_5 &= 2\alpha\rho/g+\rho\epsilon, &\\
 A^-_1 &= 2/g, &
 A^-_2 &= 2+\epsilon, &
 A^-_3 &= 2g, &
 A^-_4 &= (\rho+1)/g. 
\end{align*}
Set $\talpha=\min\{A^+_1/A^-_1,\dots,A^+_4/A^-_4\}$ and 
$\tbeta=\max\{A^+_2/A^-_1,\dots,A^+_5/A^-_4\}$.  By 
Theorem~\ref{cons:thm}, the set $\cS_{3,5}$ contains 
$(\talpha,\tbeta)$.   However, as $\epsilon$ tends to 
zero, $A^+_i/A^-_i$ tends to $\alpha$ for $i=1,\dots,4$,   
$A^+_{i+1}/A^-_i$ tends to $g\alpha$ for $i=1,2,4$, and $A^+_4/A^-_3$ 
tends to $\alpha(\rho+1)/(2g^2)$.   As $1<g<2$, the condition 
$(\rho+1)/(2g^2)\le g$ is equivalent to $g^3-g^2-g-1\le 0$.  It is fulfilled since
$g\le 1.839$.  Thus $(\talpha,\tbeta)$ converges to $(\alpha,g\alpha)$ 
as $\epsilon\to 0$.  Since $\cS_{3,5}$ is a closed subset
of $[0,\infty]^2$, we conclude that it contains $(\alpha,g\alpha)$ and so,
by Theorem~\ref{dr:thm:rectangles}, it contains $(\alpha,\beta)$ for any
$\beta\in[g\alpha,\infty]$.  
\end{proof}

%
%

\section{Final remarks}
\label{sec:fin}

For a fixed $\uu\in\bR^n\setminus\{0\}$ and for each $q\in\bR$, we form the 
convex body of $\bR^n$
\[
 \tcC_\uu(q)=\{\ux\in\bR^n\,;\, \norm{\ux\wedge\uu}\le 1\ \text{and}\ |\ux\cdot\uu|\le e^{-q}\}
\]
where $\ux\cdot\uu$ stands for the usual scalar product of $\ux$ and $\uu$ in $\bR^n$.
Then, for each $i=1,\dots,n$, we denote by $\tL_i(q)$ the logarithm of the $i$-th
minimum of $\tcC_\uu(q)$ with respect to $\bZ^n$, namely the smallest $t\in\bR$
such that $e^t\tcC_\uu(q)$ contains $i$ linearly independent elements of $\bZ^n$.
Our first remark is that sign-free-$n$-systems on $\bR$ occur naturally as
approximations of the maps
\[
 \begin{array}{rcl}
  \tuL_\uu\colon\bR &\longrightarrow &\bR^n\\
  q &\longmapsto &(\tL_1(q),\dots,\tL_n(q)).
 \end{array}
\]

\begin{proposition}
\label{fin:prop1}
For each $\uu\in\bR^n\setminus\{0\}$, there exists a sign-free-$n$-system 
$\uP\colon\bR\to\bR^n$ with $\uP=\uP^\vee$ and $\uP(0)=0$ such 
that $\tuL_\uu-\uP$ is bounded. Conversely, for each sign-free-$n$-system 
$\uP\colon\bR\to\bR^n$ with $\uP=\uP^\vee$ and $\uP(0)=0$, there 
exists $\uu\in\bR^n\setminus\{0\}$ such that $\tuL_\uu-\uP$ is bounded.
\end{proposition}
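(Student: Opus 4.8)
The plan is to deduce Proposition \ref{fin:prop1} from the correspondence between the maps $\tuL_\uu$ and $n$-systems on $[0,\infty)$ established in \cite{R2015} (the case $K=\bQ$, $\pw\mid\infty$, of \cite{PR2023}), combined with Mahler's duality for convex bodies, the latter being exactly what accounts for the symmetry condition $\uP=\uP^\vee$.

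First I would record the shape of $\tcC_\uu(q)$. Decomposing $\bR^n=\bR\uu\oplus\uu^\perp$ and writing $\ux=\ux_\parallel+\ux_\perp$ accordingly, one has $\norm{\ux\wedge\uu}=\norm{\uu}\,\norm{\ux_\perp}$ and $|\ux\cdot\uu|=\norm{\uu}\,\norm{\ux_\parallel}$, so $\tcC_\uu(q)$ is the box $\{\ux\,;\,\norm{\ux_\perp}\le\norm{\uu}^{-1},\ \norm{\ux_\parallel}\le e^{-q}\norm{\uu}^{-1}\}$. Two consequences follow at once. On one hand, $\tcC_{c\uu}(q)=c^{-1}\tcC_\uu(q)$ for each $c>0$, so rescaling $\uu$ only shifts all $\tL_i$ by an additive constant; in particular the set of candidate maps $\uP$ allowed in the statement is the same for $\uu$ and for $c\uu$, and we may assume $\norm{\uu}=1$. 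On the other hand, a one-line computation of the polar dual of a box gives, when $\norm{\uu}=1$,
\begin{equation*}
 \tfrac12\,\tcC_\uu(-q)\ \subseteq\ \tcC_\uu(q)^{\circ}\ \subseteq\ \tcC_\uu(-q),
\end{equation*}
where $\tcC_\uu(q)^{\circ}=\{\uy\,;\,\uy\cdot\ux\le1\ \text{for all}\ \ux\in\tcC_\uu(q)\}$. By Mahler's inequalities relating the successive minima (with respect to $\bZ^n$) of a symmetric convex body and those of its polar dual, this yields $\tL_i(q)+\tL_{n+1-i}(-q)=O(1)$ uniformly for $q\in\bR$ and $1\le i\le n$, the implied constant depending only on $n$ (for general $\uu$ one gets the same relation up to the harmless additive constant $2\log\norm{\uu}$). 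This is the analytic incarnation of $\uP=\uP^\vee$.

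For the forward direction, fix $\uu$ with $\norm{\uu}=1$. For $q\ge0$ the body $\tcC_\uu(q)$ agrees, up to a factor depending only on $n$, with the convex body governing $\uL_\uu$ in \cite{R2015}; hence by the main theorem there, there is an $n$-system $\uP_+$ on $[0,\infty)$ with $\tuL_\uu-\uP_+$ bounded on $[0,\infty)$. Note $\uP_+(0)=0$ automatically, since the coordinates of $\uP_+(0)$ are non-negative and sum to $0$. Now define $\uP\colon\bR\to\bR^n$ by $\uP=\uP_+$ on $[0,\infty)$ and $\uP=\uP_+^{\vee}$ on $(-\infty,0]$. By Proposition \ref{pgn:duality:prop1} each piece is a sign-free-$n$-system, the two pieces agree at $0$ because $\uP_+(0)=0=\uP_+^{\vee}(0)$, and $\uP=\uP^{\vee}$ holds by construction. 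The one delicate point is condition (S3) at the interior point $q=0$: since $\uP(0)=0$ and the slopes lie in $\{0,1\}$, necessarily $\uP(q)=(0,\dots,0,q)$ for small $q>0$, whence $\uP(q)=(q,0,\dots,0)$ for small $q<0$; so each sum $P_1+\cdots+P_j$ with $j<n$ that changes slope from $1$ to $0$ at $0$ does so with $P_j(0)=P_{j+1}(0)=0$, and (S3) is satisfied. Finally $\tuL_\uu-\uP$ is bounded on $[0,\infty)$ by the choice of $\uP_+$, and on $(-\infty,0]$ because there $\tL_i(q)=-\tL_{n+1-i}(-q)+O(1)=-P_{+,n+1-i}(-q)+O(1)$, which is the $i$-th coordinate of $\uP_+^{\vee}(q)=\uP(q)$ up to $O(1)$.

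The converse is then short: given a sign-free-$n$-system $\uP$ on $\bR$ with $\uP=\uP^{\vee}$ and $\uP(0)=0$, its restriction to $[0,\infty)$ is an $n$-system, since $P_1$ is non-decreasing with $P_1(0)=0$; by \cite{R2015} there is $\uu\ne0$ with $\tuL_\uu-\uP$ bounded on $[0,\infty)$, and the symmetry established above, together with $\uP=\uP^{\vee}$, propagates boundedness to $(-\infty,0]$ by the same computation. The main obstacle is the self-duality $\tcC_\uu(q)^{\circ}\subseteq\tcC_\uu(-q)\subseteq 2\,\tcC_\uu(q)^{\circ}$ and its conversion, via Mahler's transference inequalities, into $\tL_i(q)+\tL_{n+1-i}(-q)=O(1)$: this is precisely what upgrades the one-sided correspondence of \cite{R2015} (valid on $[0,\infty)$) to a two-sided statement compatible with $\uP=\uP^{\vee}$. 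The remaining points — gluing the two halves into a sign-free-$n$-system satisfying (S1)--(S3) at $q=0$, and identifying $\tcC_\uu(q)$ for $q\ge0$ with the body of \cite{R2015} up to a bounded factor — are routine.
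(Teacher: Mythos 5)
Your proposal is correct and follows essentially the same route as the paper: reduce to $\norm{\uu}=1$, use Mahler's duality on the (box-shaped) bodies $\tcC_\uu(q)$ to get $\tL_i(q)+\tL_{n+1-i}(-q)=O(1)$, invoke the main theorem of \cite{R2015} on $[0,\infty)$, and glue $\uP$ with $\uP^\vee$ at $0$ (where the system is $(0,\dots,0,q)$), in both directions. The only point you gloss over — that the $n$-system from \cite{R2015} is a priori defined on $[q_0,\infty)$ and must be extended to $[0,\infty)$ before one can assert $\uP_+(0)=0$ — is handled in the paper by citing the proof of \cite[Theorem~8.1]{R2015}, and is indeed routine.
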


\begin{proof}
This follows from the main result of \cite{R2015} together with Mahler's duality.  
We first note that $\tuL_\uu-\tuL_\uv$ is bounded on $\bR$ if $\uv=r\uu$ for 
some $r>0$.  So, we may assume that $\norm{\uu}=1$.  

For a fixed $\uu\in\bR^n$ with $\norm{\uu}=1$, consider the family of convex bodies 
\[
 \cC_\uu(q)=\{\ux\in\bR^n\,;\, \norm{\ux}\le 1\ \text{and}\ |\ux\cdot\uu|\le e^{-q}\}
\]
with parameter $q\in [0,\infty)$.  We define
\[
 \begin{array}{rcl}
  \uL_\uu\colon[0,\infty) &\longrightarrow &\bR^n\\
   q &\longmapsto &(L_1(q),\dots,L_n(q)),
 \end{array}
\]
where $L_i(q)$ is the logarithm of the $i$-th minimum of $\cC_\uu(q)$ 
with respect to $\bZ^n$ for $i=1,\dots,n$.
We note that each $\ux\in\bR^n$ decomposes uniquely as a sum
$\ux=\ux_\uu+(\ux\cdot\uu)\uu$ where $\ux_\uu$ is the orthogonal
projection of $\ux$ on $\uu^\perp$.  Then, we find $\norm{\ux\wedge\uu}=
\norm{\ux_\uu\wedge\uu}=\norm{\ux_\uu}$ and
\begin{equation}
\label{fin:prop1:eq1}
 \ux\cdot\uy = \ux_\uu\cdot\uy_\uu + (\ux\cdot\uu)(\uy\cdot\uu)
\end{equation}
for any $\ux,\uy\in\bR^n$.  In particular, we have the well-known formula
$\norm{\ux}^2=\norm{\ux\wedge\uu}^2+(\ux\cdot\uu)^2$ which implies that
\[
 \cC_\uu(q) \subseteq \tcC_\uu(q) \subseteq 2\cC_\uu(q)
\]
for each $q\ge 0$.  Thus, $\tuL_\uu-\uL_\uu$ is bounded on $[0,\infty)$.
Using \eqref{fin:prop1:eq1}, we also find, for each $q\in\bR$,  that the dual
(or polar) convex body $\tcC_\uu(q)^*$, defined as the set of all $\uy\in\bR^n$
such that $|\ux\cdot\uy| \le 1$ for each $\ux\in\tcC_\uu(q)$, satisfies
\[
 (1/2)\tcC_\uu(-q) \subseteq  \tcC_\uu(q)^*  \subseteq \tcC_\uu(-q).
\]
By Mahler's duality \cite[\S14, Theorem 5]{GL1987}, this implies that 
$\tL_i(-q)+\tL_{n+1-i}(q)$ is a bounded function of $q\in\bR$, for $i=1,\dots,n$.

For a given unit vector $\uu\in\bR^n$, Theorem~1.3 of \cite{R2015}
provides an $n$-system $\uP\colon[q_0,\infty)\to\bR^n$ such that 
$\uL_\uu-\uP$ is bounded on $[q_0,\infty)$.  We may assume that $q_0=0$
as such a map $\uP$ can be extended to an $n$-system on $[0,\infty)$
(see the proof of \cite[Theorem~8.1]{R2015}).  For $\epsilon>0$ small 
enough, we have $\uP(q)=(0,\dots,0,q)$ for each $q\in[0,\epsilon]$. So,
$\uP$ can be further extended to a sign-free-$n$-system on $\bR$ by
the formula $\uP(q)=\uP^\vee(q)$ for each $q\in (-\infty,0]$. Then, we 
have $\uP=\uP^\vee$ and $\uP(0)=0$.  By the above, $\tuL_\uu-\uP$ 
is bounded on $[0,\infty)$ and $\tuL_\uu-\uP^\vee$ is bounded 
on $(-\infty,0]$.   Thus, $\tuL_\uu-\uP$ is bounded on $\bR$.

Conversely, suppose that $\uP\colon\bR\to\bR^n$ is a sign-free-$n$-system 
with $\uP=\uP^\vee$ and $\uP(0)=0$.  Then, the restriction of $\uP$ to
$[0,\infty)$ is an $n$-system and so \cite[Theorem~8.1]{R2015} provides 
a unit vector $\uu\in\bR^n$ such that $\uL_\uu-\uP$ is bounded on
$[0,\infty)$.  We conclude as above that $\tuL_\uu-\uP$ is bounded 
on $\bR$.
\end{proof}

In a remark at the end of section \ref{sec:type}, we observed that, for any 
integers $m$ and $n$ with $1\le m\le n-1$, and for any non-degenerate 
proper $n$-system $\uP$, there is a potentially simpler map with the same 
$m$-division points providing the same point in $\cS_{n-m,n}$.  Our last remark
is that this process of deformation can be extended further for $\cS_{3,5}$ 
using the generalized $n$-systems from \cite{R2016}, as the next result shows.

\begin{proposition}
\label{fin:prop2}
Let $\uP=(P_1,\dots,P_5)$ be a non-degenerate
proper $5$-system.  Then, there exists a proper generalized $5$-system 
$\tuP=(\tP_1,\dots,\tP_5)$ with the following properties:
\begin{itemize}
\item[(i)] we have $\tuP=\uP$ on each interval $[a,b]$ in $\cI_2(\uP)$ 
on which $P_4$ or $P_5$ is not constant;
\smallskip
\item[(ii)]  on any other interval $[a,b]$ in $\cI_2(\uP)$, the ratio
\begin{equation}
 \label{fin:prop2:eq1}
 \tchi(q) = \frac{\tP_3(q)+\tP_4(q)+\tP_5(q)}{\tP_1(q)+\tP_2(q)}
\end{equation}
is (strictly) decreasing;
\smallskip
\item[(iii)] with the notation \eqref{pgn:eq:chi}, we have 
 $\chibot_2(\tuP)=\chibot_2(\uP)$ and   $\chitop_2(\tuP)\le \chitop_2(\uP)$.
\end{itemize}
\end{proposition}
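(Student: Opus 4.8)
The plan is to leave $\uP$ alone on the simple intervals of $\cI_2(\uP)$ over which $P_4$ or $P_5$ is non-constant and to replace it, over each maximal run of the remaining simple intervals, by a generalized $5$-system piece on which the ratio $\tchi$ is monotone. I would begin by recalling from \cite{R2016} the definition of a proper generalized $5$-system together with the two facts we shall use: that one assembles such systems from pieces over the successive intervals of a partition of the domain exactly as for ordinary $5$-systems, provided consecutive pieces agree at the common endpoint; and that, for a proper generalized $5$-system, the quantities $\chibot_2$ and $\chitop_2$ of \eqref{pgn:eq:chi} are still the lower and the upper limit, as $q\to\infty$, of $\tchi=(\tP_3+\tP_4+\tP_5)/(\tP_1+\tP_2)$. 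Setting $\tuP=\uP$ on every simple interval of $\cI_2(\uP)$ over which $P_4$ or $P_5$ varies then disposes of (i).

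Next I would treat a fixed maximal run $R=[u,v]$ of simple intervals of $\cI_2(\uP)$ on which $P_4$ and $P_5$ are both constant, say $P_4\equiv p_4$ and $P_5\equiv p_5$. By maximality the (active) neighbours of $R$ have already fixed $\tuP(u)=\uP(u)$ and $\tuP(v)=\uP(v)$, while at the $2$-division numbers interior to $R$ one is free. On $R$ all the growth of $q$ is carried by $P_1,P_2,P_3$ and $\tchi=(\tP_3+p_4+p_5)/(\tP_1+\tP_2)$, so I would build the replacement by keeping $\tP_4\equiv p_4$, $\tP_5\equiv p_5$ and choosing the ordered triple $\tP_1\le\tP_2\le\tP_3$ so as to drive $\tP_1+\tP_2$ up as early and as fast as the admissibility rules of generalized systems and the ordering permit, making the denominator of $\tchi$ outrun its numerator. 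This is the generalized analogue of the endpoint-to-endpoint constructions of Section~\ref{sec:type}, and it is precisely the point where the extra latitude of the generalized $5$-systems of \cite{R2016} over the combinatorial conditions (S1)--(S3) is needed, since a non-degenerate ordinary system is forced by (S3) to begin the growth over $R$ with the coordinate $P_3$, which would make $\tchi$ rise at first. One then checks that the concatenation of all the pieces is a proper generalized $5$-system and that $\tchi$ is strictly decreasing on each such run $R$, which is (ii).

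For (iii) one notes that $\tchi=\chi_2$ on the active intervals, and that over a run $R=[u,v]$ the function $\tchi$ decreases monotonically from $\tchi(u)=\chi_2(u)$ to $\tchi(v)=\chi_2(v)$; for $\uP$, by Lemma~\ref{part:lemma:simple} and the computation in the proof of Proposition~\ref{part:prop:n-sys}, the ratio $\chi_2$ on $R$ takes at its $2$-division numbers the values $A_i^+/A_i^-$ and in between rises to peaks $A_{i+1}^+/A_i^-\ge A_i^+/A_i^-$. Comparing the values of $\chi_2$ for $\uP$ at the boundary $2$-division numbers $u,v$ of $R$ with those at its interior $2$-division numbers, one sees that the range of $\tchi$ on $R$ lies inside the range of $\chi_2$ on $R$ and that no peak of $\chi_2$ on $R$ survives; since the surviving local minima of $\tchi$ are exactly those $2$-division numbers of $\uP$ that border an active interval, where $\tchi$ coincides with $\chi_2$, passing to $\liminf$ and $\limsup$ as $q\to\infty$ gives $\chibot_2(\tuP)=\chibot_2(\uP)$ and $\chitop_2(\tuP)\le\chitop_2(\uP)$. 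The main obstacle is the replacement step of the middle paragraph: one must pin down exactly how much freedom the generalized $5$-systems of \cite{R2016} grant beyond (S1)--(S3), realize the strictly monotone piece with the prescribed endpoint values while preserving the ordering $\tP_1\le\tP_2\le\tP_3$, and, for (iii), verify the requisite comparisons among the values of $\chi_2$ at the boundary and interior $2$-division numbers of each run, in particular that $\chi_2(u)\ge\chi_2(v)$.
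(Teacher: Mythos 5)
Your overall plan coincides with the paper's: keep $\uP$ on the intervals of $\cI_2(\uP)$ where $P_4$ or $P_5$ varies, replace it on each maximal interval of type contained in $\{1,2,3\}$ by a generalized piece with the same endpoint values on which $\tchi$ decreases, and then compare extrema of $\tchi$ and of $\chi_2$ interval by interval to get (iii). But the heart of the proof is exactly the step you leave open and yourself call ``the main obstacle'': you never exhibit the replacement piece, nor prove that one exists with the prescribed endpoint values and with $\tchi$ strictly decreasing, nor verify the endpoint comparisons (e.g.\ that $\chi_2$ does not increase from one $2$-division number of the run to the next) that your argument for (iii) needs. The paper does all of this explicitly: on an interval $[a,b]$ of type $\{2,3\}$ or $\{1,2,3\}$ it writes down the combined graph (its Figure 3), where $P_1$ first rises alone at slope $1$ until it meets $P_2=P_3$, then the three lower components travel together at slope $1/3$ until $P_1$ reaches $P_1(b)$, then $P_2=P_3$ finish at slope $1/2$; realizability of this schedule with the given endpoints rests on the inequalities $P_2(a)\le P_1(b)$, $P_3(a)\le P_2(b)$ supplied by Proposition~\ref{type:prop:kl}. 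Monotonicity of $\tchi$ is then not a soft consequence of ``the denominator outruns the numerator'' but of a quantitative comparison: the sum $\tS=\tP_1+\tP_2$ has slope at least $1/2$ throughout, while $(3/2)\tS(q)\le 3\tP_2(q)\le q-\tS(q)$ forces $\tS(q)/q\le 2/5<1/2$, so $\tS(q)/q$ increases and $\tchi(q)=q/\tS(q)-1$ decreases. Finally, the comparison you postpone ($\min_{[a,b]}\chi_2=\chi_2(b)$ and $\max_{[a,b]}\tchi=\chi_2(a)\le\max_{[a,b]}\chi_2$) is obtained in the paper by applying this very construction to each simple subinterval of the run and using that $\chi_2$ attains its minimum on a simple interval at an endpoint.

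A further caution about your guiding heuristic: driving $\tP_1+\tP_2$ up ``as early and as fast as possible'' is not by itself a correct recipe. The total increase of $\tP_1+\tP_2$ over the run is fixed by the prescribed values $\uP(u)$ and $\uP(v)$; if you spent it all at the start, the remainder of the run would have constant denominator while $\tP_3$ grows, and $\tchi$ would increase there. What makes the paper's schedule work is that the greedy growth is automatically capped by the terminal values ($P_1$ may not exceed $P_1(b)$, etc.), and even so one still needs the $1/2$ versus $2/5$ slope estimate above to conclude monotonicity. So as it stands the proposal is an outline of the correct strategy with its central construction and verifications missing.
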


Since $(\chibot_2(\tuP),\chitop_2(\tuP))\in \cS_{3,5}$, the above 
result could be useful to study the boundary of $\cS_{3,5}$.

\begin{proof}
Suppose first that $[a,b]$ is any interval in $\cI_2(\uP)$ of type 
$\{2,3\}$ or $\{1,2,3\}$, and consider the generalized $5$-system
$\tuP$ on $[a,b]$ whose combined graph is given in 
Figure~\ref{fin:fig3}, using the notation $a_i=P_i(a)$ and 
$b_i=P_i(b)$ for $i=1,\dots,5$.  In this figure, the numbers on 
the various slanted line segments indicate their slopes.

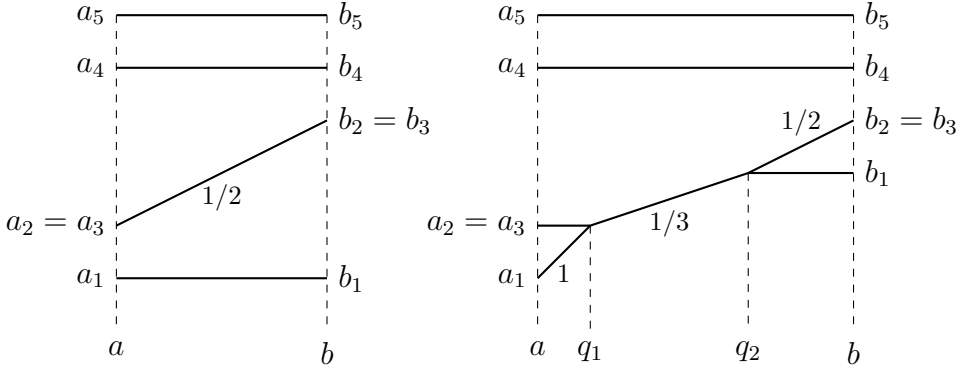
\begin{figure}[h]
     \begin{tikzpicture}[scale=0.7]
      \draw[dashed] (1,6) -- (1,0) node[below] {$a$};
      \draw[dashed] (5,6) -- (5,0) node[below] {$b$};
      \draw[thick] (1,1) -- (5,1) node[right] {$b_1$}; 
      \node[left] at (1,1){$a_1$};
      \draw[thick] (1,2) -- (5,4) node[right] {$b_2=b_3$}; 
      \node[left] at (1,2){$a_2=a_3$};
      \draw[thick] (1,5) -- (5,5) node[right] {$b_4$}; 
      \node[left] at (1,5){$a_4$};
      \draw[thick] (1,6) -- (5,6) node[right] {$b_5$}; 
      \node[left] at (1,6){$a_5$};
      \node[below] at (3,3){\footnotesize $1/2$};
      \draw[dashed] (9,6) -- (9,0) node[below] {$a$};
      \draw[dashed] (10,2) -- (10,0) node[below] {$q_1$};
      \draw[dashed] (13,3) -- (13,0) node[below] {$q_2$};
      \draw[dashed] (15,6) -- (15,0) node[below] {$b$};
      \draw[thick] (9,1) -- (10,2) -- (13,3) -- (15,3) node[right] {$b_1$}; 
      \node[left] at (9,1){$a_1$};
      \draw[thick] (10,2) -- (9,2) node[left] {$a_2=a_3$}; 
      \draw[thick]  (13,3) -- (15,4) node[right]{$b_2=b_3$};
      \draw[thick] (9,5) -- (15,5) node[right] {$b_4$}; 
      \node[left] at (9,5){$a_4$};
      \draw[thick] (9,6) -- (15,6) node[right] {$b_5$}; 
      \node[left] at (9,6){$a_5$};
      \node[below] at (9.5,1.45){\footnotesize $1$};
      \node[below] at (11.5,2.5){\footnotesize $1/3$};
      \node[above] at (14,3.5){\footnotesize $1/2$};
   \end{tikzpicture}
\caption{The two possibilities for the combined graph of $\tuP$ over $[a,b]$.}
\label{fin:fig3}
\end{figure}

We claim that the ratio $\tchi$ given by \eqref{fin:prop2:eq1} is 
decreasing on $[a,b]$ and that the ratio $\chi$ given by
\[
 \chi(q) =  \frac{P_3(q)+P_4(q)+P_5(q)}{P_1(q)+P_2(q)}
\]
satisfies
\[
 \min_{q\in[a,b]}\chi(q) = \chi(b) = \min_{q\in[a,b]}\tchi(q)
 \et
 \max_{q\in[a,b]}\chi(q) \ge \chi(a) = \max_{q\in[a,b]}\tchi(q).
\]
The second part of the claim follows from the first when $[a,b]$ is a simple 
interval because, as shown in the proof of Proposition~\ref{part:prop:n-sys}, 
the minimum of $\chi$ on $[a,b]$ is achieved at $a$ or $b$.  Since $\chi$ 
coincides with $\tchi$ at these points, we deduce from the first assertion that 
$\min_{q\in[a,b]}\chi(q) = \chi(b)$ if $[a,b]$ is simple.   Since $[a,b]$ partitions into 
simple intervals of type $\{2,3\}$ or $\{1,2,3\}$, this equality remains true in general.

To prove the first part of the claim, consider the sum $\tS:=\tP_1+\tP_2$.  
If $[a,b]$ has type $\{2,3\}$, its slope is $1/2$ on the whole interval $[a,b]$.  
If $[a,b]$ has type $\{1,2,3\}$, its slope is $1$ on $[a,q_1]$, then $2/3$ on 
$[q_1,q_2]$, and finally $1/2$ on $[q_2,b]$.  In both cases, the slope of 
$\tS$ is at least $1/2$ at each point of $[a,b]$ where it is differentiable.  
Moreover, for each $q\in[a,b]$, we have 
\[
 (3/2)\tS(q) \le 3\tP_2(q) 
                 \le \tP_3(q)+\tP_4(q)+\tP_5(q)= q - \tS(q),
\]
thus $\tS(q)/q \le 2/5 < 1/2$.  Consequently, the ratio $\tS(q)/q$ is 
increasing on $[a,b]$, and so $\tchi(q)=q/\tS(q)-1$ is 
decreasing on $[a,b]$.  

To finish the proof, it suffices to modify $\uP$ as in Figure~\ref{fin:fig3} 
on each maximal interval $[a,b]$ in $\cI_2(\uP)$ of type contained in
$\{1,2,3\}$.  The resulting map is a proper generalized $5$-system with 
the required properties (i)--(iii).  
\end{proof}



\subsection*{Acknowlegments.}  
This research was partially supported by an NSERC discovery grant.  
The authors thank Anthony Po\"els for useful comments on an earlier 
version of this paper.

\end{document}